\newtheorem{thm}{Theorem}[section]
\newtheorem{lem}[thm]{Lemma}
\newtheorem{prp}[thm]{Proposition}
\newtheorem{cor}[thm]{Corollary}
\newtheorem{dfn}[thm]{Definition}
\newtheorem{cnj}[thm]{Conjecture}
\newtheorem{que}{Question}
\newtheorem{baseexample}[thm]{Example} %never to be used !!! - use example environment
\newtheorem{baseremark}[thm]{Remark} %never to be used !!! - use remark environment
\newenvironment{example}
{\begin{baseexample}\rm}{\end{baseexample}}
\newenvironment{remark}
{\begin{baseremark}\rm}{\end{baseremark}}
\newcommand{\term}[1]{\emph{#1}}
\newcommand{\rem}[1]{}
\newcommand{\N}{\mathbb{N}}
\newcommand{\Q}{\mathbb{Q}}
\newcommand{\R}{\mathbb{R}}
\newcommand{\Z}{\mathbb{Z}}
\newcommand{\frakSmall}{
\newcommand{\fraka}{{\mathfrak{a}}}
\newcommand{\frakb}{{\mathfrak{b}}}
\newcommand{\frakc}{{\mathfrak{c}}}
\newcommand{\frakd}{{\mathfrak{d}}}
\newcommand{\frake}{{\mathfrak{e}}}
\newcommand{\frakf}{{\mathfrak{f}}}
\newcommand{\frakg}{{\mathfrak{g}}}
\newcommand{\frakh}{{\mathfrak{h}}}
\newcommand{\fraki}{{\mathfrak{i}}}
\newcommand{\frakj}{{\mathfrak{j}}}
\newcommand{\frakk}{{\mathfrak{k}}}
\newcommand{\frakl}{{\mathfrak{l}}}
\newcommand{\frakm}{{\mathfrak{m}}}
\newcommand{\frakn}{{\mathfrak{n}}}
\newcommand{\frako}{{\mathfrak{o}}}
\newcommand{\frakp}{{\mathfrak{p}}}
\newcommand{\frakq}{{\mathfrak{q}}}
\newcommand{\frakr}{{\mathfrak{r}}}
\newcommand{\fraks}{{\mathfrak{s}}}
\newcommand{\frakt}{{\mathfrak{t}}}
\newcommand{\fraku}{{\mathfrak{u}}}
\newcommand{\frakv}{{\mathfrak{v}}}
\newcommand{\frakw}{{\mathfrak{w}}}
\newcommand{\frakx}{{\mathfrak{x}}}
\newcommand{\fraky}{{\mathfrak{y}}}
\newcommand{\frakz}{{\mathfrak{z}}}
}
\newcommand{\calCapital}{
\newcommand{\calA}{{\mathcal{A}}}
\newcommand{\calB}{{\mathcal{B}}}
\newcommand{\calC}{{\mathcal{C}}}
\newcommand{\calD}{{\mathcal{D}}}
\newcommand{\calE}{{\mathcal{E}}}
\newcommand{\calF}{{\mathcal{F}}}
\newcommand{\calG}{{\mathcal{G}}}
\newcommand{\calH}{{\mathcal{H}}}
\newcommand{\calI}{{\mathcal{I}}}
\newcommand{\calJ}{{\mathcal{J}}}
\newcommand{\calK}{{\mathcal{K}}}
\newcommand{\calL}{{\mathcal{L}}}
\newcommand{\calM}{{\mathcal{M}}}
\newcommand{\calN}{{\mathcal{N}}}
\newcommand{\calO}{{\mathcal{O}}}
\newcommand{\calP}{{\mathcal{P}}}
\newcommand{\calQ}{{\mathcal{Q}}}
\newcommand{\calR}{{\mathcal{R}}}
\newcommand{\calS}{{\mathcal{S}}}
\newcommand{\calT}{{\mathcal{T}}}
\newcommand{\calU}{{\mathcal{U}}}
\newcommand{\calV}{{\mathcal{V}}}
\newcommand{\calW}{{\mathcal{W}}}
\newcommand{\calX}{{\mathcal{X}}}
\newcommand{\calY}{{\mathcal{Y}}}
\newcommand{\calZ}{{\mathcal{Z}}}
}
\newcommand{\CFont}{\mathscr} % Font for categories
\DeclareMathOperator{\ann}{ann}
\newcommand{\annl}{\ann^\ell}
\newcommand{\annr}{\ann^r}
 \DeclareMathOperator{\Aut}{Aut}
 \DeclareMathOperator{\Cent}{Cent}
 \DeclareMathOperator{\End}{End}
\DeclareMathOperator{\Hom}{Hom} %
\DeclareMathOperator{\id}{id} %
\DeclareMathOperator{\im}{im} %
\DeclareMathOperator{\Jac}{Jac} %
\newcommand{\op}{\mathrm{op}}
\DeclareMathOperator{\soc}{soc} %
\DeclareMathOperator{\Spec}{Spec} %
\newcommand{\what}[1]{\widehat{#1}}
\newcommand{\veps}{\varepsilon}
\newcommand{\vphi}{\varphi}
\newcommand{\idealof}{\unlhd} % \vartriangleleft
\newcommand{\derives}{\Longrightarrow}
\newcommand{\suchthat}{\,:\,}
\newcommand{\where}{\,|\,}
\newcommand{\quo}[1]{\overline{#1}}
\newcommand{\smallSMatII}[4]{\left[\begin{smallmatrix} {#1} & {#2} \\ {#3} &
{#4} \end{smallmatrix}\right]}
\newcommand{\Trings}[1]{\left< #1 \right>}
\newcommand{\nGal}[2]{\mathrm{Gal}({#1}/{#2})}
\newcommand{\nMat}[2]{\mathrm{M}_{#2}(#1)}
\newcommand{\invlim}{\underleftarrow{\lim}\,}
\newcommand{\dpar}[1]{(\!(#1)\!)}
\newcommand{\dbrac}[1]{[[#1]]}
\newcommand{\units}[1]{{#1^\times}}
\newcommand{\ideal}[1]{\left<#1\right>}
\newcommand{\ids}[1]{\mathrm{E}(#1)}
\newcommand{\cHom}{\Hom_{\mathrm{c}}}
\newcommand{\cEnd}{\End_{\mathrm{c}}}
\newcommand{\cAut}{\Aut_{\mathrm{c}}}
\newcommand{\LTR}{{\CFont{L}\!\!\CFont{T}_2}}
\DeclareMathOperator{\Ball}{B}
\begin{document}

%\maketitle

\title{Semi-Invariant Subrings}
\author{Uriya A.\ First}
\address{Department of Mathematics, Bar-Ilan University, Ramat-Gan}
%\currentaddress{Einstein Institute of Mathematics, Hebrew University of Jerusalem}
\email{uriya.first@gmail.com}
\thanks{This research was partially supported by an Israel-US BSF grant \#2010/149.}
\date{\today}
\keywords{}
\maketitle

\begin{abstract}
    We say that a subring $R_0$ of a ring $R$ is \emph{semi-invariant}
    if $R_0$ is the ring of invariants in $R$ under some set of
    ring endomorphisms of some ring containing $R$.
%    there exists a ring $S$ containing $R$ and a set of ring endomorphisms $\Sigma\subseteq \End(S)$ such that
%    $R_0=R^\Sigma:=\{r\in R \suchthat \sigma(r)=r~\forall \sigma\in \Sigma\}$.
    We show that $R_0$ is semi-invariant
    if and only if there is a ring $S\supseteq R$ and a set $X\subseteq S$ such that $R_0=\Cent_R(X):=\{r\in R
    \suchthat xr=rx~ \forall x\in X\}$; in particular, centralizers of subsets of $R$ are semi-invariant subrings.

    We prove that a semi-invariant subring $R_0$ of a semiprimary (resp.\ right perfect) ring $R$
    is again semiprimary (resp.\ right perfect) and satisfies $\Jac(R_0)^n\subseteq\Jac(R)$ for
    some $n\in\N$. This result holds for other families
    of semiperfect rings, but the semiperfect analogue fails in general. In order to overcome
    this, we specialize to Hausdorff \emph{linearly topologized} rings and consider
    \emph{topologically semi-invariant} subrings. This enables us to
    show that any topologically semi-invariant subring (e.g.\
    a centralizer of a subset) of a semiperfect ring that can be endowed with a ``good'' topology (e.g.\
    an inverse limit of semiprimary rings) is semiperfect.

    Among the applications: (1) The center of a semiprimary (resp.\ right perfect) ring
    is semiprimary (resp.\ right perfect). (2) If $M$ is a finitely presented module over a ``good''
    semiperfect ring (e.g.\ an inverse limit of semiprimary rings),
    then $\End(M)$ is semiperfect, hence $M$ has a Krull-Schmidt decomposition. (This generalizes results of Bjork
    and Rowen; see \cite{Bj71B}, \cite{Ro87}, \cite{Ro86}.) (3) If $\rho$ is a representation of a monoid or
    a ring over a module with a ``good'' semiperfect endomorphism ring (in the sense of (2)), then $\rho$ has a Krull-Schmidt decomposition.
    (4) If $S$ is a ``good'' commutative semiperfect ring and $R$ is an $S$-algebra that is f.p.\ as an $S$-module,
    then $R$ is semiperfect.
    (5) Let $R\subseteq S$ be rings and let $M$ be a right $S$-module. If $\End(M_R)$ is semiprimary (resp.\ right perfect), then
    $\End(M_S)$ is semiprimary (resp.\ right perfect).

\end{abstract}

\section{Preface}
\label{section:preface}

    Throughout, all rings are assumed to have a unity and ring
    homomorphisms are required to preserve it.
    Subrings are assumed to have the same unity as the ring containing them.
    Given a ring $R$, denote its set of invertible elements by $\units{R}$, its Jacobson radical by
    $\Jac(R)$, its set of idempotents by $\ids{R}$ and its center by $\Cent(R)$. We let $\End(R)$ (resp.\ $\Aut(R)$)
    denote the set of ring homomorphisms (resp.\ isomorphisms) from $R$ to itself. If $X\subseteq R$
    is any set, then its right (left) annihilator in $R$
    will be denoted by $\annr_R X$ ($\annl_R X$). The subscript $R$ will
    be dropped when understood from the context.
    A \emph{semisimple} ring always means a semisimple artinian ring. For a prime number $p$, we
    let $\Z_p$, $\Q_p$ and $\Z_{\ideal{p}}$ denote the $p$-adic integers, $p$-adic numbers and $\Z$
    localized (but not completed) at $p\Z$, respectively.

    \medskip

    Let $R$ be a ring and let $J=\Jac(R)$. Recall that $R$ is \emph{semilocal} if $R/J$ is semisimple. If in addition
    $J$ is idempotent lifting, then $R$ is called \emph{semiperfect}. For a detailed discussion about semiperfect rings,
    see \cite[\S2.7]{Ro88} and \cite{Ba60}. Semiperfect rings play an important role in representation theory and module
    theory because of the Krull-Schmidt Theorem.
    Recall that an object $A$ in an additive category $\CFont{A}$ is said to have a \emph{Krull-Schmidt decomposition}
    if it is a sum of (non-zero) indecomposable objects and any two such decompositions are the same up to isomorphism and reordering.

    \begin{thm}{\bf (Krull-Schmidt, for Categories).}
        Let $\CFont{A}$ be an additive category in which all idempotents split (e.g.\ an abelian category) and let $A\in \CFont{A}$.
        If $\End_{\CFont{A}}(A)$ is semiperfect, then $A$ has a Krull-Schmidt decomposition and the endomorphism ring
        of any indecomposable summand of $A$ is local.
    \end{thm}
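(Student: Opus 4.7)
My plan is to exploit the standard bridge between direct sum decompositions of $A$ and decompositions of the unity of $R := \End_{\CFont{A}}(A)$ into orthogonal idempotents, which is available because idempotents split in $\CFont{A}$.

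For existence, since $R$ is semiperfect, the standard structure theory (see, e.g., \cite[\S2.7]{Ro88}) gives a decomposition $1_R = e_1 + \cdots + e_n$ into pairwise orthogonal primitive idempotents, with the property that each corner ring $e_i R e_i$ is local. Splitting each $e_i$ in $\CFont{A}$ yields direct summands $A_i$ of $A$ with $\End_{\CFont{A}}(A_i) \cong e_i R e_i$ and $A \cong A_1 \oplus \cdots \oplus A_n$. Since a local ring has no idempotents other than $0$ and $1$, each $A_i$ is indecomposable (any non-trivial direct sum decomposition of $A_i$ would produce a non-trivial idempotent in $\End_{\CFont{A}}(A_i)$ because idempotents split).

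For uniqueness, suppose $A \cong B_1 \oplus \cdots \oplus B_m$ is any other decomposition into non-zero indecomposable objects. Each $B_j$ corresponds (after choosing a splitting) to an idempotent $f_j \in R$, and $\End_{\CFont{A}}(B_j) \cong f_j R f_j$. The indecomposability of $B_j$ together with the splitting of idempotents in $\CFont{A}$ forces $f_j R f_j$ to have no non-trivial idempotents, so $f_j$ is a primitive idempotent of the semiperfect ring $R$, whence $f_j R f_j$ is local. Thus both decompositions are into indecomposables with local endomorphism rings, and the classical Azumaya--Krull--Schmidt theorem (for objects with local endomorphism rings in additive categories where idempotents split) supplies a bijection between the two sets of summands under which corresponding summands are isomorphic.

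The only real content beyond bookkeeping is the classical Azumaya exchange argument, which I would quote rather than reproduce; the rest is essentially a dictionary between idempotents in $R$ and direct summands of $A$, made legitimate by the splitting hypothesis. The single place where one must be careful is verifying that indecomposable summands of $A$ automatically have local endomorphism rings (rather than merely no non-trivial idempotents)---and this is exactly where semiperfectness of $R$ enters, via the fact that primitive idempotents in a semiperfect ring have local corners.
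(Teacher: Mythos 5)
Your argument is correct and is the standard proof of this result. Note that the paper itself states this theorem as a well-known background fact (citing it in the context of the Krull--Schmidt literature) and does not supply a proof, so there is no in-paper argument to compare against; your proposal is the canonical route via the dictionary between direct-sum decompositions of $A$ and decompositions of $1$ into orthogonal idempotents of $\End_{\CFont{A}}(A)$, combined with the fact that primitive idempotents in a semiperfect ring have local corner rings and with the Azumaya exchange theorem for objects with local endomorphism rings. The one step you flag as needing care --- that an indecomposable summand $B_j$ automatically has a \emph{local} (not merely connected) endomorphism ring --- is indeed the place where semiperfectness is used, and your justification via primitivity of $f_j$ in $R$ is exactly right: $f_j R f_j$ is a corner of a semiperfect ring and hence semiperfect, and a semiperfect ring without non-trivial idempotents is local.
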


    Generalizations of this theorem and counterexamples of some natural variations are widely studied (e.g.\ see \cite{FHL95},\cite{BFR01},\cite{ABF09},\cite{FaEcKo10} and also
    \cite{Fa03})
    and there has been considerable interest in finding rings over which all finitely presented modules
    have a Krull-Schmidt decomposition (e.g.\ \cite[\S6]{Sw60},\cite{Bj71B},\cite{Ro86},\cite{Ro87},\cite{Vamos90}; theorem
    \ref{RING:TH:application-to-fp-modules-top-case}(iii) below generalizes all these references except the last).

    \begin{example} \label{RING:EX:semiperfect-rings}
        Semiperfect rings naturally appear upon taking completions:
        \begin{enumerate}
            \item[(1)] Let $R$ be a semilocal ring
            and let $J=\Jac(R)$. Then the $J$-adic completion of $R$, $\invlim\{R/J^n\}_{n\in\N}$, is well known to be semiperfect.
            If the natural map $R\to \invlim\{R/J^n\}_{n\in\N}$ is an isomorphism, then $R$ is called \emph{complete semilocal}.
            Such rings (especially noetherian or with Jacobson radical f.g.\ as a right ideal) appear in various areas
            (e.g.\ \cite{Ma58}, \cite{Hi60}, \cite[\S6]{Sw60}, \cite{Ro87}).
            \item[(2)] Let $R$ be a commutative noetherian domain,
            let $A$ be an $R$-algebra that is finitely generated as an $R$-module and let $P\in\Spec(R)$.
            Then the completion of $A$ at $P$ is semiperfect (and noetherian). (See \cite[\S6]{MaximalOrders};
            This assertion can also be shown using the results of this paper.)
        \end{enumerate}
    \end{example}

    Let $R$ be any ring and let $R_0\subseteq R$ be a subring.
    \begin{enumerate}
        \item[(a)] Call $R_0$ a \emph{semi-invariant} subring if there is a ring $S\supseteq R$
            and a set $\Sigma\subseteq\End(S)$
            such that $R_0=R^\Sigma:=\{r\in R\suchthat \sigma(r)=r~\forall \sigma\in\Sigma\}$
            (elements of $\Sigma$ are not required to be injective nor surjective).
            The \emph{invariant} subrings of $R$ are the subrings for which we can choose $S=R$.
        \item[(b)] Call $R_0$ a \emph{semi-centralizer} subring if there is a ring $S\supseteq R$ and a set $X\subseteq S$
            such that $R_0=\Cent_R(X):=\{r\in R\suchthat rx=xr~\forall x\in X\}$.
            If we can choose $S=R$, then $R_0$ is a \emph{centralizer} subring.
        \item[(c)] Recall that $R_0$ is \emph{rationally closed} in $R$ if $\units{R}\cap R_0=\units{R_0}$. That is,
            elements of $R_0$ that are invertible in $R$ are also invertible in $R_0$.
    \end{enumerate}
    Semi-centralizer and semi-invariant subrings are clearly rationally closed. The latter were studied (for semilocal $R$)
    in \cite{CaDi93} and invariant subrings (w.r.t.\ an arbitrary set) were considered in \cite{Bj71B}. However,
    the notion of semi-invariant subrings appears to be new.

    The purpose of this paper is to study semi-invariant subrings of semiperfect rings where our motivation
    comes from the Krull-Schmidt theorem
    and the following observations, verified in sections \ref{section:semi-inv}:
    \begin{enumerate}
        \item[(1)] For any ring $R$, a subring of $R$ is semi-invariant if and only if it is semi-centralizer. In particular,
        all centralizers of subsets of $R$ are semi-invariant subrings.

        \item[(2)] If $R\subseteq S$ are rings and $M$ is a right $S$-module, then $\End(M_S)$ is a semi-invariant subring of $\End(M_R)$.
        %(That $\End(M_S)$ is rationally closed in $\End(M_R)$ was noted in \cite{FacHer06}.)

        \item[(3)] If $M$ is a finitely presented right $R$-module, then $\End(M_R)$ is a quotient of a semi-invariant subring of $\nMat{R}{n}\times
        \nMat{R}{m}$
        for some $n,m$.
    \end{enumerate}
    While in general semi-invariant subrings of semiperfect rings need not be semiperfect (see Examples
    \ref{RING:EX:inv-subring-semiperfect-case-I}-\ref{RING:EX:inv-subring-semiperfect-case-III} below),
    we show that this is true for special families of semiperfect rings, e.g.\ for semiprimary and right perfect rings
    (Theorem \ref{RING:TH:main-res-general}; see Section \ref{section:facts} for definitions).
    In addition, if the ring in question is \emph{pro-semiprimary}, i.e.\ an inverse limit of semiprimary rings
    (e.g.\ the rings of Example \ref{RING:EX:semiperfect-rings}),
    then its \emph{topologically semi-invariant} subrings (e.g.\ centralizer subrings; see Section \ref{section:top-rings} for definition)
    are semiperfect. This actually holds under milder assumptions regarding whether
    the ring can be endowed with a ``good'' topology; see Theorems \ref{RING:TH:main-res-for-quasi-pi-reg-rings}
    and \ref{RING:TH:pro-semiprim-full-result}.

    Our results together with the previous observations and the Krull-Schmidt Theorem lead to numerous applications including:
    \begin{enumerate}
        \item[(1)] The center and any maximal commutative subring
        of a semiprimary (resp.\ right perfect, semiperfect and pro-semiprimary) ring is semiprimary
        (resp.\ right perfect, etc.).
        \item[(2)] If $R$ is a semiperfect pro-semiprimary ring, then all f.p.\ modules over $R$
        have a semiperfect endomorphism ring and hence admit a
        Krull-Schmidt decomposition. If moreover $R$ is right noetherian, then the endomorphism ring of
        a f.g.\ right $R$-modules is pro-semiprimary.
        (This generalizes Swan (\cite[\S6]{Sw60}), Bjork (\cite{Bj71B}) and Rowen (\cite{Ro86}, \cite{Ro87}) and
        also relates to works of V{\'a}mos (\cite{Vamos90}), Facchini and Herbera (\cite{FacHer06}); see Remark \ref{RING:RM:results-related-to-KSD} for more details.)
        %\rem{(Compare with \cite[Th.\ 3.3]{FacHer06}
        %which asserts that f.p.\ modules over semilocal rings have semilocal endomorphism ring. Also see \cite[Th.\ ]{Vamos90}.)}
        \item[(3)] If $S$ is a commutative semiperfect pro-semiprimary ring and $R$ is an $S$-algebra
        that is \emph{Hausdorff} (see Section \ref{section:modules}) and f.p.\ as an $S$-module, then $R$ is semiperfect.
        If moreover $S$ is noetherian, then the Hausdorff assumption is
        superfluous and $R$ is pro-semiprimary, hence the assertions of (2) apply. (The first statement
        is known to hold under mild assumptions for \emph{Henselian} rings; see \cite[Lm.\ 12]{Vamos90}.)
        \item[(4)] If $\rho$ is a representation of a ring or a monoid over a module with a semiperfect pro-semiprimary
        endomorphism ring, then $\rho$ has a Krull-Schmidt decomposition.
        \item[(5)] Let $R\subseteq S$ be rings and let $M$ be a right $S$-module. If $\End(M_R)$ is semiprimary (resp.\ right perfect), then
        so is $\End(M_S)$. In particular, $M$ has a Krull-Schmidt decomposition over $S$. (Compare with \cite[Pr.\ 2.7]{FacHer06}.)
    \end{enumerate}
    Additional applications concern bilinear forms and getting a ``Jordan Decomposition'' for endomorphisms of modules
    with semiperfect pro-semiprimary endomorphism ring. We also conjecture that (3) holds for
    non-commutative $S$ under mild assumptions (see Section
    \ref{section:further-remarks}).

    Other interesting byproducts of our work are the fact that a pro-semiprimary ring
    is an inverse limit of some of its semiprimary quotients and Theorem \ref{RING:TH:closed-submodule-thm} below. (The former assertion
    fails once replacing semiprimary with right artinian; see Example \ref{RING:EX:JacRsquared-can-be-non-open} and
    the comment before it.)

    \begin{remark}
        It is still open whether all semiperfect pro-semiprimary rings are complete semilocal.
        However, this is true for noetherian rings; see  Section \ref{section:rings-with-Hausodroff-mods}.
    \end{remark}

    Section \ref{section:facts} recites some definitions and well-known facts.
    Section \ref{section:semi-inv} presents the basics of semi-invariant subrings; we present five equivalent
    characterizations of them and show that they naturally appear in various situations.
    As all our characterizations use the existence of \emph{some} ambient ring, we ask whether there is a definition avoiding this.
    In Section \ref{section:main-thm},
    we prove that various ring properties pass to semi-invariant subrings, e.g.\
    being semiprimary and being right perfect. Section
    \ref{section:top-rings} develops the theory of T-semi-invariant subrings. The discussion
    leads to a proof that several properties, such as being pro-semiprimary and semiperfect,
    are inherited by T-semi-invariant subrings.
    Section \ref{section:examples} presents counterexamples. We show that semi-invariant subrings of
    semiperfect rings need not be semiperfect, even when the ambient ring is pro-semiprimary.
    In addition, we show that in general none of the properties discussed in sections \ref{section:main-thm} and \ref{section:top-rings}
    pass to rationally closed subrings. The latter implies that there are
    non-semi-invariant rationally closed subrings. In sections \ref{section:applications} and
    \ref{section:modules} we present applications
    of our results (most applications were briefly described above) and in Section
    \ref{section:rings-with-Hausodroff-mods} we specialize them to \emph{strictly pro-right-artinian} rings
    (e.g.\ noetherian pro-semiprimary rings), which are better behaved. Section \ref{section:further-remarks}
    describes some issues that are still open. The appendix
    is concerned with providing conditions implying that the
    topologies $\{\tau_n^M\}_{n=1}^\infty$ defined at Section \ref{section:modules}
    coincide.

\rem{
    \begin{thm}\label{RING:TH:main-thm}
        Let $R$ be a semiprimary (resp.\ right perfect) ring and let $R_0$ be a semi-invariant subring of $R$. Then
        $R_0$ is semiprimary (resp.\ right perfect) and there is $n\in\N$
        such that $\Jac(R_0)^n\subseteq \Jac(R)$.
    \end{thm}

    \begin{cor}\label{RING:CR:main-cr:I}
        Let $R\subseteq S$ be rings and let $M$ be a right $S$-module.
        If $\End(M_R)$ is semiprimary (resp.\ right perfect),
        then so is $\End(M_S)$.
        In addition, there exists $n\in \N$
        such that $\Jac(\End(M_S))^n\subseteq \Jac(\End(M_R))$.
    \end{cor}

    Theorem \ref{RING:TH:main-thm} generalizes a result of Bjork (\cite[Thm. 5.1]{Bj71B})
    who proved that an invariant subring of a semiprimary
    ring is semiprimary.

    We note that Theorem \ref{RING:TH:main-thm} and Corollary
    \ref{RING:CR:main-cr:I} hold for additional families of semiperfect rings (e.g.\ PI semilocal rings
    with nil Jacobson radical), but their semiperfect analogue fail in general (see Examples \ref{RING:EX:inv-subring-semiperfect-case-I} and \ref{RING:EX:main-cor-fails-in-general}).

    In order to overcome this, we specialize to topological rings. A topological ring
    is called \emph{linearly topologized} (abbreviated: LT) if it admits a local basis (i.e.\
    a basis of neighborhoods of $0$) consisting of two sided ideals. A subring $R_0$ of a \emph{Hausdorff}
    LT ring $R$ is called \emph{T-semi-invariant} if there exists a Hausdorff LT ring $S$ containing $R$
    \emph{as a topological ring} and set $\Sigma$ of \emph{continuous} endomorphisms of $S$
    such that $R_0=R^\Sigma$. One can similarly define \emph{T-semi-centralizer} subrings
    and it turns out that they coincide with the T-semi-invariant subrings (Proposition \ref{RING:PR:T-semi-invariant-definition-prop}).
    Using this new setting, we are able to show that T-semi-invariant subrings
    of ``good'' semiperfect Hausdorff LT rings are semiperfect. Rather than giving here the precise definition,
    we will present here several families of ``good'' LT rings, restricting our results to them.
    The definitions, as well as the results in their full strength, are brought in Section \ref{section:top-rings}.

    Let $R$ be a ring. The Jacobson topology on $R$ is the ring topology with
    local basis $\{\Jac(R)^n\where n\in\N\}$. The ring $R$ is called \emph{complete semilocal} if it is semilocal and
    it is complete and Hausdorff w.r.t.\ the Jacobson topology, i.e.\ the
    natural map $R\to\invlim \{R/\Jac(R)^n\}_{n\in\N}$ is an isomorphism. Such rings appear when studying
    objects defined over
    Dedekind domains (see \cite{MaximalOrders}, \cite{Sw60} for classical examples), in
    Matlis theory (see \cite{Ma58}, \cite{Hu07}) and in other areas (e.g.\ \cite{Ro87}).
    Noting that a complete semilocal ring is always isomorphic as a topological ring
    to an inverse limit of discrete semiprimary rings, we
    consider the following generalization:
    \begin{enumerate}
        \item[(d)] A LT ring $R$ is \emph{pro-semiprimary} if it is  isomorphic as topological ring
        to the inverse
        limit of some inverse system of discrete semiprimary rings.
    \end{enumerate}
    Clearly any inverse limit of semiprimary rings can be endowed with a topology making it
    into a pro-semiprimary ring (see Example \ref{RING:EX:top-rings}(vi)).\footnote{
        In general, this topology depends on the inverse system and it is not uniquely determined by the ring.
        However, for right noetherian pro-semiprimary rings, the topology is uniquely determined
        by the ring (!)
        and always coincides with the Jacobson topology.
        In more general cases (e.g.\ for arbitrary pro-semiprimary rings), the Jacobson topology is
        the largest topology for which $R$ is ``good''; See the appendix.
    }
    More generally, whenever $\calP$ is a ring property, we will say that $R$ is \emph{pro-$\calP$} if $R$ is
    isomorphic (as a topological ring) to the inverse limit of some inverse system of discrete rings satisfying $\calP$.
    If in addition the natural map from $R$ to each of these rings is onto\footnote{
        This is not trivial since the maps in the inverse system are not assumed to be onto.
    } we will
    call $R$ \emph{strictly pro-$\calP$}. (In this case $R$ is the inverse limit
    of some of its quotients satisfying $\calP$).
    In Section \ref{section:top-rings} we show
    that the notions of pro-semiprimary (-right-perfect) and strictly pro-semiprimary
    (-right-perfect) coincide and under mild assumptions, e.g.\
    being semilocal, such rings rings are semiperfect.
    (The author does not know whether semilocal pro-semiprimary rings are always complete semilocal.
    In the appendix we show this is indeed the case for right noetherian semilocal pro-semiprimary rings.)

    \begin{thm} \label{RING:TH:pro-semiprim-cor}
        Let $R$ be a pro-semiprimary (-right-perfect) ring and assume $R_0\subseteq R$ is a
        T-semi-invariant subring.
        Then $R_0$ is pro-semiprimary (-right-perfect) w.r.t.\ the induced topology. Moreover, if $R$ is semilocal, then $R_0$
        is semiperfect and there exists $n\in\N$
        such that $\Jac(R_0)^n\subseteq \Jac(R)$.
    \end{thm}

    In general, semi-invariant
    subrings of a pro-semiprimary ring need not be pro-semiprimary and not even semiperfect; See Example \ref{RING:EX:non-pro-semiprim-semi-inv-subring}.
    \smallskip
}

\section{Preliminaries}
\label{section:facts}

    This section recalls some definitions and known facts that will be used throughout the paper.
    Some of the less known facts include proofs for sake of completion. If no reference is specified,
    proofs can be found at \cite{Ro88}, \cite{Ba60} or \cite{La99}.
    \smallskip

    Let $R$ be a semilocal ring. The ring $R$ is called semiprimary (right perfect)
    if $\Jac(R)$ is nilpotent (right T-nilpotent\footnote{
            An ideal $I\idealof R$ is right T-nilpotent if for any sequence $x_1,x_2,x_3,\dots\in I$,
            the sequence $x_1,x_2x_1,x_3x_2x_1,\dots$ eventually vanishes.
    }). Since any nil ideal is idempotent lifting,
    right perfect rings are clearly semiperfect.

    \begin{prp}{\bf(Bass' Theorem P, partial).}
        \label{RING:PR:Bass-Thm-P}
        Let $R$ be a ring. Then
        $R$ is right perfect $\iff$ every right
        $R$-module has a projective cover $\iff$ $R$ has DCC on principal \emph{left} ideals.
    \end{prp}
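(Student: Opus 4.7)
The plan is to prove the cycle of implications (right perfect) $\Rightarrow$ (every right module has a projective cover) $\Rightarrow$ (DCC on principal left ideals) $\Rightarrow$ (right perfect). I would structure the argument around the Jacobson radical $J=\Jac(R)$, exploiting right T-nilpotency on one side and the chain condition on the other.

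For the first implication, assume $R$ is right perfect. Given a right $R$-module $M$, the quotient $M/MJ$ is a module over the semisimple ring $R/J$, so it admits a free cover $\bar{\pi}\colon \bar{F}\onto M/MJ$ by a free $R/J$-module. Lifting $\bar{F}$ to a free right $R$-module $F$ and lifting $\bar{\pi}$ to a map $\pi\colon F\to M$, one checks that $\pi(F)+MJ=M$, whence $\pi$ is surjective by right T-nilpotency of $J$ (this is the standard Nakayama-type argument: if $N:=\pi(F)$ and $N+MJ=M$, then inductively $N+MJ^n=M$, and right T-nilpotency of $J$ forces $N=M$ element by element). The same T-nilpotency then shows $\ker\pi\subseteq FJ$ is superfluous in $F$, giving a projective cover.

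For the second implication, I would argue by contrapositive. Suppose there is a strictly descending chain $Ra_1\supsetneq Ra_2a_1\supsetneq Ra_3a_2a_1\supsetneq\cdots$ with each $a_i\in R$ (such a chain certainly exists if DCC on principal left ideals fails; one verifies the $a_i$ can be chosen in $J$). Construct the right module $M=\bigoplus_{i\ge 1} e_iR \big/ N$, where $N$ is generated by the elements $e_i-e_{i+1}a_{i+1}$. The cover $F=\bigoplus_i e_iR\onto M$ should be shown to have no summand that is a projective cover, because the chain's failure to stabilize prevents $\ker$ from being superfluous in any direct summand of $F$. This is the step I expect to be the main technical obstacle — one must show that any hypothetical projective cover would yield stabilization of the chain, contradicting the assumption.

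For the third implication, assume DCC on principal left ideals. First, this DCC implies semilocality: any strictly descending chain of finite intersections of maximal right ideals would transport (via annihilators) to a strictly descending chain of principal left ideals, so $R/J$ is semisimple. Next, I would show $J$ is right T-nilpotent: given $x_1,x_2,\ldots\in J$, consider the descending chain $Rx_1\supseteq Rx_2x_1\supseteq \cdots$. By DCC it stabilizes at some $n$, so $x_n\cdots x_1=r x_{n+1}x_n\cdots x_1$ for some $r\in R$; since $x_{n+1}\in J$, the element $1-rx_{n+1}$ is a unit, hence $x_n\cdots x_1=0$. This yields right T-nilpotency and completes the proof.
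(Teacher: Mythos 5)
The paper does not prove this proposition; it is recorded as a classical fact (Bass' Theorem~P) whose proof is delegated to the references cited at the start of Section~\ref{section:facts}. Your cyclic scheme is a legitimate framework, and the T-nilpotency half of the third implication is correct, but each of the other two-and-a-half links has a genuine gap.

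In the first implication, replacing the projective cover of $M/MJ$ over $R/J$ by an arbitrary \emph{free} cover $\bar F\onto M/MJ$ breaks the argument: when $\bar F\to M/MJ$ is not an isomorphism, its kernel is a nonzero direct summand of the semisimple module $\bar F$, and after lifting to $\pi\colon F\to M$ one gets $\ker\pi\not\subseteq FJ$; in fact $\ker\pi$ then contains a nonzero direct summand of $F$ and cannot be superfluous. (Concretely, with $R=\Z/4$, $M=\Z/2$ and $\bar F=(\Z/2)^2$, the element $(1,1)\in\ker\pi$ generates a direct summand of $F=(\Z/4)^2$.) Moreover, when $R/J$ is not a division ring, the projective cover of $M/MJ$ over $R/J$ is $M/MJ$ itself, which is typically not free, so you cannot confine yourself to free $\bar F$: you must lift a \emph{projective} $R/J$-module to a projective $R$-module, and that requires first establishing that $R$ is semiperfect (idempotent lifting modulo $J$). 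This does follow from right perfectness — as the paper notes, $J$ is nil — but the step must appear. Also, the ``inductively $N+MJ^n=M$, so T-nilpotency forces $N=M$ element by element'' sketch of the superfluity lemma is not an argument, since no intersection theorem is available here; the correct proof passes to $M/N$, assumes $MJ=M\neq 0$, and extracts a sequence $a_1,a_2,\dots\in J$ with $a_n\cdots a_1\neq 0$ for all $n$, contradicting right T-nilpotency. Your second implication correctly identifies the Bass module attached to a non-stabilizing chain as the relevant device, but you leave precisely the decisive verification open, so this link is not proved. Finally, in the third implication the proposed route from DCC on principal left ideals to semilocality is not valid: left annihilators of a \emph{descending} chain of right ideals form an \emph{ascending} chain and are not principal, so the ``transport via annihilators'' does not exist as described. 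The standard argument instead uses DCC to produce a minimal left ideal of $R/J$, invokes Brauer's lemma (a non-nilpotent minimal left ideal is generated by an idempotent) to split it off, and iterates; termination is guaranteed because DCC on principal left ideals forbids infinite orthogonal families of idempotents.
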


    \begin{prp} \label{RING:PR:semiperfect-equiv-conds}
        Let $R$ be a ring. Then $R$ is semiperfect $\iff$ every right (left)
        f.g.\! $R$-module has a projective cover $\iff$ there are orthogonal
        idempotents $e_1,\dots,e_r\in R$ such that $\sum_{i=1}^re_i=1$ and $e_iRe_i$
        is local for all $i$.
    \end{prp}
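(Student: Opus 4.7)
The plan is to establish the three-way equivalence via the chain: semiperfect $\Rightarrow$ existence of orthogonal idempotents $e_1,\dots,e_r$ summing to $1$ with each $e_iRe_i$ local $\Rightarrow$ every f.g.\ right $R$-module admits a projective cover $\Rightarrow$ $R$ is semiperfect. The corresponding statements for left modules follow by the left-right symmetry of the semiperfect condition. Write $J=\Jac(R)$ throughout.

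For the first implication, use that $R/J$ is semisimple artinian to decompose $\bar 1=\bar f_1+\cdots+\bar f_r$ as an orthogonal sum of primitive idempotents in $R/J$. An iterative lifting argument produces orthogonal idempotents $e_1,\dots,e_r\in R$ with residues $\bar f_i$ and sum $1$: lift $\bar f_1$ to an idempotent $e_1\in R$ via the idempotent-lifting property of $J$, pass to the Peirce corner $(1-e_1)R(1-e_1)$ whose Jacobson radical $(1-e_1)J(1-e_1)$ is again idempotent-lifting, lift $\bar f_2$ there, and continue. For each $i$, a standard Peirce-corner calculation gives $\Jac(e_iRe_i)=e_iJe_i$, and the residue $e_iRe_i/\Jac(e_iRe_i)\cong\bar f_i(R/J)\bar f_i$ is a division ring by primitivity of $\bar f_i$ in the semisimple $R/J$; hence $e_iRe_i$ is local.

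For the second implication, the decomposition yields $R_R=\bigoplus_i e_iR$ with each $e_iR$ indecomposable (as $\End_R(e_iR)\cong e_iRe_i$ is local) having simple top $e_iR/e_iJ$, whence $R/J=\bigoplus_i e_iR/e_iJ$ is semisimple. For any f.g.\ right $R$-module $M$, the quotient $M/MJ$ is a f.g.\ semisimple $R/J$-module, isomorphic to $\bigoplus_k e_{i_k}R/e_{i_k}J$ for some finite multiset of indices; setting $P:=\bigoplus_k e_{i_k}R$, the canonical surjection $P\twoheadrightarrow M/MJ$ lifts by projectivity of $P$ to a map $\tilde\pi:P\to M$, which is surjective by Nakayama's lemma (since $M$ is f.g.) and has kernel inside the superfluous submodule $PJ$. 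Thus $\tilde\pi$ is a projective cover.

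For the last implication, any simple right $R$-module $S$, being cyclic, has its projective cover realized as a direct summand of $R$: the surjection $R\twoheadrightarrow S$ factors through the cover $P_S\twoheadrightarrow S$, forcing $R\twoheadrightarrow P_S$, which splits, so $P_S=eR$ with $e$ primitive and $eRe$ local. Applying the same reasoning to $R/J$ shows its projective cover equals $R$ itself (the kernel of any split surjection $R\twoheadrightarrow P$ covering $R/J$ lies in $J$ and is idempotent-generated, hence zero), so $J$ is superfluous in $R$. Standard arguments then extract a finite complete set of orthogonal primitive idempotents with local corners — finiteness coming from $R$ being a cyclic $R$-module and the collection of primitive projective summands of $R$ being limited by the f.g.\ structure — yielding the idempotent decomposition, from which semisimplicity of $R/J$ and idempotent lifting modulo $J$ follow immediately. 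The main obstacle is the bookkeeping in the iterative idempotent lifting of the first implication (verifying each Peirce corner inherits an idempotent-lifting Jacobson radical) and in the last implication the careful extraction of the finite orthogonal idempotent decomposition from the projective cover hypothesis; the remaining steps are routine uses of Nakayama's lemma and the Peirce decomposition.
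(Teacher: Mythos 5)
The paper does not prove this proposition --- it appears in the Preliminaries section as a standing fact, with the reader pointed to \cite{Ro88}, \cite{Ba60} and \cite{La99} --- so there is no paper proof to compare against; I assess your argument on its own. Your chain (1)$\Rightarrow$(3)$\Rightarrow$(2)$\Rightarrow$(1) is the standard textbook route, and the first step is carried out correctly. In (3)$\Rightarrow$(2) you assert that $e_iR/e_iJ$ has simple top without justification; this is true but not free. Writing $\bar e_i$ for the image of $e_i$ in $R/J$, a proper nonzero submodule $N\subsetneq\bar e_i(R/J)$ must have $N(R/J)\bar e_i=0$, since $N(R/J)\bar e_i$ is a right ideal of the division ring $\bar e_i(R/J)\bar e_i$ and, if nonzero, would put $\bar e_i$ in $N$ and force $N=\bar e_i(R/J)$. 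Then $N^2\subseteq (N\bar e_i)(R/J)=0$, giving a nonzero nilpotent right ideal in $R/J$, contradicting $\Jac(R/J)=0$. Once this is supplied, the Nakayama lifting argument in (3)$\Rightarrow$(2) is fine.

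The genuine gap is in (2)$\Rightarrow$(1). You correctly produce, for each simple module, a projective cover of the form $eR$ with $eRe$ local, but then hand-wave the passage to a finite complete set of orthogonal local idempotents. The stated reason for finiteness --- that $R$ is a cyclic $R$-module --- is not valid: cyclicity of $R_R$ places no bound on the number of pairwise orthogonal nonzero idempotents (e.g.\ $\prod_{n\geq 1}k$ is cyclic over itself yet contains infinitely many), and it is never explained how the various local idempotents are arranged to be orthogonal and to sum to $1$. The usual way to close this is to first establish semisimplicity of $\bar R:=R/J$ directly: if $M$ is a f.g.\ right $\bar R$-module with projective $R$-cover $P\onto M$, then $P/PJ\onto M$ is a projective cover over $\bar R$, and its kernel, being superfluous, lies in $\rad(P/PJ)=(P/PJ)\Jac(\bar R)=0$; hence every cyclic right $\bar R$-module is projective, every right ideal of $\bar R$ is a direct summand, and $\bar R$ is semisimple. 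Idempotent lifting then follows from the uniqueness of projective covers applied to $R\onto\bar R=\bar e\bar R\oplus(1-\bar e)\bar R$, after which (3) and (1) both drop out.
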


    For the next lemma, and also for later usage, let us set some conventions about inverse limits of rings:
    By saying that $\{R_i,f_{ij}\}$ is an \emph{inverse system of rings} indexed by $I$,
    we mean that: (1) $I$ is a \emph{directed set} (i.e.\ a partially ordered set such that for all
    $i,j\in I$ there is $k\in I$ with $i,j\leq k$), (2) $R_i$ ($i\in I$) are rings and $f_{ij}:R_j\to R_i$ ($i\leq j$)
    are ring homomorphisms and (3)
    $f_{ii}=\id_{R_i}$ and $f_{ij}f_{jk}=f_{ik}$ for all $i\leq j\leq k$ in $I$.
    When the maps $\{f_{ij}\}$ are obvious or of little interest, we will drop them from the notation,
    writing only $\{R_i\}_{i\in I}$.
    The \emph{inverse limit} of $\{R_i,f_{ij}\}$ will be denoted by $\invlim \{R_i\}_{i\in I}$.
    It
    can be understood as the set of $I$-tuples $(a_i)_{i\in I}\in\prod_{i\in I}R_i$ such that
    $f_{ij}(a_j)=a_i$ for all $i\leq j$ in $I$.

    \begin{lem} \label{RING:LM:basic-inv-lim-lem}
        Let $\{R_i,f_{ij}\}$ be an inverse system of rings and let $R=\invlim \{R_i\}$.
        Denote by $f_i$ the standard map from $R$ to $R_i$. Then:
        \begin{enumerate}
            \item[(i)] For all $n\in\N$, $\invlim \{\nMat{R_i}{n}\}_{i\in I}\cong  \nMat{\invlim \{R_i\}}{n}=\nMat{R}{n}$.
            \item[(ii)] For all $e\in\ids{R}$, $\invlim {\{e_iR_ie_i\}_{i\in I}}\cong eRe$ where $e_i=f_i(e)$.
        \end{enumerate}
    \end{lem}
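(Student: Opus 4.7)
The plan is to construct explicit natural ring isomorphisms in both parts, using the universal property of the inverse limit and the coordinate-wise description of $R=\invlim\{R_i\}$.

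For (i), I would define $\Phi:\nMat{R}{n}\to \invlim\{\nMat{R_i}{n}\}$ by $\Phi(A)=(\nMat{f_i}{n}(A))_{i\in I}$, where $\nMat{f_i}{n}$ denotes entrywise application of $f_i$. Since each $f_i$ is a ring homomorphism, so is $\nMat{f_i}{n}$, and the compatibility $\nMat{f_{ij}}{n}\circ \nMat{f_j}{n}=\nMat{f_i}{n}$ shows that the tuple lies in the inverse limit; hence $\Phi$ is a well-defined ring homomorphism. Injectivity is immediate: if $\Phi(A)=0$, then every entry of $A$ vanishes under every $f_i$, forcing each entry, hence $A$ itself, to be $0$ in $R$. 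For surjectivity, take a coherent tuple $(A^{(i)})_{i\in I}$ with $A^{(i)}=(a^{(i)}_{kl})$; coherence says $f_{ij}(a^{(j)}_{kl})=a^{(i)}_{kl}$ for all $k,l$, so each sequence $(a^{(i)}_{kl})_{i\in I}$ defines an element $a_{kl}\in R$, and the matrix $A=(a_{kl})$ satisfies $\Phi(A)=(A^{(i)})_{i\in I}$.

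For (ii), I would define $\Psi:eRe\to \invlim\{e_iR_ie_i\}$ by $\Psi(x)=(f_i(x))_{i\in I}$. Since $f_i$ is a ring homomorphism and $f_i(exe)=e_if_i(x)e_i$, the image lies in $e_iR_ie_i$; coherence of the tuple follows from $f_{ij}\circ f_j=f_i$. Note that $\Psi$ sends the unity $e$ of $eRe$ to the tuple $(e_i)_{i\in I}$, which is the unity of $\invlim\{e_iR_ie_i\}$, so $\Psi$ is a unital ring homomorphism. Injectivity follows since $f_i(x)=0$ for all $i$ forces $x=0$ in $R$. For surjectivity, a coherent tuple $(x_i)_{i\in I}$ with $x_i\in e_iR_ie_i$ assembles to a unique element $x\in R$ with $f_i(x)=x_i$; applying $f_i$ to $exe$ yields $e_ix_ie_i=x_i=f_i(x)$ for every $i$, so $exe=x$ by uniqueness in the inverse limit, placing $x$ in $eRe$ and giving $\Psi(x)=(x_i)$.

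There is no real obstacle here; the statement is essentially the remark that the functors $R\mapsto \nMat{R}{n}$ and $R\mapsto eRe$ (for a fixed compatible system of idempotents) commute with inverse limits in the category of rings. The only point that merits slight care is verifying that $\Psi$ in (ii) preserves the unit, which uses that $e_i=f_i(e)$ is by hypothesis the image of the chosen idempotent $e\in R$ under the structure map, so the corner rings $e_iR_ie_i$ form a genuine inverse subsystem with the induced transition maps.
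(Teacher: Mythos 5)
Your proof is correct and is precisely the routine verification that the paper's proof (which reads only ``This is straightforward.'') has in mind: both parts follow by writing the isomorphism coordinate-wise via the structure maps $f_i$ and checking well-definedness, unitality, injectivity, and surjectivity directly from the description of $\invlim$ as compatible tuples. The one point you rightly flag --- that $\Psi$ in (ii) is unital because $e\mapsto (e_i)_{i\in I}$ --- is the only detail with any content, and you handle it correctly.
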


    \begin{proof}
        This is straightforward.
    \end{proof}

    \begin{prp} \label{RING:PR:morita-preserved-properties}
        (i) Being semiprimary (resp.\ right perfect, semiperfect, semilocal, pro-semiprimary) is
        preserved under Morita equivalence.

        (ii) Let $R$ be a ring and $e\in \End(R)$. Then $R$ is semiprimary (resp.\ right perfect, semiperfect, semilocal)
        if and only if $eRe$ and $(1-e)R(1-e)$ are.
    \end{prp}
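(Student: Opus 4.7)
The plan is to reduce (i) to two elementary Morita moves---passing from $R$ to $\nMat{R}{n}$, and from $R$ to $eRe$ for a \emph{full} idempotent $e\in R$ (one with $ReR=R$)---and to handle (ii) by a direct Peirce-decomposition analysis with respect to the orthogonal idempotents $e$ and $f:=1-e$.

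For (i), both moves leave the Jacobson radical well-controlled: $\Jac(\nMat{R}{n})=\nMat{\Jac(R)}{n}$ and $\Jac(eRe)=e\Jac(R)e$, with quotients $\nMat{R}{n}/\nMat{\Jac(R)}{n}\cong\nMat{R/\Jac(R)}{n}$ and $eRe/e\Jac(R)e\cong e(R/\Jac(R))e$. Nilpotence and T-nilpotence transfer trivially in both directions (for T-nilpotence in $\nMat{R}{n}$, unroll entrywise into $n^2$ sequences in $\Jac(R)$); semisimplicity is inherited by matrix rings and by full corners of semisimple rings, so the semilocal case follows. The semiperfect case is best treated via the orthogonal-idempotent characterization of Proposition \ref{RING:PR:semiperfect-equiv-conds}, which is manifestly preserved by both moves. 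Finally, the pro-semiprimary case is immediate from Lemma \ref{RING:LM:basic-inv-lim-lem}: writing $R\cong\invlim\{R_i\}$ with each $R_i$ discrete semiprimary gives $\nMat{R}{n}\cong\invlim\{\nMat{R_i}{n}\}$ and $eRe\cong\invlim\{e_iR_ie_i\}$, each factor being semiprimary by the already-proved case.

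For (ii), I work with the Peirce decomposition $R=eRe\oplus eRf\oplus fRe\oplus fRf$ together with the identities $\Jac(eRe)=e\Jac(R)e$ and $\Jac(fRf)=f\Jac(R)f$ (and the analogous block decomposition of $\Jac(R)$ itself). One direction is easy in each case by restricting to a corner. For the reverse direction: in the semilocal case, every simple right $R$-module $S$ satisfies $Se\neq0$ or $Sf\neq0$, and $S\mapsto Se$, $S\mapsto Sf$ give bijections between simple $R$-modules not killed by $e$ (resp.\ $f$) and simple right $eRe$-modules (resp.\ $fRf$-modules), so finiteness of iso classes on both corners gives finiteness on $R$. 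For the semiprimary (resp.\ right perfect) case, expand a product $x_1x_2\cdots x_N$ of elements of $\Jac(R)$ along the Peirce decomposition into a sum of ``consistent paths'' through $\{e,f\}$: each maximal run of $ee$-steps contributes a factor in $\Jac(eRe)$ and each run of $ff$-steps a factor in $\Jac(fRf)$, and once $N$ is sufficiently large one of these diagonal runs is forced to be long enough to vanish. For semiperfect, combine orthogonal decompositions of $e$ in $eRe$ and of $f$ in $fRf$ with local corners to obtain such a decomposition of $1$ in $R$, again via Proposition \ref{RING:PR:semiperfect-equiv-conds}.

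The chief technical point is the Peirce-block bookkeeping in (ii) for nilpotence and T-nilpotence: since $eRf$ and $fRe$ need not lie in $\Jac(R)$, one must track how off-diagonal blocks interleave with diagonal ones in a long product and argue that by the pigeonhole principle the ``diagonal content'' on at least one side becomes long enough to annihilate. This is elementary but requires a careful combinatorial case analysis. In the right-perfect case, the same argument is adapted by extracting, from any sequence in $\Jac(R)$, an infinite subsequence whose Peirce analysis lies on one of the two diagonal blocks---whose T-nilpotence then terminates the sequence.
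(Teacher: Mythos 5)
The paper's own proof is essentially a citation: it calls the first four properties ``well known'' (in both (i) and (ii)) and deduces only the pro-semiprimary part of (i) from Lemma \ref{RING:LM:basic-inv-lim-lem}, which is exactly how you handle that case; the rest of your write-up is spelling out the ``well known'' parts. Part (i) and the Peirce-block arguments for the semiprimary and semiperfect cases of (ii) are sound. The right-perfect case, however, is more delicate than your phrasing suggests: from $x_n\cdots x_1\neq 0$ for all $n$ one must build the finitely-branching tree of Peirce paths $(a_0,\dots,a_n)$ with nonvanishing partial products, invoke K\"onig's Lemma to extract an infinite path, and only then group the factors between consecutive returns of that path to $e$ (or to $f$) to get a bad sequence in $\Jac(eRe)$ (or $\Jac(fRf)$). ``Extracting an infinite subsequence of the $x_i$'' is not the right operation and will not by itself land you in a diagonal block.

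The genuine gap is the semilocal case of (ii). The bijection $W\mapsto We$ between simple $eRe$-modules and simple $R$-modules not killed by $e$ is correct, but the inference ``$R$ has finitely many isomorphism classes of simple modules, hence $R$ is semilocal'' is false. For a counterexample, let $K$ be a field, $V$ a $K$-vector space of countably infinite dimension, $F\subseteq\End_K(V)$ the ideal of finite-rank operators, and $S=K\cdot 1+F$. Then $S$ is left primitive, so $\Jac(S)=0$, yet $S$ is not artinian and hence not semilocal; nevertheless $S$ has exactly two isomorphism classes of simple right modules (namely $S/F\cong K$, and the unique simple $W$ with $WF\neq 0$). So finiteness of simples is genuinely weaker than semilocality, and your argument does not close. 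The correct route is the one you already use for semiperfect, applied to $\bar R=R/\Jac(R)$: since $\Jac(eRe)=e\Jac(R)e$, one has $\bar e\bar R\bar e\cong eRe/\Jac(eRe)$ and $\bar f\bar R\bar f\cong fRf/\Jac(fRf)$, both semisimple artinian, so each has a complete set of orthogonal idempotents with division-ring corners; together these give such a set for $\bar R$, making $\bar R$ semiperfect by Proposition \ref{RING:PR:semiperfect-equiv-conds}, and a semiperfect ring with zero Jacobson radical is semisimple artinian. In short, treat the semilocal case as the semiperfect case of $R/\Jac(R)$ rather than by counting simples.
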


    \begin{proof}
        All statements regarding semiprimary, right perfect, semiperfect
        and semilocal rings are well known. The other statements follow from Lemma  \ref{RING:LM:basic-inv-lim-lem}.
    \end{proof}

    Part (ii) of Proposition \ref{RING:PR:morita-preserved-properties} does not hold for pro-semiprimary
    rings. For instance, take $R=\left\{\smallSMatII{a}{v}{0}{b}\where a,b\in\Z_p,~v\in\bigoplus_{i=1}^\infty\Z_p\right\}$
    (where $\Z_p$ are the $p$-adic integers) and let $e$ be the matrix unit $e_{11}$.

    \begin{thm}{\bf(Levitski).} \label{RING:TH:nil-subring}
        Let $R$ be a right noetherian ring. Then any nil subring of $R$ is nilpotent.
    \end{thm}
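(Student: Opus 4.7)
The plan is to argue by contradiction, using right-noetherianness of $R$ in the form of ACC on right annihilators (these being right ideals of $R$).

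Suppose $N$ is nil but not nilpotent, so $N^i\neq 0$ for every $i\ge 1$. Apply ACC to stabilize the ascending chain $\annr_R(N)\subseteq\annr_R(N^2)\subseteq\cdots$ at some index $k$, giving $\annr_R(N^k)=\annr_R(N^{k+1})$. Since $N^k\neq 0$, a second application of ACC yields $0\neq a_0\in N^k$ with $\annr_R(a_0)$ maximal in $\{\annr_R(a):a\in N^k,\ a\neq 0\}$. The core step is to show $a_0 N=0$: for any $b\in N$, $a_0 b\in N^{k+1}\subseteq N^k$ and $\annr_R(a_0 b)\supseteq\annr_R(a_0)$, so maximality forces $a_0 b=0$ or $\annr_R(a_0 b)=\annr_R(a_0)$; in the latter case, nilpotence $b^n=0$ combined with $a_0 b^n=0$ gives $b^{n-1}\in\annr_R(a_0 b)=\annr_R(a_0)$, hence $a_0 b^{n-1}=0$, and iterating the descent in the exponent forces $a_0 b=0$, contradicting $a_0 b\neq 0$. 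Thus $a_0 N=0$.

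To finish one must upgrade this from the single witness $a_0$ to the ideal-wise conclusion $N^k\cdot N=0$. I plan to iterate via the sets $B_i := \{a\in N^k : aN^i=0\}= N^k\cap\annl_R(N^i)$. Rerunning the maximality argument restricted to $\{\annr_R(a):a\in N^k\setminus B_i\}$ (when nonempty) produces $a_0\in N^k\setminus B_i$ with $a_0 N\subseteq B_i$, whence $a_0 N^{i+1}\subseteq B_iN^i=0$ places $a_0$ in $B_{i+1}\setminus B_i$. So the chain $B_1\subsetneq B_2\subsetneq\cdots$ is strictly ascending until it reaches $N^k$; once $B_m=N^k$ we obtain $N^{k+m}=N^k N^m=0$, the required contradiction.

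The main obstacle is justifying that the chain $\{B_i\}$ must terminate: these are left-annihilator-type slices of $N^k$, and right-noetherianness of $R$ does not automatically supply an ACC on the associated ascending chain $\annl_R(N^i)$ of left ideals. I plan to close this either by (a) tracking the induced ascending chain of right ideals $B_iR\subseteq R$, which does stabilize by ACC, and extracting termination of $\{B_i\}$ itself from this stabilization; or (b) by the structural alternative of reducing via the finitely many minimal primes $P_1,\dots,P_s$ of the right-noetherian ring $R$ (each image $N\subseteq R/P_i$ is nil in a prime right-noetherian ring, and Goldie's theorem embeds $R/P_i$ into a simple artinian $M_n(D)$ in which nil subrings are nilpotent by the classical Engel/Wedderburn fact for nil algebras of matrices), combined with nilpotence of the lower nilradical $\mathrm{Nil}_*(R)$ in right-noetherian rings to collect the uniform bound.
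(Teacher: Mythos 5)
The paper does not prove this statement; it is stated as the classical theorem of Levitzki and attributed implicitly to the standard references \cite{Ro88}, \cite{Ba60}, \cite{La99} cited at the start of Section~2. So there is no "paper's proof" to compare against, and I assess your argument on its own.

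Your first two paragraphs are a correct Lanski-style annihilator argument: ACC on right annihilators gives the stabilization index $k$ and an element $0\neq a_0\in N^k$ with $\annr_R(a_0)$ maximal, the exponent-descent in $b$ yields $a_0N=0$, and the iterated version (maximality over $N^k\setminus B_i$) does produce $a_0\in B_{i+1}\setminus B_i$ whenever $N^{k+i}\neq 0$. The gap is exactly where you flag it: termination of $B_1\subsetneq B_2\subsetneq\cdots$ does not follow from right-Noetherianity. Writing $B_i=N^k\cap\annl_R(N^i)$, the relevant ascending chain sits inside \emph{left} annihilators; right-Noetherian gives ACC on right annihilators, hence DCC (not ACC) on left annihilators, and neither controls the abelian groups $B_i$. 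Your proposed patch~(a) does not close this: ACC gives $B_mR=B_{m+1}R$ for large $m$, but $B_mR=B_{m+1}R$ in no way forces $B_m=B_{m+1}$ — indeed the newly produced $a_0\in B_{m+1}\setminus B_m$ satisfies $a_0N\subseteq B_m$ and may well already lie in $B_mR$. There is no extraction of stabilization of $\{B_i\}$ from that of $\{B_iR\}$, so route~(a) remains genuinely incomplete.

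Route~(b) is the standard proof and is sound: the lower nilradical $\Nil_*(R)$ of a right Noetherian ring is nilpotent; each $R/P_i$ for the finitely many minimal primes $P_i$ is prime right Goldie, so embeds in some simple Artinian $M_{n_i}(D_i)$; a nil subring of $M_{n_i}(D_i)$ is nilpotent of index $\leq n_i$; hence $\bar N^m=0$ in $R/\Nil_*(R)\hookrightarrow\prod_i M_{n_i}(D_i)$ with $m=\max n_i$, so $N^m\subseteq\Nil_*(R)$ and $N^{mt}=0$ with $\Nil_*(R)^t=0$. Two cautions. First, (b) is not a repair of (a) but an independent argument; if you take this route the whole annihilator/maximality development is unused and should be dropped. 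Second, the matrix step should not be cited as "Engel/Wedderburn": the classical Wedderburn nil-algebra theorem is about nil $K$-subalgebras of $M_n(K)$ over a field $K$, whereas here you need nil sub\emph{rings} of $M_n(D)$ over a division ring; the result you actually want is Levitzki's triangularization theorem for nil multiplicative semigroups of matrices over a division ring, which does give a common annihilated vector and hence nilpotence of index $\leq n$.
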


    Let $R$ be a ring. An element $a\in R$ is called \emph{right $\pi$-regular} (in $R$) if
    the right ideal chain $aR\supseteq a^2R\supseteq a^3R\supseteq\ldots$ stabilize.\footnote{
        This notion of $\pi$-regularity is sometimes called strong $\pi$-regularity.
    } If $a$ is both left and right $\pi$-regular we will say it is $\pi$-regular.
    A ring that all of its elements are right $\pi$-regular is called \emph{$\pi$-regular}.
    It was shown by Dischinger in \cite{Di76} that the latter property is actually left-right symmetric.
\rem{
    Our motivation in recalling these (somewhat uncommon) notions is the fact that $\pi$-regularity of elements is preserved upon
    moving to a semi-invariant subring. This observation, verified in
    Corollary \ref{RING:CR:pi-reg-transfers-to-semi-inv-subring} below, plays a crucial part in the main results
    of this paper.
}

    Since $\pi$-regularity is not preserved under Morita equivalence (see \cite{Ro89}), it is convenient to
    introduce the following notion: A ring $R$ is called $\pi_\infty$-regular\footnote{
        This property is sometimes called completely $\pi$-regular.
    } if $\nMat{R}{n}$ is $\pi$-regular for all $n$.

    \begin{prp} \label{RING:PR:pi-reg-hecke-algebra}
        (i) Let $R$ be a $\pi$-regular ($\pi_\infty$-regular) ring and $e\in\ids{R}$. Then $eRe$ is $\pi$-regular
        ($\pi_\infty$-regular).

        (ii) $\pi_\infty$-regularity is preserved under Morita equivalence.
    \end{prp}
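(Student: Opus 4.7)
The plan is to prove (i) by a direct manipulation with idempotents and then deduce (ii) via the standard characterization of Morita equivalence through full corner rings of matrix rings.

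For (i), take $a\in eRe$, so $ea=ae=a$. Right $\pi$-regularity of $a$ in $R$ yields $n\in\N$ and $r\in R$ with $a^n=a^{n+1}r$. Multiplying on the right by $e$ and using $a^{n+1}e=a^{n+1}$ gives
\[
a^n \;=\; a^n e \;=\; a^{n+1}re \;=\; (a^{n+1}e)(re) \;=\; a^{n+1}(ere),
\]
with $ere\in eRe$, proving right $\pi$-regularity of $a$ in $eRe$; the left case is symmetric. For the $\pi_\infty$-regular statement, I would apply the $\pi$-regular case to the idempotent $E=\mathrm{diag}(e,\dots,e)\in\nMat{R}{m}$, using the standard identification $\nMat{(eRe)}{m}\cong E\nMat{R}{m}E$ together with the hypothesis that $\nMat{R}{m}$ is $\pi$-regular.

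For (ii), recall that Morita equivalence of $R$ and $S$ amounts to the existence of $n\in\N$ and a full idempotent $f\in\nMat{R}{n}$ (i.e.\ $\nMat{R}{n}f\nMat{R}{n}=\nMat{R}{n}$) with $S\cong f\nMat{R}{n}f$. For each $m$, the natural ring isomorphism $\nMat{\nMat{R}{n}}{m}\cong\nMat{R}{nm}$ identifies $\nMat{S}{m}\cong\nMat{(f\nMat{R}{n}f)}{m}$ with $F\nMat{R}{nm}F$, where $F=\mathrm{diag}(f,\dots,f)$. Since $R$ is $\pi_\infty$-regular, $\nMat{R}{nm}$ is $\pi$-regular, and part (i) applied to $F$ gives that $\nMat{S}{m}$ is $\pi$-regular, as required.

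The argument presents no genuine obstacle; the only care needed lies in bookkeeping idempotents to pass between matrix rings over corner rings and corner rings of matrix rings. Conceptually, this double-sided reduction is precisely what forces one to adjoin all matrix rings, which explains why $\pi_\infty$-regularity rather than bare $\pi$-regularity is the notion preserved under Morita equivalence.
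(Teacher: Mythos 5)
Your proof is correct and follows essentially the same route as the paper: multiply $a^n=a^{n+1}r$ on the right by $e$ to land in $eRe$, identify $\nMat{(eRe)}{m}$ with $E\nMat{R}{m}E$ for $E=\mathrm{diag}(e,\dots,e)$, and deduce (ii) from the fact that any ring Morita equivalent to $R$ is a corner of a matrix ring over $R$. Your write-up of (ii) is a bit more explicit than the paper's terse one-liner, but the underlying argument is identical.
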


    \begin{proof}
        (i) Assume $R$ is $\pi$-regular, let $e\in R$ and let $a=eae\in eRe$. By definition, there
        is $b\in R$ and $n\in \N$ such that $a^n=a^{n+1}b$. Multiplying by $e$ on the right we
        get $a^n=a^{n+1}ebe$, hence $a^n(eRe)=a^{n+1}(eRe)$.

        Assume $R$ is $\pi_\infty$-regular and let $e\in R$. Let $I$ denote
        identity matrix in $\nMat{R}{n}$. Then $(eI)\nMat{R}{n}(eI)=\nMat{eRe}{n}$.
        By the previous argument, the left-hand side is $\pi$-regular, hence we are through.

        (ii) We only need to check that $\nMat{R}{n}$ is $\pi_\infty$-regular for all $n\in \N$, which is obvious
        from the definition, and that $eRe$ is $\pi_\infty$-regular, which follows from (i).
    \end{proof}

    \begin{prp} \label{RING:PR:pi-reg-suff-cond}
        Let $R$ be a ring and let $N$ denote its prime radical (i.e.\
        the intersection of all prime ideals). Then $R$ is $\pi$-regular ($\pi_\infty$-regular)
        if and only if $R/N$ is.
    \end{prp}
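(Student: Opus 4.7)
The forward implication is routine: if $R$ is $\pi$-regular and $a\in R$, the identity $a^n=a^{n+1}b$ passes to $R/N$, so $R/N$ is $\pi$-regular. For the $\pi_\infty$-regular case, one applies this to $\nMat{R}{m}$ for every $m\in\N$, using the standard identification $\nMat{R}{m}/\nMat{N}{m}\cong\nMat{R/N}{m}$ together with the fact that the prime radical of $\nMat{R}{m}$ equals $\nMat{N}{m}$.

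All the substance is in the converse, and my plan is to invoke Azumaya's refinement of $\pi$-regularity: in a strongly $\pi$-regular ring, every element $\overline{a}$ satisfies $\overline{a}^n=\overline{a}^{n+1}f(\overline{a})$ for some $n\in\N$ and some polynomial $f\in\Z[x]$. Applied in $R/N$ and lifted to $R$, this yields $c:=a^n-a^{n+1}f(a)\in N$, which is nilpotent since the prime radical is nil; say $c^k=0$. The decisive point is that $a$ commutes with the polynomial $f(a)$, so I can factor $c=a^n g(a)$ with $g(a):=1-af(a)$ commuting with $a$, giving $c^k=a^{nk}g(a)^k=0$. Expanding binomially, $g(a)^k=1+a\,h(a)$ for some $h\in\Z[x]$, whence $a^{nk}=-a^{nk+1}h(a)\in a^{nk+1}R\cap R\,a^{nk+1}$; thus $a$ is both left and right $\pi$-regular. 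For $\pi_\infty$-regularity, I repeat the argument inside each $\nMat{R}{m}$.

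The step I expect to carry the weight is securing the polynomial form of the quasi-inverse. If one only lifts the bare relation $\overline{a}^n=\overline{a}^{n+1}\overline{b}$, one obtains $a^n=a^{n+1}b+c$ with $c\in N$; but $a$ and $b$ need not commute in $R$ (their commutator lies in $N$, which is a priori only nil, not zero), and without that commutativity the clean identity $c^k=a^{nk}(1-ab)^k$ breaks down. A self-contained alternative would be to construct the Fitting idempotent of $\overline{a}$ inside $\Cent_{R/N}(\overline{a})$ and lift it to an idempotent of $R$ commuting with $a$ — conceptually cleaner but requiring a separate lifting lemma for commuting idempotents modulo a nil ideal.
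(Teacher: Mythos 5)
Your forward direction and the reduction of $\pi_\infty$-regularity to $\pi$-regularity via $\nMat{R}{m}/\nMat{N}{m}\cong\nMat{R/N}{m}$ (using that the prime radical of $\nMat{R}{m}$ is $\nMat{N}{m}$) are both fine. The converse, however, rests on a claim that is false. The ``Azumaya refinement'' you invoke — that in a strongly $\pi$-regular ring every element $\overline{a}$ admits a quasi-inverse of the form $f(\overline{a})$ with $f\in\Z[x]$ — already fails for $R/N=\Q$ and $\overline{a}=2$: a relation $2^n=2^{n+1}f(2)$ would force $2f(2)=1$ in $\Z$, which is impossible. What Azumaya actually proved is that the quasi-inverse can be chosen to \emph{commute} with $\overline{a}$ (the Drazin inverse, which in fact double-commutes), but it need not lie in $\Z[\overline{a}]$. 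As you yourself observe, commutativity holding only modulo $N$ is not enough to sustain the identity $c^k=a^{nk}g(a)^k$; the polynomial form was precisely the device meant to upgrade that commutativity to genuine commutativity in $R$, and it is not available.

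Your proposed fallback — lift the Fitting idempotent of $\overline{a}$ to an idempotent of $R$ commuting with $a$ — also does not come for free. Modulo a nil ideal, an element $g$ with $g^2-g$ nilpotent does lift to an idempotent $e\in\Z[g]$ (hence $e$ commutes with whatever $g$ commutes with), but the associated idempotent $\overline{e}$ of $\overline{a}$ in $R/N$ is not in general a $\Z$-polynomial in $\overline{a}$: take $R/N=\Q\times\Q$ and $\overline{a}=(2,0)$, so $\overline{e}=(1,0)$, and $p\bigl((2,0)\bigr)=(1,0)$ with $p\in\Z[x]$ forces $p(0)=0$ and then $2q(2)=1$ for $p=xq$, impossible. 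Thus $\overline{e}$ need not lie in the image of $\Z[a]$, and no commuting lift is produced by the standard lemma. The paper offers no argument to compare against — it simply defers to Rowen \S 2.7 — but as written your converse has a genuine gap at the lifting step, and this is not a cosmetic omission.
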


    \begin{proof}
        See \cite[\S2.7]{Ro88}. (The argument is easily generalized to $\pi_\infty$-regular rings.)
    \end{proof}

    \begin{remark} \label{RING:RM:examples-of-pi-regular-rings}
        Any PI semilocal ring with nil Jacobson radical
        is $\pi_\infty$-regular (see \cite[Apx.]{Ro86}). However, there are semilocal rings with nil Jacobson
        radical that are not $\pi$-regular, see \cite{Ro89}.
    \end{remark}

    \begin{remark} \label{RING:RM:hirarchy-of-properties}
        We have the following implications:
        \[\textrm{right artinian}\derives\textrm{semiprimary}
        \derives\textrm{left/right perfect}\overset{\textrm{(\ref{RING:PR:Bass-Thm-P})}}{\derives}\textrm{$\pi_\infty$-regular}\derives\textrm{$\pi$-regular}\]
        However, all these notions coincide for right noetherian rings. Indeed, assume $R$ is $\pi$-regular and right noetherian
        and let $J=\Jac(R)$. Then
        $J$ is nil (see Lemma \ref{RING:LM:main-lemma-II}(i)), hence Theorem \ref{RING:TH:nil-subring} implies
        $J^n=0$ for some $n\in \N$. By Lemma \ref{RING:LM:main-lemma-II}(ii) below, $R$ is semiperfect and in particular
        $R/J$ is semisimple. As $R$ is right noetherian, the right $R/J$-modules $\{J^{i-1}/J^i\}_{i=1}^n$ are f.g.,
        hence their length as right $R$-modules is finite. It follows that $R_R$ has a finite length, so $R$ is right artinian.
    \end{remark}

    Throughout, we will  use implicitly the next lemma. Notice that it implies
    that being semiprimary (resp.\ right perfect, semiperfect, semilocal) passes to quotients.

    \begin{lem}
        Let $R$ be a semilocal ring. Then any surjective ring homomorphism $\vphi:R\to S$ satisfies $\vphi(\Jac(R))=\Jac(S)$.
    \end{lem}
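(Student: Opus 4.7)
The plan is to prove the two inclusions $\varphi(\Jac(R)) \subseteq \Jac(S)$ and $\Jac(S) \subseteq \varphi(\Jac(R))$ separately, using the semilocal hypothesis only for the second one.

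First I would show $\varphi(\Jac(R)) \subseteq \Jac(S)$, which does not actually need $R$ to be semilocal. Since $\varphi$ is surjective, $I := \varphi(\Jac(R))$ is a two-sided ideal of $S$. Recalling that $\Jac(R)$ coincides with the largest quasi-regular ideal of $R$, every element $r \in \Jac(R)$ satisfies: $1-ar$ is a unit of $R$ for every $a \in R$. Applying $\varphi$, we get that $1-\varphi(a)\varphi(r)$ is a unit of $S$ for every $a \in R$; by surjectivity, this means $1 - sx$ is a unit of $S$ for every $s \in S$ and $x \in I$. Hence $I$ is a quasi-regular ideal of $S$, so $I \subseteq \Jac(S)$.

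Next I would establish the reverse inclusion $\Jac(S) \subseteq \varphi(\Jac(R))$. The surjection $\varphi$ descends to a surjective ring homomorphism
\[
\bar{\varphi} : R/\Jac(R) \longrightarrow S/\varphi(\Jac(R)).
\]
Since $R$ is semilocal, the left-hand side is semisimple (artinian). Every quotient of a semisimple ring is semisimple, so $S/\varphi(\Jac(R))$ is semisimple and therefore has trivial Jacobson radical. Pulling this back through the projection $S \twoheadrightarrow S/\varphi(\Jac(R))$ and using that the preimage of the Jacobson radical under a surjection contains $\Jac(S)$, we conclude $\Jac(S) \subseteq \varphi(\Jac(R))$.

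There is no real obstacle here; the only subtle point is the asymmetric role of the hypotheses. The inclusion $\varphi(\Jac(R)) \subseteq \Jac(S)$ holds for any surjective ring homomorphism between arbitrary rings (via quasi-regularity), whereas the reverse inclusion genuinely uses semilocality to pass to a semisimple quotient; without it, one can easily exhibit surjections $\varphi$ with $\Jac(S)$ strictly larger than $\varphi(\Jac(R))$ (e.g.\ $\Z \twoheadrightarrow \Z/p\Z$).
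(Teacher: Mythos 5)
Your proof is correct and follows essentially the same route as the paper: the inclusion $\vphi(\Jac(R))\subseteq\Jac(S)$ via the quasi-regularity/units characterization of the Jacobson radical (the paper phrases it as $1+\vphi(\Jac(R))\subseteq\vphi(\units{R})\subseteq\units{S}$), and the reverse inclusion by observing that $S/\vphi(\Jac(R))$ is a quotient of the semisimple ring $R/\Jac(R)$, hence semisimple, hence has zero radical. The remark that semilocality is only used for the second inclusion is accurate and a nice observation, though not something the paper records.
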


    \begin{proof}
        $\vphi(\Jac(R))$ is an ideal of $\vphi(R)=S$ and $1+\vphi(\Jac(R))=\vphi(1+\Jac(R))\subseteq\vphi(\units{R})\subseteq \units{S}$,
        hence $\vphi(\Jac(R))\subseteq \Jac(S)$. On the other hand, $S/\vphi(\Jac(R))$ is a quotient of $R/\Jac(R)$
        which is semisimple. Therefore, $S/\vphi(\Jac(R))$ is semisimple, implying $\vphi(\Jac(R))\supseteq \Jac(S)$.
    \end{proof}

\section{Semi-Invariant Subrings}
\label{section:semi-inv}

    This section presents the basic properties of semi-invariant subrings.
    We begin by showing that for any ring the semi-invariant subrings are precisely the semi-centralizer subrings.

    \begin{prp} \label{RING:PR:semi-invariant-definition-prop}
        Let $R_0 \subseteq R$ be rings. The following are equivalent:
        \begin{enumerate}
            \item[(a)] There is a ring $S\supseteq R$ and a set $\Sigma\subseteq \End(S)$ such that $R_0=R^\Sigma$.
            \item[(b)] There is a ring $S\supseteq R$ and a subset $X\subseteq S$ such that $R_0=\Cent_R(X)$.
            \item[(c)] There is a ring $S\supseteq R$ and $\sigma\in \Aut(S)$
            such that  $\sigma^2=\id$ and $R_0=R^{\{\sigma\}}$.
            \item[(d)] There is a ring $S\supseteq R$ and an \emph{inner} automorphism $\sigma\in \Aut(S)$ such that $\sigma^2=\id$
            and $R_0=R^{\{\sigma\}}$.
            \item[(e)] There are rings $\{S_i\}_{i\in I}$ and ring homomorphisms $\psi^{(1)}_i,\psi^{(2)}_i:R \to S_i$
            such that $R_0=\{r\in R\,:\, \psi^{(1)}_i(r)=\psi^{(2)}_i(r),~\forall i\in I\}$.
        \end{enumerate}
    \end{prp}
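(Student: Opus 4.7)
The plan is to close the cycle $(d)\Rightarrow(c)\Rightarrow(a)\Rightarrow(e)\Rightarrow(b)\Rightarrow(d)$. The implications $(d)\Rightarrow(c)$ and $(c)\Rightarrow(a)$ are immediate (in the latter case take $\Sigma=\{\sigma\}$). For $(a)\Rightarrow(e)$, given $\Sigma\subseteq\End(S)$ with $R_0=R^\Sigma$, index $I=\Sigma$, set every $S_i=S$, and take $\psi^{(1)}_\sigma$ to be the inclusion $R\hookrightarrow S$ and $\psi^{(2)}_\sigma=\sigma\circ(\text{inclusion})$; then the condition $\psi_\sigma^{(1)}(r)=\psi_\sigma^{(2)}(r)$ for all $\sigma$ is exactly $\sigma(r)=r$ for all $\sigma\in\Sigma$.

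The first nontrivial step is $(e)\Rightarrow(b)$, handled by a trivial-extension construction. Given the family $\{\psi^{(1)}_i,\psi^{(2)}_i:R\to S_i\}_{i\in I}$, give each $S_i$ the $(R,R)$-bimodule structure $r\cdot s\cdot r':=\psi^{(1)}_i(r)\,s\,\psi^{(2)}_i(r')$, form the bimodule $M=\bigoplus_{i\in I}S_i$ (with componentwise action), and let $S$ be the trivial extension $R\ltimes M$, i.e.\ $R\oplus M$ with multiplication $(r,m)(r',m')=(rr',\,r\cdot m'+m\cdot r')$. Then $R$ embeds in $S$ via $r\mapsto(r,0)$. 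Letting $e_i\in M$ denote the element with $1_{S_i}$ in the $i$-th slot and zero elsewhere, one checks $(r,0)(0,e_i)=(0,\psi^{(1)}_i(r))$ and $(0,e_i)(r,0)=(0,\psi^{(2)}_i(r))$; so $r$ commutes with $X:=\{(0,e_i):i\in I\}$ if and only if $\psi^{(1)}_i(r)=\psi^{(2)}_i(r)$ for all $i$, giving $R_0=\Cent_R(X)$.

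The main obstacle is $(b)\Rightarrow(d)$: to realize a centralizer condition as the fixed ring of an \emph{inner involution}. For a single $x\in X$, the element $u_x=\smallSMatII{1}{x}{0}{-1}\in\nMat{S}{2}$ satisfies $u_x^2=I$, and a direct computation shows that for the diagonal copy $r\mapsto\smallSMatII{r}{0}{0}{r}$ of $R$, conjugation by $u_x$ fixes $r$ iff $rx=xr$. To handle the whole set $X$ simultaneously, take the product ring $T=\prod_{x\in X}\nMat{S}{2}$, embed $R$ diagonally, and let $\sigma$ be conjugation by $u:=(u_x)_{x\in X}\in T$; then $\sigma$ is an inner involution, $T$ contains $R$, and $R^{\{\sigma\}}=\Cent_R(X)=R_0$, establishing $(d)$.

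The subtle point to verify carefully is that every construction really preserves $R$ as an (honest, unital) subring of the larger ring and that the involution/endomorphisms restrict correctly; once this is checked the cycle closes and all five conditions are equivalent.
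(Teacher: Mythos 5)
Your proof is correct, and it follows a genuinely different route than the paper's. The paper closes the cycle $(a)\Rightarrow(e)\Rightarrow(c)\Rightarrow(d)\Rightarrow(b)\Rightarrow(a)$: for $(e)\Rightarrow(c)$ it builds $S=\prod_{(i,j)}S_{ij}$ with a coordinate-swap automorphism, for $(c)\Rightarrow(d)$ it passes to the quotient $S[x;\sigma]/\ideal{x^2-1}$ and conjugates by $\quo{x}$, and for $(b)\Rightarrow(a)$ it embeds $S$ into the Laurent-series ring $S\dpar{t}$ so that each $t^{-1}+x$ becomes invertible. You instead go $(e)\Rightarrow(b)$ directly via the trivial extension $R\ltimes M$ with the $(R,R)$-bimodule structure twisted by $\psi^{(1)}_i,\psi^{(2)}_i$, and then $(b)\Rightarrow(d)$ directly via the involution $u_x=\smallSMatII{1}{x}{0}{-1}$ in $\nMat{S}{2}$ and a product over $X$. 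Both implications check out (associativity and unitality of the trivial extension follow from the bimodule axioms, which your twisted structure satisfies; and $u_x^2=I$ with $u_x\smallSMatII{r}{0}{0}{r}u_x=\smallSMatII{r}{rx-xr}{0}{r}$ are immediate). The practical upshot of your route is that it avoids twisted polynomial rings and Laurent series entirely, using only the trivial extension and $2\times2$ matrices; it also produces the inner involution in one step rather than via a detour through $(a)$ and $(c)$.

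One small gap worth noting: in $(b)\Rightarrow(d)$ your ambient ring is $T=\prod_{x\in X}\nMat{S}{2}$, which degenerates to the zero ring when $X=\emptyset$, so $R$ cannot embed unitally. This is easily patched (e.g.\ replace $X$ by $X\cup\{1_S\}$, which does not change $\Cent_R(X)$, or handle $X=\emptyset$ separately with $S=R$ and $\sigma=\id$), but strictly speaking it should be said. The paper's $(e)\Rightarrow(c)$ step has an analogous wrinkle, which it addresses by inserting a dummy index $i_0$ with $S_{i_0}=R$ to guarantee injectivity of $\Psi$; your trivial-extension construction sidesteps that particular issue since $R\hookrightarrow R\ltimes M$ is always injective, but you trade it for the empty-$X$ case above.
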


    Note that condition (e) implies that the family of semi-invariant subrings is closed under intersection.

    \begin{proof}
        We prove (a)$\derives$(e)$\derives$(c)$\derives$(d)$\derives$(b)$\derives$(a).

        (a)$\derives$(e): Take $I=\Sigma$ and define $S_\sigma=S$, $\psi_\sigma^{(1)}=\sigma$ and $\psi_\sigma^{(2)}=\id_R$.

        (e)$\derives$(c): Let $\{S_i,\psi^{(1)}_i,\psi^{(2)}_i\}_{i\in I}$ be given. Without loss
        of generality we may assume that there is $i_0\in I$
        such that $S_{i_0}=R$ and $\psi_{i_0}^{(1)}=\psi_{i_0}^{(2)}=\id_R$.
        Define $S=\prod_{(i,j)\in I\times\{1,2\}} S_{ij}$ where
        $S_{ij}=S_i$ and let $\Psi:R\to S$ be given by
        \[\Psi(r)=\left(\psi_i^{(j)}(r)\right)_{(i,j)\in I\times\{1,2\}}\in S~.\]
        The existence of $i_0$  above implies $\Psi$ is injective. Let $\sigma\in\Aut(S)$ be the automorphism
        exchanging the $(i,1)$ and $(i,2)$ components of $S$ for all $i\in I$.
        Then
        one easily checks that $\sigma^2=\id$ and $\Psi(R)^{\{\sigma\}}=\Psi(R_0)$. We finish by identifying $R$ with $\Psi(R)$.

        (c)$\derives$(d): Let $S,\sigma$ be given and let $S'=S[x;\sigma]$ denote the ring
        of $\sigma$-twisted polynomials with (left) coefficients in $S$.
        Observe that $(x^2-1)\in\Cent(S')$ (since $\sigma^2=\id$), hence $S'':=S'/\ideal{x^2-1}$ is a free
        left $S$-module with basis $\{\quo{1},\quo{x}\}$
        (where $\quo{a}$ is the image of $a\in S'$ in $S''$). Let $\tau\in\Aut(S'')$ be conjugation by $\quo{x}$.
        Then $\tau^2=\id$ and $R^{\{\tau\}}=\{r\in R \suchthat \quo{x} r=r\quo{x}\}=\{r\in R \suchthat \sigma(r)=r\}=R^{\{\sigma\}}=R_0$.

        (d)$\derives$(b): This is a clear.

        (b)$\derives$(a): Let $S,X$ be given. Let $S'=S\dpar{t}$ be the ring of formal Laurent series $\sum_{n=k}^\infty a_nt^n$ ($k\in\Z$)
        with coefficients in $S$. The elements of $S'$ commuting with $X$ are precisely the elements that commute with $t^{-1}+X$ (as
        $t^{-1}$ is central in $S'$). However, it is easily seen that all elements in $t^{-1}+X$ are invertible. For all $x\in X$, let $\sigma_x
        \in\End(S')$ be  the inner automorphism of $S'$ given by conjugation with $t^{-1}+x$ and let $\Sigma=\{\sigma_x~|~x\in X\}$. Then
        ${R}^\Sigma = \Cent_R(t^{-1}+X)=\Cent_R(X)=R_0$.
    \end{proof}

    \begin{cor}
        Let $R,W$ be rings and let $\vphi:R\to W$ be a ring homomorphism. Assume $W_0\subseteq W$ is a semi-invariant
        subring of $W$. Then $\vphi^{-1}(W_0)$ is a semi-invariant subring of $R$.
    \end{cor}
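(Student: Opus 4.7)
The plan is to reduce the statement to condition (e) of Proposition \ref{RING:PR:semi-invariant-definition-prop}, which is the characterization of semi-invariant subrings best suited to being pulled back along a ring homomorphism, since it only requires ring homomorphisms \emph{out of} the ambient ring rather than an embedding.

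First, I would apply (a)$\derives$(e) of that proposition to $W_0 \subseteq W$ to obtain an index set $I$, rings $\{S_i\}_{i\in I}$, and ring homomorphisms $\psi_i^{(1)}, \psi_i^{(2)}: W \to S_i$ such that
\[W_0 = \{w \in W \suchthat \psi_i^{(1)}(w) = \psi_i^{(2)}(w) \text{ for all } i \in I\}.\]
Next, I would form the compositions $\psi_i^{(j)} \circ \vphi : R \to S_i$ for $j \in \{1,2\}$ and $i \in I$. For any $r \in R$, we have $r \in \vphi^{-1}(W_0)$ iff $\vphi(r) \in W_0$, which by the choice of the $\psi_i^{(j)}$ is equivalent to $(\psi_i^{(1)} \circ \vphi)(r) = (\psi_i^{(2)} \circ \vphi)(r)$ for every $i \in I$. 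Thus
\[\vphi^{-1}(W_0) = \{r \in R \suchthat (\psi_i^{(1)} \circ \vphi)(r) = (\psi_i^{(2)} \circ \vphi)(r) \text{ for all } i \in I\},\]
which realizes $\vphi^{-1}(W_0)$ in the form required by (e). Applying (e)$\derives$(a) then concludes that $\vphi^{-1}(W_0)$ is a semi-invariant subring of $R$.

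There is essentially no obstacle once the equivalent characterization (e) is invoked; the only minor verification is that $\vphi^{-1}(W_0)$ really is a unital subring, which is automatic since $\vphi$ preserves $1$ and $1_W \in W_0$. I would point out that attempting this proof through characterization (a) or (b) directly would be awkward: one would need to construct an overring of $R$ compatible with the given overring of $W$, which is problematic when $\vphi$ is not injective. The detour through (e) sidesteps this issue cleanly, which is perhaps the main reason for including (e) among the equivalent conditions.
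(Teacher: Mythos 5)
Your proof is correct and follows exactly the same route as the paper's: invoke characterization (e) of Proposition \ref{RING:PR:semi-invariant-definition-prop} to obtain the equalizing pairs $\psi_i^{(1)},\psi_i^{(2)}$, precompose with $\vphi$, and observe that this exhibits $\vphi^{-1}(W_0)$ in form (e) again. Your closing remark about why (e) is the convenient characterization here (because $\vphi$ need not be injective, so one cannot easily transport an overring) is a sound observation, though the paper leaves it implicit.
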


    \begin{proof}
        By Proposition \ref{RING:PR:semi-invariant-definition-prop}(e), there are rings
        $\{S_i\}_{i\in I}$ and ring homomorphisms $\psi^{(1)}_i,\psi^{(2)}_i:W \to S_i$
        such that $W_0=\{r\in R\,:\, \psi^{(1)}_i(r)=\psi^{(2)}_i(r)~\forall i\in I\}$.
        Define $\vphi_i^{(n)}=\psi_i^{(n)}\circ \vphi$ and note that
        $\vphi^{-1}(W_0)=\{r\in R\suchthat \vphi(r)\in W_0\}=\{r\in R \suchthat \psi_i^{(1)}\vphi(r)=\psi_i^{(2)}\vphi(r)
        ~\forall i\in I\}=\{r\in R\suchthat \vphi_i^{(1)}(r)=\vphi_i^{(2)}(r)~\forall i\in I\}$.
    \end{proof}

    The equivalent conditions of Proposition \ref{RING:PR:semi-invariant-definition-prop}
    require the existence of some ambient ring. This leads to the following question:

    \begin{que}
        Is there an intrinsic definition of semi-invariant subrings?
    \end{que}

    \rem{By \emph{intrinsic} we mean a definition that does not assume the existence of \emph{some}
    mathematical object taken from a large category (e.g.\ an ambient ring).} Informally,
    we ask for a definition that would make it easy to show that a given subring is \emph{not} semi-invariant.
    \smallskip

    The next proposition is useful for producing examples of semi-invariant subrings.

    \begin{prp} \label{RING:PR:semi-inv-examples-pr}
        Let $R\subseteq S$ be rings and let $K$ be a central subfield of $S$. Then
        $R\cap K$ is a semi-invariant subring of $R$.
    \end{prp}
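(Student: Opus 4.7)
The plan is to verify condition (e) of Proposition~\ref{RING:PR:semi-invariant-definition-prop}: I will exhibit a ring $T$ and two ring homomorphisms $\psi^{(1)},\psi^{(2)}\colon R\to T$ whose equalizer $\{r\in R:\psi^{(1)}(r)=\psi^{(2)}(r)\}$ is precisely $R\cap K$.

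The natural candidate is $T:=S\otimes_K S$. Since $K$ is a central subfield of $S$, the ring $S$ is a $K$-algebra in the usual sense, and so $T$ is a well-defined associative $K$-algebra under the multiplication $(a\otimes b)(c\otimes d)=ac\otimes bd$ (centrality of $K$ is precisely what makes this rule well-defined and associative). I then define $\psi^{(1)}(r)=r\otimes 1$ and $\psi^{(2)}(r)=1\otimes r$; a direct check shows both are ring homomorphisms.

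It remains to verify that $r\otimes 1=1\otimes r$ in $T$ if and only if $r\in K$. The implication ``$\Leftarrow$'' is immediate from the tensor relation $sk\otimes t=s\otimes kt$ valid for $k\in K$. For ``$\Rightarrow$'', I extend $\{1\}$ to a $K$-basis $\{1\}\cup\{e_i\}_{i\in I}$ of $S$ (which exists because $K$ is a field, so $S$ is a $K$-vector space), and write $r=a+\sum_i c_i e_i$ with $a,c_i\in K$. Then
\[
r\otimes 1-1\otimes r=\sum_i c_i(e_i\otimes 1-1\otimes e_i),
\]
and the elements $\{e_i\otimes 1\}_{i\in I}\cup\{1\otimes e_j\}_{j\in I}$ sit inside the standard $K$-basis $\{1\otimes 1\}\cup\{e_i\otimes 1\}\cup\{1\otimes e_j\}\cup\{e_i\otimes e_j\}$ of $T$, so they are $K$-linearly independent. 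This forces $c_i=0$ for all $i$, giving $r=a\in K$.

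The argument presents no serious obstacle; the only points requiring attention are that the tensor product of $K$-algebras over the central subfield $K$ genuinely forms a ring, and that the standard tensor basis cleanly yields the linear-independence step above. Both are standard once one exploits that $K$ lies in $\Cent(S)$.
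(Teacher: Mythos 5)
Your proof is correct and follows exactly the paper's route: form $S\otimes_K S$, map via $r\mapsto r\otimes 1$ and $r\mapsto 1\otimes r$, and invoke Proposition~\ref{RING:PR:semi-invariant-definition-prop}(e). The paper simply states ``it is easy to check'' for the equalizer identification $\{s\in S:s\otimes 1=1\otimes s\}=K$, whereas you spell out the $K$-basis argument; this is the same proof with one step made explicit.
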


    \begin{proof}
        Let $S'=S\otimes_K S$ and define $\vphi_1,\vphi_2:S\to S'$
        by $\vphi_1(s)=s\otimes 1$ and $\vphi_2(s)=1\otimes s$.
        As $K$ is a central subfield, it is easy to
        check that $\{s\in S\suchthat \vphi_1(s)=\vphi_2(s)\}=K$, hence
        $\{s\in R\suchthat \vphi_1(s)=\vphi_2(s)\}=R\cap K$. We are done by
        Proposition \ref{RING:PR:semi-invariant-definition-prop}(e).
    \end{proof}

    \begin{cor} \label{RING:CR:semi-inv-subrings-of-a-field}
        Let $K$ be a field. Then the semi-invariant subrings of $K$ are precisely its subfields.
    \end{cor}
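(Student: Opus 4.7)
The claim has two directions, and both follow quickly from results already in the excerpt.

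For the nontrivial direction, the plan is to exploit the fact (noted right after the definitions in Section~1) that every semi-invariant subring is rationally closed. Suppose $R_0 \subseteq K$ is semi-invariant, so $\units{K} \cap R_0 = \units{R_0}$. Since $K$ is a field, every nonzero element of $R_0$ lies in $\units{K}$, hence in $\units{R_0}$. Combined with the fact that $R_0$ is a commutative domain (being a subring of $K$) containing the unity of $K$, this forces $R_0$ to be a subfield of $K$.

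For the reverse direction, let $F$ be any subfield of $K$. Since $K$ itself is commutative, $F$ is a central subfield of $K$, so Proposition~\ref{RING:PR:semi-inv-examples-pr} applies directly with $R = S = K$ and the central subfield taken to be $F$: this yields $R \cap F = K \cap F = F$ as a semi-invariant subring of $K$.

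I do not anticipate any real obstacle here; the proof is essentially a two-line application of the rational-closure observation and Proposition~\ref{RING:PR:semi-inv-examples-pr}. The only minor point to be careful about is the convention that subrings share the unity of the ambient ring, which is what ensures $R_0$ is not the zero ring and therefore genuinely a field (rather than vacuously satisfying the field axioms).
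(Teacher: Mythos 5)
Your proof is correct and follows essentially the same route as the paper: the forward direction uses rational closure of semi-invariant subrings to conclude that every nonzero element is a unit, and the converse applies Proposition~\ref{RING:PR:semi-inv-examples-pr} with $S=K$ and the subfield as the central subfield. Nothing to add.
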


    \begin{proof}
        Any semi-invariant subring $R\subseteq K$ satisfies $\units{R}=
        R\cap\units{K}=R\setminus\{0\}$, hence it is a field. The
        converse follows from the last proposition.
    \end{proof}

    \begin{remark} \label{RING:RM:inv-subrings-of-a-field}
        If $K/L$ is an algebraic field extension, then $L$ is an invariant subring of $K$ if and only if
        $K/L$ is Galois.
    \end{remark}

    We finish this section by introducing two cases where semi-invariant subrings
    naturally appear.

    \begin{prp} \label{RING:PR:endo-ring-is-semi-invariant}
        Let $R\subseteq S$ be rings and let $M$ be a right $S$-module. Then $\End(M_S)$ is a semi-invariant
        subring of $\End(M_R)$.\footnote{
            This proposition is a refinement of \cite[Pr.\ 2.7]{FacHer06}, which
            asserts that under the same assumptions $\End(M_S)$ is a rationally closed subring of $\End(M_R)$.
        }
    \end{prp}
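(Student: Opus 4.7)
The plan is to invoke the equivalence (a)$\Leftrightarrow$(b) of Proposition \ref{RING:PR:semi-invariant-definition-prop}, exhibiting $\End(M_S)$ as a centralizer in $\End(M_R)$ taken inside a suitable ambient ring. The obvious ambient ring is $T:=\End(M,+)$, the ring of additive group endomorphisms of $M$; this ring contains $\End(M_R)$ as a subring (with the same unity, namely $\id_M$), so it qualifies as the ``$S$'' of Proposition \ref{RING:PR:semi-invariant-definition-prop}(b) with the role of $R$ played by $\End(M_R)$.

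Inside $T$ I would take the set $X=\{\rho_s\suchthat s\in S\}$, where $\rho_s\in T$ is right multiplication by $s$, i.e.\ $\rho_s(m)=ms$ (this is additive, hence lies in $T$, even though it typically fails to be $R$-linear). The key computation is then: for $f\in \End(M_R)$, the identity $(f\circ\rho_s)(m)=f(ms)$ and $(\rho_s\circ f)(m)=f(m)s$ show that $f$ commutes in $T$ with every $\rho_s$ ($s\in S$) if and only if $f(ms)=f(m)s$ for all $m\in M$, $s\in S$, i.e.\ if and only if $f\in \End(M_S)$. Hence
\[
\End(M_S)=\Cent_{\End(M_R)}(X),
\]
and Proposition \ref{RING:PR:semi-invariant-definition-prop}(b)$\Rightarrow$(a) concludes that $\End(M_S)$ is semi-invariant in $\End(M_R)$.

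There is essentially no obstacle here beyond choosing the correct ambient ring: one might be tempted to try $T=\End(M_R)$ itself, but the $\rho_s$ need not be $R$-linear, so they do not live in $\End(M_R)$; on the other hand they are always additive. One could alternatively use characterization (e) by taking, for each $s\in S$, two parallel $\mathbb{Z}$-linear maps $\End(M_R)\to T$ given by pre- and post-composition with $\rho_s$, but these are not ring homomorphisms, so the centralizer formulation via (b) is cleaner. No delicate verification is required; the proof is a single line once the ambient ring is identified.
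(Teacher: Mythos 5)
Your proof is essentially identical to the paper's: the paper also takes the ambient ring to be $\End(M_\Z)=\End(M,+)$ and identifies $\End(M_S)$ as the centralizer in $\End(M_R)$ of the set of right-multiplication maps $\rho_s$ (which the paper packages as the image of a ring homomorphism $S^\op\to\End(M_\Z)$). The argument is correct and matches the paper's route via Proposition \ref{RING:PR:semi-invariant-definition-prop}(b).
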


    \begin{proof}
        There is a ring homomorphism $\vphi:S^\op \to \End(M_{\Z})$ given
        by $\vphi(s^\op)(m)=ms$ for all $m\in M$. It is straightforward to check
        that $\End(M_S)=\Cent_{\End(M_{\Z})}(\im \vphi)$. As $\End(M_S)\subseteq \End(M_R)$, it follows that
        $\End(M_S)=\Cent_{\End(M_R)}(\im\vphi)$, hence $\End(M_S)$ is a semi-centralizer subring
        of $\End(M_R)$.
    \end{proof}

    \begin{prp} \label{RING:PR:exact-seq-lemma}
        Let $\CFont{A}$ be an abelian category and let $A\xrightarrow{f} B\xrightarrow{g} C\to 0$ be
        an exact sequence in $\CFont{A}$ such that for any $c\in \End(C)$ there
        are $b\in \End(B)$ and $a\in \End(A)$ with $cg=gb$ and $bf=fa$ (e.g.:
        if both $A$ and $B$ are projective, or if $B$ is projective and $f$ is injective).
        Then $\End(C)$ is isomorphic to a quotient of:
        \begin{enumerate}
            \item[(i)] A semi-invariant subring of $\End(A)\times\End(B)$.
            \item[(ii)] An invariant and a centralizer subring of $\End(A\oplus B)$, provided $f$ is injective.
        \end{enumerate}
    \end{prp}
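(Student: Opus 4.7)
For (i), I would consider the ring
\[T=\{(a,b)\in\End(A)\times\End(B)\suchthat bf=fa\}\]
and realize it as a semi-centralizer subring of $\End(A)\times\End(B)$. Embedding $\End(A)\times\End(B)$ diagonally in $\End(A\oplus B)$ and setting $\tilde f=\smallSMatII{0}{0}{f}{0}$, a direct $2\times 2$ matrix computation yields $\Cent_{\End(A)\times\End(B)}(\tilde f)=T$; by Proposition~\ref{RING:PR:semi-invariant-definition-prop} this makes $T$ a semi-invariant subring. I would then define $\Phi:T\to\End(C)$ as follows: for $(a,b)\in T$, since $gf=0$ we have $g(bf)=g(fa)=0$, so $gb:B\to C$ vanishes on $\ker g=\im f$ and factors through $g$ to give a unique $c\in\End(C)$ with $cg=gb$; set $\Phi(a,b)=c$. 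The ring-homomorphism axioms are immediate (for instance, $c_1c_2\cdot g=c_1(gb_2)=g(b_1b_2)$), and the hypothesis on the sequence is exactly the statement that $\Phi$ is surjective, yielding $\End(C)\cong T/\ker\Phi$.

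For (ii), suppose $f$ is a monomorphism. Since $\tilde f^2=0$, the element $\tilde g:=1+\tilde f\in\End(A\oplus B)$ is a unit with inverse $1-\tilde f$, and a direct check shows $\Cent_{\End(A\oplus B)}(\tilde g)=\Cent_{\End(A\oplus B)}(\tilde f)$. Because $\tilde g$ is a unit, conjugation by $\tilde g$ is an inner automorphism $\sigma$ of $\End(A\oplus B)$ whose fixed ring is exactly this centralizer. Hence the subring $R_0:=\Cent_{\End(A\oplus B)}(\tilde f)=\End(A\oplus B)^{\{\sigma\}}$ is simultaneously an invariant and a centralizer subring of $\End(A\oplus B)$.

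Expanding the centralizer condition on $\alpha=\smallSMatII{a}{h}{k}{b}$ gives $hf=0$, $fh=0$, and $bf=fa$. Injectivity of $f$ forces $h=0$ from $fh=0$, so $R_0$ consists of matrices $\smallSMatII{a}{0}{k}{b}$ with $bf=fa$. I would then define $\Phi':R_0\to\End(C)$ by sending such a matrix to the unique $c\in\End(C)$ satisfying $cg=gb$ (well-defined as in (i)). Multiplicativity holds because the $(2,2)$-entry of a product in $R_0$ is $b_1b_2+k_1h_2=b_1b_2$ (since $h_2=0$), and surjectivity again follows from the hypothesis, producing the desired quotient representation.

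The main obstacle is to spot the correct ``off-diagonal'' element $\tilde f\in\End(A\oplus B)$ whose commutant encodes the intertwining relation $bf=fa$, and then, in (ii), to notice that $\tilde f$ is nilpotent so that $1+\tilde f$ is a unit---this is what upgrades the centralizer to an \emph{inner-automorphism} invariant ring, allowing a single subring to play both roles demanded in (ii). The injectivity of $f$ is used exactly to annihilate the upper-right $h$-component, without which $\Phi'$ would fail to be multiplicative.
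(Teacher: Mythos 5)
Your proof is correct and follows essentially the same route as the paper: realize the intertwining condition $bf=fa$ as the centralizer of the strictly off-diagonal element $\tilde f$ in $\End(A\oplus B)$, push down to $\End(C)$ via $b\mapsto c$ with $cg=gb$, and in (ii) exploit the nilpotence of $\tilde f$ to replace it by the unit $1+\tilde f$, turning the centralizer into the fixed ring of an inner automorphism. The only cosmetic difference is that you define the surjection $T\to\End(C)$ directly, whereas the paper factors it through an intermediate subring $R\subseteq\End(B)$; the underlying argument is identical.
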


    \begin{proof}
        Let $B_0=\im f=\ker g$.
        Define $R$ to be the subring of $\End(B)$ consisting of maps $b\in \End(B)$
        for which there is $a\in\End(A)$ with $bf=fa$. Then for all $b\in R$,
        $b(B_0)=b(\im f)=\im(fa)\subseteq \im f=B_0$. Therefore, there is \emph{unique} $c\in \End(C)$
        such that $cg=gb$. The map sending $b$ to $c$ is easily seen to be
        a ring homomorphism from $R$ to $\End(C)$ and the assumptions imply it is onto.
        Therefore, $\End(C)$ is a quotient of $R$.

        Let $S=\End(A\oplus B)$. We represent elements of $S$ as matrices $\smallSMatII{x}{y}{z}{w}$
        with $x\in \End(A),y\in \Hom(B,A),z\in\Hom(A,B),w\in \End(B)$. Let $D$
        denote the diagonal matrices in $S$ (i.e.\ $\End(A)\times \End(B)$)
        and let $W=\Cent_S(\smallSMatII{0}{f}{0}{0})$.
        Then
        for $a\in \End(A)$ and $b\in \End(B)$, $fa=bf$ if and only if
        $\smallSMatII{a}{0}{0}{b}\in W$. Define a ring homomorphism $\vphi:D\to \End(B)$ by $\vphi(\smallSMatII{x}{0}{0}{y})=y$.
        Then $\vphi(\Cent_D(\smallSMatII{0}{f}{0}{0}))=\vphi(D\cap W)=R$. It
        follows that $\End(C)$ is a quotient of $R$, which is a quotient of $D\cap W$, which is a semi-centralizer
        subring of $D=\End(A)\times \End(B)$. This settles (i). To see (ii), notice
        that if $f$ is injective, then $W$ consists of upper-triangular matrices, hence $\vphi$ can
        be extended to $W$, which is
        a centralizer and an invariant subring of $S$ since $W=\Cent_S(\smallSMatII{1}{f}{0}{1})$
        and $\smallSMatII{1}{f}{0}{1}\in\units{S}$.
    \end{proof}

\section{Properties Inherited by Semi-Invariant Subrings}
\label{section:main-thm}

    In this section we prove that being semiprimary (right perfect, semiperfect and $\pi_\infty$-regular,
    semiperfect and $\pi$-regular) passes to semi-invariant subrings. We also present
    a supplementary result for algebras.
    \smallskip

    Our first step is introducing an equivalent condition for $\pi$-regularity of elements of a ring.

    \begin{lem} \label{RING:LM:main-lemma-I}
        Let $R$ be a ring and let $a\in R$ be a $\pi$-regular element. Define:
        \[\begin{array}{cc}
        A=\bigcap_{k=1}^\infty a^kR\,,\quad & B=\bigcup_{k=1}^\infty \annr a^k, \\
        A'=\bigcap_{k=1}^\infty Ra^k,\quad & B'=\bigcup_{k=1}^\infty \annl a^k.
        \end{array}\]
        Then there is $e\in \ids{R}$
        such that $A=eR$, $B=fR$, $A'=Re$ and $B'=Rf$ where $f:=1-e$.
        In particular, $R_R=A\oplus B$ and ${}_RR=A'\oplus B'$.
    \end{lem}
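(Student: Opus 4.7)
The plan is to exploit $\pi$-regularity on both sides to find an integer $n$ at which the chains $a^k R$, $Ra^k$, $\annr a^k$, $\annl a^k$ all stabilize, and then produce a single idempotent realizing all four of $A, B, A', B'$.

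First I would take $n$ large enough that $a^n R = a^{n+1}R$ and $Ra^n = Ra^{n+1}$, and pick $b, c \in R$ with $a^n = a^{n+1}b = c a^{n+1}$. Iterating yields the two key identities $a^n = a^{2n}b^n$ and $a^n = c^n a^{2n}$. From the first, $a^k R = a^n R$ for all $k\geq n$, so $A = a^n R$; from the second, $Ra^k = Ra^n$ for all $k\geq n$, so $A' = Ra^n$. The identities also force the annihilator chains to stabilize: if $r \in \annr a^{2n}$, then $a^n r = c^n a^{2n} r = 0$, so $\annr a^{2n}=\annr a^n$ and hence $B = \annr a^n$; symmetrically $B' = \annl a^n$.

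Next I would establish the direct-sum decomposition $R_R = A \oplus B$ via the identity $1 = a^n b^n + (1 - a^n b^n)$: the left summand lies in $A$, and the computation $a^n(1 - a^n b^n) = a^n - a^{2n}b^n = 0$ places the right summand in $B$. For the intersection, if $a^n s \in B$ then $a^{2n} s = 0$, so $s \in \annr a^{2n} = \annr a^n$, giving $a^n s = 0$. An identical argument handles ${}_RR = A' \oplus B'$. Since endomorphisms of $R_R$ are precisely left multiplications by elements of $R$, the projection onto $A$ along $B$ is left multiplication by some idempotent $e \in A$, yielding $A = eR$ and $B = (1-e)R$; symmetrically one obtains $e'$ with $A' = Re'$ and $B' = R(1-e')$.

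The main obstacle — and the only delicate point — is verifying that $e = e'$, so that a single idempotent handles both sides at once. The plan is to exploit the forms $e = a^n t$ (as $e \in A$) and $e' = sa^n$ (as $e'\in A'$), together with $a^n e = a^n$ (from $1-e \in \annr a^n$) and $e' a^n = a^n$ (from $1-e' \in \annl a^n$). Then $e'(1-e) = s a^n(1-e) = 0$ yields $e' = e'e$, and $(1-e')e = (1-e')a^n t = 0$ yields $e = e' e$; combining gives $e = e'$. Setting $f = 1-e$ then delivers the full statement.
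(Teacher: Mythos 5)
Your proof is correct and follows essentially the same Fitting--Lemma strategy as the paper: stabilize the four chains at some $n$, prove the two direct-sum decompositions by the same $a^n r = a^{2n}s$ type computation, and extract idempotents from the projections. The only divergence is the endgame: the paper, having $A = eR$ and $B = fR$, derives $B' = \annl(a^n) = \annl(eR) = Rf$ and $A' = Ra^n \subseteq \annl\annr(a^n) = Re$ directly via annihilator duality and then concludes $A' = Re$ from the direct-sum decomposition, whereas you construct a second idempotent $e'$ from the left decomposition and prove $e=e'$ by the identity manipulations $e' = e'e$ and $e = e'e$; both endgames are clean and correct.
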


    \begin{proof}
        Let $n\in \N$ be such that $a^nR=a^kR$ and $Ra^n=Ra^k$ for all $k\geq n$.
        Notice that this implies $\annr a^n=\annr a^k$ and $\annl a^n=\annr a^k$ for all $k\geq n$.

        We begin by showing $R_R=A\oplus B$. That ${}_RR=A'\oplus B'$ follows by symmetry.
        The argument is similar to the proof
        of Fitting's Lemma (see \cite[\S2.9]{Ro88}):
        Let $r\in R$. Then $a^{n}r\in a^nR= a^{2n}R$, hence there is $s\in R$ with
        $a^{n}r=a^{2n}s$. Observe that $a^n(r-a^ns)=0$ and $a^ns\in a^nR$, so $r=a^ns+(r-a^ns)\in A+ B$. Now
        suppose $r\in A\cap B$. Then $r=a^ns$ for some $s\in R$. However, $r\in B=\annr a^n$ implies $s\in \annr a^{2n}=\annr a^{n}$, so
        $r=a^ns=0$.

        Since $R_R=A\oplus B$ there is $e\in R$ such that $e\in A$ and $f:=1-e\in B$. It is well known
        that in this case $e^2 = e$, $A=eR$ and $B=fR$.
        This implies $B'=\annl a^n=\annl a^nR=\annl eR=Rf$ and
        $A'=Ra^n\subseteq \annl\annr Ra^n=\annl \annr a^n=\annl fR=Re$. As ${}_RR=A'\oplus B'=Re\oplus Rf$, we
        must have have $A'=Re$.
    \end{proof}

    \begin{prp} \label{RING:PR:pi-reg-equiv-cond}
        Let $R$ be a ring and $a\in R$. Then $a$ is $\pi$-regular $\iff$ there is $e\in \ids{R}$
        such that
        \begin{enumerate}
            \item[(A)] $a=eae+faf$ where $f:=1-e$.
            \item[(B)] $eae$ is invertible in $eRe$.
            \item[(C)] $faf$ is nilpotent.
        \end{enumerate}
        In this case, the idempotent $e$ is uniquely determined by $a$.
    \end{prp}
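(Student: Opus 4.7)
The plan is to prove the equivalence by extracting the candidate idempotent from the Fitting-type decomposition of Lemma \ref{RING:LM:main-lemma-I} for the forward direction, verifying that this $e$ commutes with $a$, and then reversing the argument by showing that conditions (A)--(C) force $a^n R$ and $R a^n$ to stabilize.

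For the forward direction, suppose $a$ is $\pi$-regular and let $e$ be the idempotent produced by Lemma \ref{RING:LM:main-lemma-I}, so that $A := \bigcap_k a^k R = eR$, $B := \bigcup_k \annr a^k = fR$, and $A' = Re$, $B' = Rf$, with $f = 1-e$. The heart of the argument is to show that $e$ commutes with $a$. Let $N$ be the stabilization point so $A = a^N R$; then $aA = a\cdot a^N R = a^{N+1}R = A$, so $A$ is right-$a$-invariant, and $ae \in A = eR$ forces $eae = ae$, i.e.\ $fae = 0$. The symmetric argument using $A'a = A'$ gives $eae = ea$, i.e.\ $eaf = 0$. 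This yields (A), and additionally $ea = ae$. For (C), the proof of Lemma \ref{RING:LM:main-lemma-I} also gives $\annr a^n = B = fR$ for $n \geq N$, so $a^n f = 0$; since $e$ commutes with $a$, an immediate expansion using $ef = fe = 0$ gives $(faf)^n = f a^n f = 0$. For (B), the map ``left multiplication by $a$'' sends $A = eR$ onto $A$ (since $aA = A$) and has kernel $A \cap \annr a \subseteq A \cap B = 0$; it is therefore an automorphism of $eR_R$, and under the identification $\End(eR_R) = eRe$ via left multiplication it corresponds to $ae = eae$, so $eae \in \units{eRe}$.

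For the reverse direction, condition (A) already implies $ea = eae = ae$, so $e$ commutes with $a$ and, using $ef = fe = 0$, we obtain $a^n = (eae)^n + (faf)^n$ for every $n$. Let $k$ be such that $(faf)^k = 0$ and let $u \in eRe$ be the two-sided inverse of $eae$ in $eRe$. For $n \geq k$, $a^n = (eae)^n$ and $a^{n+1} u = (eae)^{n+1} u = (eae)^n = a^n$, giving $a^n R = a^{n+1} R$; the symmetric computation with $u$ on the left gives $R a^n = R a^{n+1}$, so $a$ is $\pi$-regular. For uniqueness, observe that (A)--(C) together with the commutation just derived imply $a^n R = (eae)^n R = eR$ and $\annr a^n = \annr(eae)^n = fR$ for $n \geq k$; since these sets are intrinsic to $a$, both $eR$ and $fR$ depend only on $a$, and hence so does the idempotent $e$ in the direct sum $R_R = eR \oplus fR$.

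The main obstacle I expect is the commutation step in the forward direction: without $ea = ae$, the Peirce decomposition of $a$ relative to $e$ would only be upper-triangular, spoiling both the clean identity $a^n = (eae)^n + (faf)^n$ and the identification of $a$ on $eR$ with an element of $eRe$. It is the combined input of the right-sided fact $A = eR$ and the left-sided fact $A' = Re$ from Lemma \ref{RING:LM:main-lemma-I}---which is nontrivial even though $\pi$-regularity is a two-sided hypothesis---that forces $eaf = fae = 0$.
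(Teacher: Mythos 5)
Your proof is correct and follows essentially the same strategy as the paper's: both extract $e$ from the Fitting-type decomposition in Lemma~\ref{RING:LM:main-lemma-I}, verify (A)--(C) from $aA = A$, $aB\subseteq B$ (or equivalently your $A'a=A'$), and run the argument backwards for the converse, with uniqueness falling out of the intrinsic characterization $eR=\bigcap_k a^kR$, $fR=\bigcup_k\annr a^k$. The only cosmetic divergences are that you derive $eaf=0$ from the left-sided identity $A'a=A'$ where the paper uses $aB\subseteq B$, and you verify (B) by identifying left multiplication by $a$ with an automorphism of $eR_R$ rather than exhibiting the inverse $(eae)^{n-1}xe$ explicitly; both are valid and amount to the same computation.
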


    \begin{proof}
        Assume $a$ is $\pi$-regular and let $e,f,A,B,A',B',n$ be as in Lemma \ref{RING:LM:main-lemma-I}.
        Then $ae\in aeR=aA=a(a^nR)=a^{n+1}R=A=eR$
        and $af\in aB=a\annr a^n\subseteq \annr a^n=B=fR$. Therefore, $ae=eae$ and $af=faf$, hence
        $a=ae+af=eae+faf$.
        This implies $a^k=(eae)^k+(faf)^k$ for all $k\in \N$.
        As $a^n\in eR$, we have $a^n=ea^n$, hence
        $(eae)^n+(faf)^n=a^n=e(eae)^n+e(faf)^n=(eae)^n$
        which implies $(faf)^n=0$.
        In particular, for all $k\geq n$, $a^k=(eae)^k+(faf)^k=(eae)^k$. Since $e\in a^nR$,
        there is $x\in R$ such that $e=a^nx=(eae)^nx$. Multiplying by $e$ on the right yields $e=(eae)((eae)^{n-1}xe)$,
        hence $eae$ is right invertible in $eRe$. By symmetry, $eae$ is left also left invertible
        in $eRe$, hence we conclude that $e$ satisfies (A)--(C).

        Now assume there is $e\in\ids{R}$ satisfying (A)--(C) and
        let $b$ be the inverse of $a$ in $eRe$.
        Then $a^k=(eae)^k+(faf)^k$ for all $k\in \N$. Condition (C) now implies there is $n\in\N$
        such that $a^k=(eae)^k$ for all $k\geq n$.
        Therefore, for all $k\geq n$,
        $a^n=(eae)^n=(eae)^kb^{k-n}=a^kb^{k-n}\in a^kR$ implying
        $a^nR=a^kR$. By symmetry, $Ra^n=Ra^k$ for all $k\geq n$,
        so $a$ is $\pi$-regular.

        Finally, assume that $e,e'\in\ids{R}$ satisfy conditions (A)--(C) and let $f=1-e$, $f'=1-e'$.
        By the previous paragraph $a$ is $\pi$-regular, hence Lemma \ref{RING:LM:main-lemma-I} implies
        $R=A\oplus B$ where $A=\bigcap_{k=1}^\infty a^kR$
        and $B=\bigcup_{k=1}^\infty \annr a^kR$.
        Let $b$ be the inverse
        of $eae$ in $eRe$ and let $n\in\N$ be such that $(faf)^n=0$.
        Then $e=(eae)^kb^k=a^kb^k\in a^kR$ for all $k\geq n$, hence $e\in A$, and $a^nf=(eae)^nf=0$, hence $f\in B$.
        Similarly, $e'\in A$ and $f'\in B$. It follows that $e,e'\in A$ and $f,f'\in B$. Since $1=e+f=e'+f'$
        and $R=A\oplus B$, we must have $e=e'$.
    \end{proof}

    Let $R,a$ be as in Proposition \ref{RING:PR:pi-reg-equiv-cond}. Henceforth, we call the unique
    idempotent $e$ satisfying conditions (A)--(C) the \emph{associated idempotent} of $a$ (in $R$).

    \begin{cor} \label{RING:CR:pi-reg-transfers-to-semi-inv-subring}
        (i) Let $R$ be a ring, $R_0\subseteq R$ a semi-invariant subring and let $a\in R_0$ be $\pi$-regular
        in $R$.
        Then $a$ is $\pi$-regular in $R_0$.

        (ii) A semi-invariant subring of a $\pi$-regular ($\pi_\infty$-regular) ring is $\pi$-regular ($\pi_\infty$-regular).
    \end{cor}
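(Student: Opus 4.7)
The plan is to leverage the uniqueness of the associated idempotent in Proposition \ref{RING:PR:pi-reg-equiv-cond} together with the characterization of semi-invariant subrings as ``equalizers'' given in Proposition \ref{RING:PR:semi-invariant-definition-prop}(e). The philosophy is that anything constructed canonically from $a$ (its associated idempotent, and the inverse of $eae$ in $eRe$) must be preserved by every pair of ring maps that agree on $R_0$, hence must itself lie in $R_0$.

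For (i), fix $a\in R_0$ which is $\pi$-regular in $R$, and let $e$ be its associated idempotent in $R$. Using Proposition \ref{RING:PR:semi-invariant-definition-prop}(e), write $R_0=\{r\in R:\psi_i^{(1)}(r)=\psi_i^{(2)}(r)\ \forall i\in I\}$ for suitable $\psi_i^{(j)}:R\to S_i$. Applying any ring homomorphism $\psi:R\to S$ to conditions (A)--(C) of Proposition \ref{RING:PR:pi-reg-equiv-cond} shows that $\psi(e)$ is the associated idempotent of $\psi(a)$ in $S$ (condition (B) is preserved because $\psi(b)$ is a two-sided inverse of $\psi(eae)$ in $\psi(e)S\psi(e)$, where $b$ denotes the inverse of $eae$ in $eRe$). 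Since $a\in R_0$ gives $\psi_i^{(1)}(a)=\psi_i^{(2)}(a)$, the uniqueness clause of Proposition \ref{RING:PR:pi-reg-equiv-cond} forces $\psi_i^{(1)}(e)=\psi_i^{(2)}(e)$, so $e\in R_0$. The same argument applied to the two-sided inverse $b\in eRe$ (which is uniquely determined by $a$ and $e$) yields $b\in R_0$, hence $b\in eR_0e$. Conditions (A)--(C) now hold for $a$ inside $R_0$ with associated idempotent $e$, and Proposition \ref{RING:PR:pi-reg-equiv-cond} gives $\pi$-regularity of $a$ in $R_0$.

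For (ii), the $\pi$-regular case is immediate from (i) applied to every element of $R_0$. For the $\pi_\infty$-regular case, I would observe that if $R_0\subseteq R$ is semi-invariant with witnessing data $\{S_i,\psi_i^{(1)},\psi_i^{(2)}\}_{i\in I}$, then the induced maps $\nMat{\psi_i^{(j)}}{n}:\nMat{R}{n}\to \nMat{S_i}{n}$ exhibit $\nMat{R_0}{n}$ as a semi-invariant subring of $\nMat{R}{n}$; by hypothesis the latter is $\pi$-regular, so part (i) (or the first assertion of (ii)) yields $\nMat{R_0}{n}$ is $\pi$-regular for every $n$, i.e.\ $R_0$ is $\pi_\infty$-regular.

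The only genuine obstacle is verifying that the inverse $b$ of $eae$ in $eRe$ is transported by arbitrary ring homomorphisms and is itself canonically determined by $a$; both facts are elementary, but they are precisely what is needed to push $b$ into $R_0$ via the equalizer description. Once $e$ and $b$ are shown to lie in $R_0$, the proof reduces to re-reading conditions (A)--(C) inside the subring.
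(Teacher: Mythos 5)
Your proposal is correct and follows essentially the same route as the paper: use the uniqueness of the associated idempotent (Proposition \ref{RING:PR:pi-reg-equiv-cond}) together with an equalizer-type characterization of semi-invariance to force $e$ into $R_0$, then read conditions (A)--(C) inside $R_0$. The only small difference is cosmetic --- you invoke characterization (e) of Proposition \ref{RING:PR:semi-invariant-definition-prop} where the paper works directly with $\Sigma\subseteq\End(S)$, and you make explicit the (correct, and worth stating) observation that the inverse $b$ of $eae$ in $eRe$ is also fixed by the defining data and hence lands in $R_0$, a step the paper leaves implicit.
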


    \begin{proof}
        (i)
        Let $S\supseteq R$ and $\Sigma\subseteq\End(S)$ be such that $R_0=R^\Sigma$ and let $a\in R_0$
        be $\pi$-regular in $R$. Let $e$ be the associated idempotent of $a$ in $R$.
        Then $e$ is clearly the associated idempotent of $a$ in $S$ (hence $a$ is $\pi$-regular in $S$).
        However, it is straightforward to check that $\sigma(e)$
        satisfies conditions (A)--(C) (in $S$) for all $\sigma\in \Sigma$
        (since $\sigma(a)=a$), so the uniqueness of $e$ forces $e\in S^\Sigma\cap R=R^\Sigma=R_0$.
        Therefore, $a$ is $\pi$-regular in $R_0$.

        (ii) The $\pi$-regular case follows from (i). The $\pi_\infty$-regular case
        follows once noting that if $R_0$ is a semi-invariant subring of $R$, then
        $\nMat{R_0}{n}$ is a semi-invariant subring of $\nMat{R}{n}$ for all $n\in \N$.
    \end{proof}

    \begin{lem} \label{RING:LM:main-lemma-II}
        Let $R$ be a $\pi$-regular ring. Then:
        \begin{enumerate}
            \item[(i)] $\Jac(R)$ is nil.
            \item[(ii)] $R$ is semiperfect $\iff$ $R$ does not contain an infinite set of orthogonal idempotents.
        \end{enumerate}
    \end{lem}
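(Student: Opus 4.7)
For (i), I would take any $a\in\Jac(R)$ and apply Proposition \ref{RING:PR:pi-reg-equiv-cond} to obtain an idempotent $e$ with $a=eae+faf$, where $eae$ is a unit in $eRe$ and $faf$ is nilpotent. The element $eae$ lies in $\Jac(R)\cap eRe\subseteq\Jac(eRe)$ (a standard containment: for $x\in\Jac(R)\cap eRe$ one verifies that $eue$ is the two-sided inverse of $e-x$ in $eRe$, where $u$ is the inverse of $1-x$ in $R$). Since $\Jac(eRe)$ contains no unit of $eRe$ unless $eRe=0$, this forces $e=0$, whence $a=faf$ is nilpotent.

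For the forward direction of (ii), a semiperfect ring $R$ has $R/\Jac(R)$ semisimple artinian, which admits no infinite orthogonal family of nonzero idempotents; combined with the fact that every idempotent of $\Jac(R)$ is zero (since such an $e$ is killed by the unit $1-e$), this shows that an infinite orthogonal family in $R$ would descend to an infinite orthogonal family of nonzero idempotents in $R/\Jac(R)$, a contradiction.

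For the converse, the strategy is to produce a decomposition $1=e_1+\cdots+e_r$ into primitive orthogonal idempotents and then prove each corner $e_iRe_i$ is local, concluding via Proposition \ref{RING:PR:semiperfect-equiv-conds}. To obtain the decomposition, I would iteratively split any non-primitive summand into two nonzero orthogonal idempotents in its corner and organize these splittings as a binary tree; if the process never terminated, K\"onig's lemma applied to this tree would produce an infinite path, which in turn yields an infinite orthogonal family of nonzero idempotents in $R$, contradicting the hypothesis. For each primitive $e_i$, the corner $e_iRe_i$ is $\pi$-regular by Proposition \ref{RING:PR:pi-reg-hecke-algebra} and has only trivial idempotents; applying Proposition \ref{RING:PR:pi-reg-equiv-cond} to any $b\in e_iRe_i$ forces its associated idempotent to be $0$ or $e_i$, so every element of $e_iRe_i$ is either a unit or nilpotent. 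The main obstacle is then upgrading this dichotomy to locality by showing that the nilpotents form an ideal: closure under multiplication by arbitrary elements follows because a nonzero nilpotent cannot have a one-sided inverse, so a product of a nilpotent with anything cannot be a unit; and for additivity, if the sum $n_1+n_2$ of two nilpotents were a unit $u$, then $1=u^{-1}n_1+u^{-1}n_2$ would express $1$ as a sum of two nilpotents, contradicting the fact that $1$ minus a nilpotent is always a unit.
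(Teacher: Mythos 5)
Your proposal is correct and follows essentially the same route as the paper: for (i) it extracts the associated idempotent from Proposition \ref{RING:PR:pi-reg-equiv-cond} and forces it to be zero, and for (ii) it recursively splits nontrivial idempotents down to primitive ones (with the same K\"onig-lemma-style termination that the paper phrases as ``the induction process must stop''), then shows each primitive corner is local via the unit-or-nilpotent dichotomy. You spell out two steps the paper leaves implicit---that this dichotomy makes the corner ring local, and the K\"onig argument itself---but the underlying strategy is identical.
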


    \begin{proof}
        For $a\in R$, let $e_a$ denote the associated idempotent of $a$ and let $f_a=1-e_a$.

        (i) Let $a\in \Jac(R)$ and let
        $b$ be the inverse of $e_aae_a$ in $e_aRe_a$. Then $e_a=b(e_aae_a)\in \Jac(R)$, hence $e_a=0$, implying $a=f_aaf_a$ is
        nilpotent.

        (ii) That $R$ is semiperfect clearly implies $R$ does not contain an infinite set of
        orthogonal idempotents, so assume the converse.
        Let $a\in R$. Observe that if $e_a=0$ then $a$ is nilpotent and if $e_a=1$ then
        $a$ is invertible. Therefore, if $e_a\in\{0,1\}$ for all $a\in R$, then $R$ is local and in particular,
        semiperfect.

        Assume there is $a\in R$ with $e:=e_a\notin \{0,1\}$. We now apply an inductive
        argument to deduce that $eRe$ and $(1-e)R(1-e)$ are semiperfect,
        thus proving $R$ is semiperfect (by Proposition \ref{RING:PR:morita-preserved-properties}).
        The induction process must stop because otherwise there is a sequence of idempotents $\{e_k\}_{k=0}^\infty\subseteq R$
        such that $e_k\in e_{k-1}Re_{k-1}$ and $e_k\notin \{0,e_{k-1}\}$. This implies $\{e_{k-1}-e_k\}_{k=1}^\infty$ is an infinite
        set of non-zero orthogonal idempotents, which cannot exist by our assumptions.
    \end{proof}

    \begin{lem} \label{RING:LM:main-lemma-III}
        Let $R_0\subseteq R$ be rings. If $R$ is semiperfect and both $R_0$ and $R$ are $\pi$-regular,
        then $R_0$ is semiperfect and $\Jac(R_0)^n\subseteq \Jac(R)$ for some $n\in\N$.
        If in addition $R$ is semiprimary (right perfect), then so is $R_0$.
    \end{lem}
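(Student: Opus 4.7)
The plan is to deduce semiperfectness of $R_0$ by idempotent counting, then establish the containment $\Jac(R_0)^n \subseteq \Jac(R)$ by invoking Levitski's theorem inside the semisimple quotient $R/\Jac(R)$, and finally transfer nilpotency and T-nilpotency from $R$ to $R_0$.

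First, any set of orthogonal idempotents in $R_0$ is also such a set in $R$. Since $R$ is semiperfect and $\pi$-regular, Lemma \ref{RING:LM:main-lemma-II}(ii) ensures $R$ contains no infinite set of orthogonal idempotents, so neither does $R_0$. Because $R_0$ is itself $\pi$-regular by hypothesis, Lemma \ref{RING:LM:main-lemma-II}(ii) applied to $R_0$ yields that $R_0$ is semiperfect.

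Next I turn to the central step: $\Jac(R_0)^n \subseteq \Jac(R)$ for some $n \in \N$. Set $\bar R := R/\Jac(R)$, which is semisimple and hence right noetherian. Let $\bar I$ denote the image of $\Jac(R_0)$ in $\bar R$. Since $R_0$ is $\pi$-regular, Lemma \ref{RING:LM:main-lemma-II}(i) tells us $\Jac(R_0)$ is nil, so every element of $\bar I$ is nilpotent. As $\bar I$ is also closed under addition and multiplication (being the image of the ideal $\Jac(R_0)$ of $R_0$), it is a nil subring of $\bar R$ (in the non-unital sense). By Levitski's theorem (Theorem \ref{RING:TH:nil-subring}), $\bar I$ is nilpotent, say $\bar I^n = 0$, which translates to $\Jac(R_0)^n \subseteq \Jac(R)$.

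The two refinements then follow quickly. If $R$ is semiprimary with $\Jac(R)^m = 0$, then $\Jac(R_0)^{mn} \subseteq \Jac(R)^m = 0$, so $R_0$ is semiprimary. If $R$ is right perfect, given any sequence $a_1, a_2, \ldots \in \Jac(R_0)$ I group it into successive blocks of $n$ terms, forming $b_k := a_{kn} a_{kn-1} \cdots a_{(k-1)n+1} \in \Jac(R_0)^n \subseteq \Jac(R)$; right T-nilpotency of $\Jac(R)$ forces $b_N b_{N-1} \cdots b_1 = 0$ for some $N$, so $a_{Nn} \cdots a_1 = 0$, proving $\Jac(R_0)$ is right T-nilpotent. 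I expect the main technical hurdle to be the radical containment step, which crucially combines the nilness of $\Jac(R_0)$ (from the $\pi$-regularity of $R_0$) with the semisimplicity of $R/\Jac(R)$ (from $R$ being semiperfect) in order to apply Levitski's theorem.
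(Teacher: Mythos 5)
Your proof follows the same route as the paper's: use Lemma \ref{RING:LM:main-lemma-II}(ii) and idempotent counting to get semiperfectness of $R_0$, then apply Levitski's Theorem (Theorem \ref{RING:TH:nil-subring}) to the nil image of $\Jac(R_0)$ in the semisimple ring $R/\Jac(R)$ to get the containment $\Jac(R_0)^n\subseteq\Jac(R)$, and finally transfer nilpotency (resp.\ T-nilpotency). You simply spell out the T-nilpotency transfer, which the paper leaves implicit; the arguments are otherwise identical.
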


    \begin{proof}
        By Lemma \ref{RING:LM:main-lemma-II}(ii), $R$ does not contain an infinite
        set of orthogonal idempotents. Therefore, this also applies to $R_0$, so the same
        lemma implies $R_0$ is semiperfect. Let $\vphi$ denote the standard
        projection from $R$ to $R/\Jac(R)$.
        By Lemma \ref{RING:LM:main-lemma-II}(i), $\vphi(\Jac(R_0))$ is nil.
        Therefore, by Theorem \ref{RING:TH:nil-subring} (applied to $\vphi(R)$, which is semisimple), $\vphi(\Jac(R_0))$ is nilpotent,
        hence there is $n\in\N$ such that $\Jac(R_0)^n\subseteq \Jac(R)$.
        If moreover $R$ is semiprimary (right perfect), then $\Jac(R)$ is nilpotent (right T-nilpotent).
        The inclusion $\Jac(R_0)^n\subseteq \Jac(R)$ then
        implies $\Jac(R_0)$ is nilpotent (right T-nilpotent), so $R_0$ is semiprimary (right perfect).
    \end{proof}

    \begin{thm} \label{RING:TH:main-res-general}
        Let $R$ be a ring and let $R_0$ be a semi-invariant subring of $R$.
        If $R$ is semiprimary (resp.\ right perfect, semiperfect and $\pi_\infty$-regular, semiperfect and $\pi$-regular), then so is $R_0$.
        In addition, there is $n\in\N$
        such that $\Jac(R_0)^n\subseteq \Jac(R)$.
    \end{thm}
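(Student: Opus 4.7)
The plan is to show that this theorem is essentially a direct combination of Corollary \ref{RING:CR:pi-reg-transfers-to-semi-inv-subring}(ii) and Lemma \ref{RING:LM:main-lemma-III}, with the hierarchy of properties recorded in Remark \ref{RING:RM:hirarchy-of-properties} doing the work of reducing every hypothesis to a $\pi$-regularity statement.

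First I would handle the two stronger hypotheses uniformly. If $R$ is semiperfect and $\pi$-regular, then Corollary \ref{RING:CR:pi-reg-transfers-to-semi-inv-subring}(ii) immediately gives that $R_0$ is $\pi$-regular. The hypothesis on $R_0 \subseteq R$ required by Lemma \ref{RING:LM:main-lemma-III} — namely $R$ semiperfect together with both $R$ and $R_0$ $\pi$-regular — is now satisfied, so the lemma delivers both that $R_0$ is semiperfect and that $\Jac(R_0)^n \subseteq \Jac(R)$ for some $n \in \N$. For the $\pi_\infty$-regular case, Corollary \ref{RING:CR:pi-reg-transfers-to-semi-inv-subring}(ii) additionally yields that $R_0$ is $\pi_\infty$-regular, and semiperfectness plus the Jacobson containment come from the same application of Lemma \ref{RING:LM:main-lemma-III}.

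Next I would reduce the semiprimary and right perfect cases to the $\pi_\infty$-regular case using Remark \ref{RING:RM:hirarchy-of-properties}: a semiprimary ring is right perfect, and a right perfect ring is $\pi_\infty$-regular (hence semiperfect). Thus, when $R$ is semiprimary (resp.\ right perfect), the previous paragraph already gives that $R_0$ is semiperfect with $\Jac(R_0)^n \subseteq \Jac(R)$ for some $n$. Since $\Jac(R)$ is then nilpotent (resp.\ right T-nilpotent), the inclusion $\Jac(R_0)^n \subseteq \Jac(R)$ forces $\Jac(R_0)$ to be nilpotent (resp.\ right T-nilpotent), so $R_0$ is semiprimary (resp.\ right perfect). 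This is precisely the last sentence of Lemma \ref{RING:LM:main-lemma-III}, so I would simply invoke it.

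There is no real obstacle remaining — the conceptual work has been front-loaded into Proposition \ref{RING:PR:pi-reg-equiv-cond} (which gives a characterization of $\pi$-regularity invariant under semi-invariance), into Corollary \ref{RING:CR:pi-reg-transfers-to-semi-inv-subring} (transferring $\pi$-regularity to $R_0$ via uniqueness of the associated idempotent), and into Lemma \ref{RING:LM:main-lemma-III} (using the idempotent count to pass semiperfectness from $R$ to $R_0$ and then controlling $\Jac(R_0)$ by Levitski's Theorem \ref{RING:TH:nil-subring}). The proof is therefore one short paragraph citing these results in the order above; the only mild subtlety is remembering to apply the $\pi_\infty$-regular statement of Corollary \ref{RING:CR:pi-reg-transfers-to-semi-inv-subring}(ii) (and not merely $\pi$-regularity) in the $\pi_\infty$-regular case so that the conclusion matches the hypothesis.
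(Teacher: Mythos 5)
Your proof is correct and follows exactly the route the paper takes: use Bass's Theorem P (via Remark \ref{RING:RM:hirarchy-of-properties}) to reduce right perfect and semiprimary to $\pi_\infty$-regular-and-semiperfect, transfer ($\pi_\infty$-)$\pi$-regularity to $R_0$ via Corollary \ref{RING:CR:pi-reg-transfers-to-semi-inv-subring}(ii), and then let Lemma \ref{RING:LM:main-lemma-III} supply semiperfectness, the containment $\Jac(R_0)^n\subseteq\Jac(R)$, and the semiprimary/right-perfect upgrade. The one phrasing to tighten is the parenthetical ``(hence semiperfect)'': it is right-perfectness (nil radical is idempotent lifting), not $\pi_\infty$-regularity, that yields semiperfectness of $R$, though your argument clearly relies on the correct fact.
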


    \begin{proof}
        Recall that being right perfect implies being $\pi$-regular by Proposition \ref{RING:PR:Bass-Thm-P}.
        Given that, the theorem follows from Corollary \ref{RING:CR:pi-reg-transfers-to-semi-inv-subring} and
        Lemma \ref{RING:LM:main-lemma-III}.
    \end{proof}

    \begin{cor}\label{RING:CR:main-cr:I}
        Let $R\subseteq S$ be rings and let $M$ be a right $S$-module.
        If $\End(M_R)$ is semiprimary (resp.\ right perfect, semiperfect and $\pi_\infty$-regular, semiperfect and $\pi$-regular),
        then so is $\End(M_S)$
        and there exists $n\in \N$
        such that $\Jac(\End(M_S))^n\subseteq \Jac(\End(M_R))$.
    \end{cor}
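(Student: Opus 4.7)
The plan is essentially to chain together the two results that immediately precede the corollary, since the statement is formally their composition. The first ingredient is Proposition \ref{RING:PR:endo-ring-is-semi-invariant}, which provides the crucial structural observation: under the inclusion $R \subseteq S$, the ring $\End(M_S)$ sits inside $\End(M_R)$ as a semi-invariant subring. Concretely, the right $S$-action on $M$ gives a ring homomorphism $\vphi : S^{\op} \to \End(M_\Z)$, and an $R$-linear endomorphism of $M$ is $S$-linear exactly when it commutes with $\im \vphi$; thus $\End(M_S) = \Cent_{\End(M_R)}(\im \vphi)$, exhibiting it as a semi-centralizer (hence semi-invariant) subring of $\End(M_R)$.

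The second ingredient is Theorem \ref{RING:TH:main-res-general}, which states that each of the four properties in question---semiprimary, right perfect, semiperfect and $\pi_\infty$-regular, semiperfect and $\pi$-regular---passes from a ring to any of its semi-invariant subrings, and moreover yields an $n \in \N$ with $\Jac(R_0)^n \subseteq \Jac(R)$. Applying this with $R = \End(M_R)$ and $R_0 = \End(M_S)$ therefore gives both assertions of the corollary in a single step.

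Since the corollary is a direct specialization of the general theorem to a specific naturally occurring class of semi-invariant subrings, there is no substantive obstacle to overcome; the only thing worth verifying is that Proposition \ref{RING:PR:endo-ring-is-semi-invariant} genuinely produces a semi-invariant subring in the precise sense used by Theorem \ref{RING:TH:main-res-general}, which it does (semi-centralizer and semi-invariant subrings coincide by Proposition \ref{RING:PR:semi-invariant-definition-prop}). Thus the proof write-up will amount to two sentences: cite Proposition \ref{RING:PR:endo-ring-is-semi-invariant} to identify the semi-invariant embedding, then cite Theorem \ref{RING:TH:main-res-general} to conclude.
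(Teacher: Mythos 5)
Your proposal is correct and is exactly the paper's own proof: the paper proves this corollary in one line, citing Theorem \ref{RING:TH:main-res-general} and Proposition \ref{RING:PR:endo-ring-is-semi-invariant}. Nothing further is needed.
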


    \begin{proof}
        This follows from the last theorem and Proposition \ref{RING:PR:endo-ring-is-semi-invariant}.
    \end{proof}

    \begin{remark}
        Camps and Dicks proved in \cite{CaDi93} that
        a \emph{rationally closed} subring of a semilocal ring
        is semilocal, thus implying the semilocal analogues of Theorem \ref{RING:TH:main-res-general} and
        Corollary \ref{RING:CR:main-cr:I}, excluding the part regarding the Jacobson radical
        (which indeed fails in this case; see Example \ref{RING:EX:non-semi-inv-subring}).
        In fact, the semilocal analogue of Corollary \ref{RING:CR:main-cr:I} was noticed in \cite[Pr.\ 2.7]{FacHer06}.
        However, we cannot use this analogue with the Krull-Schmidt Theorem (as we do in Section \ref{section:applications}
        with our results) because
        modules with semilocal endomorphism ring need not have a Krull-Schmidt decomposition, as shown in  \cite{FHL95} and \cite{BFR01}.

        Nevertheless, as there are plenty of weaker Krull-Schmidt theorems for modules
        that do not require $\End(M_R)$ to be semiperfect  (mainly due to Facchini et al.; e.g.\ \cite{BFR01},
        \cite{FaEcKo10}), it might be that if $M,R,S$ are as in Corollary \ref{RING:CR:main-cr:I}
        and $\End(M_R)$ is merely semiperfect, then $M$ has a Krull-Schmidt decomposition over $S$
        (despite the fact $\End(M_S)$ need not be semiperfect).
        To the author's best knowledge, this topic is still open.
    \end{remark}

\rem{
    \begin{remark} \label{RING:RM:main-cr:extension}
        Corollary \ref{RING:CR:main-cr:I} remains true if we replace semiprimary with
        semiperfect-and-$\pi$-regular (-$\pi_\infty$-regular). The proof is the same.
    \end{remark}
}

    We finish this section with a supplementary result for algebras.

    \begin{prp}
        Let $R\subseteq S$ be rings and $\Sigma\subseteq \End(S)$. Assume there
        is a division ring $D\subseteq R$ such that $\sigma(D)\subseteq D$
        for all $\sigma\in\Sigma$.
        Then $\dim {}_{D^\Sigma}R^\Sigma\leq \dim {}_DR$.
    \end{prp}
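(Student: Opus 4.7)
My plan is to imitate the classical Artin/Dedekind linear independence argument, adapted to the (possibly non-commutative) one-sided setting.

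First I would set up the objects: since any ring endomorphism $\sigma \in \Sigma$ restricts to a ring endomorphism of the division ring $D$ (by hypothesis $\sigma(D)\subseteq D$), and ring endomorphisms of division rings send inverses to inverses, the fixed set $D^\Sigma$ is a sub-division-ring of $D$. Next, $R$ carries a natural left $D$-module structure via multiplication inside $R$, and $R^\Sigma$ carries a natural left $D^\Sigma$-module structure: if $d\in D^\Sigma$ and $r\in R^\Sigma$, then $\sigma(dr)=\sigma(d)\sigma(r)=dr$ for all $\sigma\in\Sigma$. So both dimensions in the statement make sense.

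The heart of the proof is to show that any $D^\Sigma$-linearly independent subset of $R^\Sigma$ remains $D$-linearly independent in $R$; the dimension inequality is then immediate. Suppose for contradiction there are $r_1,\dots,r_n\in R^\Sigma$ which are $D^\Sigma$-linearly independent but satisfy some relation $\sum_{i=1}^n d_i r_i=0$ with $d_i\in D$ not all zero. Choose such a relation with the minimal number of nonzero coefficients, and assume $d_1\neq 0$. Left-multiplying by $d_1^{-1}$ (possible since $D$ is a division ring) I may assume $d_1=1$. Applying any $\sigma\in\Sigma$ to the relation and using $\sigma(r_i)=r_i$ together with $\sigma(d_i)\in D$ gives
\[
r_1 + \sigma(d_2)r_2 + \cdots + \sigma(d_n)r_n = 0.
\]
Subtracting the original relation yields $\sum_{i=2}^n (d_i-\sigma(d_i))r_i=0$, a strictly shorter $D$-linear relation. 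Minimality forces $\sigma(d_i)=d_i$ for all $i$ and all $\sigma\in\Sigma$, i.e.\ $d_i\in D^\Sigma$; but then the original relation is a nontrivial $D^\Sigma$-linear dependence among $r_1,\dots,r_n$, contradicting their assumed independence.

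Conceptually nothing is hard here, but the one place that requires care is the non-commutativity: I must keep the $D$-scalars on the \emph{left} throughout (so that $\sigma$ interacts correctly with the product $dr = \sigma(d)\sigma(r)$ when $r\in R^\Sigma$), and I must be sure left-multiplication by $d_1^{-1}$ preserves the relation. Both are fine as stated, so the Artin-style minimality argument goes through verbatim.
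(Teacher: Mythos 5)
Your argument is correct. It reaches the same dimension inequality as the paper but by a different mechanism. The paper picks a left $D$-basis $\{v_i\}$ of the subspace $V=DR^\Sigma$ consisting of elements of $R^\Sigma$, writes an arbitrary $v\in R^\Sigma$ uniquely as $v=\sum_i d_iv_i$, applies $\sigma$ and invokes uniqueness of coordinates to conclude $d_i\in D^\Sigma$; thus $\{v_i\}$ is simultaneously a $D$-basis of $V$ and a $D^\Sigma$-basis of $R^\Sigma$, giving $\dim_{D^\Sigma}R^\Sigma=\dim_D V\le\dim_D R$ in one line. You instead establish the dual statement, that every $D^\Sigma$-independent finite subset of $R^\Sigma$ stays $D$-independent in $R$, via the classical Artin--Dedekind minimal-relation trick (normalize the leading coefficient, apply $\sigma$, subtract, invoke minimality). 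Both proofs pivot on the same observation --- applying $\sigma$ to a $D$-combination of $\Sigma$-fixed vectors transforms the scalars while leaving the vectors alone --- but the paper packages it as a uniqueness-of-coordinates argument, which avoids the extremal/induction step, while yours is the more traditional character-independence-style proof. Neither buys anything the other misses; the paper's is marginally shorter, yours is arguably more self-contained since it never introduces the auxiliary space $V$.
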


    \begin{proof}
        Consider the left $D$-vector space $V=DR^\Sigma$. Let $\{v_i\}_{i\in I}\subseteq R^\Sigma$
        be a left $D$-basis for $V$. We claim that $\{v_i\}_{i\in I}$ is a left $D^\Sigma$-basis
        for $R^\Sigma$. Indeed, let $v\in R^\Sigma$. Then  there are
        unique $\{d_i\}_{i\in I}\subseteq D$ (almost all $0$)
        such that $v=\sum_i d_iv_i$. However,
        for all $\sigma\in \Sigma$, $v=\sigma(v)=\sum_i \sigma(d_i)v_i$, so
        $\sigma(d_i)=d_i$ for all $i\in I$, hence $v\in \sum_{i\in I}D^\Sigma v_i$.
        Therefore, $\dim {}_{D^\Sigma}R^\Sigma = \dim {}_DV\leq \dim {}_DR$.
    \end{proof}

    \begin{remark}
        An \emph{invariant} subring of a f.d.\ algebra need not be left nor right artinian,
        even when invariants are taken w.r.t.\ to the action of a finite cyclic group.
        This was demonstrated by Bjork in \cite[\S2]{Bj71B}.
        In particular, the assumption $\sigma(D)\subseteq D$ for all $\sigma\in\Sigma$ in the last proposition
        is essential. However, Bjork also proved that if $\Sigma$ is a \emph{finite group} acting on a f.d.\ algebra over
        a \emph{perfect} field, then the invariant subring (w.r.t.\ $\Sigma$) is artinian; see \cite[Th. 2.4]{Bj71B}.
        For a detailed discussion about when a subring of an artinian ring is artinian, see
        \cite{Bj71B} and \cite{Bj71A}.
    \end{remark}

\section{Topologically Semi-Invariant Subrings}
\label{section:top-rings}

    In this section we specialize the notions of
    semi-invariance and and $\pi$-regularity to certain topological rings.
    As a result we obtain a topological analogue of Theorem \ref{RING:TH:main-res-general}
    (Theorem \ref{RING:TH:main-res-for-quasi-pi-reg-rings}),
    which is used to prove that \emph{topologically semi-invariant}
    subrings of
    semiperfect pro-semiprimary rings are semiperfect and pro-semiprimary (Theorem \ref{RING:TH:pro-semiprim-full-result}).
    Note that once
    restricted to
    \emph{discrete} topological rings, some of the results of this section
    reduce to results from the previous sections. However, the latter are
    not superfluous since we will rely on them.
    For a general reference about topological rings, see \cite{Wa93}.

    \begin{dfn}
        A topological ring $R$ is called \emph{linearly topologized} (abbreviated: LT) if it admits
        a local basis (i.e.\ a basis of neighborhoods of $0$) consisting of two-sided ideals. In
        this case the topology on $R$ is called \emph{linear}.
    \end{dfn}

    Let us set some general notation: For any topological ring $R$,  let $\calI_R$ denote
    its set of open ideals. Then $R$ is LT if and only if $\calI_R$ is a local basis.
    We use $\cHom$ ($\cEnd$)
    to denote continuous homomorphisms (endomorphisms).
    The category of \emph{Hausdorff} linearly topologized rings will be denoted
    by $\LTR$, where $\Hom_{\LTR}(A,B)=\cHom(A,B)$ for all $A,B\in \LTR$.\footnote{
        The subscript ``$2$'' in $\LTR$ stands for the second separation axiom $\mathrm{T}_2$ (i.e.\ being Hausdorff).
    }
    A subring of a topological ring
    is assumed to have the induced topology.
    In particular, if $R\in \LTR$ then so is any subring of $R$.
    The following facts will be used freely throughout the paper. For proofs, see \cite[\S3]{Wa93}.
    \begin{enumerate}
        \item[(1)] Let $(G,+)$ be an abelian topological group and let $\calB$ be a local
        basis of $G$. Then for any subset $X\subseteq G$, $\overline{X}=\bigcap_{U\in\calB}(X+U)$.
        \item[(2)] Under the pervious assumptions, $G$ is Hausdorff $\iff$ $\overline{\{0\}}=\bigcap_{U\in\calB}U=\{0\}$.
        \item[(3)] Given a ring $R$ and a filter base of ideals $\calB$, there exists
        a unique ring topology on $R$ with local basis $\calB$. This topology makes $R$ into an LT ring.
    \end{enumerate}

    \begin{example} \label{RING:EX:top-rings}
        (i) Any ring assigned with the discrete topology is LT.

        (ii) $\Z_p$ (with the $p$-adic topology) is LT but $\Q_p$ is not.

        (iii) Let $R$ be an LT ring and let $n\in\N$. We make $\nMat{R}{n}$ into an LT ring
        by assigning it the unique ring topology with local basis $\{\nMat{I}{n}\where I\in\calI_R\}$.

        (iv) If $R$ is LT and $e\in \ids{R}$, then $eRe$ is LT w.r.t.\ the induced topology.

        (v) Let $\{R_i\}_{i\in I}$ be LT rings. Then $\prod_{i\in I}R_i$ is LT w.r.t.\
        the product topology.

        (vi) Let $R$ be an inverse limit of LT rings $\{R_i\}_{i\in I}$,
        with the topology induced from the product topology on $\prod_{i\in I}R_i$.\footnote{
            With this topology $R$ is indeed the inverse limit of $\{R_i\}_{i\in I}$ in category of topological rings, i.e.\
            it admits the required universal property.
        }
        Then by (v) $R$ is LT.

        (vii) If $R$ is LT and $J\idealof R$, then $R/J$ with the quotient topology is LT. Indeed, $\{I/J\where J\subseteq I\in\calI_R\}$
        is a local basis for that topology. The ring $R/J$ is Hausdorff if and only if $J$ is closed, and discrete if and only if $J$ is open.
    \end{example}

    The last example implies that $\LTR$ is closed under products, inverse limits and forming matrix rings (with the appropriate
    topologies). We will say that a property $\calQ$ of LT rings is preserved under Morita equivalence if
    whenever $R\in\LTR$ has $\calQ$, then so does $\nMat{R}{n}$ and $eRe$ for $e\in\ids{R}$ s.t. $eR$ is a
    progenerator.\footnote{
        Caution: There is a notion of Morita equivalence for (right) LT rings, but we will not use it in this paper; see \cite{Gr88}
        and related articles.
    }

    \begin{dfn}
        Let $R\in \LTR$. A subring $R_0\subseteq R$ is called a \term{topologically semi-invariant}
        (abbrev.: T-semi-invariant) subring
        if there is $R\subseteq S\in\LTR$ and a set $\Sigma\subseteq \cEnd(S)$
        such that $R_0=R^\Sigma$. The subring $R_0$ is called a \term{topologically semi-centralizer}
        (abbrev.: T-semi-centralizer)
        subring if there is $R\subseteq S\in\LTR$ and a set $X\subseteq S$ such
        that $R_0=\Cent_R(X)$.
    \end{dfn}

    A T-semi-invariant subring is always closed. In addition,
    there is an analogue of Proposition \ref{RING:PR:semi-invariant-definition-prop} for T-semi-invariant rings.

    \begin{prp} \label{RING:PR:T-semi-invariant-definition-prop}
        Let $R_0$ be a subring of $R\in \LTR$. The following are equivalent:
        \begin{enumerate}
            \item[(a)] There is $R\subseteq S\in \LTR$ and a set $\Sigma\subseteq \cEnd(S)$ such that $R_0=R^\Sigma$.
            \item[(b)] There is $R\subseteq S\in \LTR$ and a subset $X\subseteq S$ such that $R_0=\Cent_R(X)$.
            \item[(c)] There is $R\subseteq S\in \LTR$ and $\sigma\in \cAut(S)$
            with  $\sigma^2=\id$ and $R_0=R^{\{\sigma\}}$.
            \item[(d)] There is $R\subseteq S\in \LTR$ and an \emph{inner} automorphism $\sigma\in \cAut(S)$ such that $\sigma^2=\id$
            and $R_0=R^{\{\sigma\}}$.
            \item[(e)] There are LT Hausdorff rings $\{S_i\}_{i\in I}$ and
            continuous ring homomorphisms $\psi^{(1)}_i,\psi^{(2)}_i:R \to S_i$
            such that $R_0=\{r\in R\,:\, \psi^{(1)}_i(r)=\psi^{(2)}_i(r)~\forall i\in I\}$.
        \end{enumerate}
    \end{prp}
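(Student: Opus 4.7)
The plan is to mirror the cyclic proof of Proposition~\ref{RING:PR:semi-invariant-definition-prop} along (a)$\derives$(e)$\derives$(c)$\derives$(d)$\derives$(b)$\derives$(a), verifying at each step that the constructed rings lie in $\LTR$ and that the homomorphisms involved are continuous. The implications (a)$\derives$(e) and (d)$\derives$(b) are immediate, since the relevant maps (a given $\sigma \in \cEnd(S)$, the identity, and conjugation by a fixed element) are continuous by hypothesis.

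For (e)$\derives$(c), I would augment the index family by a distinguished $i_0$ with $S_{i_0} = R$ and $\psi_{i_0}^{(1)} = \psi_{i_0}^{(2)} = \id_R$ (which does not change the invariants), form $S = \prod_{(i,j)} S_{ij}$ with the product topology (in $\LTR$ by Example~\ref{RING:EX:top-rings}(v)(vi)), and take the diagonal map $\Psi$ and the coordinate-swap $\sigma$ exactly as in the algebraic proof. The new topological point is that $\Psi$ is a topological embedding: $\Psi$ is continuous componentwise, while the projection $S \to S_{i_0,1}$, which is also continuous, restricts on $\Psi(R)$ to a continuous inverse of $\Psi$. Hence the subspace topology on $\Psi(R)$ coincides with the given topology on $R$, and identifying $R$ with $\Psi(R)$ lets the rest of the algebraic argument go through.

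For (c)$\derives$(d), I would define $S'' = S \oplus Sx$ with $x^2 = 1$ and $xs = \sigma(s)x$, and topologize it via the product identification with $S \oplus S$. As a local basis of two-sided open ideals I would use $\{I \oplus Ix : I \in \calI_S,\ \sigma(I) = I\}$; these $\sigma$-invariant open ideals form a local basis of $S$ because continuity of $\sigma$ together with $\sigma^2 = \id$ force $\sigma(I) = \sigma^{-1}(I)$ to be open, so $I \cap \sigma(I)$ is an open $\sigma$-invariant ideal inside any given $I \in \calI_S$. Hausdorffness of $S''$ then follows from that of $S$. Conjugation by $x$ is inner, of order two, continuous (by continuity of $\sigma$), and restricts on $S$ to $\sigma$, so its $R$-invariants are $R_0$.

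The main obstacle is (b)$\derives$(a), where I must exhibit a Hausdorff linear topology on $S' = S\dpar{t}$ in which $S$ embeds as a topological subring and in which conjugation by each $t^{-1}+x$ ($x \in X$) is continuous. My candidate local basis is $\{V_I : I \in \calI_S\}$, where $V_I$ consists of those Laurent series all of whose coefficients lie in $I$. The key verifications are that each $V_I$ is a two-sided ideal of $S'$ (coefficient-wise, using $I \idealof S$), that $V_I \cap S = I$ so that $S \hookrightarrow S'$ is a topological embedding, and that $\bigcap_I V_I = 0$ yields Hausdorffness. Continuity of the conjugation automorphisms then reduces to two observations: $t^{-1}+x$ is a unit of $S'$ with inverse $\sum_{n\geq 0}(-x)^n t^{n+1}$, and left or right multiplication by any fixed element of $S'$ carries each $V_I$ into itself. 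Centrality of $t^{-1}$ gives $\Cent_R(t^{-1}+X) = \Cent_R(X) = R_0$, closing the cycle.
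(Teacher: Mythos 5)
Your proof is correct and follows essentially the same route as the paper: the same cyclic scheme (a)$\Rightarrow$(e)$\Rightarrow$(c)$\Rightarrow$(d)$\Rightarrow$(b)$\Rightarrow$(a), with the same choices of topologies (product topology on $\prod S_{ij}$, the $\sigma$-invariant open ideals $I\cap\sigma(I)$ giving the local basis for $S''=S'/\ideal{x^2-1}$, and the coefficientwise open ideals of $S\dpar{t}$). You have merely spelled out the verifications (topological embeddings, two-sidedness of $I\oplus Ix$, continuity of conjugation) that the paper explicitly leaves to the reader.
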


    \begin{proof}
        This is essentially the proof of Proposition \ref{RING:PR:semi-invariant-definition-prop},
        but we need to endow the rings constructed throughout the proof with topologies making
        them into LT Hasudorff rings that contain $R$ as a topological ring. This is briefly
        done below; the details are left to the reader.

        (e)$\derives$(c): Endow $S=\prod_{(i,j)\in I\times \{1,2\}}S_{ij}$ with the product topology.

        (c)$\derives$(d): Observe that
        $\calB=\{I\cap \sigma(I)\where I\in\calI_S\}$ is a local basis of $S$ and $\sigma(J)=J$ for all $J\in\calB$.
        Assign $S'=S[x;\sigma]$ the unique ring topology with local basis $\{J[x;\sigma]\where J\in\calB\}$,
        where $J[x;\sigma]$ denotes the set of polynomials with (left) coefficients in $J$, and give $S''=S'/\ideal{x^2-1}$
        the quotient topology.

        (b)$\derives$(a): Give $S\dpar{t}$ the unique ring topology with local basis $\{I\dpar{t}\where I\in\calI_S\}$,
        where $I\dpar{t}$ denotes the set of polynomials with coefficients in $I$.
    \end{proof}

    We now generalize the notion of $\pi$-regularity for topological rings.
    Our definition is inspired by Proposition \ref{RING:PR:pi-reg-equiv-cond}.

    \begin{dfn}
        Let $R\in\LTR$ and $a\in R$. The element $a$ is called \term{quasi-$\pi$-regular} in $R$ if
        there is an idempotent $e\in \ids{R}$ such that:
        \begin{enumerate}
            \item[(A)] $a=eae+faf$ where $f:=1-e$.
            \item[(B)] $eae$ is invertible in $eRe$.
            \item[(C$'$)] $(faf)^n\xrightarrow{n\to\infty} 0$ (w.r.t.\ the topology on $R$).
        \end{enumerate}
        Call $R$ \term{quasi-$\pi$-regular} if all its elements are quasi-$\pi$-regular.
    \end{dfn}

    Since we only consider LT rings, condition (C$'$) means that for any $I\in\calI_R$ there is $n\in\N$ such that $(faf)^n\in I$.
    This implies that quasi-$\pi$-regularity coincide with $\pi$-regularity for
    discrete topological rings (take $I=\{0\}$) and that if $a$ is quasi-$\pi$-regular in $R$
    then $a+I$ is $\pi$-regular in $R/I$ for all $I\in\calI_R$. In particular, if $R$ is
    quasi-$\pi$-regular then $R/I$ is $\pi$-regular.\rem{ Henceforth, these facts will be used freely.}
    We will call the idempotent $e$ satisfying conditions (A),(B) and (C$'$) the \emph{associated idempotent}
    of $a$. The following lemma shows that it is unique.

    \begin{lem} \label{RING:LM:uniqueness-lemma}
        Let $R\in\LTR$ and $a\in R$ a quasi-$\pi$-regular element. Then
        the idempotent $e$ satisfying conditions (A),(B) and (C\,$'$\!) is uniquely determined by $a$.
    \end{lem}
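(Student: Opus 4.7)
The plan is to reduce the uniqueness question to the already-established discrete case (Proposition \ref{RING:PR:pi-reg-equiv-cond}) by passing to quotients by open ideals and then invoking the Hausdorff property to conclude.

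First, I would observe that for any $I\in\calI_R$ the quotient ring $R/I$ is discrete, and that if $e\in \ids R$ satisfies (A), (B) and (C$'$) for $a$ in $R$, then its image $\bar e=e+I$ satisfies conditions (A), (B) and (C) for $\bar a=a+I$ in $R/I$. Condition (A) pushes forward trivially. For (B), an inverse $b\in eRe$ of $eae$ maps to $\bar b\in \bar e(R/I)\bar e$ satisfying $\bar b(\bar e\bar a\bar e)=\bar e=(\bar e\bar a\bar e)\bar b$, so $\bar e\bar a\bar e$ is invertible in $\bar e(R/I)\bar e$. For (C), condition (C$'$) gives some $n\in\N$ with $(faf)^n\in I$, i.e.\ $(\bar f\bar a\bar f)^n=0$ in $R/I$. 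Thus $\bar e$ is the associated idempotent of $\bar a$ in the discrete ring $R/I$.

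Next, suppose $e,e'\in\ids R$ both satisfy (A), (B), (C$'$) for $a$. Applying the previous step to each, both $\bar e$ and $\bar e'$ are associated idempotents of $\bar a$ in $R/I$. By the uniqueness clause of Proposition \ref{RING:PR:pi-reg-equiv-cond}, $\bar e=\bar e'$ in $R/I$, that is $e-e'\in I$. Since this holds for every $I\in\calI_R$ and $\calI_R$ is a local basis for the Hausdorff topology on $R$, we obtain
\[
e-e'\in\bigcap_{I\in\calI_R} I=\overline{\{0\}}=\{0\},
\]
so $e=e'$.

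There is no real obstacle here: the argument is a clean reduction to the discrete case, and the only subtle point is checking that condition (C$'$) correctly specializes to the nilpotency condition (C) in each quotient $R/I$, which is precisely the content of the definition of quasi-$\pi$-regularity in an LT ring.
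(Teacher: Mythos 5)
Your proof is correct and follows essentially the same route as the paper: reduce modulo each open ideal $I\in\calI_R$, invoke the uniqueness of the associated idempotent in the discrete quotient $R/I$ from Proposition \ref{RING:PR:pi-reg-equiv-cond}, and then use the Hausdorff property to conclude $e=e'$. The only difference is that you explicitly verify that $e+I$ satisfies (A)--(C) in $R/I$, a step the paper treats as already noted in the remarks preceding the lemma.
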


    \begin{proof}
        Assume both $e$ and $e'$ satisfy conditions (A), (B), (C$'$) and
        let $I\in\calI_R$. Then $e+I$ and $e'+I$ are associated idempotents of $a+I$ in $R/I$,
        hence $e+I=e'+I$, or equivalently $e-e'\in I$.
        It follows that $e-e'\in\bigcap_{I\in\calI_R}I=\{0\}$ (since $R$ is Hausdorff), so $e=e'$.
    \end{proof}

    \begin{remark} \label{RING:RM:remark-after-uniqueness-lemma}
        (i) In the assumptions of the previous lemma it is also possible to show
        that $eR=\bigcap_{n=1}^\infty a^nR$
        and $(1-e)R=\{r\in R\suchthat a^nr\xrightarrow{n\to\infty} 0\}$.

        (ii) If we do not restrict to LT Hausdorff rings, the associated idempotent need not be unique.
        For example, in $\Q_p$ both $0$ and $1$ are associated idempotents of $p$. (It is not known
        if Lemma \ref{RING:LM:uniqueness-lemma} holds under the assumption that $R$ is \emph{right LT}, i.e.\
        has a local basis of right ideals).

        (iii) If one assigns a semiperfect ring $R$ with $\bigcap_{n=1}^{\infty}\Jac(R)^n=\{0\}$ the $\Jac(R)$-adic
        topology, then $R$ becomes a Hausdorff LT ring and for any $a\in R$ there is an idempotent $e$ satisfying
        conditions (B) and (C$'$) (but such $e$ need not be
        unique even when $R$ is simple). However, condition (A) might be impossible to satisfy for some $a$.
        Indeed, the ring $R$ constructed in
        example \ref{RING:EX:inv-subring-semiperfect-case-I} below, which is isomorphic to $\nMat{\Z_{\ideal{3}}}{4}$,
        is a semiperfect ring having no ring topology making it into a quasi-$\pi$-regular Hausdorff LT ring.
        As $\Z_{\ideal{3}}$ is quasi-$\pi$-regular w.r.t.\ the $3$-adic topology (since it is local),
        it follows that quasi-$\pi$-regularity is not preserved under Marita equivalence. (This also
        follows from the comment before Proposition \ref{RING:PR:pi-reg-hecke-algebra}.)
    \end{remark}

    It light of the last remark, it is convenient to call an LT Hausdorff ring $R$ \emph{quasi-$\pi_\infty$-regular} if $\nMat{R}{n}$
    is quasi-$\pi$-regular for all $n$. This property is preserved under Morita
    equivalence and turns out to be related with
    \emph{Henselianity} (see Section \ref{section:modules}).
    However, to avoid cumbersome notation, we will not mention it in this section.
    All statements henceforth can be easily seen to hold once replacing (quasi-)$\pi$-regular
    with (quasi-)$\pi_\infty$-regular.

    \begin{cor} \label{RING:CR:quasi-pi-reg-transfers-to-semi-inv-subring}
        (i) Let $R\in\LTR$, let $R_0$ be a T-semi-invariant subring of $R$ and let $a\in R_0$ be quasi-$\pi$-regular
        in $R$.
        Then $a$ is quasi-$\pi$-reuglar in $R_0$.

        (ii) A T-semi-invariant subring of a quasi-$\pi$-regular ring is quasi-$\pi$-regular.
    \end{cor}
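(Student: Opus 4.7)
The plan is to mirror the proof of Corollary \ref{RING:CR:pi-reg-transfers-to-semi-inv-subring}, using Lemma \ref{RING:LM:uniqueness-lemma} in place of the uniqueness clause of Proposition \ref{RING:PR:pi-reg-equiv-cond}. For part (i), let $R \subseteq S \in \LTR$ and $\Sigma \subseteq \cEnd(S)$ witness $R_0 = R^\Sigma$, and let $e \in \ids{R}$ be the associated idempotent of $a$ in $R$.

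First I would verify that $e$ remains an associated idempotent of $a$ when viewed in $S$: condition (A) transfers automatically; condition (B) transfers because the inverse $b \in eRe$ of $eae$ remains an inverse in $eSe \supseteq eRe$; and condition (C$'$) transfers since $R$ carries the subspace topology from $S$, so a sequence in $R$ that converges to $0$ in $R$ also converges to $0$ in $S$. Next, for each $\sigma \in \Sigma$, I would show $\sigma(e)$ satisfies conditions (A), (B), (C$'$) for $a$ in $S$: applying $\sigma$ to $a = eae + faf$ and using $\sigma(a) = a$ and $\sigma(f) = 1-\sigma(e)$ yields (A); $\sigma(b)$ is an inverse of $\sigma(eae) = \sigma(e)a\sigma(e)$ in $\sigma(e) S \sigma(e)$, giving (B); and continuity of $\sigma$ with $(faf)^n \to 0$ gives $(\sigma(f)a\sigma(f))^n = \sigma((faf)^n) \to 0$, settling (C$'$). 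Lemma \ref{RING:LM:uniqueness-lemma} then forces $\sigma(e) = e$, so $e \in S^\Sigma \cap R = R_0$.

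It remains to check that $e$ actually witnesses quasi-$\pi$-regularity of $a$ \emph{inside} $R_0$. Condition (A) is immediate; (C$'$) holds since $R_0 \subseteq R$ carries the induced topology. For (B), I need the inverse $b$ of $eae$ to lie in $eR_0 e$; applying the argument above, $\sigma(b)$ also satisfies $\sigma(b)(eae) = (eae)\sigma(b) = e$ in $eSe$, so uniqueness of two-sided inverses yields $\sigma(b) = b$ for every $\sigma \in \Sigma$, hence $b \in R_0$. Part (ii) is then immediate from (i): every element of $R_0$ is quasi-$\pi$-regular in $R$, hence in $R_0$ (and the analogous assertion for $\pi_\infty$-regularity follows by observing that $\nMat{R_0}{n}$ is a T-semi-invariant subring of $\nMat{R}{n}$, as in the proof of Corollary \ref{RING:CR:pi-reg-transfers-to-semi-inv-subring}(ii)).

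The only mildly delicate point is the interplay between the three ambient topologies on $R_0$, $R$, and $S$ when verifying conditions (B) and (C$'$) — specifically, ensuring that invertibility and convergence established in the larger ring descend to the smaller one. This is where the convention that subrings carry the induced topology, together with the $\Sigma$-invariance argument for the inverse $b$, does the work; once these observations are in hand the proof is essentially a topological transcription of the earlier $\pi$-regular argument.
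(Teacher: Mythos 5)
Your proof is correct and takes essentially the same route as the paper, which simply says the argument is ``similar to the proof of Corollary~\ref{RING:CR:pi-reg-transfers-to-semi-inv-subring}.'' You follow that template faithfully: pass to the ambient ring $S$, show $\sigma(e)$ satisfies (A), (B), (C$'$) using continuity of $\sigma$ for (C$'$), and invoke Lemma~\ref{RING:LM:uniqueness-lemma} to conclude $\sigma(e)=e$ and hence $e\in R_0$.

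One small point worth noting: you are more careful than the printed proof of Corollary~\ref{RING:CR:pi-reg-transfers-to-semi-inv-subring} in one respect. After concluding $e\in R_0$, one must still verify condition (B) relative to $R_0$, i.e.\ that the inverse $b$ of $eae$ actually lies in $eR_0e$ rather than just in $eRe$; the paper's proof of the discrete case treats this as implicit. Your observation that $\sigma(b)$ is a two-sided inverse of $\sigma(eae)=eae$ in $eSe$, so that uniqueness of inverses forces $\sigma(b)=b$ and hence $b\in S^\Sigma\cap R=R_0$, is exactly the right way to close that gap. (An equivalent argument: since $\sigma(e)=e$, the map $\sigma$ restricts to an endomorphism of $eSe$, so $eR_0e=(eRe)^\Sigma$ is a semi-invariant, hence rationally closed, subring of $eRe$, and $eae\in eR_0e$ is a unit of $eRe$.) Your remark on the $\pi_\infty$-regular version via matrix rings goes slightly beyond what (ii) states, but it is correct and consistent with the convention the paper sets out at the end of Section~\ref{section:top-rings}.
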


    \begin{proof}
        This is similar to the proof of Corollary \ref{RING:CR:pi-reg-transfers-to-semi-inv-subring}.
    \end{proof}

    \begin{lem} \label{RING:LM:main-lemma-II-quasi-pi-reg}
        Let $R\in\LTR$ be quasi-$\pi$-regular. Then:
        \begin{enumerate}
            \item[(i)] For all $a\in \Jac(R)$, $a^n\xrightarrow{n\to\infty}0$. (That is, $\Jac(R)$ is ``topologically nil'').
            \item[(ii)] $R$ is semiperfect $\iff$ $R$ does not contain an infinite set of orthogonal idempotents.
        \end{enumerate}
    \end{lem}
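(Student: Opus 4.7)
For (i), the algebraic argument from Lemma \ref{RING:LM:main-lemma-II}(i) transfers almost verbatim. Given $a \in \Jac(R)$ with associated idempotent $e = e_a$, let $b \in eRe$ invert $eae$ there; then $e = b(eae) \in R \cdot \Jac(R) \subseteq \Jac(R)$. Since an idempotent in $\Jac(R)$ must vanish ($1 - e \in \units{R}$ together with $e(1-e) = 0$ forces $e = 0$), we conclude $a = faf$ with $f = 1$, so condition (C$'$) directly delivers $a^n \to 0$.

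For (ii), the forward direction is standard. For the converse, I would adapt the inductive strategy of Lemma \ref{RING:LM:main-lemma-II}(ii): if some $a \in R$ has $e := e_a \notin \{0, 1\}$, split $R$ via Proposition \ref{RING:PR:morita-preserved-properties}(ii) and induct on the corners $eRe$ and $(1-e)R(1-e)$, each of which is Hausdorff LT and inherits quasi-$\pi$-regularity (for $a' \in eRe$, the continuous involution $\sigma$ on $R$ conjugating by $2e - 1$ fixes $a'$, so Lemma \ref{RING:LM:uniqueness-lemma} forces $e_{a'} \in R^{\{\sigma\}} = eRe + (1-e)R(1-e)$, and a Peirce-decomposition argument using (B) further confines $e_{a'}$ to $eRe$). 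Neither corner admits an infinite orthogonal idempotent family, so the induction terminates by the standard chain-of-idempotents argument and produces semiperfectness. This reduces everything to the base case: \emph{if $e_a \in \{0, 1\}$ for every $a \in R$, then $R$ is local}.

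The base case is the real obstacle, because the algebraic trick---sum a finite geometric series to invert $1 - a$ when $a$ is nilpotent---is unavailable: (C$'$) only yields $a^n \to 0$ and $R$ is not assumed complete. My plan is to reduce modulo each $I \in \calI_R$. Every $R/I$ is discrete and $\pi$-regular, and the hypothesis $e_a \in \{0, 1\}$ passes to $R/I$, so Lemma \ref{RING:LM:main-lemma-II}(ii) applied there yields that $R/I$ is local. To deduce $R$ is local, I first observe that in a nontrivial Hausdorff LT ring, $a^n \to 0$ precludes $a$ from being a unit (else $1 = a^n a^{-n}$ lies in every $I \in \calI_R$, forcing $R = \{0\}$); hence $R \setminus \units{R} = \{a : e_a = 0\}$. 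I then show this set coincides with $\Jac(R)$: the inclusion $\Jac(R) \subseteq \{a : e_a = 0\}$ is (i), and for the reverse, if $e_a = 0$ and $r \in R$, then for each nontrivial $R/I$ the element $\overline{a}$ is nilpotent (as $a^n \in I$ eventually), so $\overline{1 + ar} \in \units{R/I}$; this rules out $e_{1+ar} = 0$, since then $(1 + ar)^n \in I$ for large $n$ while $\overline{(1 + ar)^n}$ remains a unit. Hence $1 + ar \in \units{R}$ for every $r$, giving $a \in \Jac(R)$. So $R \setminus \units{R} = \Jac(R)$ is an ideal, and $R$ is local.
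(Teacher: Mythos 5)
Your proof is correct and verifiable throughout, but it departs from the paper's argument in all three places where content is needed. In (i), you mirror the discrete proof of Lemma~\ref{RING:LM:main-lemma-II}(i) directly inside $R$ (writing $e=b(eae)\in\Jac(R)$ to force $e=0$), whereas the paper instead reduces modulo each open ideal $I$ and applies Lemma~\ref{RING:LM:main-lemma-II}(i) to the discrete $\pi$-regular ring $R/I$. In the inductive step of (ii), you obtain $e_b\in eRe$ by exhibiting $eRe\oplus(1-e)R(1-e)$ as the fixed ring of the continuous involution $x\mapsto(2e-1)x(2e-1)$, applying Lemma~\ref{RING:LM:uniqueness-lemma}, and finishing with a Peirce computation via condition~(B); the paper again reduces modulo $I$ and invokes Proposition~\ref{RING:PR:pi-reg-hecke-algebra}(i) for each $R/I$. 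For the base case of (ii) your route is the most divergent: you pass to each nontrivial quotient $R/I$ to get a discrete local ring, use it to show that $\{a:e_a=0\}$ is closed under $a\mapsto 1+ar$ and hence equals $\Jac(R)$, and so conclude that non-units form an ideal. The paper instead argues directly in $R$ with the algebraic identity $1=(1-a^n)^n+a^nh(a)$, showing that $a^n$ and $(1-a)^n$ cannot both lie in a proper $I\in\calI_R$, so one of $e_a,e_{1-a}$ equals $1$. Your base-case argument is longer but assembles cleanly from standard local-ring facts; the paper's is slicker but relies on a somewhat opaque polynomial identity. One small citation caution: you invoke Lemma~\ref{RING:LM:main-lemma-II}(ii) to conclude $R/I$ is local, but the stated lemma only gives ``semiperfect''; the locality under the hypothesis $e_a\in\{0,1\}$ for all $a$ is an intermediate claim inside that lemma's proof, so you should say you are reusing that internal argument rather than the lemma's conclusion.
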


    \begin{proof}
        (i) Let $I\in\calI_R$. Then $a+I\in(\Jac(R)+I)/I\subseteq \Jac(R/I)$,
        so by Lemma \ref{RING:LM:main-lemma-II}(i) applied to $R/I$ (which is $\pi$-regular),
        there is $n\in\N$ such that $a^n\in I$.

        (ii) We only show the non-trivial implication.
        For $a\in R$, let $e_a$ denote the associated idempotent of $a$.
        Note that $e_a=1$ implies $a\in\units{R}$ and $e_a=0$ implies $a^n\xrightarrow{n\to\infty}0$.

        Assume $e_a\in\{0,1\}$ for all $a\in R$. We claim $R$ is local. This is clear if $R=\{0\}$.
        Otherwise, let $a\in R$ and assume by contradiction that $e_a=e_{1-a}=0$.
        Let $R\neq I\in\calI_R$ (here we need $R\neq \{0\}$). Then there is $n\in \N$
        such that $a^n,(1-a)^n\in I$, implying $(1-a^n)^n=(1-a)^n(1+a+\dots+a^{n-1})^n\in I$.
        We can write $1=(1-a^n)^n+a^nh(a)$ for some $h(x)\in\Z[x]$, thus
        getting $1\in I$, in contradiction to the assumption $I\neq R$. Therefore, one of $e_a$, $e_{1-a}$ is $1$, hence
        one of $a$, $1-a$ is invertible.

        Now assume there is $a\in R$ with $e:=e_a\notin\{0,1\}$. Then we can induct on $eRe$
        and $(1-e)R(1-e)$ as in the proof of Lemma \ref{RING:LM:main-lemma-II}(ii).
        However, we need to verify that $eRe$ is quasi-$\pi$-regular (w.r.t.\ the induced topology).
        Let $b\in eRe$. It
        enough to show $e_b\in eRe$, i.e.\ $e_b=ee_be$. As $R\in\LTR$, this is equivalent to $e_b+I=ee_be+I$ for
        all $I\in \calI_R$. Indeed, since $R/I$ is $\pi$-regular, so is $e(R/I)e$ (by Proposition \ref{RING:PR:pi-reg-hecke-algebra}(i)),
        hence $b+I$ has an associated idempotent $\veps\in e(R/I)e$.
        However, it easy to see that $\veps$ is also the associated idempotent of $b+I$ in $R/I$, so necessarily $\veps=e_b+I$. As
        $\veps=(e+I)\veps (e+I)$,
        it follows that $e_b+I=ee_be+I$.
    \end{proof}

    We can now state and prove a T-semi-invariant analogue of Theorem \ref{RING:TH:main-res-general}.

    \begin{thm} \label{RING:TH:main-res-for-quasi-pi-reg-rings}
        Let $R_0$ be a T-semi-invariant subring of a semiperfect and quasi-$\pi$-regular ring $R\in\LTR$.
        Then $R_0$ is semiperfect and quasi-$\pi$-regular and
        there is $n\in \N$ such that $\Jac(R_0)^n\subseteq \Jac(R)$.
    \end{thm}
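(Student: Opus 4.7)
The plan is to imitate the proof of Theorem~\ref{RING:TH:main-res-general}, replacing ordinary nilpotence by topological nilpotence at the two points where it enters. Quasi-$\pi$-regularity of $R_0$ is immediate from Corollary~\ref{RING:CR:quasi-pi-reg-transfers-to-semi-inv-subring}(ii). For semiperfectness, Lemma~\ref{RING:LM:main-lemma-II-quasi-pi-reg}(ii) applied to $R$ rules out infinite sets of orthogonal idempotents in $R$, hence in $R_0$, and the same lemma applied to the now quasi-$\pi$-regular $R_0$ yields semiperfectness of $R_0$.

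For the Jacobson-radical inclusion I would project by $\pi$ to $\bar{R}:=R/\Jac(R)$, which is semisimple (hence right noetherian), and aim to show that $\pi(\Jac(R_0))$ is nil. Levitski's Theorem~\ref{RING:TH:nil-subring} would then promote this to nilpotence, giving $\Jac(R_0)^n\subseteq R_0\cap\Jac(R)\subseteq\Jac(R)$. For $a\in\Jac(R_0)$, the associated idempotent of $a$ in $R_0$ must vanish (else $eae$ would lie simultaneously in $\Jac(eR_0e)$ and $\units{eR_0e}$), so condition~(C$'$) yields $a^n\to 0$ in $R$. What is still needed is that this topological nilpotence forces $a^n\in\Jac(R)$ for some $n$; equivalently, that $\Jac(R)$ be closed in $R$, because then $\bar{R}$ is a Hausdorff LT semisimple ring, hence discrete, and convergence to $0$ in $\bar{R}$ collapses to literal vanishing.

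The main obstacle is therefore the closedness of $\Jac(R)$, which I would handle via the Pierce decomposition. Fix a complete set $e_1,\dots,e_k$ of orthogonal primitive idempotents in $R$; then $R=\bigoplus_{i,j}e_iRe_j$ decomposes topologically, the summands are closed in $R$ (as equalizers of continuous maps built from the $e_i$), and $\Jac(R)=\bigoplus_{i,j}e_i\Jac(R)e_j$, so it suffices to prove each piece is closed in $e_iRe_j$. For $i=j$, the corner $e_iRe_i$ is local and inherits quasi-$\pi$-regularity from $R$ (the associated idempotent of an element of $e_iRe_i$, by uniqueness, must lie in $e_iRe_i$), so $\Jac(e_iRe_i)$ equals the set of topologically nilpotent elements; this set is always closed in a Hausdorff LT ring by a short approximation argument modulo open ideals. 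For $i\neq j$, either $e_iR\not\cong e_jR$ as right $R$-modules, in which case $e_iRe_j\subseteq\Jac(R)$ outright, or one can choose $u_{ij}\in e_iRe_j$ and $u_{ji}\in e_jRe_i$ with $u_{ij}u_{ji}=e_i$ and $u_{ji}u_{ij}=e_j$, and then identify $e_i\Jac(R)e_j$ with the preimage of $\Jac(e_iRe_i)$ under the continuous map $x\mapsto xu_{ji}$, which is closed.

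With $\Jac(R)$ closed, the proof concludes as outlined: for every $a\in\Jac(R_0)$, $\pi(a)^n\to 0$ in the discrete ring $\bar{R}$, so $\pi(a)$ is nilpotent; $\pi(\Jac(R_0))$ is therefore nil, and Levitski's theorem finishes.
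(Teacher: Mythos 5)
Your proof is correct, and your first two steps (quasi-$\pi$-regularity of $R_0$ via Corollary~\ref{RING:CR:quasi-pi-reg-transfers-to-semi-inv-subring}(ii), semiperfectness via Lemma~\ref{RING:LM:main-lemma-II-quasi-pi-reg}(ii)) coincide with the paper's. The route to $\Jac(R_0)^n\subseteq\Jac(R)$ is genuinely different, however. The paper does \emph{not} try to make $R/\Jac(R)$ discrete and push nilpotence through the quotient; instead, for each open ideal $I$ it applies the discrete Lemma~\ref{RING:LM:main-lemma-III} to $(R_0+I)/I\subseteq R/I$ to get $\Jac(R_0)^{n_I}\subseteq\Jac(R)+I$, then observes that the exponents $n_I$ are uniformly bounded by the composition length of $R/\Jac(R)$, yielding $\Jac(R_0)^n\subseteq\overline{\Jac(R)}$ for a single $n$, and finally invokes closedness of $\Jac(R)$. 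Your approach instead shows each $a\in\Jac(R_0)$ is topologically nilpotent (re-proving Lemma~\ref{RING:LM:main-lemma-II-quasi-pi-reg}(i)), uses closedness of $\Jac(R)$ to conclude $R/\Jac(R)$ is a discrete semisimple ring, so that $\pi(\Jac(R_0))$ is literally nil, and finishes with Levitski. Both reach the same place; your route is arguably more direct at the level of ideas, while the paper's modulo-$I$ reduction applies the already-established discrete lemma and keeps the topological bookkeeping contained in a single limit.

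Where the two proofs diverge most is in establishing that $\Jac(R)$ is closed. The paper has a short stand-alone lemma (Lemma~\ref{RING:LM:closed-jacobson-radical}): one first shows $\units{R}$ is closed via uniqueness of associated idempotents modulo open ideals, then deduces that $\Jac(R)$ is closed by the quasi-regularity criterion $1+ab\in\units{R}$, all with an approximation argument and no use of semiperfectness. Your Pierce-decomposition argument is correct but much heavier: it invokes semiperfectness (a complete set of primitive idempotents), a closedness argument for local corners, a case split on whether $e_iR\cong e_jR$, and the existence of the transfer elements $u_{ij},u_{ji}$. None of that is needed, and the paper's lemma is also strictly more general (it holds for any quasi-$\pi$-regular $R\in\LTR$), which the paper exploits elsewhere (e.g.\ Remark~\ref{RING:RM:Jacobson-radical-might-be-open}). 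Replacing your Pierce decomposition by the paper's direct argument for closedness would leave an otherwise valid and reasonably clean alternative proof.
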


    \begin{proof}
        That $R_0$ is quasi-$\pi$-regular and semiperfect follows form Corollary \ref{RING:CR:quasi-pi-reg-transfers-to-semi-inv-subring}
        and Lemma \ref{RING:LM:main-lemma-II-quasi-pi-reg}(ii).
        Now let $I\in\calI_R$. Then both $R/I$ and $(R_0+I)/I$ are semiperfect and $\pi$-regular
        (since $(R_0+I)/I\cong R_0/(R_0\cap I)$ and $R_0\cap I$ is open in $R_0$).
        Therefore, by Lemma \ref{RING:LM:main-lemma-III}, there is $n_I\in\N$ such
        that $\Jac((R_0+I)/I)^{n_I}\subseteq \Jac(R/I)^{n_I}$.
        As $\Jac(R/I)=(\Jac(R)+I)/I$,
        this implies $\Jac(R_0)^{n_I}\subseteq \Jac(R)+I$.
        However, $(R/I)/(\Jac(R/I))\cong R/(\Jac(R)+I)$ is a quotient of $R/\Jac(R)$ which is semisimple, hence
        the index of nilpotence of any of its subsets is bounded (when finite) by $\mathrm{length}(R/\Jac(R))$.\footnote{
            Actually, the index of nilpotence is bounded in any right noetherian ring $R$. Indeed,
            the prime radical of $R$, denoted $N$, is nilpotent and $R/N$ is a semiprime
            Goldie ring. Therefore, by Goldie's Theorem, $R/N$ embeds in a
            semisimple ring and thus has a bounded index of nilpotence.
        }
        Therefore, there is $n\in\N$ such that for all $I\in\calI_R$, $\Jac(R_0)^n\subseteq \Jac(R)+I$ or
        equivalently, $\Jac(R_0)^n\subseteq\bigcap_{I\in\calI_R}(\Jac(R)+I)=\overline{\Jac(R)}$.
        Thus, we are done by the following lemma.
    \end{proof}

    \begin{lem} \label{RING:LM:closed-jacobson-radical}
        Let $R\in\LTR$ be quasi-$\pi$-regular. Then
        $\units{R}$ and $\Jac(R)$ are closed.
    \end{lem}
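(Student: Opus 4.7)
The plan is to first show $\units{R}$ is closed and then deduce the closure of $\Jac(R)$ by expressing it as a preimage under continuous maps.

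For $\units{R}$, I would use the characterization of units in terms of associated idempotents: for any quasi-$\pi$-regular $a\in R$, one has $a\in\units{R}$ iff $e_a=1$. (One direction: if $u\in\units{R}$, then taking $e=1$, $f=0$ visibly satisfies (A), (B), (C$'$), so $e_u=1$ by Lemma \ref{RING:LM:uniqueness-lemma}. Other direction: if $e_a=1$, condition (A) gives $a=eae$ and (B) says this is invertible.) The key auxiliary observation is that the associated idempotent is compatible with quotients by open ideals: if $a\in R$ is quasi-$\pi$-regular with associated idempotent $e$, and $I\in\calI_R$, then $a+I$ is $\pi$-regular in $R/I$ with associated idempotent $e+I$. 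Indeed, reducing the decomposition $a=eae+faf$ modulo $I$ gives (A); an inverse $b\in eRe$ of $eae$ reduces to an inverse of $\bar e\bar a\bar e$ in $\bar e(R/I)\bar e$, giving (B); and $(faf)^n\to 0$ means some $(faf)^n$ lies in $I$, giving (C).

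Now take $a\in\overline{\units{R}}$. For each $I\in\calI_R$ there is $u\in\units{R}$ with $a-u\in I$, so $a+I=u+I$ in $R/I$. Since $u$ is a unit its associated idempotent is $1$, so the same holds for $a+I$ in $R/I$. By the compatibility just verified, this forces $e_a+I=1+I$, i.e.\ $1-e_a\in I$. As this holds for every $I\in\calI_R$ and $R$ is Hausdorff, $1-e_a\in\bigcap_{I\in\calI_R}I=\{0\}$, so $e_a=1$ and $a\in\units{R}$. Hence $\units{R}$ is closed.

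For $\Jac(R)$, I would use the characterization
\[\Jac(R)=\bigcap_{r\in R}\{a\in R\suchthat 1-ra\in\units{R}\}.\]
For each $r\in R$ the map $\phi_r:R\to R$ defined by $\phi_r(a)=1-ra$ is continuous since $R$ is a topological ring, and $\phi_r^{-1}(\units{R})$ is closed as the preimage of a closed set. Intersecting over $r\in R$ gives that $\Jac(R)$ is closed, finishing the proof.

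The only real content is in the closure of $\units{R}$; I expect the reduction-mod-$I$ argument for the associated idempotent to be the main step to state cleanly, but it is essentially formal once the definition of quasi-$\pi$-regularity and the uniqueness lemma are in place. The closure of $\Jac(R)$ is then automatic.
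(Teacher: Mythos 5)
Your proof is correct. The first half (closure of $\units{R}$) is the same argument as the paper's: take $a\in\overline{\units{R}}$, observe that $a+I$ is a unit in $R/I$ for each $I\in\calI_R$, hence its associated idempotent in $R/I$ is $1+I$, and use the compatibility of associated idempotents with open-ideal quotients together with Hausdorffness to force $e_a=1$.

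For the closure of $\Jac(R)$ you take a slightly different and somewhat slicker route. The paper argues directly by density: given $a\in\overline{\Jac(R)}$ and $b\in R$, for each $I\in\calI_R$ it picks $a_I\in\Jac(R)$ with $a-a_I\in I$, notes $1+a_Ib\in\units{R}$ and $(1+ab)-(1+a_Ib)\in I$, and concludes $1+ab\in\overline{\units{R}}=\units{R}$. You instead write $\Jac(R)=\bigcap_{r\in R}\phi_r^{-1}(\units{R})$ with $\phi_r(a)=1-ra$ continuous, and observe this is an intersection of preimages of a closed set. Both are valid, and both hinge on the nontrivial fact that $\units{R}$ is closed; your version packages the remaining step as pure topology (continuity of ring operations), which is arguably cleaner, while the paper's version is more elementary and stays closer to the definition of $\Jac(R)$ via quasi-regularity of $1+ab$.
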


    \begin{proof}
        Let $a\in \overline{\units{R}}$ and let $e$ be its associated idempotent. Then for any $I\in \calI_R$
        there is $a_I\in\units{R}$ such that $a-a_I\in I$. Clearly
        $e+I$ is the associated idempotent of $a+I=a_I+I$ in $R/I$. However, $a_I+I\in\units{(R/I)}$ and thus
        $1+I$ is its associated idempotent. It follows that $e+I=1+I$ for all $I\in\calI_R$, hence
        $e=1$ and $a\in\units{R}$.

        Now assume $a\in\overline{\Jac(R)}$. It is enough to show that for all $b\in R$,
        $1+ab\in \units{R}$. Let $I\in \calI_R$ and let $a_I\in \Jac(R)$ be
        such that $a-a_I\in I$. Then $1+a_Ib\in\units{R}$ and $(1+ab)-(1+a_Ib)\in I$.
        Therefore, $1+ab\in\bigcap_{I\in \calI_R}(\units{R}+I)=\overline{\units{R}}=\units{R}$.
    \end{proof}

    \begin{remark} \label{RING:RM:Jacobson-radical-might-be-open}
        (i) The assumption that $R$ is quasi-$\pi$-regular in the last lemma is essential;
        see Example \ref{RING:EX:non-hausdorff-I} (take $n=0$). In addition,
        $\Jac(R)^2$ need not be closed even when $R$ is quasi-$\pi$-regular; see Example \ref{RING:EX:JacRsquared-can-be-non-open}.

        (ii) If $R$ is quasi-$\pi$-regular and semiperfect, then $\units{R}$ and $\Jac(R)$
        are also open. Indeed, by the previous lemma $\Jac(R)=\overline{\Jac(R)}=\bigcap_{I\in\calI_R}(\Jac(R)+I)$,
        hence $\Jac(R)$ is an intersection of open ideals. Since $R/\Jac(R)$ is artinian, $\Jac(R)$ is
        the intersection of finitely many such ideals, thus open. The set $\units{R}$ is open
        since it is a union of cosets of $\Jac(R)$.
    \end{remark}

    In order to apply theorem \ref{RING:TH:main-res-for-quasi-pi-reg-rings} to pro-semiprimary
    rings, we need to
    recall some facts about complete topological rings. While the
    exact definition (see \cite[\S7-8]{Wa93}) is of little use to us, we will need
    the following results. Let $R\in\LTR$, then:
    \begin{enumerate}
        \item[(1)] $R$ is complete if and only if $R$ is isomorphic to an inverse limit of
        an inverse system of discrete
        topological rings $\{R_i\}_{i\in I}$. In this case, if $\vphi_i$ is the standard map from $R$ to $R_i$, then
        $\{\ker\vphi_i\where i\in I\}$ is a local basis of $R$.
        \item[(2)] If $R$ is complete
        and $\calB$ is a local basis consisting of ideals, then $R\cong \invlim \{R/I\}_{I\in\calB}$.
        (Note that $R/I$ is discrete for all $I\in \calB$.)
    \end{enumerate}
    We will also use the fact that a closed subring of complete ring is complete.
    (This can be verified directly for rings in $\LTR$ using the previous facts.)

    We now specialize the definition of pro-semiprimary rings given in Section \ref{section:preface}
    to topological rings. For a ring property $\calP$, a topological ring $R$ will be called \emph{pro-$\calP$}
    if $R$ is isomorphic as a topological ring to the inverse limit of an inverse system of discrete rings
    satisfying $\calP$. If in addition the standard map from $R$ to each of these rings is onto\footnote{
        This is not trivial since the maps in the inverse system are not assumed to be onto.
    }, then $R$ will
    be called \emph{strictly pro-$\calP$}. Clearly any pro-$\calP$ ring is complete and lies in $\LTR$.
    An LT ring $R$ is strictly pro-$\calP$
    if and only if it is complete and admits a local basis of ideals $\calB$ such that $R/I$ has $\calP$ for all $I\in\calB$.
    Notice that if $\calP$ is preserved under Morita equivalence, then so does being pro-$\calP$ and being strictly pro-$\calP$
    (because the isomorphisms of Lemma \ref{RING:LM:basic-inv-lim-lem} are also topological isomorphisms).

    \begin{remark}
        Any inverse limit of (non-topological) rings satisfying $\calP$ can
        be endowed with a linear ring topology making it into a pro-$\calP$ ring, but this topology
        usually depends on the inverse system used to construct the ring. However, when $\calP=\textrm{semiprimary}$
        and the ring is right noetherian, the topology is uniquely determined and always coincide with the Jacobson
        topology! See Section \ref{section:rings-with-Hausodroff-mods}.
    \end{remark}

    Once recalling Remark \ref{RING:RM:hirarchy-of-properties}, the following lemma implies
    that pro-semiprimary rings are quasi-$\pi$-regular.

    \begin{lem} \label{RING:LM:pro-pi-reg-lemma-I}
        Let $\{R_i,f_{ij}\}$ be an $I$-indexed inverse system of $\pi$-regular rings and let $R=\invlim \{R_i\}_{i\in I}$.
        Then $R$ is quasi-$\pi$-regular.
    \end{lem}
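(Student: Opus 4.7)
The plan is to construct the associated idempotent componentwise using the inverse limit description, leveraging the uniqueness of associated idempotents established in Proposition \ref{RING:PR:pi-reg-equiv-cond}. Let $\vphi_i : R \to R_i$ denote the canonical projections, and recall that since each $R_i$ is discrete, a local basis for $R$ consists of the ideals $\{\ker\vphi_i\}_{i\in I}$ (which is directed by the directedness of $I$). Fix $a\in R$ and set $a_i=\vphi_i(a)\in R_i$.

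First, for each $i\in I$, since $R_i$ is $\pi$-regular, Proposition \ref{RING:PR:pi-reg-equiv-cond} furnishes a unique idempotent $e_i\in\ids{R_i}$ satisfying (A), (B), (C) with respect to $a_i$. The key compatibility step is to observe that for $i\leq j$, the element $f_{ij}(e_j)$ automatically satisfies (A), (B), (C) for $a_i=f_{ij}(a_j)$ in $R_i$ (since $f_{ij}$ is a ring homomorphism, it preserves the equation in (A), sends the inverse of $e_jae_j$ in $e_jR_je_j$ to an inverse of $f_{ij}(e_j)a_if_{ij}(e_j)$ in $f_{ij}(e_j)R_if_{ij}(e_j)$, and carries nilpotents to nilpotents). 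Uniqueness then forces $f_{ij}(e_j)=e_i$, so $(e_i)_{i\in I}$ defines an element $e\in R$; since each $e_i$ is idempotent, so is $e$.

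Next, I verify the three conditions for $e$ with respect to $a$. Writing $f=1-e$, the identity $a_i = e_ia_ie_i + f_ia_if_i$ holds in every $R_i$, so the equation $a=eae+faf$ holds in $R=\invlim R_i$, giving (A). For (B), the inverse $b_i\in e_iR_ie_i$ of $e_ia_ie_i$ is likewise unique and satisfies $f_{ij}(b_j)=b_i$ (since $f_{ij}(b_j)$ is an inverse of $e_ia_ie_i$ in the unital ring $f_{ij}(e_jR_je_j)\subseteq e_iR_ie_i$, and left-and-right inverses in unital rings are unique); hence $(b_i)$ assembles into an element $b\in R$ with $b=ebe$ and $b(eae)=(eae)b=e$, establishing that $eae$ is invertible in $eRe$. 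For (C$'$), since $\{\ker\vphi_i\}_{i\in I}$ is a local basis of $R$, it suffices to show that for each $i\in I$ there exists $n\in\N$ with $(faf)^n\in\ker\vphi_i$; but $\vphi_i((faf)^n)=(f_ia_if_i)^n$, which vanishes for $n$ large because $f_ia_if_i$ is nilpotent in $R_i$ by condition (C) of Proposition \ref{RING:PR:pi-reg-equiv-cond}.

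There is no serious obstacle here: the whole argument is a direct transfer via the inverse limit, and the only subtle point is invoking uniqueness (both of the associated idempotent and of the inverse in $eRe$) to glue the local data into a global element of $R$. The Hausdorffness of $R$ as an inverse limit of discrete rings is what makes this gluing unambiguous.
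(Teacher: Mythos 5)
Your proof is correct and follows essentially the same route as the paper's: construct the idempotent $e=(e_i)$ componentwise via the uniqueness clause of Proposition \ref{RING:PR:pi-reg-equiv-cond}, assemble the inverse $b=(b_i)$ the same way using uniqueness of inverses in the corner rings, and check (A), (B), (C$'$) coordinatewise. Your write-up is a bit more explicit than the paper's (e.g.\ spelling out why $f_{ij}(e_j)$ satisfies (A)--(C) and unpacking (C$'$) via the local basis $\{\ker\vphi_i\}$), but the argument is the same.
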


    \begin{proof}
        We identify $R$ with the set of compatible $I$-tuples in $\prod_{i\in I} R_i$
        (i.e.\ tuples $(x_i)_{i\in I}$ satisfying $f_{ij}(x_j)=x_i$ for all $i\leq j$ in $I$).
        Let $a=(a_i)_{i\in I}\in R$ and let $e_i\in\ids{R_i}$ be the associated idempotent of $a_i$ in $R_i$.
        The uniqueness of $e_i$ implies that $e=(e_i)_{i\in I}$ is compatible and
        hence lie in $R$.
        We claim that $e$ is the associated idempotent of $a$ in $R$. Conditions (A) and (C$'$)
        are straightforward, so
        we only check (B): Let $b_i$ be the inverse of $e_ia_ie_i$ in $e_iRe_i$.
        The for all $i\leq j$ in $I$, $f_{ij}(b_j)$ is also an inverse of $e_ia_ie_i$ in $e_iRe_i$,
        hence $f_{ij}(b_j)=b_i$. Therefore, $b:=(b_i)_{i\in I}$ is compatible and lie in $R$.
        Clearly $b=ebe$ and
        $b(eae)=(eae)b=e$ (since this holds in each coordinate), so condition (B) is satisfied.
        We thus conclude that $R$ is quasi-$\pi$-regular. \rem{To finish, if $\vphi_i$ is
        the natural map from $R\to R_i$, then $\vphi_i(R)\cong R/\ker\vphi_i$
        and the latter is $\pi$-regular since $\ker\vphi_i$ is open in $R$.}
    \end{proof}

    The converse of Lemma \ref{RING:LM:main-lemma-II-quasi-pi-reg} is almost true;
    if $R\in\LTR$ is quasi-$\pi$-regular, then $R$ is dense in a
    pro-$\pi$-regular ring, namely $\invlim \{R/I\}_{I\in\calI_R}$.
    The following theorem implies that T-semi-invariant subrings of semiperfect pro-semiprimary
    rings are semiperfect and pro-semiprimary (w.r.t.\ the induced topology).

    \begin{thm} \label{RING:TH:pro-semiprim-full-result}
        Assume $R=\invlim \{R_i\}_{i\in I}$ where each $R_i$ is $\pi$-regular.
        Denote by $J_i$ the kernel of the standard map $R\to R_i$ and let $R_0$ be a T-semi-invariant subring
        of $R$. Then:
        \begin{enumerate}
            \item[(i)] $R_0$ is quasi-$\pi$-regular and $R_0=\invlim \{R_0/(J_i\cap R_0)\}_{i\in I}$.
            \item[(ii)] If $R$ does not contain an infinite set of
            orthogonal idempotents, then $R_0$ is semiperfect and there is $n\in\N$ such that $\Jac(R_0)^n\subseteq \Jac(R)$.
            \item[(iii)] For all $i\in I$, $R_0/(J_i\cap R_0)$ is $\pi$-regular.
                If moreover $R_i$ is semiprimary (right perfect, semiperfect), then so is $R_0/(J_i\cap R_0)$. In particular,
                if $R$ is pro-semiprimary (pro-right-perfect, pro-$\pi$-regular-and-semiperfect),
                then so is $R_0$.
        \end{enumerate}
    \end{thm}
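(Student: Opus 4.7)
The plan is to address the three parts in order, leveraging the quasi-$\pi$-regularity machinery already established.

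For (i), first observe that $R$ is quasi-$\pi$-regular by Lemma \ref{RING:LM:pro-pi-reg-lemma-I}, so Corollary \ref{RING:CR:quasi-pi-reg-transfers-to-semi-inv-subring}(ii) yields that $R_0$ is quasi-$\pi$-regular. For the inverse limit description, note that $\{J_i\}_{i\in I}$ is a local basis of ideals of $R$ (each $R_i$ is discrete in the inverse-limit topology, so the kernel of $R\to R_i$ is open, and any open ideal of $R$ contains some $J_i$). Intersecting with $R_0$ gives a local basis $\{J_i\cap R_0\}_{i\in I}$ of ideals of $R_0$. Since $R_0$ is T-semi-invariant it is closed in $R$, and a closed subring of a complete Hausdorff LT ring is complete; by fact (2) about complete LT rings recalled just before the definition of pro-$\calP$, one obtains $R_0\cong \invlim\{R_0/(J_i\cap R_0)\}_{i\in I}$.

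For (ii), since $R$ contains no infinite orthogonal set of idempotents neither does its subring $R_0$. Combined with the quasi-$\pi$-regularity from (i), Lemma \ref{RING:LM:main-lemma-II-quasi-pi-reg}(ii) implies that both $R$ and $R_0$ are semiperfect. The existence of $n\in\N$ with $\Jac(R_0)^n\subseteq\Jac(R)$ is then exactly the content of Theorem \ref{RING:TH:main-res-for-quasi-pi-reg-rings} applied to the T-semi-invariant inclusion $R_0\subseteq R$.

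For (iii), the ring $R_0/(J_i\cap R_0)$ is $\pi$-regular for every $i$: it is the quotient of the quasi-$\pi$-regular ring $R_0$ by the open ideal $J_i\cap R_0$, and such quotients are $\pi$-regular (as noted immediately after the definition of quasi-$\pi$-regularity). The restriction of the standard map $R\onto R/J_i=R_i$ to $R_0$ has kernel $J_i\cap R_0$, yielding a subring embedding $R_0/(J_i\cap R_0)\hookrightarrow R_i$. When $R_i$ is semiperfect (in particular when semiprimary or right perfect), apply Lemma \ref{RING:LM:main-lemma-III} to this subring inclusion, using that both rings are $\pi$-regular, to conclude that $R_0/(J_i\cap R_0)$ inherits the same property. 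The \emph{in particular} statement then follows by combining this with (i): if $R$ is pro-semiprimary (resp.\ pro-right-perfect, pro-$\pi$-regular-and-semiperfect), then each $R_i$ has the corresponding property, hence so does each $R_0/(J_i\cap R_0)$, so $R_0$ is the inverse limit of discrete rings of the required type.

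The only step requiring genuine care is (iii), where one is tempted to seek a semi-invariance statement for the quotient $R_0/(J_i\cap R_0)$ inside $R_i$; this is not evident (T-semi-invariance does not obviously descend to these quotients). The observation that unlocks the argument is that Lemma \ref{RING:LM:main-lemma-III} requires only a plain subring inclusion together with $\pi$-regularity of both rings, so the $\pi$-regularity of the quotient obtained from quasi-$\pi$-regularity of $R_0$ is exactly what is needed to bypass the semi-invariance issue.
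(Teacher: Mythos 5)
Your proof is correct and follows the same route as the paper: establish quasi-$\pi$-regularity of $R$ via Lemma~\ref{RING:LM:pro-pi-reg-lemma-I}, pass it to $R_0$ by Corollary~\ref{RING:CR:quasi-pi-reg-transfers-to-semi-inv-subring}(ii), use closedness and completeness together with the local basis $\{J_i\cap R_0\}$ for the inverse-limit description, invoke Lemma~\ref{RING:LM:main-lemma-II-quasi-pi-reg}(ii) and Theorem~\ref{RING:TH:main-res-for-quasi-pi-reg-rings} for (ii), and for (iii) apply Lemma~\ref{RING:LM:main-lemma-III} to the plain subring inclusion $R_0/(J_i\cap R_0)\hookrightarrow R_i$ after noting that the quotient by an open ideal of a quasi-$\pi$-regular ring is $\pi$-regular. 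Your closing observation about why semi-invariance of the quotient is not needed in (iii) is precisely the point the paper's one-line remark about Lemma~\ref{RING:LM:main-lemma-III} relies on.
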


    \begin{proof}
        By Lemma \ref{RING:LM:pro-pi-reg-lemma-I}, $R$ is quasi-$\pi$-regular,
        so the first assertion of (i) is Corollary \ref{RING:CR:quasi-pi-reg-transfers-to-semi-inv-subring}(ii).
        As for the second assertion, $R$ is complete and $R_0$ is closed in $R$, hence $R_0$ is complete.
        Since $\{R_0\cap J_i\where i\in I\}$ is a local basis of $R_0$, $R_0=\invlim \{R_0/(J_i\cap R_0)\}_{i\in I}$.
        (ii) follows from Lemma \ref{RING:LM:main-lemma-II-quasi-pi-reg}(ii) and Theorem \ref{RING:TH:main-res-for-quasi-pi-reg-rings}.
        As for (iii), $R_0/(J_i\cap R_0)$ is $\pi$-regular as a quotient of a quasi-$\pi$-regular ring with an open ideal.
        The rest follows from Lemma \ref{RING:LM:main-lemma-III} (applied to $R_0/(J_i\cap R_0)$ identified as a subring of $R_i$).
    \end{proof}

    Let $\calP\in\{\textrm{semiprimary},~\textrm{right-perfect},~\textrm{$\pi$-regular-and-semiperfect},
    ~\textrm{$\pi$-regular}\}$. Then
    Theorem \ref{RING:TH:pro-semiprim-full-result} implies that pro-$\calP$
    rings
    are \emph{strictly} pro-$\calP$ (take $R_0=R$). In fact,
    we can  prove an even stronger result:

    \begin{cor} \label{RING:CR:inverse-lim}
        In the previous notation, the inverse limit of a small category of pro-$\calP$ rings
        is strictly pro-$\calP$.
    \end{cor}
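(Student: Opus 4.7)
The plan is to realize $R=\invlim A_j$ as a T-semi-invariant subring of the product $P=\prod_{j\in\Obj(\calD)}A_j$ (product topology), where $j\mapsto A_j$ is the functor from the small indexing category $\calD$ to pro-$\calP$ rings, and then invoke Theorem \ref{RING:TH:pro-semiprim-full-result}(iii).

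First, I would verify that $P$ itself is strictly pro-$\calP$. By the remark immediately preceding the corollary, every pro-$\calP$ ring is strictly pro-$\calP$, so each $A_j$ admits a local basis $\calB_j$ of open ideals with $A_j/I$ satisfying $\calP$ for all $I\in\calB_j$. The product topology on $P$ has a local basis of ideals of the form $\prod_{j\notin F}A_j\times\prod_{j\in F}I_j$ with $F\subseteq\Obj(\calD)$ finite and $I_j\in\calB_j$; the corresponding quotient of $P$ is canonically $\prod_{j\in F}A_j/I_j$, a finite direct product of $\calP$-rings. Since each of the four properties in the list is stable under finite direct products, this quotient again satisfies $\calP$, so $P$ is strictly pro-$\calP$ and in particular pro-$\pi$-regular.

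Next, I would identify $R$ inside $P$ as the equalizer
\[
R=\{(a_j)\in P\suchthat \alpha(a_{s(\alpha)})=a_{t(\alpha)}\text{ for every morphism }\alpha\text{ of }\calD\},
\]
which is the standard construction of a limit in topological rings. For each morphism $\alpha:A_{s(\alpha)}\to A_{t(\alpha)}$ set $S_\alpha=A_{t(\alpha)}$ and define continuous ring homomorphisms $\psi^{(1)}_\alpha,\psi^{(2)}_\alpha:P\to S_\alpha$ by $\psi^{(1)}_\alpha((a_j))=\alpha(a_{s(\alpha)})$ and $\psi^{(2)}_\alpha((a_j))=a_{t(\alpha)}$. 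Then $R$ is precisely the set of $p\in P$ with $\psi^{(1)}_\alpha(p)=\psi^{(2)}_\alpha(p)$ for every $\alpha$, so Proposition \ref{RING:PR:T-semi-invariant-definition-prop}(e) exhibits $R$ as a T-semi-invariant subring of $P$.

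Finally, I would apply Theorem \ref{RING:TH:pro-semiprim-full-result} with ambient ring $P$ (written as the inverse limit of the discrete $\pi$-regular rings $\{P/I\}$ constructed in the first step) and $R_0=R$. Parts (i) and (iii) together give $R=\invlim\{R/(I\cap R)\}$ with each quotient $R/(I\cap R)$ satisfying $\calP$ (since the corresponding $P/I$ does), which is exactly the definition of strictly pro-$\calP$. The only even mildly delicate point is the bookkeeping: checking that the topology on $P$ obtained from the product pro-$\calP$ structures coincides with the product topology, and that $R$ carries the induced subspace topology, so that Theorem \ref{RING:TH:pro-semiprim-full-result} applies verbatim; the closure of $\calP$ under finite products is routine in each case.
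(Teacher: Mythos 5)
Your proof is correct and follows essentially the same route as the paper: form the product $P=\prod_j A_j$, exhibit $R=\invlim A_j$ as a T-semi-invariant subring of $P$ via the equalizer characterization of Proposition~\ref{RING:PR:T-semi-invariant-definition-prop}(e), and then invoke Theorem~\ref{RING:TH:pro-semiprim-full-result}. The only difference is cosmetic: where the paper asserts ``clearly $S:=\prod_{i\in I}R_i$ is pro-$\calP$'' without elaboration, you spell out the local basis of the product topology and the stability of $\calP$ under finite products, which is a sensible detail to make explicit.
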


    \begin{proof}
        Let $\CFont{C}$ be a small category of pro-$\calP$ rings and let $\{R_i\}_{i\in I}$
        be the objects of $\CFont{C}$.
        Then $R=\invlim \CFont{C}$ can be identified with the set of $I$-tuples
        $(x_i)_{i\in I}\in\prod_{i\in I}R_i$ such that $f(x_j)=x_i$
        for all $i,j\in I$ and $f\in \Hom_{\CFont{C}}(R_j,R_i)$.
        Clearly $S:=\prod_{i\in I}R_i$ is pro-$\calP$. If we can
        prove that $R$
        is a T-semivariant subring of $S$, then we are through by Theorem \ref{RING:TH:pro-semiprim-full-result}.
        Indeed, let $\pi_i$ denote the projection from $S$ to $R_i$.
        For all $i,j\in I$ and $f\in\Hom_{\CFont{C}}(R_j,R_i)$
        define $\vphi_f^{(1)},\vphi_f^{(2)}:S\to R_i$ by $\vphi_f^{(1)}=\pi_i$, $\vphi_f^{(2)}=f\circ\pi_j$.
        Then $R=\{x\in S\suchthat \vphi_f^{(1)}(x)=\vphi_f^{(2)}(x)\,\forall f\}$, hence $R$
        is a T-semi-invariant subring of $S$ by Proposition \ref{RING:PR:T-semi-invariant-definition-prop}(e).
    \end{proof}

    In some sense, Corollary \ref{RING:CR:inverse-lim} includes Theorem \ref{RING:TH:main-res-general}
    and part of
    Theorem \ref{RING:TH:pro-semiprim-full-result} because a T-semi-invariant subring
    can be understood as the inverse limit of a category with two objects. (Indeed, if $R\subseteq S\in\LTR$ and
    $\Sigma$ is a submonoid of $\cEnd(S)$, then take $\mathrm{Ob}(\CFont{C})=\{R,S\}$ with $\End_{\CFont{C}}(S)=\Sigma$,
    $\End_{\CFont{C}}(R)=\{\id_R\}$, $\Hom_{\CFont{C}}(S,R)=\phi$ and $\Hom_{\CFont{C}}(R,S)=\{i\}$ where
    $i:R\to S$ is the inclusion map).

\rem{
    In general, a semi-invariant subring of a semiperfect pro-semiprimary ring $R$ need not be
    semiperfect (see Example \ref{RING:EX:non-pro-semiprim-semi-inv-subring}). However,
    the author could not find an example with a \emph{closed} semi-invariant subring.
}

    \begin{cor} \label{RING:CR:pro-semiprim-is-satisfies-jacobson-conj}
            If $R$ is pro-semiprimary, then for any $J\in\calI_R$ there is $n\in\N$ such that $\Jac(R)^n\subseteq J$.
            In particular, $\bigcap_{n=1}^\infty \Jac(R)^n=\{0\}$.
    \end{cor}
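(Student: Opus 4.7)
The plan is to use Theorem \ref{RING:TH:pro-semiprim-full-result} to reduce to the fact that semiprimary rings have nilpotent Jacobson radical, and then exploit the basic property that $\Jac$ behaves well under surjections.

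First, since $R$ is pro-semiprimary, Theorem \ref{RING:TH:pro-semiprim-full-result}(iii) applied with $R_0=R$ (a trivial T-semi-invariant subring of itself) tells us that $R$ is \emph{strictly} pro-semiprimary. Concretely, this produces a local basis $\calB\subseteq\calI_R$ such that $R/I$ is semiprimary for every $I\in\calB$. Now fix an arbitrary $J\in\calI_R$; since $\calB$ is a local basis, I can choose $I\in\calB$ with $I\subseteq J$, and it suffices to show $\Jac(R)^n\subseteq I$ for some $n\in\N$.

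Let $\pi:R\to R/I$ be the quotient map. A standard observation is that $\pi(\Jac(R))\subseteq \Jac(R/I)$: for any $j\in\Jac(R)$ and any $\quo{s}=\pi(s)\in R/I$, the element $1-js$ is invertible in $R$, so $\pi(1-js)=1-\pi(j)\quo{s}$ is invertible in $R/I$. Since $R/I$ is semiprimary, $\Jac(R/I)^n=0$ for some $n\in\N$, and therefore
\[\pi(\Jac(R)^n)=\pi(\Jac(R))^n\subseteq \Jac(R/I)^n=0,\]
which gives $\Jac(R)^n\subseteq\ker\pi=I\subseteq J$, as required.

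For the ``in particular'' statement, recall that $R\in\LTR$ is Hausdorff, so $\bigcap_{J\in\calI_R}J=\{0\}$ (property (2) of abelian topological groups recalled in Section \ref{section:top-rings}). Combining this with what we just proved,
\[\bigcap_{n=1}^{\infty}\Jac(R)^n\;\subseteq\;\bigcap_{J\in\calI_R}J\;=\;\{0\}.\]
There is no real obstacle here; the only subtlety is recognizing that the conclusion of Theorem \ref{RING:TH:pro-semiprim-full-result}(iii) upgrades ``pro-semiprimary'' to ``strictly pro-semiprimary'', which is what lets us pass from an arbitrary open ideal $J$ to an open ideal $I$ whose quotient is actually semiprimary.
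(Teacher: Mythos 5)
Your proof is correct and follows essentially the same route as the paper's: both invoke Theorem \ref{RING:TH:pro-semiprim-full-result}(iii) to obtain a local basis of open ideals $I$ with $R/I$ semiprimary, pick one inside the given $J$, and conclude from the inclusion $(\Jac(R)+I)/I \subseteq \Jac(R/I)$ together with the nilpotence of $\Jac(R/I)$. Your version just spells out the two small steps (why $\pi(\Jac(R))\subseteq\Jac(R/I)$, and how Hausdorffness gives the "in particular" clause) that the paper leaves implicit.
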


    \begin{proof}
        Assume $R=\invlim \{R_i\}_{i\in I}$ with each $R_i$ semiprimary and let $J_i$ be as in Theorem
        \ref{RING:TH:pro-semiprim-full-result}. Since $\{J_i\where i\in I\}$ is local basis, there
        is $i\in I$ such that $J_i\subseteq J$. By Theorem
        \ref{RING:TH:pro-semiprim-full-result}(iii), $R/J_i$ is semiprimary, hence there is $n\in\N$
        such that $\Jac(R/J_i)^n=0$. As $\Jac(R/J_i)\supseteq(\Jac(R)+J_i)/J_i$, we get $\Jac(R)^n\subseteq J_i\subseteq J$.
    \end{proof}

    \begin{remark} \label{RING:RM:non-com-non-local-non-complete-quasi-pi-reg-ring}
        We will show in Proposition \ref{RING:PR:quasi-pi-reg-is-henselian} that Henselian rank-$1$ valuation
        rings are quasi-$\pi_\infty$-regular. In particular, non-complete such
        rings (e.g.\ the $\Q$-algebraic elements in $\Z_p$) are examples of non-complete quasi-$\pi_\infty$-regular rings.\rem{

        There are non-commutative
        quasi-$\pi$-regular rings which are not local and not complete. Let $1<n\in\N$ and let $S=\nMat{\Z_p}{n}$. Then $S$ is pro-semiprimary,
        hence quasi-$\pi$-regular. Define a sequence of subrings of $S$ as follows: Let $R_0=\nMat{\Z}{n}$ and
        given $R_i$, define $R_{i+1}$ to be the ring generated by $R_i$ and $\{e_r\where r\in R_i\}$
        where $e_r$ is the associated
        idempotent of $r$ in $S$. Finally, let $R=\bigcup_{i=0}^\infty R_i$. Then $R$ is
        quasi-$\pi$-regular (w.r.t.\ the topology induced from $S$). However, $R\neq S$ since
        $|R|=\aleph_0\neq |S|$. The ring $R$ is not complete nor commutative nor local since
        it contains $R_0$ which is dense in $S$ and contains matrix units.
        Note that remark \ref{RING:RM:remark-after-uniqueness-lemma}(iii) implies that in case $p=3$ and
        $n=4$, $R\neq \nMat{\Z_{\ideal{p}}}{n}$ (since the latter is not
        quasi-$\pi$-regular). The author believes this holds in general.
}
        In addition, the author suspects that the following are also explicit examples of non-complete quasi-$\pi_\infty$-regular
        rings:
        (1)
        The ring of power series $\sum_{i=0}^\infty a_it^i\in\Z_p\dbrac{t}$ with $a_i\to 0$
        endowed with
        the $t$-adic topology (such rings are common in rigid geometry).
        (2) The ring in the comment after Lemma \ref{RING:LM:basic-inv-lim-lem} w.r.t.\ its Jacobson topology.
    \end{remark}

\rem{
    We finish by noting that it is
    likely that the theory presented in this section can be extended to
    \emph{right} linearly topologized topologized rings, i.e.\ topological rings admitting a local basis
    consisting of right ideals.
}

\section{Counterexamples}
\label{section:examples}

    This section consists of counterexamples. In particular, we show that:
    \begin{enumerate}
        \item If $R$ is a semiperfect ring and $\Sigma\subseteq \End(R)$, then
            $R^\Sigma$ need not be semiperfect even when $\Sigma$ is a finite group and
            even when $\Sigma$ consists of a single automorphism.
            Similarly, if $X\subseteq R$ is a set, then
            $\Cent_R(X)$ need not be semiperfect even when $X$ consists of a single element.
        \item The semiperfect analogue of Corollary \ref{RING:CR:main-cr:I} is not true in general.
        \item A semi-invariant subring of a semiperfect pro-semiprimary ring need not be semiperfect even
        when closed
        (in contrast to T-semi-invariant subrings).
        \item Rationally closed subrings of a f.d.\ algebra need not be semiperfect. In particular,
            Theorem \ref{RING:TH:main-res-general} do not generalize to rationally closed
            subrings.
        \item No two of the families of semi-invariant, invariant, centralizer and rationally closed subrings
            coincide in general.
    \end{enumerate}
    We note that (1) is also true if we replace semiperfect with artinian. This
    was treated at the end of Section \ref{section:main-thm}.

    \smallskip

    We begin with demonstrating  (1).
    Our examples use Azumaya algebras and we refer the reader to \cite{Sa99} for definition and details.

    \begin{example} \label{RING:EX:inv-subring-semiperfect-case-I}
        Let $S$ be a discrete valuation ring with
        maximal ideal $\pi S$, residue field $k=S/\pi S$ and fraction field $F$,
        and let $A$ be an Azumaya algebra over $S$. Recall that this implies $A/\pi A$ is
        a central simple $k$-algebra and $\Jac(A)=\pi A$. Assume the following holds:
        \begin{enumerate}
            \item[(a)] $D=F\otimes_SA$ is a division ring.
            \item[(b)] $A/\pi A$ has zero divisors.
        \end{enumerate}
        In addition, assume
        there is a set $X\subseteq \units{A}$
        generating $A$ as an $S$-algebra (such $X$ always exists).
        Note that conditions (a) and (b) imply that $A$ is not semiperfect because $A$ contains
        no non-trivial idempotents while $A/\pi A=A/\Jac(A)$ does contain such idempotents,
        hence $\Jac(A)$ is not idempotent lifting.

        Define $R=A\otimes_S A^\op$ and let $\Sigma=\{\sigma_x\}_{x\in X}$ where
        $\sigma_x$ is conjugation by $1\otimes x^\op$. Then $R$ is an
        Azumaya $S$-algebra which is an $S$-order inside $D\otimes_F D^\op\cong \nMat{F}{r}$. This is well known to imply that $R
        \cong\End(P_S)$ for some faithful finite projective $S$-module $P$ (in fact, $P$ is free since $S$ is local). Therefore,
        $R$ is
        Morita equivalent to $S$, hence semiperfect.
        On the other hand,
        $R^\Sigma=
        \Cent_R(\{1\otimes x^\op \where x\in X\})=\Cent_R(S\otimes A^\op)=A\otimes S\cong A$,
        so $R^\Sigma$ is not semiperfect.

        An explicit choice for $S,A,F,D$ is $S=\Z_{\ideal{3}}$ ($\pi=3$), $F=\Q$,
        $D=(-1,-1)_{\Q}=\Q[\,i,j\,|\,ij=-ji, i^2=j^2=-1]$
        and $A=S[i,j]$. If we take $X=\{i,j\}$ then $\Sigma$ will
        consist of two inner automorphisms which are easily seen to generate
        an automorphism group isomorphic to $(\Z/2)\times (\Z/2)$.
    \end{example}

    \begin{example} \label{RING:EX:inv-subring-semiperfect-case-II}
        Let $S,\pi,A,F,D$ be as in Example \ref{RING:EX:inv-subring-semiperfect-case-I}
        and assume there is a cyclic Galois extension $K/F$ such that:
        \begin{enumerate}
            \item[(c)] $K/F$ is totally ramified at $\pi$.
            \item[(d)] $K\otimes_F D$ splits (i.e.\ $K\otimes_F D \cong \nMat{K}{t}$).
        \end{enumerate}
        Write $\nGal{K}{F}=\Trings{\sigma}$.

        Let $T$ denote the integral closure of $S$ in $K$. Then $\sigma(T)=T$.
        We claim that $T\otimes_S A$ is semiperfect, but $(T\otimes_S A)^{\{\sigma\otimes 1\}}$
        is not. Indeed, $T^{\{\sigma\}}=T\cap F=S$, so $(T\otimes A)^{\{\sigma\otimes 1\}}=S\otimes A\cong A$
        which is not semiperfect as explained in the previous example.
        On the other hand, $T\otimes A$ is an Azumaya $T$-algebra and a $T$-order in $K\otimes D\cong M_t(K)$.
        Again, this implies $T\otimes A$ is Morita equivalent to $T$. But
        $T$ is local because $K/F$ is totally ramified at $\pi$, hence $T\otimes A$
        is semiperfect.

        If we take $S,A,F,D$ as in the previous example, then $T=S[\sqrt{-3}]$, $K=\Q[\sqrt{-3}]$
        will satisfy (c) and (d).
        Indeed, dimension constraints imply $K\otimes D$ is either a division ring or $M_2(K)$,
        but
        $\sqrt{-3}+i+j+ij\in K\otimes D$ has reduced norm $0$, so the latter option must hold.\rem{\footnote{
            The Author thanks Dr.\ Eli Matzri for his help in choosing
            appropriate values for $S,A,F,D$ and $K$ in Examples \ref{RING:EX:inv-subring-semiperfect-case-I}
            and \ref{RING:EX:inv-subring-semiperfect-case-II}.
        }}
    \end{example}

    \begin{example} \label{RING:EX:inv-subring-semiperfect-case-III}
        Start with a semiperfect ring $R$ and $\sigma\in\End(R)$ such that $R^{\{\sigma\}}$ is not semiperfect
        (e.g.\ those of Example \ref{RING:EX:inv-subring-semiperfect-case-II}).
        Let $R'=R[[t;\sigma]]$
        be the ring of  $\sigma$-twisted formal power series
        with left coefficients in $R$
        (i.e.\ $\sigma(r)t=t r $ for all $r\in R$) and let $x=1+t\in\units{(R')}$. We claim $R'$ is semiperfect, but $\Cent_{R'}(x)$ is not.
        Indeed, $\Cent_{R'}(x)=\Cent_{R'}(t)=
        R^{\{\sigma\}}[[t]]$, so we are done by applying the following
        proposition for $R[[t;\sigma]]$ and $R^{\{\sigma\}}[[t]]$.
    \end{example}

    \begin{prp}
        For any ring $W$
        and $\tau\in\End(W)$, $W$ is
        semiperfect if and only if $W[[t;\tau]]$ is.
    \end{prp}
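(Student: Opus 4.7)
The plan is to exploit the augmentation $\pi\colon W'\!:=\!W[[t;\tau]]\to W$ that sends $t\mapsto 0$. This is a surjective ring homomorphism whose kernel is the two-sided ideal $(t)=tW'=W't$, and the inclusion $\iota\colon W\hookrightarrow W'$ as constant series splits $\pi$. The whole strategy is to transfer both pieces of ``semiperfect'' --- namely semilocality and idempotent lifting modulo the Jacobson radical --- along $\pi$ and $\iota$.

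My first step would be the unit and Jacobson-radical calculation. For any $x\in (t)$ and any $g\in W'$, the series $\sum_{k\geq 0}(gx)^k$ converges in the $t$-adic topology and inverts $1-gx$, so $(t)\subseteq \Jac(W')$. Next, I would check by a recursive coefficient computation (only $\tau$, not $\tau^{-1}$, is needed, since $tw=\tau(w)t$) that $f\in\units{W'}$ if and only if $\pi(f)\in\units{W}$. Combining these,
\[
\Jac(W') \;=\; \pi^{-1}(\Jac(W)) \;=\; \Jac(W)+(t), \qquad W'/\Jac(W')\;\cong\;W/\Jac(W),
\]
so $W$ is semilocal if and only if $W'$ is.

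The second step is the lifting of idempotents. Both $\iota$ and $\pi$ send idempotents to idempotents. Given an idempotent $\bar e\in W/\Jac(W)=W'/\Jac(W')$, any idempotent lift in $W$ is automatically an idempotent lift in $W'$; conversely, any idempotent lift $e'\in W'$ yields the idempotent $\pi(e')\in W$, which lifts $\bar e$ under the identification $W/\Jac(W)\cong W'/\Jac(W')$. Hence idempotents lift modulo $\Jac(W)$ precisely when they lift modulo $\Jac(W')$, and combining with semilocality finishes both directions.

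There is no real obstacle here; the twisting plays no essential role once the unit criterion is verified, and the proof amounts to the standard observation that $(t)$ is a ``topologically nilpotent'' two-sided ideal contained in $\Jac(W')$ with quotient $W$. The only point deserving care is the recursive inversion that establishes $f\in\units{W'}\iff\pi(f)\in\units{W}$, since one must handle $\tau$-twisted multiplication --- but this is the mildest of bookkeeping.
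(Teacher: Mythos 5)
Your proof is correct and follows essentially the same route as the paper: the paper likewise computes $\Jac(W')=\Jac(W)+W't$ (writing $W'=W[[t;\tau]]$) and uses the ring-theoretic splitting $\iota\colon W\hookrightarrow W'$, $\pi\colon W'\to W$ (equivalently, the additive decomposition $W'=W\oplus W't$) to transfer idempotent lifting. Two minor remarks: since $tw=\tau(w)t$ one only has $tW'\subseteq W't$, with equality exactly when $\tau$ is surjective, so the parenthetical ``$(t)=tW'$'' is incorrect in general (harmlessly, as you only ever use $\ker\pi=W't$); and the recursive coefficient computation for units is redundant, because once $(t)\subseteq\Jac(W')$ is established the equivalence $f\in\units{W'}\iff\pi(f)\in\units{W}$ is the standard fact that units lift modulo any ideal contained in the Jacobson radical.
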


    \begin{proof}
        Let $V=W[[t;\tau]]$
        and let $J=\Jac(W)+Vt\idealof V$. Then $V/J\cong W/\Jac(W)$.
        Since
        the r.h.s.\ has zero Jacobson radical, $J\supseteq\Jac(V)$.
        However, as $1+J\subseteq\units{V}$, we have $J\subseteq\Jac(V)$, thus $\Jac(V)=J$.
        The isomorphism $V/J\cong W/\Jac(W)$ now implies that $V$ is semilocal $\iff$ $W$ is semilocal.
        We finish by observing that $Vt$ is idempotent lifting (this immediate as $V=W\oplus Vt$),
        hence $J$ is idempotent lifting in $V$ $\iff$ $J/Vt$ is idempotent lifting in $V/Vt$
        $\iff$ $\Jac(W)$ is idempotent lifting in $W$.
    \end{proof}

    We now show (2), relying on the previous examples.

    \begin{example} \label{RING:EX:main-cor-fails-in-general}
        Let $R$ be a semiperfect ring and let $X\subseteq R$ be such that $\Cent_R(X)$ is not
        semiperfect (the existence of such $R$ and $X$ was shown in previous examples).
        Let $Y=\{y_a~|~a\in X\}$ be a set of formal
        variables and let $S=R\Trings{Y}$ be the ring of non-commutative polynomials in $Y$ over $R$ ($Y$
        commutes with $R$). We can make $R$ into a right $S$-module by considering the standard right
        action of $R$ onto itself and extending it to $S$ by defining $r\cdot y_a = ar$ for all $a\in X$.
        Let $M$ denote the right $S$-module obtained thusly.
        Identify $R$ with $\End(M_R)=\End(R_R)$ via $r\mapsto(m\mapsto rm)\in\End(R_R)$.
        It is straightforward to check that $\End(M_S)$ now corresponds to  $\Cent_R(X)$.
        Therefore, $\End(M_R)\cong R$ is semiperfect but $\End(M_S)\cong \Cent_R(X)$ is not semiperfect.
    \end{example}

    The next example demonstrates (3).

    \begin{example} \label{RING:EX:non-pro-semiprim-semi-inv-subring}
        Let $p,q$ be distinct
        primes. Endow $R=\Z_p\times \Z_q\times \Q$ with the product topology
        (the topology on $\Q$ is the discrete topology). Then $R$
        is clearly semiperfect and pro-semiprimary. Define
        $K=\{(a,a,a)\where a\in\Q\}$ and let $R_0=R\cap K$. Then $R_0$ is a semi-invariant subring of $R$
        by Proposition \ref{RING:PR:semi-inv-examples-pr} (take $S=\Q_p\times \Q_q\times\Q$) and it
        is routine to check $R_0$ is closed. However, $R_0$ is not semiperfect. Indeed,
        it is isomorphic
        to  $T=M^{-1}\Z$ where $M=\Z\setminus (p\Z\cup q\Z)$.
        The ring $T$ is not semiperfect because it has no non-trivial idempotents
        while $T/\Jac(T)\cong T/pqT\cong T/pT\times T/qT$ has such, so $\Jac(T)$ cannot be idempotent lifting.
    \end{example}

    The following example shows that rationally closed subrings of a f.d.\ algebra
    need not be semiperfect. As f.d.\ algebras
    are semiprimary, this shows that Theorem \ref{RING:TH:main-res-general}
    fails for rationally closed subrings.

    \begin{example} \label{RING:EX:non-semi-inv-subring}
        Let $K=\Q(x)$ and $R=K\times K\times K$. Define $S'=\{f/g~|~f,g\in\Q[x],g(0)\neq 0,g(1)\neq 0\}$
        and observe that $S'$ is not semiperfect since it is
        a domain but $S'/\Jac(S')$ has non-trivial idempotents. (Indeed, $S'/\Jac(S')=S'/\ideal{x(x-1)}
        \cong S'/\ideal{x}\times S'/\ideal{x-1}\cong \Q\times\Q$). Define $\vphi:S'\to R$
        to be the $\Q$-algebra homomorphism obtained by sending $x$ to $a:=(0,1,x)\in R$ and let
        $S=\im \vphi$. It is easy to verify that $\vphi$ is well defined and injective, hence
        $S$ is not semiperfect.
        However, $S$ is rationally closed in $R$. To see this, let $q(x)\in S'$ and assume $q(a)\in\units{R}$.
        Then $q(0),q(1)\neq 0$. This implies $q(x)\in\units{(S')}$, hence $q(a)=\vphi(q(x))\in \units{S}$.
    \end{example}

    We finish by demonstrating (5).
    The subring $S$ of the last example
    cannot be semi-invariant, for otherwise we would get a contradiction to Theorem \ref{RING:TH:main-res-general}.
    In particular, $S$ is not an invariant subring nor a centralizer subring.
    Next, let $R=\Q[\sqrt[3]{2},\sqrt{3}]$. Then the only centralizer subring of $R$ is $R$ itself,
    the invariant subrings of $R$ are $R$ and $\Q[\sqrt[3]{2}]$ (Remark \ref{RING:RM:inv-subrings-of-a-field})
    and the semi-invariant subrings of $R$ are the four subfields of $R$ (Corollary \ref{RING:CR:semi-inv-subrings-of-a-field}). In particular,
    $R$ admits a semi-invariant non-invariant subring and an invariant non-centralizer subring.
    \rem{
    Corollary \ref{RING:CR:semi-inv-subrings-of-a-field} and Remark \ref{RING:RM:inv-subrings-of-a-field}
    suggest an easy construction of a semi-invariant subring that is not invariant.
    Finally, note that a commutative ring admits only one centralizer subring, which is the ring itself. Therefore, it
    is relatively easy to construct an invariant subring which is not a centralizer subring.
    }

\section{Applications}
\label{section:applications}

\newcommand{\Pdisc}{\CFont{P}_{\mathrm{disc}}}
\newcommand{\Ptop}{\CFont{P}_{\mathrm{top}}}
\newcommand{\Pmor}{\CFont{P}_{\mathrm{mor}}}
\newcommand{\Psp}{\CFont{P}_{\mathrm{sp}}}

    This section presents applications of the previous results.
    In order to avoid cumbersome phrasing, we introduce the following
    families of ring-theoretic properties:
    \begin{eqnarray}
    \Pdisc  &\!\!\!\!= & \!\!\!\!
    \left\{\begin{array}{c}
    \textrm{semiprimary},~\textrm{right perfect},~\textrm{left perfect},~
    \textrm{$\pi_\infty$-regular and semiperfect}, \\
    \textrm{$\pi_\infty$-regular},~\textrm{$\pi$-regular and semiperfect},~\textrm{$\pi$-regular}
    \end{array}\right\}
    \nonumber\\
    \Ptop &\!\!\!\!= & \!\!\!\!
    \left\{\begin{array}{c}
    \textrm{pro-$\calP$},~\textrm{pro-$\calP$ and semiperfect}, \\
    \textrm{quasi-$\calQ$},~\textrm{quasi-$\calQ$ and semiperfect}
    \end{array}\Bigg|~
    \begin{array}{c}
    \calP\in \Pdisc \\
    \calQ\in\{\textrm{$\pi_\infty$-regular},~\textrm{$\pi$-regular}\}
    \end{array} \right\}
    \nonumber%\\
    \end{eqnarray}
    (For example, ``quasi-$\pi_\infty$-regular and semiperfect'' lies in $\Ptop$.)
    Note that the properties in $\Pdisc$ apply to rings while the properties in $\Ptop$
    apply to LT rings. Nevertheless, we will sometimes address non-topological
    rings as satisfying one of the properties of $\Ptop$, meaning that they satisfy it w.r.t.\ \emph{some} linear ring
    topology. We also define $\Pmor$ (resp.\ $\Psp$) to be the set of properties in $\Pdisc\cup\Ptop$ which
    are preserved under Morita equivalence (resp.\ imply that the ring is semiperfect).\rem{
    \begin{eqnarray}
    \Pmor &\!\!\!\!= & \!\!\!\!
    \{\calP\in\Pdisc\cup\Ptop\suchthat \textrm{$\calP$ is preserved under Morita equivalence}\}
    \nonumber\\
    &\!\!\!\!=& \!\!\!\!\{\textrm{the properties in $\Pdisc\cup\Ptop$ not containing the string ``$\pi$-regular''}\}
    \nonumber\\
    \Psp &\!\!\!\!= & \!\!\!\!
    \{\calP\in\Pdisc\cup\Ptop\suchthat \textrm{$\calP$ implies semiperfect}\}
    \nonumber
    \end{eqnarray}}
    Recall that a  property  in $\Ptop$ is
    preserved under Morita equivalence if this holds in the sense of Section \ref{section:top-rings}
    (and not in the sense of \cite{Gr88}). For example, ``$\pi$-regular'' and ``quasi-$\pi$-regular''
    do not lie in $\Pmor$ nor in $\Psp$, ``pro-semiprimary and semiperfect'' lies in both $\Psp$ and $\Pmor$,
    and
    ``pro-semiprimary'' lies in $\Pmor$, but not in $\Psp$.

    Theorems \ref{RING:TH:main-res-general}, \ref{RING:TH:main-res-for-quasi-pi-reg-rings} and \ref{RING:TH:pro-semiprim-full-result}
    and Corollaries \ref{RING:CR:pi-reg-transfers-to-semi-inv-subring}(ii) and \ref{RING:CR:quasi-pi-reg-transfers-to-semi-inv-subring}(ii)
    can be now summarized as follows:

    \begin{thm} \label{RING:TH:summary-thm}
        Let $R$ be a ring and $R_0$ a subring.
        \begin{enumerate}
        \item[(i)] If $R$ has $\calP\in\Pdisc$ and $R_0$ is semi-invariant, then $R_0$ has $\calP$.
        \item[(ii)] If $R\in\LTR$ has $\calP\in\Ptop$ and $R_0$ is T-semi-invariant, then $R_0$ has $\calP$
        (w.r.t.\ the induced topology).
        \item[(iii)] In both (i) and (ii), if $\calP\in \Psp$, then $\Jac(R_0)^n\subseteq\Jac(R)$ for some $n\in\N$.
        \end{enumerate}
    \end{thm}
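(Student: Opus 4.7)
The plan is simply to verify case-by-case that every property appearing in $\Pdisc\cup\Ptop$ is in fact covered by one of the results cited immediately before the theorem, and then to check that $\Psp$ correctly singles out those cases in which the Jacobson bound has actually been established. The proof is a compilation, so the work is bookkeeping rather than new mathematics.

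First I would handle part (i). The properties ``semiprimary'', ``right perfect'', ``$\pi_\infty$-regular and semiperfect'' and ``$\pi$-regular and semiperfect'' are exactly the list in Theorem \ref{RING:TH:main-res-general}. The properties ``$\pi_\infty$-regular'' and ``$\pi$-regular'' are Corollary \ref{RING:CR:pi-reg-transfers-to-semi-inv-subring}(ii). The one entry that is not immediately visible in those statements is ``left perfect''; this I would reduce to the right-perfect case by passing to opposite rings, using that if $R_0=R^\Sigma$ for some $\Sigma\subseteq\End(S)$ then $R_0^{\op}=(R^{\op})^{\Sigma^{\op}}$ where $\Sigma^{\op}=\{\sigma^{\op}\where \sigma\in\Sigma\}\subseteq\End(S^{\op})$, so semi-invariance is preserved under $(-)^{\op}$, and a ring is left perfect iff its opposite is right perfect.

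For part (ii), the structure is the same. The discrete-$\calP$ cases enter through pro-$\calP$ properties and are precisely what Theorem \ref{RING:TH:pro-semiprim-full-result}(iii) proves (with the pro-left-perfect case again obtained by the $(-)^{\op}$ trick, noting that the operation preserves T-semi-invariance once one checks that it preserves the topological ingredients, which is immediate). The ``quasi-$\pi$-regular'' and ``quasi-$\pi_\infty$-regular'' cases come from Corollary \ref{RING:CR:quasi-pi-reg-transfers-to-semi-inv-subring}(ii), with the $\pi_\infty$-version following by applying the $\pi$-version to $\nMat{R_0}{n}\subseteq\nMat{R}{n}$ (which inherits T-semi-invariance from $R_0\subseteq R$). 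Whenever ``and semiperfect'' is appended, I would additionally invoke Lemma \ref{RING:LM:main-lemma-II-quasi-pi-reg}(ii): $R_0$ is quasi-$\pi$-regular and sits inside a semiperfect $R$, so it contains no infinite orthogonal family of idempotents, and therefore is semiperfect.

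For part (iii) I would run through the elements of $\Psp$. In the discrete setting every $\calP\in\Psp\cap\Pdisc$ makes both $R$ and $R_0$ semiperfect and $\pi$-regular, so Lemma \ref{RING:LM:main-lemma-III} directly supplies an $n$ with $\Jac(R_0)^n\subseteq\Jac(R)$. In the topological setting, if $R$ is quasi-$\pi$-regular and semiperfect then Theorem \ref{RING:TH:main-res-for-quasi-pi-reg-rings} itself delivers the inclusion, while in the pro-$\calP$-with-semiperfect case the inclusion is part of Theorem \ref{RING:TH:pro-semiprim-full-result}(ii). Since there is no genuine obstacle beyond making sure no entry of $\Pdisc\cup\Ptop$ is overlooked, the main ``hard part'' is purely organizational: being careful that ``left perfect'' is handled via the opposite-ring manoeuvre, and that in the topological list, each ``quasi-'' and ``pro-'' modifier is matched with the correct previous result.
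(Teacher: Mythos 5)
Your proposal is correct and takes essentially the same approach as the paper, which presents this theorem as a summary of Theorems \ref{RING:TH:main-res-general}, \ref{RING:TH:main-res-for-quasi-pi-reg-rings}, \ref{RING:TH:pro-semiprim-full-result} and Corollaries \ref{RING:CR:pi-reg-transfers-to-semi-inv-subring}(ii), \ref{RING:CR:quasi-pi-reg-transfers-to-semi-inv-subring}(ii), leaving the case-checking implicit. Your careful bookkeeping — in particular routing the ``left perfect'' entries through opposite rings (or, equivalently, observing that the argument of Lemma \ref{RING:LM:main-lemma-III} works verbatim with left T-nilpotence), matching the quasi- and pro- modifiers to the right lemmas, and using Lemma \ref{RING:LM:main-lemma-II-quasi-pi-reg}(ii) for the ``and semiperfect'' upgrades — is exactly the verification the paper delegates to the reader.
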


    Our first application follows from the fact that a centralizer subring is always \mbox{(T-)semi-invariant}:

    \begin{cor}
        Let $\calP\in \Pdisc\cup\Ptop$ and let $R$ be a ring satisfying $\calP$. Then
        $\Cent(R)$ and any
        maximal commutative subring of $R$ satisfy $\calP$.
    \end{cor}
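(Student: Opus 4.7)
The key observation is that both $\Cent(R)$ and any maximal commutative subring arise as centralizer subrings of $R$ inside $R$ itself. Explicitly, $\Cent(R) = \Cent_R(R)$, and if $C$ is a maximal commutative subring of $R$ then $C = \Cent_R(C)$: the inclusion $C \subseteq \Cent_R(C)$ is automatic since $C$ is commutative, and for the reverse, any $r \in \Cent_R(C)$ together with $C$ generates a commutative subring containing $C$, which must coincide with $C$ by maximality.

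My plan is therefore to reduce the statement to Theorem \ref{RING:TH:summary-thm} by observing that centralizer subrings are (T-)semi-invariant. For the discrete case $\calP \in \Pdisc$, taking $S = R$ and $X = R$ (resp.\ $X = C$) in Proposition \ref{RING:PR:semi-invariant-definition-prop}(b) exhibits $\Cent(R)$ (resp.\ $C$) as a semi-centralizer, hence semi-invariant, subring of $R$; then Theorem \ref{RING:TH:summary-thm}(i) gives the conclusion. For the topological case $\calP \in \Ptop$, the ring $R$ is implicitly in $\LTR$, and the same choice $S = R$ with $X \in \{R, C\}$ exhibits these subrings as T-semi-centralizer, hence T-semi-invariant, subrings of $R$ by Proposition \ref{RING:PR:T-semi-invariant-definition-prop}(b); Theorem \ref{RING:TH:summary-thm}(ii) then finishes the argument.

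There is no real obstacle here; the only point worth being careful about is the identity $C = \Cent_R(C)$ for maximal commutative $C$, which is the standard argument sketched above. Everything else is just invoking the already-established machinery.
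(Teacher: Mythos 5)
Your proof is correct and takes the same route as the paper: identify $\Cent(R)$ and any maximal commutative subring as centralizer subrings of $R$ (hence (T-)semi-invariant via Propositions \ref{RING:PR:semi-invariant-definition-prop}(b) and \ref{RING:PR:T-semi-invariant-definition-prop}(b)), then apply Theorem \ref{RING:TH:summary-thm}. Your spelled-out verification that $C = \Cent_R(C)$ for maximal commutative $C$ is the same standard argument the paper takes for granted.
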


    \begin{proof}
        $\Cent(R)$ is the centralizer of $R$ and a maximal commutative subring of $R$ is itself's centralizer.
        Now apply Theorem \ref{RING:TH:summary-thm}.
    \end{proof}

    Surprisingly, the author could not find in the literature results that are similar to the previous corollary,
    except the fact that the center of a right artinian ring is semiprimary. (This follows from a classical result of
    Jacobson, stating that the endomorphism ring of any module of finite length is semiprimary, together with the
    fact that the center of a ring $R$ is isomorphic to $\End({}_RR_R)$.)

    \smallskip

    The next applications concern endomorphism rings of finitely presented modules.
    We will only treat here the non-topological properties (i.e.\ $\Pdisc$). The
    topological analogues of the results to follow require additional notation
    and are thus postponed to the next section.

    \begin{thm} \label{RING:TH:application-to-fp-modules}
        Let $R$ be a ring
        satisfying $\calP\in \Pdisc\cap\Pmor$
        and let $M$ be a finitely presented right $R$-module. Then $\End(M_R)$ satisfies $\calP$.
    \end{thm}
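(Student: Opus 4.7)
The plan is to use observation (3) from the preface, which is made precise by Proposition~\ref{RING:PR:exact-seq-lemma}(i). Since $M$ is finitely presented, there exist $n,m\in\N$ and an exact sequence
\[
R^m \xrightarrow{f} R^n \xrightarrow{g} M \to 0
\]
in $\rmod{R}$. Both $R^m$ and $R^n$ are projective, so the hypothesis of Proposition~\ref{RING:PR:exact-seq-lemma}(i) is satisfied. Hence $\End(M_R)$ is isomorphic to a quotient of a semi-invariant subring $T$ of $\End(R^n)\times\End(R^m) \cong \nMat{R}{n}\times\nMat{R}{m}$.

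Next I would verify that $\nMat{R}{n}\times\nMat{R}{m}$ satisfies $\calP$. Since $\calP\in\Pmor$, Morita equivalence gives that $\nMat{R}{n}$ and $\nMat{R}{m}$ both satisfy $\calP$. For every $\calP$ in $\Pdisc\cap\Pmor$ (namely semiprimary, left/right perfect, $\pi_\infty$-regular, and $\pi_\infty$-regular plus semiperfect) finite direct products preserve $\calP$, since Jacobson radicals, nilpotence/T-nilpotence, and $\pi_\infty$-regularity all behave componentwise on finite products, and the product of semisimple rings is semisimple. Hence $\nMat{R}{n}\times\nMat{R}{m}$ satisfies $\calP$.

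Now I would apply Theorem~\ref{RING:TH:summary-thm}(i) (equivalently, Theorem~\ref{RING:TH:main-res-general}) to conclude that the semi-invariant subring $T$ also satisfies $\calP$. Finally, the surjection $T\onto \End(M_R)$ transports $\calP$ to $\End(M_R)$: the lemma at the end of Section~\ref{section:facts} says semiprimary, right/left perfect and semiperfect pass to quotients (since $\vphi(\Jac(R))=\Jac(S)$ for any surjective $\vphi$ from a semilocal ring), and $\pi$-regularity (hence $\pi_\infty$-regularity) trivially descends to quotients because a defining equation $a^k = a^{k+1}b$ survives modulo any ideal.

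The only non-trivial input is Theorem~\ref{RING:TH:summary-thm}(i), so there is no real obstacle beyond carefully checking case by case that each $\calP\in\Pdisc\cap\Pmor$ is closed under finite products and quotients; this is straightforward from the definitions and the cited structural results.
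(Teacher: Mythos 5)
Your proof is correct and follows exactly the same route as the paper: finitely presented $M$ gives an exact sequence $R^m\to R^n\to M\to 0$, Proposition \ref{RING:PR:exact-seq-lemma} exhibits $\End(M_R)$ as a quotient of a semi-invariant subring of $\nMat{R}{n}\times\nMat{R}{m}$, and then one invokes Morita-invariance, closure under finite products, Theorem \ref{RING:TH:summary-thm}(i), and passage to quotients. The only difference is that you spell out the case-by-case verifications (products and quotients) that the paper treats as self-evident, which is fine.
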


    \begin{proof}
        There is an exact sequence $R^n\to R^m\to M\to 0$ with $n,m\in\N$. Since $R^n, R^m$ are projective,
        we may apply Proposition \ref{RING:PR:exact-seq-lemma} to deduce that $\End(M)$ is a quotient
        of a semi-invariant subring of $\End(R^n)\times \End(R^m)\cong\nMat{R}{n}\times\nMat{R}{m}$. The latter
        has $\calP$ because any $\calP\in\Pdisc\cap\Pmor$
        is preserved under
        Morita equivalence and under taking finite products. Since all ring properties
        in $\Pdisc$ pass to quotients, we are done by Theorem \ref{RING:TH:summary-thm}.
    \end{proof}

    \begin{cor} \label{RING:CR:application-to-fp-modules}
        Let $\vphi:S\to R$ be a ring homomorphism. Consider $R$ as a right $S$-module via $\vphi$
        and assume it is finitely presented. Then if $S$ satisfies $\calP\in\Pdisc\cap\Pmor$, then so
        does $R$.
    \end{cor}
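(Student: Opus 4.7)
The plan is to realize $R$ itself as a semi-invariant subring of $\End(R_S)$ and then invoke Theorem~\ref{RING:TH:application-to-fp-modules} followed by Theorem~\ref{RING:TH:summary-thm}(i). Since $R$ is finitely presented as a right $S$-module and $S$ satisfies $\calP\in\Pdisc\cap\Pmor$, the former will give $\calP$ for $\End(R_S)$, and it will then remain only to transfer $\calP$ down to $R$.

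First I would consider the ring homomorphism $\lambda:R\to\End(R_S)$ defined by $\lambda(r)(x)=rx$. This is well defined because the right $S$-action on $R$ is $x\cdot s=x\vphi(s)$, so left multiplication by any $r\in R$ automatically commutes with it; and $\lambda$ is injective since $\lambda(r)(1)=r$. The image of $\lambda$ is precisely $\End(R_R)$, viewing $R$ as a right module over itself, and in the chain of inclusions in $\End(R_\Z)$ one has $\End(R_R)\subseteq\End(R_S)$.

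Next I would verify that $\End(R_R)$ is a semi-invariant subring of $\End(R_S)$ by applying Proposition~\ref{RING:PR:endo-ring-is-semi-invariant} to the subring inclusion $\vphi(S)\subseteq R$ with module $M=R$. Because the right $S$-action on $R$ factors through $\vphi(S)$, the right $\vphi(S)$-module structure on $R$ coincides with its right $S$-module structure, so $\End(R_S)=\End(R_{\vphi(S)})$, and the proposition delivers the semi-invariance of $\End(R_R)$ inside $\End(R_S)$. Note that no injectivity of $\vphi$ is needed here; we only ever pass through its image.

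Combining the two steps, $R\cong\End(R_R)$ embeds as a semi-invariant subring of $\End(R_S)$, which has $\calP$ by Theorem~\ref{RING:TH:application-to-fp-modules}, and Theorem~\ref{RING:TH:summary-thm}(i) then yields $\calP$ for $R$. The whole argument is essentially a bookkeeping assembly of earlier results, and I do not foresee a genuine obstacle; if anything, the step deserving a moment's care is the identification of $R$ with $\End(R_R)$ as a semi-invariant subring of $\End(R_S)$, which amounts to unwinding Proposition~\ref{RING:PR:endo-ring-is-semi-invariant} in the present context.
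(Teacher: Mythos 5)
Your proof is correct and follows essentially the same approach as the paper: first invoke Theorem~\ref{RING:TH:application-to-fp-modules} to get $\calP$ for $\End(R_S)$, then identify $R\cong\End(R_R)$ as a semi-invariant subring of $\End(R_S)$ (via Proposition~\ref{RING:PR:endo-ring-is-semi-invariant} applied to $\vphi(S)\subseteq R$) and transfer $\calP$ down. The paper simply packages the second step as a single citation of Corollary~\ref{RING:CR:main-cr:I}, which is itself exactly the combination of Proposition~\ref{RING:PR:endo-ring-is-semi-invariant} and Theorem~\ref{RING:TH:summary-thm}(i) that you spell out.
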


    \begin{proof}
        By Theorem \ref{RING:TH:application-to-fp-modules}, $\End(R_S)$ has $\calP$. Therefore,
        by Corollary \ref{RING:CR:main-cr:I}, $R\cong\End(R_R)$
        has $\calP$.
    \end{proof}

    \begin{remark}
        Theorem \ref{RING:TH:application-to-fp-modules} actually follows from results of Bjork, who proved the semiprimary case and part
        of the left/right perfect cases (\cite[Ths.\ 4.1-4.2]{Bj71B}), and Rowen, who proved the left/right perfect
        and the semiperfect and $\pi_\infty$-regular cases (\cite[Cr.\ 11 and Th.\ 8(iii)]{Ro86}).
        Our approach suggests a single simplified proof to all the cases.
        Note that we cannot replace ``finitely presented'' with ``finitely generated'' in Theorem \ref{RING:TH:application-to-fp-modules};
        in \cite[Ex.\ 2.1]{Bj71A}, Bjork presents a right artinian ring with a cyclic left module having
        a non-semilocal endomorphism ring. See also \cite[Ex.\ 3.5]{FacHer06}.
    \end{remark}

    By arguing as in the proof of Theorem \ref{RING:TH:application-to-fp-modules} one can also obtain:

    \begin{thm} \label{RING:TH:application-to-exact-sequences}
        Let $0\to A\to B\to C\to 0$ be an exact sequence in an abelian category $\CFont{A}$ such that $B$ is projective.
        \begin{enumerate}
            \item[(i)] If
            $\End(A)$ and $\End(B)$ has $\calP\in \Pdisc$, then
            $\End(C)$ has $\calP$.
            \item[(ii)] If $\End(A\oplus B)$ has $\calP\in \Psp$, then $\End(C)$ is semiperfect.
        \end{enumerate}
    \end{thm}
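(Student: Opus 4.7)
The plan is to mimic the proof of Theorem \ref{RING:TH:application-to-fp-modules}, invoking Proposition \ref{RING:PR:exact-seq-lemma} together with Theorem \ref{RING:TH:summary-thm}. The key observation is that the sequence being short exact makes the map $A\to B$ injective, so \emph{both} parts of Proposition \ref{RING:PR:exact-seq-lemma} are available; part (i) handles conclusion (i) of the theorem and part (ii) handles conclusion (ii).

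For (i), Proposition \ref{RING:PR:exact-seq-lemma}(i) realizes $\End(C)$ as a quotient of a semi-invariant subring $R_0$ of $\End(A)\times \End(B)$. First I would note that every property $\calP\in\Pdisc$ is closed under finite direct products: for semiprimary and left/right perfect this follows from $\Jac(R_1\times R_2)=\Jac(R_1)\times\Jac(R_2)$ together with the coordinatewise nature of (T-)nilpotence, and for the $(\pi_\infty)$-regular-and-semiperfect variants it is again a coordinatewise check combined with the fact that being semiperfect is closed under finite products. Hence $\End(A)\times\End(B)$ has $\calP$. Theorem \ref{RING:TH:summary-thm}(i) then transfers $\calP$ to the semi-invariant subring $R_0$, and since each property in $\Pdisc$ passes to homomorphic images (as recorded in Section \ref{section:facts}, just before Section \ref{section:semi-inv}), $\End(C)$ has $\calP$ as required.

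For (ii), since $f\colon A\to B$ is injective, Proposition \ref{RING:PR:exact-seq-lemma}(ii) exhibits $\End(C)$ as a quotient of a semi-invariant (in fact invariant and centralizer) subring $R_0$ of $S:=\End(A\oplus B)$. By hypothesis $S$ has $\calP\in\Psp$, so Theorem \ref{RING:TH:summary-thm}(i) gives that $R_0$ has $\calP$; by the very definition of $\Psp$ this forces $R_0$ to be semiperfect. As semiperfectness passes to quotients, $\End(C)$ is semiperfect.

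I do not foresee a genuine obstacle: once Proposition \ref{RING:PR:exact-seq-lemma} and Theorem \ref{RING:TH:summary-thm} are in hand the argument is essentially bookkeeping. The only points meriting a line of verification are the closure of the properties in $\Pdisc$ under finite products and under quotients, both of which are standard and already invoked earlier in the paper.
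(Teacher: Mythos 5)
Your proposal is correct and follows essentially the same route the paper intends (namely, Proposition~\ref{RING:PR:exact-seq-lemma} together with Theorem~\ref{RING:TH:summary-thm}, exactly as in Theorem~\ref{RING:TH:application-to-fp-modules}). One small imprecision: in part (ii) the class $\Psp$ contains topological properties from $\Ptop$ as well, for which Theorem~\ref{RING:TH:summary-thm}(i) does not directly apply; in that case one instead gives $\End(A\oplus B)$ the linear topology realizing $\calP$ and invokes Theorem~\ref{RING:TH:summary-thm}(ii), noting that $R_0$, being a centralizer subring, is T-semi-invariant --- after which the rest of your argument (semiperfectness of $R_0$, passage to the quotient $\End(C)$) goes through unchanged.
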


    Next, we turn to representations over modules with ``good'' endomorphism ring. By a representation
    of a monoid (ring) $G$ over a right $R$-module $M$, we mean a mononid (ring) homomorphism $\rho:G\to\End(M)$ (so
    $G$ acts on $M$ via $\rho$).

    \begin{cor}
        Let $R$ be a ring and let $\rho$ be a representation of a monoid (or a ring) $G$ over a right $R$-module $M$. Assume that one
        of the following holds
        \begin{enumerate}
            \item[(i)] $\End(M)$ has $\calP\in\Pdisc\cup\Ptop$.
            \item[(ii)] There is a sub-monoid (subring) $H\subseteq G$ such that $\End(\rho|_H)$ has
            $\calP\in \Pdisc$.
            \item[(iii)] $\End(M)$ is LT and Hausdorff and there is a sub-monoid (or a subring) $H\subseteq G$ such that $\End(\rho|_H)$
            has $\calP\in\Ptop$ w.r.t.\ the induced topology.
        \end{enumerate}
        Then $\End(\rho)$ has $\calP$. Moreover, if $\calP\in\Psp$, then $\rho$ has
        a Krull-Schmidt decomposition $\rho\cong\rho_1\oplus\dots\oplus \rho_t$
        and $\End(\rho_i)$ is local and has $\calP$ for all $1\leq i\leq t$.
    \end{cor}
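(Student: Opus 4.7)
The key observation is that for any representation $\rho\colon G\to\End(M)$, we have
\[
\End(\rho)=\Cent_{\End(M)}\!\bigl(\rho(G)\bigr),
\]
since a module endomorphism commutes with $\rho$ precisely when it commutes with every $\rho(g)$. More generally, for any submonoid (or subring) $H\subseteq G$, $\End(\rho|_H)=\Cent_{\End(M)}(\rho(H))$, and since commuting with $\rho(G)$ implies commuting with $\rho(H)$, we obtain the nested identity
\[
\End(\rho)=\Cent_{\End(\rho|_H)}\!\bigl(\rho(G)\bigr).
\]
Thus in all three cases, $\End(\rho)$ arises as a centralizer subring of a ring known to satisfy $\calP$, and by Proposition \ref{RING:PR:semi-invariant-definition-prop}(b) (resp.\ Proposition \ref{RING:PR:T-semi-invariant-definition-prop}(b) in the topological setting) it is therefore semi-invariant (resp.\ T-semi-invariant).

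Case (i) is immediate: $\End(\rho)$ is a centralizer subring of $\End(M)$, so Theorem \ref{RING:TH:summary-thm}(i) or (ii) (according to whether $\calP\in\Pdisc$ or $\calP\in\Ptop$) applies directly. In case (ii), $\End(\rho)$ is a centralizer subring of $\End(\rho|_H)$, and since $\calP\in\Pdisc$ we apply Theorem \ref{RING:TH:summary-thm}(i). In case (iii), $\End(M)$ is Hausdorff LT and $\End(\rho|_H)$ is closed in $\End(M)$ (being a centralizer of a set), hence lies in $\LTR$ with the induced topology; $\End(\rho)$ is then a T-semi-centralizer subring of $\End(\rho|_H)$, so Theorem \ref{RING:TH:summary-thm}(ii) gives that $\End(\rho)$ has $\calP$.

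For the last assertion, suppose $\calP\in\Psp$, so $\End(\rho)$ is semiperfect. The category $\CFont{A}$ of representations of $G$ over right $R$-modules is abelian (if $G$ is a ring, representations are $(G,R)$-bimodules; if $G$ is a monoid, $\CFont{A}$ is the category of right modules over the monoid ring $R[G]$ acting on the other side), so in particular idempotents split. The Krull--Schmidt theorem quoted in Section \ref{section:preface} then yields a decomposition $\rho\cong\rho_1\oplus\cdots\oplus\rho_t$ into indecomposables with each $\End(\rho_i)$ local. Finally, each $\End(\rho_i)=e_i\End(\rho)e_i$ for the orthogonal idempotent $e_i$ corresponding to $\rho_i$, and since every $\calP\in\Psp\cap(\Pdisc\cup\Ptop)$ passes to corners $eRe$ (by Proposition \ref{RING:PR:morita-preserved-properties}(ii) in the discrete case and the analogous preservation under Morita equivalence in the topological case), $\End(\rho_i)$ inherits $\calP$.

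The only genuine subtlety is the topological bookkeeping in case (iii): one must check that $\End(\rho|_H)$, carrying the subspace topology from $\End(M)$, lies in $\LTR$ (it does, since closed subrings of Hausdorff LT rings are Hausdorff LT), and that the induced topology on $\End(\rho)$ from $\End(\rho|_H)$ coincides with that from $\End(M)$ (which is automatic since the topology on $\End(\rho|_H)$ was itself induced from $\End(M)$). Once this is verified, the proof reduces cleanly to an application of the main theorems.
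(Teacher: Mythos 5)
Your proof is correct and follows essentially the same route as the paper's: both identify $\End(\rho)=\Cent_{\End(\rho|_H)}(\rho(G))$, apply Theorem \ref{RING:TH:summary-thm} (the paper handles (i) by taking $H$ trivial rather than separately, a cosmetic difference), and then invoke the Krull--Schmidt Theorem together with preservation of $\calP$ under passage to corner rings $e\End(\rho)e$. Your extra remarks on the topological bookkeeping in case (iii) are accurate but add no new substance beyond what the paper's argument implicitly relies on.
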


    \begin{proof}
        (i) follows from (ii) and (iii) if we take $H$ to be the trivial monoid (basic subring). To see (ii) (resp.\ (iii)),
        notice that $\End(\rho)=\Cent_{\End(\rho|_H)}(\rho(G))$. Therefore,
        $\End(\rho)$ is a semi-invariant (resp.\ T-semi-invariant) subring of $\End(\rho|_H)$, hence
        by Theorem \ref{RING:TH:summary-thm}, $\End(\rho)$ has $\calP$.
        If $\calP\in\Psp$, then $\End(\rho)$ is semiperfect. The Krull-Schmidt
        Theorem now implies $\rho$ has a Krull-Schmidt decomposition
        $\rho\cong\rho_1\oplus\dots\oplus \rho_t$ and $\End(\rho_i)$ is local for all $i$. Finally,
        $\End(\rho_i)\cong e\End(\rho)e$
        for some $e\in\ids{\End(\rho)}$. Since for any ring $R$ and $e\in\ids{R}$, $R$ has $\calP$
        implies $eRe$ has $\calP$, we are through.
    \end{proof}

    Assume $R$ is a ring and $M$ is a right $R$-module such that $\End(M_R)$ is semiperfect and quasi-$\pi$-regular
    (see Theorem \ref{RING:TH:application-to-fp-modules-top-case} below for
    cases when this happens).
    Then the endomorphisms of $M$ have a ``Jordan decomposition'' in the following
    sense: If $f\in\End(M_R)$, then we can consider $M$ as a right $R[x]$-module by letting $x$ act as $f$.
    Clearly $\End(M_{R[x]})=\Cent_{\End(M_R)}(f)$, so by Theorem \ref{RING:TH:main-res-for-quasi-pi-reg-rings},
    $\End(M_{R[x]})$ is semiperfect. Therefore, $M_{R[x]}$ has a Krull-Schmidt decomposition $M=M_1\oplus\dots\oplus M_t$.
    (Notice that each $M_i$ is an $f$-invariant submodule of $M$.) This decomposition plays the role of a Jordan decomposition
    for $f$, since the isomorphism classes of $M_1,\dots,M_t$ (as $R[x]$-modules) determine the conjugation class of $f$.
    In particular, studying endomorphisms of $M$ can be done by classifying LE-modules over $R[x]$.

    \smallskip

    Finally, the results of this paper can be applied in a rather different manner
    to bilinear forms: Let $*$ be an anti-endomorphism of a ring $R$ (i.e.\
    an additive, unity-preserving map that reverses order of multiplication). Then $\sigma=*^2$
    is an endomorphism of $R$ and $*$ becomes an involution on the invariant subring $R^{\{\sigma\}}$.
    As some claims on $(R,*)$ can be reduced to claims on $(R^{\{\sigma\}},*|_{R^{\{\sigma\}}})$,
    our results become a useful tool for studying the former. Recalling that bilinear (resp.\ sesquilinear) forms correspond certain anti-endomorphisms and
    quadratic (resp.\ hermitian) forms correspond to involutions (see \cite[Ch.\ I]{InvBook}), these ideas, taken much further,
    can be used to reduce the isomorphism problem of bilinear forms to the isomorphism problem of hermitian forms.
    This was actually done (using other methods) for bilinear forms over fields by Riehm (\cite{Ri74}),
    who later generalized this with Shrader-Frechette to sesquilinear forms over semisimple algebras (\cite{RiSh76}).
    The author can improve these results for bilinear (sesquilinear) forms over various semiperfect pro-semiprimary rings (e.g.\
    f.g.\ algebras over $\Z_p$). This\rem{ approach is described at \cite{Fi12B} and} will be published elsewhere.

\section{Modules Over Linearly Topologized Rings}
\label{section:modules}

    In this section we extend Theorem \ref{RING:TH:application-to-fp-modules}
    and other applications to LT rings.
    This is done by properly topologizing modules and endomorphisms rings of modules
    over LT rings.

    \smallskip

    Let
    $R$ be an LT ring and let $M$ be a right $R$-module. Then $M$ can be made into a topological $R$-module
    by taking $\{x+MJ\where J\in \calI_R\}$ as basis of neighborhoods of $x\in M$. (That $M$ is indeed
    a topological module follows from \cite[Th.\ 3.6]{Wa93}.) Notice that any homomorphism
    of modules is continuous w.r.t.\ this topology. Furthermore, $\End(M)$ can be linearly topologized by
    taking $\{\Hom_R(M,MJ)\where J\in \calI_R\}$ as a local basis.\footnote{
        This topology is the uniform convergence topology (w.r.t.\ the natural uniform structure of $M$).
        If $M$ is f.g.\ then this topology coincides with the pointwise convergence topology (i.e.\
        the topology induced from the
        the product topology on $M^M$).
    } We will refer to the topologies just defined on $M$ and $\End(M)$ as
    their \emph{standard topologies}. In general, that $R$ is
    Hausdorff does not imply  $M$ or $\End(M)$ are Hausdorff. (E.g.: For
    any distinct primes $p,q\in\Z$, the $\Z$-module $\Z/q$ is not Hausdorff
    w.r.t.\
    the $p$-adic topology on $\Z$). Observe that $\overline{\{0_{\End(M)}\}}=\bigcap_{J\in\calI_R}\Hom(M,MJ)=
    \Hom(M,\bigcap_{J\in\calI_R}MJ)=\Hom(M,\overline{\{0_M\}})$, so $M$ is Hausdorff implies $\End(M)$ is Hausdorff.

    Now let $E_\bullet$ be a be a finite resolution of $M$, i.e.\ $E_\bullet$
    consists of an exact sequence $E_{n-1}\to \dots\to E_0\to E_{-1}=M\to 0$.\footnote{
        We do not require the map $E_{n-1}\to E_{n-2}$ to be injective.
    }
    The maps $E_i\to E_{i-1}$ will be denoted by $d_i$.
    We say that $E_\bullet$ has the \emph{lifting property} if any $f_{-1}\in\End(M)$
    can be extended to a chain complex homomorphism $f_\bullet:E_\bullet\to E_\bullet$. (Recall
    that $f_\bullet$
    consists of a sequence $\{f_i\}_{i=-1}^{n-1}$ such that $f_i\in\End(E_i)$ and $d_if_i=f_{i-1}d_i$ for all $i$.)
    In other words, $E_\bullet$ has the lifting property if and only if the following commutative diagram can
    be completed for every $f_{-1}\in \End(M)$.
    \[\xymatrix{
        E_{n-1} \ar[r]\ar@{.>}[d]^{f_{n-1}} & \dots \ar[r] & E_1 \ar[r]\ar@{.>}[d]^{f_1} & E_0 \ar[r]\ar@{.>}[d]^{f_0} & M \ar[d]^{f_{-1}}\\
        E_{n-1} \ar[r]                & \dots \ar[r] & E_1 \ar[r]             & E_0 \ar[r]            & M
    }\]
    For example, any projective resolution has the lifting property.
    We define a linear ring topology $\tau_E$ on $\End(M)$ as follows: For all
    $J\in\calI_R$, define $\Ball(J,E)$ to the set of maps
    $f_{-1}\in\End(M,MJ)$ that extend to a chain complex homomorphism $f_\bullet:E_\bullet\to E_\bullet$
    such that $\im f_i\subseteq E_iJ$ for all $-1\leq i<n$. The lifting property implies $\Ball(J,E)\idealof \End(M)$
    and it is clear that $\calB_E:=\{\Ball(J,E)\where J\in\calI_R\}$ is a filter base.
    Therefore, there is a unique ring topology  on $\End(M)$, denoted $\tau_E$, having $\calB_E$ as a local basis.

    It turns out that
    if $E_\bullet$ is a \emph{projective} resolution, then $\tau_E$ only
    depends on the length of $E$, i.e.\ the number $n$. Indeed, if $P_\bullet, P'_\bullet$
    are two \emph{projective} resolutions of length $n$ of $M$, then the map $\id_M :M\to M$
    gives rise to chain complex homomorphisms $\alpha_{\bullet}:P_\bullet\to P'_\bullet$
    and $\beta_{\bullet}:P'_\bullet\to P_\bullet$ with $\alpha_{-1}=\beta_{-1}=\id_M$.
    Now, if $J\in\calI_R$ and $f_{-1}\in\Ball(J,P)$, then there is $f_\bullet:P_\bullet\to P_\bullet$
    such that $\im f_i\subseteq P_iJ$ for all $i$. Define $f'_\bullet=\alpha_\bullet f_\bullet \beta_\bullet$.
    Then $\im f'_i\subseteq \alpha_i(P_iJ)\subseteq P'_iJ$ for all $i$ and $f'_{-1}=\id_M f_{-1}\id_M=f_{-1}$,
    so $f_{-1}\in\Ball(J,P')$. By symmetry, we get $\Ball(J,P)=\Ball(J,P')$ for all $J\in\calI_R$, hence
    $\tau_P=\tau_{P'}$.

    The topology of $\End(M)$ obtained from a projective resolution of length $n$ will
    be denoted by $\tau_n^M$ and the closure of the zero ideal in that topology will be denoted by $I_n^M$.
    Note that $\tau_1^M\subseteq \tau_2^M\subseteq\dots$ and that
    that $\tau_1^M$ is the standard topology on $\End(M)$. (Indeed, if $P_\bullet:~P_0\to M\to 0$ is a projective
    resolution of length $1$, then any $f\in\Hom(M,MJ)$ can be lifted to $f_0:P_0\to P_0J$ because the map
    $P_0J\to MJ$ is onto, hence $\Ball(P,J)=\Hom(M,MJ)$.) More generally,
    for any resolution $E_\bullet$ of $M$, $\tau_E$ contains the standard topology on $\End(M)$.
    Therefore, if $M$ is Hausdorff, then
    $\tau_E$ is Hausdorff.
    In the appendix we provide sufficient conditions for
    $\tau_1^M,\tau_2^M,\dots$ to coincide.

    With this terminology at hand, we can generalize Proposition \ref{RING:PR:exact-seq-lemma}:

\rem{
    \begin{prp}
        Let $R$ be an LT ring, let $S$ be a (non-topological) ring containing $R$ and let $M$ be
        a right $S$-module. Assume $M_R$ is Hausdorff and endow $\End(M_R)$ with its natural topology. Then
        $\End(M_S)$ is a T-semi-invariant subring of $\End(M_R)$.
    \end{prp}

    \begin{proof}
        Let $C^0(M)$ be the ring of all continuous functions from
        $M$ to itself and endow it with the uniform convergence topology.
        Then $C^0(M)$ is a Hausdorff \emph{right} linearly topologized ring
        with local basis $\{\mathrm{Cont}(M,MJ)\where J\in\calI_R\}$.
        Now
        continue as in Proposition \ref{RING:PR:endo-ring-is-semi-invariant}.
    \end{proof}
}

    \begin{prp} \label{RING:PR:exact-sequence-LT-rings}
        Let $R$ be an LT ring and let $E:~A\to B\to C\to 0$ be an exact sequence of right $R$-modules satisfying
        the lifting property (w.r.t.\ $C$)
        and such that $A$ and $B$ are Hausdorff.
        Assign $\End(A)$ and $\End(B)$ the natural topology and endow $\End(C)$ with $\tau_E$.
        Then $\End(C)$ is isomorphic as a topological ring to a quotient of
        a T-semi-invariant subring of $\End(A)\times \End(B)$.
    \end{prp}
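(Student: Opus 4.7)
The plan is to topologize the construction from the proof of Proposition~\ref{RING:PR:exact-seq-lemma} and check that everything in sight is continuous with the right topologies. Let $S = \End(A \oplus B)$ carry its natural linear topology, and identify $D = \End(A) \times \End(B)$ with the diagonal subring of $S$ via $(x,y)\mapsto \smallSMatII{x}{0}{0}{y}$. Since $A$ and $B$ are Hausdorff, so is $A\oplus B$ (as $(A\oplus B)J = AJ\oplus BJ$), hence $S$ lies in $\LTR$ and so does $D$. A direct comparison of basic open ideals shows the topology on $D$ induced from $S$ coincides with the product of the natural topologies on $\End(A)$ and $\End(B)$: explicitly, $D\cap \Hom(A\oplus B,(A\oplus B)J) = \Hom(A,AJ)\times \Hom(B,BJ)$ for every $J\in\calI_R$.

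Next, take $X = \bigl\{\smallSMatII{0}{f}{0}{0}\bigr\} \subseteq S$. As in Proposition~\ref{RING:PR:exact-seq-lemma}, $\Cent_D(X) = \{(a,b)\in D : bf = fa\}$, which is by construction a T-semi-centralizer subring of $D$, hence T-semi-invariant by Proposition~\ref{RING:PR:T-semi-invariant-definition-prop}. Define $\pi:\Cent_D(X)\to\End(C)$ by $\pi(a,b)=c$, where $c$ is the unique element of $\End(C)$ with $cg = gb$ (existence uses $b(\ker g) = b(\im f) = \im(fa) \subseteq \im f = \ker g$; uniqueness uses surjectivity of $g$). This is a ring homomorphism, and the lifting property hypothesis makes $\pi$ surjective.

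It remains to show that $\pi$ is continuous and open, which will simultaneously identify $\tau_E$ with the quotient topology on $\End(C)$ transferred via $\pi$. For $J\in\calI_R$, set $U_J = \Cent_D(X)\cap (\Hom(A,AJ)\times \Hom(B,BJ))$; these form a local basis at $0$ of $\Cent_D(X)$, and the goal is to prove $\pi(U_J) = \Ball(J,E)$. For the inclusion $\pi(U_J)\subseteq \Ball(J,E)$: if $(a,b)\in U_J$ and $c=\pi(a,b)$, then $(f_1,f_0,f_{-1}):=(a,b,c)$ is a chain complex endomorphism of $E$ (since $bf=fa$ and $cg=gb$) with images in $AJ$, $BJ$, and $g(BJ)=CJ$ respectively, so $c\in \Ball(J,E)$. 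Conversely, given $c\in\Ball(J,E)$, any chain complex lift $(a,b,c)$ with $\im a\subseteq AJ$ and $\im b\subseteq BJ$ yields $(a,b)\in U_J$ with $\pi(a,b)=c$. This equality simultaneously yields continuity of $\pi$ and the fact that $\pi$ sends basic open ideals onto basic open ideals of $\tau_E$, so $\End(C)$ endowed with $\tau_E$ is indeed the topological quotient of $\Cent_D(X)$. The main technical obstacle is precisely this two-sided identification $\pi(U_J)=\Ball(J,E)$; everything else is a topological transcription of Proposition~\ref{RING:PR:exact-seq-lemma}.
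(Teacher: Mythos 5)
Your proof is correct and follows the same route as the paper's own proof, which is quite terse: it simply states that the embedding $D\hookrightarrow S$ is a topological embedding and that the identification of the quotient topology with $\tau_E$ is ``routine.'' Your proposal supplies exactly those routine verifications — the identification of the subspace topology on $D$ with the product topology, and the key equality $\pi(U_J)=\Ball(J,E)$ that establishes continuity and openness of $\pi$ — so this is the same argument with the details written out.
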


    \begin{proof}
        We use the notation of the proof of Proposition
        \ref{RING:PR:exact-seq-lemma}. By that proof, $\End(C)$ is isomorphic to a quotient of
        $\Cent_{D}(\smallSMatII{0}{f}{0}{0})$.
        It is easy to check that the embedding
        $D\hookrightarrow S$
        is a topological embedding, hence $\End(C)$ is isomorphic to a quotient of
        a T-semi-invariant subring of $D$.
        That the quotient topology on $\End(C)$ is indeed $\tau_E$ is routine.
    \end{proof}

    We are now in position to generalize previous results.

    \begin{lem} \label{RING:LM:pro-P-equiv-cond}
        Let $R\in\LTR$ and $\calP\in\Pdisc$. Then $R$ is pro-$\calP$
        if any only if $R$ is complete and $R/I$ has $\calP$ for all $I\in \calI_R$.
    \end{lem}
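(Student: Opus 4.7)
The plan is to verify both directions using material already developed in the paper, with the essential content in the ``only if'' direction coming from Theorem \ref{RING:TH:pro-semiprim-full-result}(iii).

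For the ``if'' direction, suppose $R$ is complete and $R/I$ has $\calP$ for every $I\in\calI_R$. Then $\calI_R$ itself is a local basis of ideals with the property that $R/I$ has $\calP$ for each of its members, so by the characterization of strictly pro-$\calP$ rings already recorded in Section \ref{section:top-rings} (``an LT ring $R$ is strictly pro-$\calP$ if and only if it is complete and admits a local basis of ideals $\calB$ such that $R/I$ has $\calP$ for all $I\in\calB$''), $R$ is strictly pro-$\calP$, hence pro-$\calP$.

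For the ``only if'' direction, write $R\cong\invlim\{R_i\}_{i\in I}$ with each $R_i$ discrete and satisfying $\calP$; then $R$ is automatically complete, and the kernels $J_i$ of the standard projections $R\to R_i$ form a local basis of $\calI_R$. The heart of the argument is showing that $R/J_i$ itself has $\calP$ for every $i$. By Remark \ref{RING:RM:hirarchy-of-properties}, each $R_i$ is $\pi$-regular, so Theorem \ref{RING:TH:pro-semiprim-full-result}(iii) applied with $R_0=R$ yields that $R/J_i$ is $\pi$-regular and, moreover, semiprimary, right perfect, or semiperfect when $R_i$ is (the left-perfect case follows by the symmetric version of the same argument). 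For the $\pi_\infty$-regular variants we apply the same reasoning to $\nMat{R}{n}=\invlim\nMat{R_i}{n}$ for each $n$, using $\nMat{R/J_i}{n}\cong\nMat{R}{n}/\nMat{J_i}{n}$ together with Lemma \ref{RING:LM:basic-inv-lim-lem}(i). Combining these, $R/J_i$ has $\calP$ for every $\calP\in\Pdisc$.

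To pass from open ideals in the distinguished local basis $\{J_i\}$ to arbitrary $I\in\calI_R$, I invoke the local-basis property: given $I$, pick $i$ with $J_i\subseteq I$, so $R/I$ is a ring-theoretic quotient of $R/J_i$. Since every property in $\Pdisc$ is preserved under surjective ring homomorphisms (as noted after the surjection-preserves-Jacobson-radical lemma in Section \ref{section:facts}, together with the obvious remarks that $\pi$-regularity and $\pi_\infty$-regularity are stable under quotients), $R/I$ inherits $\calP$. This completes the proof.

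There is no real obstacle; the only point requiring care is noting that ``$\pi_\infty$-regular'' forces one to run the argument on matrix rings, and ``left perfect'' requires the (automatic) left-right symmetric version of Theorem \ref{RING:TH:pro-semiprim-full-result}(iii). Neither causes any genuine difficulty.
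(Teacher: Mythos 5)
Your proposal is correct and follows essentially the same route as the paper: the ``if'' direction via $R\cong\invlim\{R/I\}_{I\in\calI_R}$, and the ``only if'' direction by deducing strict pro-$\calP$-ness from Theorem \ref{RING:TH:pro-semiprim-full-result}(iii) with $R_0=R$ (the paper phrases this through Corollary \ref{RING:CR:inverse-lim}, which is proven by exactly that specialization) and then passing to an arbitrary open ideal by taking a quotient. Your explicit handling of the $\pi_\infty$-regular and left-perfect members of $\Pdisc$ makes visible a step the paper leaves to its earlier blanket remark that the $\pi_\infty$ variants follow by the same arguments.
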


    \begin{proof}
        If $R$ is complete and $R/I$ has $\calP$ for all $I\in \calI_R$, then $R\cong\invlim \{R/I\}_{I\in\calI_R}$,
        so $R$ is pro-$\calP$. On the other hand, if $R$ is pro-$\calP$, then it is complete.
        In addition, it is strictly pro-$\calP$
        (Corollary \ref{RING:CR:inverse-lim}), hence there is a local basis of ideals $\calB$
        such that $R/I$ has $\calP$ for all $I\in\calB$. Now, let $I\in\calI_R$. Then
        there is $I_0\in\calB$ contained in $I$. Now, $R/I$ is a quotient of $R/I_0$. As the latter has
        $\calP$, so does $R/I$.
    \end{proof}

    Recall that a topological ring is \emph{first countable} if it admits a countable local basis. If $R$ is pro-$\calP$,
    then this is equivalent to saying that $R$ is the inverse
    limit of countably many discrete rings satisfying $\calP$.

    \begin{thm} \label{RING:TH:application-to-fp-modules-top-case}
        Let $R\in\LTR$ be a ring and let $M$
        be a f.p.\ right $R$-module.
        \begin{enumerate}
            \item[(i)] If $R$ is first countable and satisfies $\calP\in\Ptop\cap\Pmor$, then $\End(M)/I_2^M$ satisfies $\calP$
            when $\End(M)$ is endowed with $\tau_2^M$.
            In particular, if $M$ is Hausdorff, then $\End(M)$ has $\calP$.
            \item[(ii)] Assume
            $R$ is quasi-$\pi_\infty$-regular and let $i\in\{1,2\}$.
            Then $\End(M)/I_i^M$ is quasi-$\pi_\infty$-regular when $\End(M)$ is endowed with $\tau_i^M$.
            In particular, if $M$ is Hausdorff, then $\End(M)$ is quasi-$\pi_\infty$-regular w.r.t.\ $\tau_1^M$.
            \item[(iii)] If $R$ is semiperfect and quasi-$\pi_\infty$-regular, then $\End(M)$ is semiperfect.
        \end{enumerate}
    \end{thm}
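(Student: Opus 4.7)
The strategy is to realize $\End(M)$, with its topology $\tau_i^M$, as a topological quotient of a T-semi-invariant subring of a matrix ring over $R$, then transfer the relevant properties along this presentation using the results of Sections \ref{section:main-thm}--\ref{section:top-rings}.

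Since $M$ is f.p., fix a projective presentation $R^n\xrightarrow{g}R^m\xrightarrow{h}M\to 0$; this is a length-$2$ projective resolution $E_\bullet$ with the lifting property. Both $R^n$ and $R^m$ are Hausdorff because $R$ is, so Proposition \ref{RING:PR:exact-sequence-LT-rings} yields a topological isomorphism of $(\End(M),\tau_2^M)$ with $D_0/K$, where $D_0$ is a T-semi-invariant subring of $D:=\End(R^n)\times\End(R^m)\cong\nMat{R}{n}\times\nMat{R}{m}$ and $K$ is the kernel. The ring $D$ inherits from $R$ every $\calP\in\Pmor$ (matrix rings and finite products preserve Morita-invariant properties) and is quasi-$\pi_\infty$-regular whenever $R$ is. Theorem \ref{RING:TH:summary-thm}(ii) then gives that $D_0$ satisfies $\calP$ in case (i), is quasi-$\pi_\infty$-regular in case (ii), and is semiperfect plus quasi-$\pi_\infty$-regular in case (iii) (via Theorem \ref{RING:TH:main-res-for-quasi-pi-reg-rings}). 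Letting $\bar K$ denote the closure of $K$ in $D_0$, one has $\End(M)/I_2^M\cong D_0/\bar K$ as topological rings, with $\bar K$ a \emph{closed} ideal of $D_0$.

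It remains to transfer the property in question from $D_0$ to $D_0/\bar K$. For (iii) we drop topology: $D_0$ is semiperfect as a ring and semiperfectness passes to ring quotients, so $\End(M)\cong D_0/K$ is semiperfect. For (ii) with $i=2$, quasi-$\pi$-regularity is inherited by the Hausdorff quotient $D_0/\bar K$: lift $a\in D_0/\bar K$ to $\tilde a\in D_0$, take its associated idempotent $e$, and reduce mod $\bar K$---(A),(B) are preserved trivially and (C$'$) survives because $(faf)^n\to 0$ in $D_0$ implies the same modulo $\bar K$. Applying this to $M^k$ for each $k$ and invoking the topological isomorphism $\nMat{\End(M)}{k}\cong\End(M^k)$ gives the $\pi_\infty$ version. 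The case $i=1$ reduces to $i=2$: since $\tau_1^M\subseteq\tau_2^M$, the closed ideal $I_1^M$ remains closed in $\tau_2^M$, so $\End(M)/I_1^M$ is a Hausdorff quotient of the quasi-$\pi_\infty$-regular ring $\End(M)/I_2^M$ by a closed ideal, and quasi-$\pi_\infty$-regularity only weakens under the coarser $\tau_1^M$. For (i) with $\calP\in\Ptop$, first countability of $R$ propagates to $D$, $D_0$, and $D_0/\bar K$; since $D_0$ is complete, $D_0/\bar K$ is a first-countable quotient by a closed ideal, hence complete, and its quotients by open ideals $J\supseteq\bar K$ are of the form $D_0/J$, which satisfy $\calP'$ by Lemma \ref{RING:LM:pro-P-equiv-cond} (treating $\calP=\textrm{pro-}\calP'$; the variants involving semiperfectness or quasi-$\calQ$ are handled analogously). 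Hence $D_0/\bar K$ satisfies $\calP$.

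The main technical obstacle is the completeness step in part (i): one concludes that $D_0/\bar K$ is complete precisely because $R$ is first countable, which is where that hypothesis is essential. Without it, the quotient could admit the right discrete quotients while failing to equal their inverse limit, and hence fail to be pro-$\calP'$. The other transfers---T-semi-invariance into $D_0$, ring quotients for semiperfectness, and Hausdorff quotients for quasi-$\pi$-regularity---follow cleanly from the machinery developed in Sections \ref{section:main-thm}--\ref{section:top-rings}.
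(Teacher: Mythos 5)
Your proof follows the paper's own route very closely: realize $(\End(M),\tau_2^M)$ as a topological quotient of a T-semi-invariant subring of $\nMat{R}{n}\times\nMat{R}{m}$ via Proposition \ref{RING:PR:exact-sequence-LT-rings}, transfer $\calP$ to that subring via Theorem \ref{RING:TH:summary-thm}, then pass to the quotient by a closed ideal. Parts (ii) and (iii) are handled essentially as the paper does (for (ii), the lift/descend argument on associated idempotents; for (iii), dropping topology and using that semiperfectness passes to ring quotients). The $i=1$ reduction via ``quasi-$\pi_\infty$-regularity weakens under coarser Hausdorff linear sub-topologies'' also matches.

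There is a genuine gap in part (i), however, precisely at the step you yourself flag as ``the main technical obstacle.'' You write that $D_0/\bar K$ is ``a first-countable quotient by a closed ideal, hence complete,'' but completeness does \emph{not} follow from first countability plus closedness of the kernel by a formal argument --- Hausdorff quotients of complete topological groups need not be complete in general. What one actually needs, and what the paper supplies, is the chain: first countable $\Rightarrow$ metrizable (Birkhoff--Kakutani, or directly via an ultrametric built from a nested countable local basis of ideals), and then the nontrivial fact that a Hausdorff quotient of a \emph{complete metric} ring by a closed ideal is complete (Bourbaki). Without the metrizability step, your ``hence complete'' is an assertion, not a deduction. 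Since you correctly identified first countability as the essential hypothesis and understood what can go wrong without it, this is a localized omission rather than a structural error, but as written the proof of (i) is incomplete.
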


    \begin{proof}
        (i) The argument in the proof of Theorem \ref{RING:TH:application-to-fp-modules}
        shows that $\End(M)$ is a quotient of an LT Hausdorff ring satisfying $\calP$, which we denote by $W$
        (use Proposition \ref{RING:PR:exact-sequence-LT-rings} instead of Proposition \ref{RING:PR:exact-seq-lemma}).
        $I_2^M$ is a closed ideal of $\End(M)$ and therefore $\End(M)/I_2^M$ is a
        quotient of $W$ by a closed ideal. We finish by claiming that for any closed ideal $I\idealof W$,
        $W/I$ satisfies
        $\calP$. We will only check the case $\calP=\textrm{pro-$\calQ$}$
        for $\calQ\in \Pdisc$. The other cases are straightforward or follow from the pro-$\calQ$ case.
        Indeed, any open ideal of $W/I$ is of the form $J/I$ for some $J\in\calI_W$,
        hence by Lemma \ref{RING:LM:pro-P-equiv-cond}, $(W/I)/(J/I)\cong W/J$ satisfies $\calQ$.
        In addition, that $R$ is first countable implies $W$
        is first countable, hence by the Birkhoff-Kakutani Theorem,
        $W$ is metrizable. By \cite[p.\ 163]{Bo66} a Hausdorff quotient
        of a complete metric ring is complete, hence $W/I$ is complete.
        Therefore, by Lemma \ref{RING:LM:pro-P-equiv-cond} (applied to $W/I$),
        $W/I$ is pro-$\calQ$.

        (ii) The case $i=2$ follows from the argument of (i) since being $\pi$-regular
        passes to quotients by closed ideals (the first countable assumption is not needed). As for $i=1$,
        Since $I_1^M$ is closed in $\tau_1^M$, it is also closed in $\tau_2^M$.
        Therefore, $\End(M)/I_1^M$ is quasi-$\pi_\infty$-regular
        when $M$ is equipped with $\tau_2^M$.
        We are done by observing that if a ring is quasi-$\pi_\infty$-regular
        w.r.t.\ a given topology, then it is quasi-$\pi_\infty$-regular w.r.t.\ any linear Hausdorff sub-topology.

        (iii) By (i), $\End(M)$ is a quotient of a semiperfect ring, namely $W$.
    \end{proof}

    \begin{remark}\label{RING:RM:results-related-to-KSD}
        Part (iii) of Theorem \ref{RING:TH:application-to-fp-modules-top-case} was proved in \cite{Ro87}
        for complete semilocal rings with Jacobson radical f.g.\ as a right ideal and in \cite{Ro86}
        for semiperfect $\pi_\infty$-regular rings. Both conditions are included in being semiperfect and quasi-$\pi_\infty$-regular.
        In addition, V{\'a}mos proved in \cite[Lms.\ 13-14]{Vamos90} that all \emph{finitely generated}
        or torsion-free of finite rank modules rank over a  Henselian integral domain\footnote{
            A commutative ring $R$ is called \emph{Henselian} if $R$ is local and \emph{Hensel's Lemma} applies to $R$.
        } have semiperfect endomorphism ring.\rem{ Moreover, V{\'a}mos showed that the property that
        all f.g.\ modules have a Krull-Schmidt Decomposition characterizes the Henselian rings
        inside certain families of domains (e.g.\ Dedekind domain). fact that all}
        Results of similar flavor were
        also obtained in \cite{FacHer06}, where it is shown that the endomorphism ring
        of a f.p.\ (resp.\ f.g.) module over a semilocal (resp.\ commutative semilocal) ring is semilocal.
    \end{remark}

    \begin{cor} \label{RING:CR:fp-algebras-full-cor-topological-version}
        Let $S$ be a commutative LT ring and let $R$ be an $S$-algebra s.t. $R$ is f.p.\ and Hausdorff as
        an $S$-module. Then:
        \begin{enumerate}
            \item[(i)] If $S$ is quasi-$\pi_\infty$-regular,
            then $R$ is quasi-$\pi_\infty$-regular (w.r.t. to \emph{some} linear ring topology).
            If moreover $S$ is semiperfect, then so is $R$.
            \item[(ii)] If $S$ satisfies $\calP\in\Ptop\cap \Pmor$
            w.r.t.\ a given topology which is also first countable, then $R$ satisfies $\calP$.
        \end{enumerate}
    \end{cor}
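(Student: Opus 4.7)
The plan is to realise $R$ as a T-semi-invariant subring of $\End(R_S)$, equipped with an appropriate linear topology, and then to transport the ring-theoretic properties via the machinery already in place. Concretely, since $S$ is commutative it lies in the centre of $R$, so the left-multiplication map $\lambda\colon R\to\End(R_S)$, $r\mapsto\ell_r$, is a well-defined injective ring homomorphism. Mimicking the proof of Proposition \ref{RING:PR:endo-ring-is-semi-invariant}, one has $\lambda(R)=\Cent_{\End(R_S)}(\rho(R^\op))$, where $\rho\colon R^\op\to\End(R_{\Z})$ is the right-multiplication map. I will endow $\End(R_S)$ with the topology $\tau_1^R$ for part (i) and with $\tau_2^R$ for part (ii); both are Hausdorff, because $R$ is Hausdorff as an $S$-module (so that $I_1^R=I_2^R=0$). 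A topological refinement of Proposition \ref{RING:PR:endo-ring-is-semi-invariant}, combined with Proposition \ref{RING:PR:T-semi-invariant-definition-prop}, then identifies $\lambda(R)$ as a T-semi-invariant subring of $\End(R_S)$ in either topology, with the subspace topology on $R$ being linear and Hausdorff.

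For part (i), Theorem \ref{RING:TH:application-to-fp-modules-top-case}(ii) applied with $i=1$ gives that $\End(R_S)$ is quasi-$\pi_\infty$-regular in its standard topology $\tau_1^R$. Theorem \ref{RING:TH:summary-thm}(ii) then transfers quasi-$\pi_\infty$-regularity to $\lambda(R)\cong R$ with its subspace topology, providing the promised linear ring topology on $R$. If in addition $S$ is semiperfect, Theorem \ref{RING:TH:application-to-fp-modules-top-case}(iii) upgrades $\End(R_S)$ to being semiperfect, and Theorem \ref{RING:TH:main-res-for-quasi-pi-reg-rings} applied to the T-semi-invariant subring $\lambda(R)$ of the semiperfect quasi-$\pi$-regular ring $\End(R_S)$ yields that $R$ is semiperfect. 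Part (ii) proceeds in complete analogy: first-countability of $S$ together with $\calP\in\Ptop\cap\Pmor$ and the Hausdorffness of $R$ over $S$ let us invoke Theorem \ref{RING:TH:application-to-fp-modules-top-case}(i), giving that $\End(R_S)$ satisfies $\calP$ with respect to $\tau_2^R$; Theorem \ref{RING:TH:summary-thm}(ii) then hands $\calP$ down to $\lambda(R)$.

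The principal obstacle is the topological upgrade of Proposition \ref{RING:PR:endo-ring-is-semi-invariant}: one must produce a ring $T\in\LTR$ containing $\End(R_S)$ as a topological subring, together with a continuous map $R^\op\to T$ whose centralizer in $\End(R_S)$ is exactly $\lambda(R)$. The naive candidate $T=\End(R_{\Z})$ is not a priori linearly topologized, so one needs to verify that the local bases of $\tau_1^R$ (namely $\{\Hom_S(R,RJ):J\in\calI_S\}$) and of $\tau_2^R$ (the sets $\Ball(J,E)$ attached to a length-two projective resolution $E$ of $R$) are stable under right multiplication by arbitrary elements of $R$. This stability is automatic because $S\subseteq\Cent(R)$ forces each $RJ$ to be a two-sided ideal of $R$; the analogous stability for the $\tau_2^R$ base follows by lifting right-multiplication along the resolution. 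Once this routine but slightly technical step is settled, every remaining ingredient is already on the shelf and the corollary drops out by the transfer theorems.
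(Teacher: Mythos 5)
Your proof follows the same approach as the paper: identify $R$ with the centralizer of the right-multiplication operators $\what{r}$ inside $\End(R_S)$ (which satisfies the target property by Theorem \ref{RING:TH:application-to-fp-modules-top-case}) and invoke the transfer theorems. However, you manufacture an unnecessary obstacle. Because $S$ is commutative, ``$S$-algebra'' forces the image of $S$ to be central in $R$, and therefore each right-multiplication $\what{r}\colon x\mapsto xr$ is already an $S$-module endomorphism: $\what{r}(xs)=(xs)r=(xr)s=\what{r}(x)s$. Hence $\rho(R^\op)\subseteq\End(R_S)$, and $\lambda(R)=\Cent_{\End(R_S)}(\rho(R^\op))$ is a centralizer of a \emph{subset of} $\End(R_S)$ itself, not of a larger ring. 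In the terminology of Proposition \ref{RING:PR:T-semi-invariant-definition-prop}(b) one simply takes the ambient LT ring to be $\End(R_S)$ with topology $\tau_1^R$ or $\tau_2^R$, and T-semi-centrality is immediate. There is no need to topologize $\End(R_\Z)$, no ``topological upgrade'' of Proposition \ref{RING:PR:endo-ring-is-semi-invariant} to establish, and no stability of local bases to check; your ``principal obstacle'' paragraph can be deleted entirely. The rest of the argument (using parts (i), (ii), (iii) of Theorem \ref{RING:TH:application-to-fp-modules-top-case} together with Theorems \ref{RING:TH:summary-thm} and \ref{RING:TH:main-res-for-quasi-pi-reg-rings}) is correct and matches what the paper does.
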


    \begin{proof}
        We only prove (ii); (i) is similar. By Theorem \ref{RING:TH:application-to-fp-modules}, $\End(R_S)$ satisfies $\calP$.
        For all $r\in R$, define $\what{r}\in\End(R_S)$ by $\what{r}(x)=xr$ and observe that $\Cent_{\End(R_S)}(\{\what{r}\where r\in R\})
        \cong\End(R_R)=R$, hence $R$ has $\calP$ by Theorem \ref{RING:TH:summary-thm}.
    \end{proof}

    Let $C$ be a commutative local ring.
    Azumaya proved in \cite[Th.\ 22]{Azu51} that $C$ is Henselian if and only if every
    \emph{commutative} $C$-algebra $R$ with $R_C$ f.g.\ is semiperfect.
    This was improved by V{\'a}mos to non-commutative $C$-algebras in which all non-units are integral over $C$;
    see \cite[Lm.\ 12]{Vamos90}.
    Given the previous corollary, Azumaya and V{\'a}mos' results suggest that the notions of
    Henselian and quasi-$\pi_\infty$-regular might sometimes coincide. This is verified in the following proposition.

    \begin{prp}\label{RING:PR:quasi-pi-reg-is-henselian}
        Let $R$ be a rank-$1$ valuation ring. Then $R$ is Henselian if and only if
        $R$ is quasi-$\pi_\infty$-regular w.r.t.\ the topology induced by the valuation.
    \end{prp}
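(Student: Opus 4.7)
The plan is to translate between Hensel's Lemma (polynomial factorizations in $R[x]$) and the existence of associated idempotents (matrix decompositions in $\nMat{R}{n}$), treating each implication separately. Throughout, let $\mathfrak{m}$ denote the maximal ideal of $R$, let $k=R/\mathfrak{m}$ be the residue field, and let $v$ denote the rank-$1$ valuation.

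For the implication $(\Rightarrow)$, assuming $R$ is Henselian I would fix $a\in\nMat{R}{n}$, let $\chi(x)\in R[x]$ be the characteristic polynomial of $a$, and factor $\bar{\chi}(x)=x^m\bar{q}(x)$ in $k[x]$ with $\bar{q}(0)\neq 0$. Hensel's Lemma then lifts this to $\chi=fg$ with $f,g$ monic in $R[x]$, $f\equiv x^m$ and $g\equiv\bar{q}\pmod{\mathfrak{m}[x]}$. Next I would show $f,g$ are comaximal in $R[x]$: the natural $R$-linear map $R[x]/(fg)\to R[x]/(f)\times R[x]/(g)$ is a map of free $R$-modules of equal rank whose reduction mod $\mathfrak{m}$ is surjective (by CRT in $k[x]$), so by Nakayama it is already surjective; picking $u,w\in R[x]$ with $uf+wg=1$ and invoking Cayley-Hamilton gives $\chi(a)=0$, so $e:=u(a)f(a)$ is an idempotent in $\nMat{R}{n}$ with $eR^n=\ker g(a)$ and $(1-e)R^n=\ker f(a)$. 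On $eR^n$ the element $a$ is invertible (since $g(0)\in\units{R}$); on $(1-e)R^n$, the relation $f(a)(1-e)=0$ rewrites $a^m(1-e)$ in terms of lower powers of $a$ with $\mathfrak{m}$-coefficients, so the finitely many entries of $((1-e)a(1-e))^m$ lie in $\mathfrak{m}$ and thus have valuations bounded below by some $\varepsilon>0$, yielding $((1-e)a(1-e))^{m\ell}\to 0$. This identifies $e$ as the associated idempotent of $a$, proving that $\nMat{R}{n}$ is quasi-$\pi$-regular for every $n$.

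For $(\Leftarrow)$, I would verify the simple-root form of Hensel's Lemma, which suffices for Henselianity. Given monic $p\in R[x]$ of degree $n$ and a simple root $\bar{a}_0\in k$ of $\bar{p}$, fix a lift $a_0\in R$, let $C\in\nMat{R}{n}$ be the companion matrix of $p$, and let $e$ be the associated idempotent of $C-a_0I$ (which exists by the assumption applied at this $n$). The characteristic polynomial of $\overline{C-a_0I}$ is $\bar{p}(x+\bar{a}_0)$, with $0$ a simple root, so by Remark \ref{RING:RM:remark-after-uniqueness-lemma}(i) the reduction of $(1-e)R^n$ mod $\mathfrak{m}$ is the generalized $0$-eigenspace of $\overline{C-a_0I}$, which is $1$-dimensional. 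Being a direct summand of $R^n$, $(1-e)R^n$ is a finitely generated torsion-free module over the valuation domain $R$, hence free, and by Nakayama of rank $1$. Consequently $C$ acts on $(1-e)R^n\cong R$ as multiplication by a scalar $a_0+b$ with $b$ topologically nilpotent, which for a scalar in $R$ forces $v(b)>0$, i.e.\ $b\in\mathfrak{m}$. Cayley-Hamilton gives $p(C)=0$, hence restricting to $(1-e)R^n$ yields $p(a_0+b)=0$, producing the desired lift.

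The main obstacle, and the reason rank $1$ is essential, is that $\mathfrak{m}^\ell$ need not shrink to $\{0\}$ in the valuation topology (for instance, $\mathfrak{m}$ is idempotent when the value group is dense), so ``coefficients in $\mathfrak{m}$'' does not automatically translate to topological nilpotency. The saving observation is that at each step the argument handles a single matrix whose entries in $\mathfrak{m}$ are finite in number, so the minimum of their valuations is attained and strictly positive; this positivity is what drives the inductive valuation estimate in the forward direction, and analogously ensures that the scalar $b$ in the reverse direction actually lies in $\mathfrak{m}$ rather than merely being ``asymptotically small''.
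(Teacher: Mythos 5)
Your proof is correct, but it follows a genuinely different route from the paper's. The paper invokes Azumaya's characterizations of Henselian local rings from \cite{Azu51}: for the direction ``Henselian $\Rightarrow$ quasi-$\pi_\infty$-regular'' it forms the commutative finite $R$-algebra $R[a]$, passes to the artinian quotient $R[a]/\Jac(R)R[a]$ to find the associated idempotent there, and lifts it via Azumaya's Theorem~22; for the converse it observes that free $R$-modules are Hausdorff, applies Corollary~\ref{RING:CR:fp-algebras-full-cor-topological-version}(i) to conclude that every $R$-algebra free of finite rank over $R$ is semiperfect, and cites Azumaya's Theorem~19. Your proof instead works hands-on with polynomials: you lift the factorization $\bar\chi = x^m\bar q$ of the characteristic polynomial, obtain comaximality over $R[x]$ via a CRT/Nakayama argument, and construct the associated idempotent as $u(a)f(a)$; for the converse you verify the simple-root form of Hensel's lemma by applying the hypothesis to the companion matrix. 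The two proofs are roughly comparable in effort, but yours is self-contained modulo the (standard) equivalence of simple-root lifting with Henselianity, whereas the paper's compresses both directions by outsourcing to \cite{Azu51}. One observation both proofs share, and which is where rank $1$ enters in the direction ``Henselian $\Rightarrow$ quasi-$\pi_\infty$-regular,'' is that a \emph{single} matrix power landing entrywise in $\mathfrak m$ gives a strictly positive lower bound on the valuations of finitely many entries, which then drives the convergence to zero; your final paragraph articulates this clearly, and it matches the paper's step ``$(faf)^k\in\nMat{I_\delta}{n}$ for some $\delta>0$.'' Two small points worth tightening in your write-up: (1) you implicitly use that condition (A) makes $e$ commute with $C-a_0I$ so that $(1-e)R^n$ is $C$-stable; this is true but should be stated. (2) Remark~\ref{RING:RM:remark-after-uniqueness-lemma}(i) is stated for the regular module $R_R$, so citing it for $R^n$ is an analogue rather than a literal application; the cleaner route is simply to note that reducing conditions (A)--(C$'$) mod the open ideal $\nMat{\mathfrak m}{n}$ shows $\bar e$ is the Fitting idempotent of $\overline{C-a_0I}$ over $k$.
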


    \begin{proof}
        Assume $R$ is quasi-$\pi_\infty$-regular. Observe that any free $R$-module is Hausdorff w.r.t.\
        the standard topology, hence Corollary \ref{RING:CR:fp-algebras-full-cor-topological-version}(i)
        implies that any $R$-algebra $A$ such that $A_R$ is free of finite rank is semiperfect. Thus, by
        \cite[Th.\ 19]{Azu51}, $R$ is Henselian.

        Conversely, assume $R$ is Henselian. Denote by $\nu$
        the (additive) valuation of $R$. Since $\nu$ is of rank $1$, we may assume $\nu$
        take values in $(\R,+)$. For every $\delta\in \R$, let $I_\delta=\{x\in R\where \nu(x)>\delta\}$.
        Then $\{\nMat{I_\delta}{n}\where \delta\in [0,\infty)\}$ is a local basis for $\nMat{R}{n}$.
        Let $a\in\nMat{R}{n}$. By
        the Cayley-Hamilton theorem, $a$ is integral over $R$, hence $R[a]$
        is a f.g.\ $R$-module. Let $J=\Jac(R)\cdot R[a]$.
        Then $J\idealof R[a]$ and it is well known that $J\subseteq\Jac(R[a])$.
        The ring $R[a]/J$ is artinian, hence $a+J$ has an associated idemptent
        $\veps\in\ids{R[a]/J}$ (i.e.\ $\veps$ satisfies conditions (A)--(C) of Lemma \ref{RING:PR:pi-reg-equiv-cond}).
        By \cite[Th.\ 22]{Azu51}, $J$ is idempotent lifting, hence there is $e\in\ids{R[a]}$
        such that $e+J=\veps$. Let $f=1-e$.
        Then $a=eae+faf$ (since $R[a]$ is commutative). Furthermore,
        $eae+J$ is invertible in $\veps(R[a]/J)\veps$, hence $eae$ is invertible
        in $eR[a]e$ and in particular in $e\nMat{R}{n}e$. Next,  $(faf)^k\in J\subseteq\nMat{\Jac(R)}{n}=\nMat{I_0}{n}$
        for some $k\in\N$. This means $(faf)^k\in\nMat{I_\delta}{n}$ for some $0<\delta\in\R$,
        which implies $(faf)^m\xrightarrow{m\to\infty}0$.
        Thus, $e$ satisfies conditions (A),(B) and (C$'$) w.r.t.\ $a$ and we may conclude that $\nMat{R}{n}$
        is quasi-$\pi$-regular for all $n\in\N$.
    \end{proof}

    Using the ideas in the proof of Theorem \ref{RING:TH:application-to-fp-modules-top-case}, we can also obtain:

    \begin{thm}
        Let $R$ be an LT ring and let $E:~0\to A\to B\to C\to 0$ be an exact sequence
        of right $R$-modules such that $B$ is projective and $A$ and $B$ are Hasudorff.
        Endow $\End(A)$ and $\End(B)$ with their standard topologies and $\End(C)$ with $\tau_E$ and let
        $I_E$ denote the closure of the zero ideal in $\End(C)$.
        Then:
        \begin{enumerate}
            \item[(i)] If $\End(A)$ and $\End(B)$ are first countable and satisfy
            $\calP\in\Ptop$, then so does $\End(C)/I_E$.
            \item[(ii)] If $\End(A)$ and $\End(B)$ are quasi-$\pi$-regular,
            then so does $\End(C)/I_E$.
            \item[(iii)] If $\End(A)$ and $\End(B)$ are quasi-$\pi$-regular and
            semiperfect, then $\End(C)$ is semiperfect.
        \end{enumerate}
    \end{thm}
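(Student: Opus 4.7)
The plan is to parallel the proof of Theorem \ref{RING:TH:application-to-fp-modules-top-case}, replacing the f.p.\ setup by Proposition \ref{RING:PR:exact-sequence-LT-rings}. First I would verify that $E$ has the lifting property: given $f_{-1}\in\End(C)$, projectivity of $B$ lifts $f_{-1}\circ g$ to some $f_0\in\End(B)$ (where $g\colon B\twoheadrightarrow C$); since $f_0$ preserves $\ker g=\im(A\to B)$, it restricts to $f_1\in\End(A)$, completing the chain map. Hence Proposition \ref{RING:PR:exact-sequence-LT-rings} applies, yielding a T-semi-invariant subring $V$ of $W:=\End(A)\times\End(B)$ (equipped with the product topology) together with a continuous surjective ring homomorphism $\pi\colon V\twoheadrightarrow \End(C)$ that realizes $\tau_E$ as the quotient topology.

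Next I would observe that each property in $\Ptop$, and also quasi-$\pi$-regularity, is preserved by finite direct products of Hausdorff LT rings: for pro-$\calP$ the product of the defining inverse systems works, and for quasi-$\pi$-regularity the associated idempotent is formed coordinatewise. So under the hypotheses of (i), (ii) or (iii), $W$ carries the corresponding property. Theorem \ref{RING:TH:summary-thm}(ii), Corollary \ref{RING:CR:quasi-pi-reg-transfers-to-semi-inv-subring}(ii), and (in case (iii)) Theorem \ref{RING:TH:main-res-for-quasi-pi-reg-rings} then transfer these properties from $W$ to $V$. In particular, for (iii) the subring $V$ is semiperfect and quasi-$\pi$-regular, so $\End(C)$, being an ordinary ring-theoretic quotient of $V$, is semiperfect, which settles (iii) immediately.

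For (i) and (ii) it remains to pass from $V$ to $\End(C)/I_E$. Since $I_E=\overline{\{0\}}$ in $\tau_E$, the quotient $\End(C)/I_E$ is Hausdorff and isomorphic to $V/\pi^{-1}(I_E)$, where $\pi^{-1}(I_E)$ is a closed ideal of $V$. Case (ii) follows at once: a Hausdorff quotient of a quasi-$\pi$-regular LT ring by a closed ideal is quasi-$\pi$-regular, because the associated idempotent of any element descends, and Lemma \ref{RING:LM:uniqueness-lemma} applied to the Hausdorff quotient secures uniqueness. For case (i) I would argue as in the proof of Theorem \ref{RING:TH:application-to-fp-modules-top-case}(i): first countability of $\End(A)$ and $\End(B)$ is inherited by the product $W$ and hence by $V$, so by the Birkhoff--Kakutani theorem $V$ is metrizable; then \cite[p.\,163]{Bo66} ensures that the Hausdorff quotient $V/\pi^{-1}(I_E)$ is complete, and Lemma \ref{RING:LM:pro-P-equiv-cond} lets us reconstruct the pro-$\calP$ structure since open ideals of the quotient correspond to open ideals of $V$ containing $\pi^{-1}(I_E)$, all of which have $\calP$-quotients.

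The main obstacle I expect is the bookkeeping for (i): ensuring that first countability is genuinely inherited at every step (product $W$, subring $V$, Hausdorff quotient), and that each of the seven or so properties packaged into $\Ptop$ really survives both the T-semi-invariant passage and the subsequent descent by a closed ideal. The semiperfect-and-$\pi_\infty$-regular and the ``$\pi$-regular only'' variants need separate but parallel verifications, mostly reducing to Lemma \ref{RING:LM:main-lemma-II-quasi-pi-reg} and the already-established Theorem \ref{RING:TH:main-res-for-quasi-pi-reg-rings}; no new idea beyond those used for Theorem \ref{RING:TH:application-to-fp-modules-top-case} should be required.
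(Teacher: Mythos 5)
Your proof is correct and follows exactly the route the paper intends: the theorem is stated with the remark that it follows ``using the ideas in the proof of Theorem~\ref{RING:TH:application-to-fp-modules-top-case},'' and your reconstruction (verify the lifting property via projectivity of $B$ and injectivity of $A\to B$, invoke Proposition~\ref{RING:PR:exact-sequence-LT-rings} to realize $\End(C)$ with $\tau_E$ as a topological quotient of a T-semi-invariant subring of $\End(A)\times\End(B)$, transfer the relevant property to that subring via Theorem~\ref{RING:TH:summary-thm}/Corollary~\ref{RING:CR:quasi-pi-reg-transfers-to-semi-inv-subring}/Theorem~\ref{RING:TH:main-res-for-quasi-pi-reg-rings}, and then descend through the closed ideal $\pi^{-1}(I_E)$ by the Birkhoff--Kakutani/Bourbaki completeness argument and Lemma~\ref{RING:LM:pro-P-equiv-cond} for (i), the coordinatewise-idempotent-descent argument for (ii), and the quotient-of-semiperfect observation for (iii)) is precisely the intended adaptation. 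The only detail worth flagging is that $I_E$ in part (i) must be closed for $\tau_2$-type reasons, but you have handled this correctly since $I_E$ is by definition the closure of $\{0\}$ in $\tau_E$ and $\pi$ is a continuous open surjection.
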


    In light of the previous results, one might wonder under what
    conditions all right f.p.\ modules over an LT ring are Hausdorff. This is treated
    in the next section and holds, in particular, for right noetherian pro-semiprimary rings and rank-$1$
    complete valuation rings.
    \smallskip

    We finish this section by noting that we can take a different approach
    for  complete Hausdorff modules. For the following discussion, a right $R$-module module $M$ is
    called \emph{complete} if the natural map $M\to \invlim \{M/MJ\}_{J\in\calI_R}$ is an isomorphism.\footnote{
        Completeness can also be defined for non-Hausdorff topological abelian groups; see \cite{Wa93}.
    }

    \begin{prp}
        (i) Let $R$ be a complete first countable Hausdorff LT ring. Then any \emph{Hausdorff}
        f.g.\ right $R$-module is complete.

        (ii) Let $\calP\in\Pdisc$ and let $R$ be an LT ring such that $R/I$ has
        $\calP$ for all $I\in \calI_R$. Let $M$ be a complete right $R$-module such
        that $M/JM$ is f.p.\ as a right $R/J$-module for all $J\in \calI_R$ (e.g.\ if $M$ is f.p.,
        or if $M$ is f.g.\ and $R$ is strictly pro-right-artinian). Then $\End(M)$ is pro-$\calP$ w.r.t.\ $\tau_1^M$.
        If moreover $R$ is semiperfect and $M$ is f.g., then $\End(M)$ is semiperfect.
    \end{prp}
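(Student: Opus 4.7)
I would take a presentation surjection $\pi\colon R^n\twoheadrightarrow M$. The domain $R^n$, being a finite direct sum of copies of the complete, first countable, Hausdorff topological group $R$, is itself complete, first countable and Hausdorff, hence metrizable by Birkhoff--Kakutani. Because $M$ is Hausdorff, $\ker\pi$ is a closed subgroup of $R^n$. The quotient topology on $R^n/\ker\pi$ has local basis $\{\pi(R^n J)\}_{J\in\calI_R}=\{MJ\}_{J\in\calI_R}$, which is exactly the standard topology on $M$. Invoking the Bourbaki fact that a Hausdorff quotient of a complete metric abelian group by a closed subgroup is complete (already used in the proof of Theorem~\ref{RING:TH:application-to-fp-modules-top-case}), we conclude that $M$ is complete.

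\textbf{Plan for (ii), pro-$\calP$ part.} The core step is to establish a topological ring isomorphism $\End(M)\cong\invlim_{J\in\calI_R}\End_{R/J}(M/MJ)$ with each factor discrete. Every $f\in\End(M)$ preserves each $MJ$ and thus descends to $f_J\in\End_{R/J}(M/MJ)$; compatibility across $J$ is automatic from the definition of the transition maps. The resulting ring homomorphism $\Phi$ is injective since $\bigcap_J MJ=0$ (Hausdorffness of $M$, which follows from completeness), and surjective by applying the universal property of the inverse limit to $M\cong\invlim M/MJ$: a compatible family $(f_J)_J$ assembles uniquely into an $R$-linear endomorphism of $M$. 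The two topologies coincide because $\Hom(M,MJ)=\ker\bigl(\End(M)\to\End_{R/J}(M/MJ)\bigr)$, which is precisely the local basis of the inverse limit topology. Now $M/MJ$ is f.p. over $R/J$, and $R/J$ satisfies $\calP$, so Theorem~\ref{RING:TH:application-to-fp-modules} gives that each $\End_{R/J}(M/MJ)$ satisfies $\calP$. Hence $\End(M)$ is pro-$\calP$ by definition.

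\textbf{Plan for (ii), semiperfect part, and main obstacle.} Assume further that $R$ is semiperfect and $M$ is f.g. Since every property in $\Pdisc$ implies $\pi$-regular (Remark~\ref{RING:RM:hirarchy-of-properties}), each discrete factor $\End_{R/J}(M/MJ)$ is $\pi$-regular; by Lemma~\ref{RING:LM:pro-pi-reg-lemma-I}, $\End(M)$ is quasi-$\pi$-regular. By Lemma~\ref{RING:LM:main-lemma-II-quasi-pi-reg}(ii), it remains to rule out an infinite orthogonal family $\{e_i\}_{i\in\N}\subseteq\End(M)$ of nonzero idempotents. Such a family would yield a direct summand $\bigoplus_i e_iM\hookrightarrow M$ in which each $e_iM$ is nonzero and f.g.; since $R$ is semiperfect, $R/\Jac(R)$ is semisimple and $M/M\Jac(R)$ is a finite length semisimple right $R/\Jac(R)$-module. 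Nakayama's lemma then yields nonzero pairwise-disjoint summands $e_iM/e_iM\Jac(R)$ of $M/M\Jac(R)$, contradicting its finite length. The main technical step is precisely the topological inverse limit identification in Paragraph~2: verifying the ring isomorphism together with the matching of topologies hinges essentially on $M$ being complete (not merely Hausdorff), which is where the completeness hypothesis does its real work; everything else is an application of results previously established in the paper.
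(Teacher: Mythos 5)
Your proof is correct, and on two of the three components it takes a genuinely different route from the paper.

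For (i), the paper argues directly: take a countable decreasing local basis $\{J_n\}$, write an arbitrary sum $\sum m_i$ with $m_i\in MJ_i$ in terms of a fixed finite generating set, and observe that the coefficient series converge in $R$ (by completeness of $R$), so the sum converges in $M$. You instead present $M$ as $R^n/\ker\pi$, note $\ker\pi$ is closed (Hausdorffness of $M$), and invoke the Birkhoff--Kakutani metrizability theorem plus the Bourbaki fact that a Hausdorff quotient of a complete metric group is complete. Both proofs use first countability in an essential way. Your route is more conceptual but imports heavier topological machinery; the paper's is elementary and self-contained. One small thing worth verifying in your version: the quotient topology on $R^n/\ker\pi$ is indeed the standard module topology on $M$ (this does hold, since $\{J^n\}_{J\in\calI_R}$ is a local basis for the product topology on $R^n$ and its image is $\{MJ\}$).

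For the pro-$\calP$ part of (ii), your argument --- identifying $\End(M)\cong\invlim_J\End_{R/J}(M/MJ)$ as topological rings, then applying Theorem~\ref{RING:TH:application-to-fp-modules} factorwise --- is essentially the paper's argument, just spelled out in more detail.

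For the semiperfect part of (ii), the paper lifts $M$ to a f.g.\ projective cover $P$, notes $\End(P)\cong e\nMat{R}{n}e$ is semiperfect, and bounds the number of orthogonal idempotents in $\End(M)$ via the corresponding decomposition of $P$. You instead reduce modulo $\Jac(R)$: you pass from a supposed infinite orthogonal family $\{e_i\}\subseteq\End(M)$ to the induced family $\{\bar e_i\}\subseteq\End(M/M\Jac(R))$, use Nakayama (valid since each $e_iM$ is a f.g.\ direct summand of $M$ and $e_iM\cap M\Jac(R)=e_iM\Jac(R)$) to see each $\bar e_i\neq 0$, and contradict the finite length of the semisimple module $M/M\Jac(R)$. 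This works, but one phrasing caveat: ``pairwise-disjoint nonzero submodules'' of a finite-length module do \emph{not} in general contradict finite length (a two-dimensional vector space over $\F_p$ has $p+1$ pairwise-disjoint lines); what saves you is that the $\bar e_i$ are \emph{orthogonal idempotents}, so any finite subfamily of the images is independent, and independence is what forces infinite length. You should make that the explicit punchline rather than ``pairwise-disjoint.'' With that emendation the argument is fully correct, and it is a nice shortcut that avoids projective covers entirely --- arguably more self-contained, since it needs only the semisimplicity of $R/\Jac(R)$ and Nakayama rather than the existence and uniqueness of projective covers over semiperfect rings.
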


    \begin{proof}
        (i) This is a well-known argument: Let $\calB$ be a countable local basis of $R$ consisting of ideals.
        Without loss of generality, we may assume $\calB=\{J_n\}_{n=1}^\infty$ with $J_1\supseteq J_2\supseteq\dots$.
        Let $M$ be a f.g.\ Hausdorff $R$ module and let
        $\{x_1,\dots,x_n\}$ be a set of generators of $M$. Since $M$ is Hausdorff, it is enough to
        show that any sum
        $\sum_{i=1}^\infty m_i$ with $m_i\in MJ_i$ converges in $M$. Indeed, write $m_i=\sum_{j=1}^n{x_jr_{ij}}$
        with $r_{i1},\dots,r_{in}\in J_i$. Then $\sum_{i=1}^\infty{r_{ij}}$ converges in $R$ for all $j$, hence
        $\sum_{i=1}^\infty m_i$ converges to $\sum_{j=1}^n x_jr_j$ where $r_j=\sum_{i=1}^\infty{r_{ij}}$.

        (ii) Throughout, $J$ denotes
        an open ideal of $R$.
        We first note that if $M$ is f.p.\ then there is an exact sequence $R^n\to R^m\to M\to 0$ for some
        $n,m\in\N$. Tensoring it with $R/J$ we get $(R/J)^n\to (R/J)^m\to M/MJ\to 0$, implying $M/J$ is a f.p.\
        $R/J$-module. Next, if $M$ is f.g.\ and $R$ is strictly pro-right-artinian, then $M/MJ$ is a f.g.\
        module over $R/J$ which is right artinian, hence $M/J$ is f.p.\ over $R/J$.

        Now, since $M/MJ$ is f.p.\ over $R/J$, $\End(M/MJ)$ satisfies $\calP$ by Theorem \ref{RING:TH:application-to-fp-modules}.
        There is a natural map $\End(M)\to \End(M/MJ)$ whose kernel is $\Hom(M,MJ)$.
        Assign $\End(M)$ the standard topology.
        Then since $M$ is complete, $\Hom(M,M)\cong \invlim \{\End(M/MJ)\}_{J\in \calI}$ as topological rings and
        therefore, $\End(M)$ is pro-$\calP$.

        Finally, assume $M$ is f.g.\ and $R$ is semiperfect. Then by Proposition \ref{RING:PR:semiperfect-equiv-conds}, $M$ admits a projective
        cover $P$ which is easily seen to be finitely generated.
        Assume $M=M_1\oplus \dots \oplus M_t$. Then each $M_i$ is f.g.\ and thus has a projective cover
        $P_i$. Necessarily $P\cong P_1\oplus\dots\oplus P_t$. By Proposition \ref{RING:PR:morita-preserved-properties},
        $\End(P_R)$ is semiperfect, hence
        there is a finite upper bound on the cardinality of sets of orthogonal idempotents in it.
        This means $t$ is bounded and hence, $\End(M)$ cannot contain
        an infinite set of orthogonal idempotents. By Lemma \ref{RING:LM:main-lemma-II-quasi-pi-reg}(ii),
        this implies $\End(M)$ is semiperfect.
    \end{proof}

\section{LT Rings With Hausdorff Finitely Presented Modules}
\label{section:rings-with-Hausodroff-mods}

    In this section we present sufficient conditions on an LT ring guaranteeing all right f.p.\
    modules  are Hausdorff (w.r.t.\ the standard topology). The discussion leads to an interesting
    consequence about noetherian pro-semiprimary rings.

    \smallskip

    We begin by noting a famous result
    that solves the problem for many noetherian rings with the
    Jacobson topology. For proof and details, see \cite[Th.\ 3.5.28]{Ro88}.

    \begin{thm}{\bf (Jategaonkar-Schelter-Cauchon).} \label{RING:TH:Jat-Sch-Cau}
        Let $R$ be an almost fully bounded noetherian ring whose primitive images are artinian
        (e.g.\ a noetheorian PI ring). Assign $R$ the $\Jac(R)$-adic topology
        or any stronger linear ring topology. Then any f.g.\ right $R$-module is Hausdorff.
    \end{thm}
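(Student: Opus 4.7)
The plan is to reduce the problem to the classical Krull-style intersection statement that $\bigcap_{n=1}^\infty MJ^n=0$ for every f.g.\ right $R$-module $M$, where $J=\Jac(R)$. Observe first that if a linear topology on $R$ refines the Jacobson topology, then the induced topology on any f.g.\ module $M$ refines the $J$-adic one, so the closure of $\{0\}$ shrinks; therefore it suffices to treat the Jacobson topology. In that topology the closure of $\{0\}$ in $M$ is precisely $N:=\bigcap_{n=1}^\infty MJ^n$, so the goal is $N=0$.

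Next I would apply a noncommutative Artin-Rees property valid in AFBN rings to produce a relation of the form $N\subseteq NJ$. Concretely, since $R$ is right noetherian and $M$ is f.g., $N$ is a f.g.\ submodule of $M$, and the Artin-Rees lemma for AFBN rings (which uses the bounded hypothesis to force the Jacobson radical to behave like an ``AR-ideal'' with respect to every f.g.\ module) gives an integer $k$ such that $N\cap MJ^{n}\subseteq NJ^{\,n-k}$ for all $n\ge k$; plugging in $N=N\cap MJ^n$ for every $n$ yields $N\subseteq\bigcap_{m\ge 1}NJ^m\subseteq NJ$.

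Finally, because $N$ is f.g.\ over $R$ and $J\subseteq\Jac(R)$, Nakayama's lemma forces $N=0$, completing the argument. The hypothesis that primitive images of $R$ are artinian enters by ensuring that $\Jac(R)$ equals the intersection of the primitive ideals with artinian quotients, which is what makes the AR-property available globally for $J$ rather than only for particular primes; equivalently, it rules out pathological primitive quotients that could obstruct lifting the classical commutative Artin-Rees lemma.

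The main obstacle is establishing the noncommutative Artin-Rees property in Step~2. In the commutative noetherian setting this is routine via the Rees ring, but for AFBN rings one must combine Jategaonkar's ``main lemma'' on links between primes (so that the right bounded condition passes from the ring to its f.g.\ modules) with the artinian primitive images hypothesis to control how $J$ interacts with submodules uniformly. Once this AR-property is granted, Steps~1, 3, and 4 are short; essentially all the technical work of the theorem is concentrated in this noncommutative analog of the Artin-Rees lemma, for which I would refer to the proofs of Jategaonkar, Schelter, and Cauchon cited in the statement.
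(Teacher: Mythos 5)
The paper does not prove this theorem; it is stated as a known result with a citation to Rowen [Ro88, Th.\ 3.5.28], so there is no in-paper argument to compare against. That said, your reduction is correct and matches the standard setup: a stronger linear ring topology only shrinks the closure of $\{0\}$ in any module, so it suffices to handle the $J$-adic topology; there, Hausdorffness of $M$ is exactly $N := \bigcap_{n} MJ^{n} = 0$; and Nakayama finishes once $N \subseteq NJ$ is established. Your observation that $N \cap MJ^{n} = N$ for all $n$ lets you get away with a one-shot Artin--Rees inclusion rather than a uniform filtration statement, which is a nice simplification.

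The genuine gap is the inclusion $N \subseteq NJ$ itself, which you attribute to an ``Artin--Rees lemma for AFBN rings'' that is neither proved nor a standard black box. The paper's appendix does record that for almost bounded rings (e.g.\ noetherian PI rings) every ideal satisfies the AR-property, so your plan closes cleanly in that special case. But for a general almost fully bounded noetherian ring with artinian primitive images, it is far from clear that $\Jac(R)$ has the AR property, and my understanding is that the proofs of Jategaonkar, Schelter, and Cauchon (and Rowen's exposition of Th.\ 3.5.28) do \emph{not} proceed by first establishing AR for $J$; they prove $\bigcap_{n} MJ^{n}=0$ directly by exploiting the second layer condition and localization along links of primes. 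You are aware of these tools and mention them, but packaging them as ``forcing $J$ to behave like an AR-ideal'' is not a faithful account of that argument, and in any case you would still owe a proof of the AR assertion you invoke. The skeleton (reduce to the Jacobson topology, reduce to $N=0$, Nakayama) is sound; the technical heart is missing and is deferred to the very references the paper itself cites.
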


    \begin{example} \label{RING:EX:non-hausdorff-I}
        The assumption that \emph{all} powers of $\Jac(R)$ are open in the last theorem cannot be dropped:
        Let $R$ be a Dedekind domain with exactly two prime ideals $P$ and $Q$.
        Then $R$ is noetherian,
        almost fully bounded and any primite image of $R$ is artinian.
        Let $n\in\N\cup\{0\}$ and let $\calB=\{P^mQ^n\where m\in\N\}$. Assign $R$ the unique topology with
        local basis $\calB$. Clearly $\Jac(R)^k=P^kQ^k$ is
        open for all $1\leq k\leq n$. However, $\overline{\Jac(R)^{n+1}}=\overline{P^{n+1}Q^{n+1}}=\bigcap_{m=1}^\infty(P^{n+1}Q^{n+1}+P^mQ^n)=
        \bigcap_{m=1}^\infty(P^{\min\{n+1,m\}}Q^{n})=P^{n+1}Q^n$, so $\Jac(R)^{n+1}$ is not closed.
        In particular, by ($*$) below, $R/\Jac(R)^{n+1}$ is a f.g.\ non-Hausdorff $R$-module.
    \end{example}

    When considering quasi-$\pi$-regular rings, there is actually no point in taking a topology stronger than the Jacobson
    topology in Theorem \ref{RING:TH:Jat-Sch-Cau}, because for right noetherian rings the latter is the largest topology making the ring quasi-$\pi$-regular.

    \begin{prp}\label{RING:PR:Jacobson-topology-is-the-biggest}
        Let $R$ be an LT semilocal ring and let $\tau$ be the topology on $R$.
        Assume $R$ is quasi-$\pi$-regular w.r.t.\ $\tau$ and $R/I$
        is semiprimary for all $I\in\calI_R$ (e.g.\ if $R$ is right noetherian
        or pro-semiprimary w.r.t.\ $\tau$). Then $R$ is quasi-$\pi$-regular w.r.t.\
        the Jacobson topology and the latter contains $\tau$.
    \end{prp}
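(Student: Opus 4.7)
The plan is to prove the proposition in three steps. First I would show $\tau\subseteq$ Jacobson topology, next verify that the Jacobson topology is Hausdorff, and finally check that every element of $R$ remains quasi-$\pi$-regular when we switch to the Jacobson topology. Crucially, conditions (A) and (B) in the definition of quasi-$\pi$-regularity are purely algebraic, so only the topological condition (C$'$) needs attention, and I plan to recycle the associated idempotent of $\tau$ to serve in the Jacobson topology.

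For the first step, let $I\in\calI_R^\tau$. By hypothesis $R/I$ is semiprimary, and by the lemma at the end of Section \ref{section:facts} we have $\Jac(R/I)=(\Jac(R)+I)/I$. Nilpotence of $\Jac(R/I)$ forces $\Jac(R)^n\subseteq I$ for some $n$, so every $\tau$-open ideal is open in the Jacobson topology. Since $\tau$ is Hausdorff and any finer topology is Hausdorff, the Jacobson topology is Hausdorff as well.

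For the main step, fix $a\in R$ and let $e$ be the associated idempotent of $a$ with respect to $\tau$, writing $f=1-e$. Conditions (A) and (B) transfer verbatim to the Jacobson topology, so I only need $(faf)^n\to 0$ Jacobson-adically, i.e.\ $(faf)^n\in\Jac(R)^k$ eventually for every $k$. For every $I\in\calI_R^\tau$, the quotient $R/I$ is semiprimary hence $\pi$-regular, and by the uniqueness part of Proposition \ref{RING:PR:pi-reg-equiv-cond} together with the easy compatibility of associated idempotents under surjections, $\bar e\in R/I$ is the associated idempotent of $\bar a$. Consequently $\overline{faf}$ is nilpotent in $R/I$, hence lies in $\Jac(R/I)=(\Jac(R)+I)/I$. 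This says $faf\in\Jac(R)+I$ for every $I\in\calI_R^\tau$, so $faf\in\overline{\Jac(R)}$ (closure in $\tau$). Now Lemma \ref{RING:LM:closed-jacobson-radical} applied to the $\tau$-quasi-$\pi$-regular ring $R$ says $\Jac(R)$ is $\tau$-closed, and therefore $faf\in\Jac(R)$. This immediately yields $(faf)^n\in\Jac(R)^n\subseteq \Jac(R)^k$ for all $n\geq k$, which is (C$'$) in the Jacobson topology. By Lemma \ref{RING:LM:uniqueness-lemma} (applied in the Hausdorff Jacobson topology), $e$ is the associated idempotent of $a$, so $a$ is quasi-$\pi$-regular there.

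The main obstacle is the claim $faf\in\Jac(R)$ rather than merely in $\overline{\Jac(R)}$; upgrading the topological closure to genuine membership is what lets us compare $(faf)^n$ with honest powers of $\Jac(R)$. Everything hinges on closedness of $\Jac(R)$, which is exactly where the $\tau$-quasi-$\pi$-regularity hypothesis gets used a second time (after the reduction modulo each $I\in\calI_R^\tau$). Once this is established, the Jacobson-adic decay of $(faf)^n$ is automatic since $faf\in\Jac(R)$.
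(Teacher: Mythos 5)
Your first step (showing $\tau\subseteq\tau_{\Jac}$, hence $\tau_{\Jac}$ Hausdorff) is correct and matches what the paper invokes via the proof of Corollary \ref{RING:CR:pro-semiprim-is-satisfies-jacobson-conj}. Your overall strategy — keep the $\tau$-associated idempotent $e$, verify that (C$'$) holds Jacobson-adically — is also the paper's strategy. However, the step you yourself flag as ``the main obstacle'' contains a genuine error.

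You claim that $\overline{faf}$ is nilpotent in $R/I$ and \emph{hence} lies in $\Jac(R/I)$. Nilpotent elements need not lie in the Jacobson radical: take $R=\nMat{\Q}{2}$ with the discrete topology (so $I=0$ is the only proper open ideal, $R$ is semiprimary, semilocal and quasi-$\pi$-regular), and $a=e_{12}$. Then $e=0$, $f=1$, and $faf=e_{12}$ is nilpotent but $\Jac(R)=0$, so $faf\notin\Jac(R)$. Thus your conclusion $faf\in\Jac(R)$ is simply false in general, and everything you derive from it (closure, membership) collapses.

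The paper's proof sidesteps this by aiming only for the weaker — but sufficient — statement that \emph{some power} $(faf)^n$ lies in $\Jac(R)$. It gets this cheaply: since $R$ is semilocal and quasi-$\pi$-regular it is semiperfect (Lemma \ref{RING:LM:main-lemma-II-quasi-pi-reg}(ii)), so by Remark \ref{RING:RM:Jacobson-radical-might-be-open}(ii) the ideal $\Jac(R)$ is $\tau$-\emph{open}; then $(faf)^m\to 0$ in $\tau$ forces $(faf)^n\in\Jac(R)$ for some $n$, whence $(faf)^{nk}\in\Jac(R)^k$, i.e.\ $(faf)^m\to 0$ Jacobson-adically. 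Note also that you invoke closedness of $\Jac(R)$ (Lemma \ref{RING:LM:closed-jacobson-radical}) where the paper needs openness, and openness requires the semiperfectness step which your proof does not establish. Once you replace the false implication ``nilpotent $\Rightarrow$ in $\Jac$'' with the paper's openness argument, the rest of your writeup (the uniqueness argument via Lemma \ref{RING:LM:uniqueness-lemma}, the Hausdorffness of $\tau_{\Jac}$) is fine.
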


    \begin{proof}
        We first note that if $R$ is right noetherian, then for all $I\in \calI_R$, $R/I$ is right noetherian and $\pi$-regular,
        hence by Remark \ref{RING:RM:hirarchy-of-properties}, $R/I$ is semiprimary. If $R$ is pro-semiprimary,
        then $R/I$ is semiprimary for all $I\in\calI_R$ by Lemma \ref{RING:LM:pro-P-equiv-cond}.

        Let $\tau_{\Jac}$ denote the Jacobson topology and let $a\in R$. Then $a$ has an associated idempotent $e$
        w.r.t.\ $\tau$. Let $f=1-e$ and observe that $\Jac(R)$ is open by Remark \ref{RING:RM:Jacobson-radical-might-be-open}(ii).
        Then there is $n\in\N$ such that $(faf)^n\in\Jac(R)$ and it follows
        that $(faf)^n\xrightarrow{n\to\infty}0$ w.r.t.\ $\tau_{\Jac}$. Therefore, $e$
        is the associated idempotent of $a$ w.r.t.\ $\tau_{\Jac}$, hence
        $R$ is quasi-$\pi$-regular provided we can verify $\tau_{\Jac}$ is Hausdorff. This holds since
        $\tau_{\Jac}\supseteq \tau$, by the proof of Corollary \ref{RING:CR:pro-semiprim-is-satisfies-jacobson-conj}
        (which still works under our weaker assumptions).
    \end{proof}

    Stronger linear topologies are ``better'' since they have more Hausdorff modules.
    Note that the topology of an arbitrary qausi-$\pi_\infty$-regular
    ring can be stronger than the Jacobson topology. For example, take any non-semiprimary perfect ring
    $R$ with $\bigcap_{n\in\N}\Jac(R)=\{0\}$ (e.g.\ $R=\Q[x_1,x_2,x_3,\dots\where x_m^2=x_nx_m=0~\forall n>2m]$)
    and give it the discrete topology.

    \smallskip

    The next result will rely on following observation:
    \begin{enumerate}
        \item[($*$)] Let $R$ be an LT ring. If $M$ is a right $R$-module and $N$ is a submodule, then $\overline{N/N}=\overline{N}/N$.
        In particular, $M/N$ is Hausdorff if and only if $N$ is closed.
    \end{enumerate}
    Indeed, $\overline{N/N}=\bigcap_{J\in\calI_R}(M/N)J=\bigcap_{J\in\calI_R}(MJ+M)/N=(\bigcap_{J\in\calI_R}(MJ+M))/N=\overline{N}/N$.
    We will also need the following theorem. For proof, see \cite[\S7.4]{BoTheoryOfSets}.

    \begin{thm} \label{RING:TH:non-empty-inverse-limit}
        Let $\{X_i,f_{ij}\}$ be an $I$-indexed inverse system of non-empty
        sets. Assume that for each $i\in I$ we are given
        a family of subsets
        $T_i\subseteq P(X_i)$ such that for all $i\leq j$ in $I$ we have:
        \begin{enumerate}
            \item[(a)] $X_i\in T_i$ and $T_i$ is closed under (arbitrary large) intersection.
            \item[(b)] \emph{Finite Intersection Property:} If $L\subseteq T_i$ is such that the intersection of finitely many
            of the elements of $L$ is non-empty, then $\bigcap_{A\in L}A\neq \phi$.
            \item[(c)] For all $A\in T_j$, $f_{ij}(A)\in T_i$.
            \item[(d)] For all $x\in X_i$, $f_{ij}^{-1}(x)\in T_j$.
        \end{enumerate}
        Then $\invlim \{X_i\}_{i\in I}$ is non-empty.\footnote{
            This can be compared to the following topological fact: An inverse limit of an inverse system of non-empty Hausdorff compact
            topological spaces is non-empty and compact.
        }
    \end{thm}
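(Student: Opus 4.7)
The plan is to mimic, in this purely set-theoretic setting, the classical Tychonoff-style argument that an inverse limit of non-empty compact spaces is non-empty. First I would introduce the poset $\calF$ of \emph{admissible shrinkings}: families $\{A_i\}_{i\in I}$ with $A_i\in T_i$, $A_i\neq\emptyset$, and $f_{ij}(A_j)\subseteq A_i$ for all $i\leq j$, ordered by componentwise inclusion. The family $\{X_i\}_{i\in I}$ lies in $\calF$ by (a), and every decreasing chain $\{\calA^\alpha\}$ has a lower bound given by componentwise intersection: (a) keeps us inside $T_i$, (b) supplies non-emptiness (chains of non-empty sets automatically enjoy FIP), and $f_{ij}(\bigcap_\alpha A_j^\alpha)\subseteq\bigcap_\alpha f_{ij}(A_j^\alpha)\subseteq\bigcap_\alpha A_i^\alpha$ yields compatibility. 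Zorn's Lemma then produces a minimal element $\{M_i\}_{i\in I}$, and it remains to show each $M_i$ is a singleton, for then the unique elements form a compatible tuple in $\invlim \{X_i\}$.

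Assume for contradiction that $|M_{i_0}|\geq 2$ for some $i_0$. Before refining, I would select a \emph{projection-stable} point $x\in\bigcap_{k\geq i_0} f_{i_0 k}(M_k)\subseteq M_{i_0}$; this intersection is non-empty because each $f_{i_0 k}(M_k)$ lies in $T_{i_0}$ by (c), and the family satisfies FIP thanks to the directedness of $I$ (given finitely many $k_j\geq i_0$, pick $k$ above them, choose $y\in M_k$, and observe $f_{i_0 k}(y)=f_{i_0 k_j}(f_{k_j k}(y))\in f_{i_0 k_j}(M_{k_j})$ for each $j$). I would then define, for every $l\in I$,
\[
M_l' \;=\; M_l\cap\bigcap_{k\geq l,\,k\geq i_0}f_{lk}\bigl(M_k\cap f_{i_0 k}^{-1}(x)\bigr),
\]
which lies in $T_l$ by (a), (c) and (d). Non-emptiness of $M_l'$ again follows from (b): for any finite subcollection, take $k$ above every index in sight and above $i_0$, pick $y\in M_k\cap f_{i_0 k}^{-1}(x)$ (non-empty because $x$ was chosen projection-stable), and check that $f_{lk}(y)$ belongs to $M_l$ and to each $f_{lk_j}\bigl(M_{k_j}\cap f_{i_0 k_j}^{-1}(x)\bigr)$.

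Once this is set up, verifying $f_{lm}(M_m')\subseteq M_l'$ for $l\leq m$ is a diagram chase: given $k'\geq l, i_0$, factor through some $k\geq m, k'$, write any element of $f_{lm}(M_m')$ as $f_{lk}(y)$ with $y\in M_k\cap f_{i_0 k}^{-1}(x)$, set $y'=f_{k'k}(y)\in M_{k'}$, and note $f_{i_0 k'}(y')=f_{i_0 k}(y)=x$, so that $f_{lk}(y)=f_{lk'}(y')\in f_{lk'}(M_{k'}\cap f_{i_0 k'}^{-1}(x))$. Plugging $l=i_0$ and $k=i_0$ into the definition collapses the intersection to $M_{i_0}\cap\{x\}=\{x\}\subsetneq M_{i_0}$, contradicting minimality of $\{M_i\}$.

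The main obstacle is the compatibility check for indices $l$ not comparable with $i_0$: this is precisely why $M_l'$ must be defined as an intersection over \emph{all} sufficiently large $k$ rather than by a naive one-step restriction, and it is the single point where directedness of $I$ has to be carefully orchestrated together with (c) and (d). Once compatibility is in place, conditions (a), (b) and (d) close the remaining FIP arguments and deliver the desired singleton conclusion.
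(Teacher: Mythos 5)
Your proof is correct, and each step holds up under scrutiny. The Zorn's lemma setup on the poset of admissible shrinkings is sound: chains have lower bounds by componentwise intersection thanks to (a), (b), and the inclusion $f_{ij}(\bigcap_\alpha A_j^\alpha)\subseteq\bigcap_\alpha A_i^\alpha$. The construction of $x$ uses (b) and (c) via directedness, and it is worth noting (as you implicitly do when you collapse at $k=i_0$) that taking $k=i_0$ in $\bigcap_{k\geq i_0}f_{i_0 k}(M_k)$ ensures $x\in M_{i_0}$. The definition of $M_l'$ as an intersection over all $k\geq l,i_0$ correctly handles indices $l$ not comparable with $i_0$, and the membership in $T_l$, non-emptiness, and compatibility checks all go through exactly as you describe. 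The paper itself does not reproduce a proof, only citing Bourbaki's \emph{Theory of Sets} \S III.7.4; your Zorn-minimality argument is the standard route and matches that reference in spirit, so there is nothing to flag.
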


    \begin{lem} \label{RING:LM:coset-intersection}
        Let $R$ be a ring and let $M$ be a right $R$-module. Let
        $\{M_i\}_{i\in I}$ be a family of submodules of $M$ and let $\{x_i\}_{i\in I}$ be elements of $M$.
        Then $\bigcap_{i\in I}(x_i+M_i)$ is either empty or a coset of $\bigcap_{i\in I}M_i$.
    \end{lem}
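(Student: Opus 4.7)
The plan is to show that if the intersection $\bigcap_{i\in I}(x_i+M_i)$ is non-empty, then it equals $x+\bigcap_{i\in I}M_i$ for any element $x$ in the intersection; this immediately gives the required coset description.

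Concretely, I would first suppose the intersection is non-empty and pick an arbitrary $x\in\bigcap_{i\in I}(x_i+M_i)$. The key observation is that for each $i\in I$, the condition $x\in x_i+M_i$ can be rewritten as $x-x_i\in M_i$, which in turn means $x_i+M_i=x+M_i$ (since the cosets of $M_i$ in $M$ are either equal or disjoint). Thus, for every $i\in I$, we have the equality of cosets $x_i+M_i=x+M_i$.

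Taking the intersection over all $i$ then gives
\[
\bigcap_{i\in I}(x_i+M_i)=\bigcap_{i\in I}(x+M_i)=x+\bigcap_{i\in I}M_i,
\]
where the last equality is a direct set-theoretic check: $y$ lies in the left side iff $y-x\in M_i$ for every $i$ iff $y-x\in\bigcap_{i\in I}M_i$. This shows the intersection, when non-empty, is precisely the coset $x+\bigcap_{i\in I}M_i$ of $\bigcap_{i\in I}M_i$.

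There is no real obstacle here; the whole content of the lemma is the elementary fact that two cosets of the same submodule either coincide or are disjoint, applied uniformly across the family. The proof is essentially a one-line verification once a point in the intersection is fixed.
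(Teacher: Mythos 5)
Your proof is correct; the paper itself only says ``This is straightforward,'' and the argument you give (fix a point $x$ in the intersection, observe $x_i+M_i=x+M_i$ for each $i$, and then pull $x$ out of the intersection) is exactly the intended elementary verification.
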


    \begin{proof}
        This is straightforward.
    \end{proof}

    \begin{thm} \label{RING:TH:closed-submodule-thm}
        Let $R$ be strictly pro-right-artinian. Then any f.g.\ submodule of a Hausdorff right $R$-module is closed.
    \end{thm}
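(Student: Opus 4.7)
Let $x\in\overline{N}$; we aim to show $x\in N$. By property (1) of Section~\ref{section:top-rings}, $\overline{N}=\bigcap_{I\in\calI_R}(N+MI)$, so for each open ideal $I\in\calI_R$ we may pick $n_I\in N$ with $x-n_I\in MI$, and set
\[
X_I:=(x+MI)\cap N=n_I+(N\cap MI)\subseteq N,
\]
a non-empty coset of $N\cap MI$ in $N$ satisfying $X_{I'}\subseteq X_I$ whenever $I'\subseteq I$. It suffices to exhibit an element $n\in\bigcap_I X_I$, for then $x-n\in\bigcap_I MI=\overline{\{0\}}_M=\{0\}$ (since $M$ is Hausdorff), forcing $x=n\in N$.

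The intersection $\bigcap_I X_I$ is precisely the inverse limit of the system $\{X_I\}_{I\in\calI_R}$ under the inclusion bonding maps, so my plan is to apply Theorem~\ref{RING:TH:non-empty-inverse-limit} to this system. The strict pro-right-artinianness of $R$ allows us to restrict the index set to a cofinal family of open ideals $I$ with $R/I$ right artinian; for such $I$, the quotient $N/(N\cap MI)$ is a finitely generated $R/I$-module and hence itself right artinian. This is the essential structural ingredient: it supplies the descending chain condition on coset lattices that we will need in order to verify the finite intersection property. For each such $I$ I propose to define the collection $T_I\subseteq P(X_I)$ to consist of $\emptyset$ together with all subsets of $X_I$ of the form $y+L$, where $y\in X_I$ and $L\leq N\cap MI$ contains $N\cap MI'$ for some $I'\in\calI_R$ with $I'\subseteq I$; then $(N\cap MI)/L$ is a submodule of the artinian quotient $N/(N\cap MI')$ and is therefore itself right artinian.

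Conditions (c) and (d) of Theorem~\ref{RING:TH:non-empty-inverse-limit} (compatibility with the inclusion bonding maps) are routine consequences of the coset arithmetic inside $T_I$. The main obstacles are verifying (a) closure of $T_I$ under arbitrary intersection, and (b) the finite intersection property; both will require diagonalization across the filter $\calI_R$. For (a), an arbitrary intersection $\bigcap_\alpha(y_\alpha+L_\alpha)$ is a coset of $\bigcap_\alpha L_\alpha$, and to place it in $T_I$ one needs a single $I^*\in\calI_R$ with $N\cap MI^*\subseteq\bigcap_\alpha L_\alpha$. I will handle this by enlarging $T_I$ to be closed under arbitrary intersection by fiat, so that (a) becomes a matter of definition. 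The hardest step, (b), I would approach by projecting a family with non-empty finite intersections into an appropriate artinian quotient $N/(N\cap MI^*)$, applying the classical FIP for cosets in an artinian module to obtain a compatible family of non-empty cosets across the quotients $\{N/(N\cap MI^*)\}$, and then invoking Theorem~\ref{RING:TH:non-empty-inverse-limit} a second time to assemble this compatible family into a genuine element of $\bigcap_I X_I$. This diagonalization across the filter $\calI_R$ is where the strict pro-right-artinian hypothesis on $R$ plays its essential role.
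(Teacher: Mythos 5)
Your proposal shares the paper's high-level strategy -- reduce to a non-empty inverse limit via Theorem \ref{RING:TH:non-empty-inverse-limit}, using artinianness of the quotients by open ideals to supply the finite intersection property -- but the way you set up the inverse system introduces a gap that your sketch does not repair.

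You take $X_I = (x + MI)\cap N$, a nested family of cosets inside the fixed module $N$, with inclusion bonding maps. With inclusion maps the inverse limit is literally $\bigcap_I X_I$, so applying Theorem \ref{RING:TH:non-empty-inverse-limit} would directly produce the element you seek -- but the hypotheses of that theorem become exactly as hard to verify as the conclusion. Concretely, condition (d) asks that $f_{IJ}^{-1}(y)\in T_J$ for every $y\in X_I$; for inclusion maps this is the singleton $\{y\}$, a coset of the zero submodule. Your restricted choice of $T_J$ (cosets of $L$ with $N\cap MI'\subseteq L$ for some $I'$) excludes singletons unless the topology on $N$ is discrete, so (d) fails; and if you drop the restriction and allow cosets of \emph{all} submodules of $N\cap MJ$, then you lose DCC, because $N\cap MJ$ is not artinian. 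There is no choice of $T_J$ living inside the fixed module $N$ that satisfies both (b) and (d). You flag exactly this tension, but your two proposed repairs -- enlarging $T_I$ ``by fiat'' and then ``invoking Theorem \ref{RING:TH:non-empty-inverse-limit} a second time'' to verify FIP -- are respectively ineffective (enlarging $T_I$ can only make FIP harder, not easier) and circular (the second invocation would again need FIP for an ambient non-artinian module).

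The paper's device for avoiding this is to change what the $X_J$ parametrize: instead of the solutions themselves inside $N$, it takes the \emph{coefficient tuples} $X_J\subseteq (R/J)^k$ with respect to a fixed finite generating set $m_1,\dots,m_k$ of $N$, with the natural quotient bonding maps $(R/J)^k\to(R/I)^k$. Each $(R/J)^k$ is a f.g.\ module over the right artinian ring $R/J$, hence artinian, so taking $T_J$ to be \emph{all} cosets of submodules of $(R/J)^k$ contained in $X_J$ (plus $\emptyset$) makes (a), (b), (c), (d) all hold at once: (b) by DCC, and (d) because preimages of points under the quotient maps are cosets of submodules, not singletons. Completeness of $R$ is then used at the end to convert the compatible coefficient tuple $\{a_i^{(J)}\}$ into actual elements $b_i\in R$, and Hausdorffness of $M$ finishes. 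This also shows where the f.g.\ hypothesis on $N$ genuinely enters -- the generators are the coordinates of $(R/J)^k$ -- whereas your argument never uses them beyond the passing observation that $N/(N\cap MI)$ is artinian.

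In short: you have the right ingredients and the right target theorem, but you must ``internalize'' the artinian structure by passing to the quotients $(R/J)^k$ (or equivalently $N/(N\cap MJ)$) with quotient bonding maps rather than working with nested cosets in $N$ under inclusion. Without that change, condition (d) and the finite intersection property cannot both be made to hold.
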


    \begin{proof}
        Let $\calB$ be a local basis of ideals such that $R/J$ is right artinian for all $J\in\calB$.
        Assume $M$ is a Hausdorff right $R$-module, let $m_1,\dots, m_k\in M$
        and $N=\sum_{i=1}^km_iR$.
        We will show that $m\in\overline{N}$ implies $m\in N$.

        Let $m\in \overline{N}$. For every $J\in\calB$ define
        \[X_J=\left\{(a_1,\dots,a_k)\in (R/J)^k\suchthat \sum_i(m_i+MJ)a_i=m+MJ\right\}~.\]
        Observe that $m\in\overline{N}=\bigcap_{J\in\calB} (N+MJ)$, hence for all $J\in \calB$ there
        are $b_1,\dots,b_k\in R$ and $z\in MJ$ such that $\sum m_ib_i=m+z$, implying $X_J\neq\phi$.
        For all $J\subseteq I$ in $\calB$, let $f_{I\!J}$ denote the map from $(R/J)^k$ to $(R/I)^k$
        given by sending $(b_1+J,\dots,b_k+J)$ to $(b_1+I,\dots,b_k+I)$. Then $f_{I\!J}(X_J)\subseteq X_I$.
        It easy to check that $\{X_I,f_{I\!J}|_{X_J}\}$ is an inverse system of sets.

        For all $J\in\calB$, define $T_J$ to be the set consisting of the empty set together
        with all cosets of (right) $R$-submodules of $(R/J)^k$ contained in $X_J$. We claim that
        conditions (a)-(d) of Theorem \ref{RING:TH:non-empty-inverse-limit} hold. Indeed, $X_J$ is
        easily seen to be a coset of a submodule of $(R/J)^k$, thus  $X_J\in T_J$. In addition, by Lemma \ref{RING:LM:coset-intersection},
        $T_J$ is closed under intersection, so (a) holds. Since $R/J$ is right artinian, so is  $(R/J)^k$
        (as a right $R$-module).
        Lemma \ref{RING:LM:coset-intersection} then implies that cosets of submodules of $(R/J)^k$ satisfy the DCC, hence (b) holds. Conditions (c) and (d)
        are straightforward. Therefore, we may apply Theorem \ref{RING:TH:non-empty-inverse-limit} to deduce that
        $\invlim X_J$ is non-empty.

        Let $x\in \invlim\{X_J\}_{J\in\calB}$. Then $x$ consists of tuples $\{(a_1^{(J)},\dots,a_k^{(J)})\in (R/J)^k\}_{J\in \calB}$ that
        are compatible with the maps $\{f_{I\!J}\}$. As $R$ is complete, there are $b_1,\dots,b_k\in R$
        such that $a_i^{(J)}=b_i+J$ for all $1\leq i\leq k$ and $J\in\calB$. It follows that $m-\sum_{i}m_ib_i\in \bigcap_{J\in\calB}MJ$.
        As $M$ is Hausdorff, the right-hand side is $\{0\}$, so $m=\sum_{i}m_ib_i\in N$.
    \end{proof}

    \begin{remark}\label{RING:RM:f-cogenerated-rings}
        Theorem \ref{RING:TH:closed-submodule-thm} and its consequences actually hold for
        the larger class of
        strictly pro-\emph{right-finitely-cogenerated} rings.
        A module $M$ over a ring $R$ is called \emph{finitely cogenerated}\footnote{
            Other names used in the literature are ``co-finitely generated'', ``finitely embedded'' or ``essentially artinian''.
        } (abbrev.: f.cog.) if its submodules satisfy the Finite Intersection Property (condition (b) in Theorem
        \ref{RING:TH:non-empty-inverse-limit}). This is equivalent to $\soc(M)$ being f.g.\ and essential in $M$ (see
        \cite[Pr.\ 19.1]{La99}). A ring $R$ called \emph{right f.cog.}\ if $R_R$ is finitely cogenerated.
        (For example, any right pseudo-Frobeniuos ring is right f.cog.) Among the examples of
        strictly pro-\emph{right-finitely-cogenerated} rings are complete rank-$1$ valuation rings. Indeed, if $\nu:R\to \R$ is an (additive) valuation,
        and $R$ is complete w.r.t.\ $\nu$, then $R=\invlim \{R/\{x\in R\where\nu(x)>n\}\}_{n\in\N}$.
        For a detailed discussion about f.cog.\ modules and rings, see \cite{Va68}
        and \cite[\S19]{La99}.
    \end{remark}

    Notice that a complete semilocal ring is strictly pro-right-artinian if and only
    if its Jacobson radical is f.g.\ as a right module. The latter condition is commonly
    used when studying complete semilocal rings
    (e.g.\ \cite{Ro87}). In particular, Hinohara proved
    Theorem \ref{RING:TH:closed-submodule-thm} for complete semilocal rings satisfying it (\cite[Lm.\ 3]{Hi60}).
    (Other authors usually assume the ring is right noetherian.)
    By ($*$) we now get:

    \begin{cor} \label{RING:CR:fp-modules-over-pro-right-artinian}
        Let $R$ be a strictly pro-right-artinian ring. Then any f.p.\ right $R$-module is Hausdorff.
    \end{cor}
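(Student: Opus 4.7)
The plan is to assemble the ingredients already stated in the excerpt and chain them together. Write the given f.p.\ module $M$ as a cokernel: choose a presentation $R^n \xrightarrow{\varphi} R^m \to M \to 0$, and set $N = \im \varphi \subseteq R^m$. Then $N$ is a finitely generated submodule of $R^m$ (generated by the images of the standard basis of $R^n$), and $M \cong R^m/N$ as topological right $R$-modules once $R^m$ and $M$ are given their standard module topologies.

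The next step is to observe that $R^m$ is Hausdorff. The standard topology on $R^m$ has the local basis $\{R^m J : J \in \calI_R\} = \{J^m : J \in \calI_R\}$, which coincides with the product topology coming from $m$ copies of $R$. Since $R$ is strictly pro-right-artinian, it is in particular an inverse limit of discrete (hence Hausdorff) rings, so $R \in \LTR$, and therefore $R^m$ is Hausdorff as a finite product of Hausdorff modules.

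Now apply Theorem \ref{RING:TH:closed-submodule-thm} to the Hausdorff module $R^m$ and its finitely generated submodule $N$: it yields that $N$ is closed in $R^m$. Finally, apply observation ($*$), which asserts that for a submodule $N$ of an LT module, $M/N$ is Hausdorff iff $N$ is closed; this gives that $M \cong R^m/N$ is Hausdorff, which is the desired conclusion.

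There is really no serious obstacle here, since Theorem \ref{RING:TH:closed-submodule-thm} already does all the work. The only point worth double-checking is the identification of the topology on $R^m/N$ coming from the quotient with the standard module topology on $M$; this is immediate because $(R^m/N)J = (R^m J + N)/N = (J^m + N)/N$ corresponds under $M \cong R^m/N$ exactly to $MJ$, so the two topologies agree by inspection.
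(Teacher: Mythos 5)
Your proof is correct and matches the paper's (implicit) argument exactly: the paper derives this corollary directly from Theorem \ref{RING:TH:closed-submodule-thm} and observation ($*$), precisely by presenting $M$ as $R^m/N$ with $N$ a f.g.\ submodule of the Hausdorff module $R^m$, concluding $N$ is closed and hence $M$ is Hausdorff. The only addition you make is spelling out the identification of the quotient topology on $R^m/N$ with the standard topology on $M$, which is a harmless (and correct) verification the paper leaves implicit.
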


    We can now prove that under mild assumptions, strictly pro-right-artinian rings are complete semilocal.

    \begin{cor}
        Let $R$ be a strictly pro-right-artinian ring.
        If $J\subseteq \Jac(R)$ is an ideal that is
        f.g.\ as a right ideal, then $R$ is complete in the $J$-adic topology
        (i.e.\ $R\cong\invlim \{R/J^n\}_{n\in\N}$). If moreover $R/J$ is right artinian, then the topology
        on $R$ is the Jacobson topology!
        In particular, if $\Jac(R)$ is f.g.\ as a right ideal, then the topology
        on $R$ is the Jacobson topology and $R$ is complete semilocal.
    \end{cor}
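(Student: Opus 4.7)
I will prove the three assertions in sequence.

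For the first assertion, I rely on two observations. (a) Since $J \subseteq \Jac(R)$ and each $R/I$ is right artinian, the image of $J$ in $R/I$ sits inside the nilpotent ideal $\Jac(R/I)$, so for every $I$ in the distinguished local basis $\calB$ of $\tau$ there is $n = n(I)\in\N$ with $J^n \subseteq I$. (b) Since $J$ is f.g.\ as a right ideal, so is each $J^n$, hence $R/J^n$ is f.p.\ and thus Hausdorff by Corollary~\ref{RING:CR:fp-modules-over-pro-right-artinian}; in particular $J^n$ is $\tau$-closed. Fact (a) immediately gives $\bigcap_n J^n \subseteq \bigcap_{I\in\calB} I = \{0\}$, so the natural map $\phi\colon R \to \invlim\{R/J^n\}$ is injective. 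For surjectivity, given a compatible sequence $(a_n + J^n)$, set $b_I := a_{n(I)} + I$; a short check (using compatibility of the $a_n$ together with $J^{n(I)} \subseteq I$, and enlarging $n(I')$ if needed to exceed $n(I)$ when $I'\subseteq I$) shows that $\{b_I\}_{I\in\calB}$ is a compatible family in $\invlim R/I$, so completeness of $R$ in $\tau$ produces $a \in R$ with $a + I = b_I$ for every $I\in\calB$. Fixing $n$ and picking $I$ small enough so that $n(I) \geq n$ yields $a - a_n \in I + J^n$, hence $a - a_n \in \bigcap_{I\in\calB}(I+J^n) = \overline{J^n} = J^n$ by fact (b).

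For the second assertion, the additional hypothesis that $R/J$ is right artinian forces $\Jac(R)/J = \Jac(R/J)$ to be nilpotent, so $\Jac(R)^m \subseteq J$ for some $m$ and the $J$-adic and Jacobson topologies coincide. Fact (a) above already gives $\tau \subseteq \tau_J$, so the task reduces to showing each $J^n$ is $\tau$-open. Because $J$ is f.g., each $J^i/J^{i+1}$ is a f.g.\ right $R/J$-module; combined with $R/J$ artinian, this gives each $J^i/J^{i+1}$ of finite length, hence $R/J^n$ of finite length. Therefore the quotients $R/(I+J^n)$ for $I \in \calB$ have uniformly bounded length, and the length is monotone non-decreasing as $I$ shrinks, forcing stabilization: there is $I_0 \in \calB$ with $I+J^n = I_0 + J^n$ for all $I \subseteq I_0$ in $\calB$. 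Intersecting over such $I$ (a cofinal subfamily) and using closedness of $J^n$ gives $I_0 + J^n = \bigcap_{I\in\calB}(I+J^n) = \overline{J^n} = J^n$, so $I_0 \subseteq J^n$ and $J^n$ is $\tau$-open.

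For the third assertion, applying the first assertion with $J = \Jac(R)$ already gives $R \cong \invlim\{R/\Jac(R)^n\}$, i.e., completeness in the Jacobson topology. To deduce that $\tau$ itself is the Jacobson topology and that $R$ is complete semilocal, by the second assertion it suffices to verify that $R/\Jac(R)$ is right artinian, i.e., that $R$ is semilocal. \emph{This is the main obstacle:} a priori a strictly pro-right-artinian ring carries no bound on its number of maximal right ideals. My plan is to leverage that $R/\Jac(R)$ is f.p.\ (since $\Jac(R)$ is f.g.) and hence Hausdorff by Corollary~\ref{RING:CR:fp-modules-over-pro-right-artinian}, together with the identification $R/\Jac(R) = \invlim_{I\in\calB} R/(\Jac(R)+I)$ whose terms are semisimple quotients of $(R/I)/\Jac(R/I)$, and to use the finite generation of $\Jac(R)$ (together with Nakayama-type control on $\Jac(R)/\Jac(R)^2$) to produce a uniform bound on the number of simple summands of these quotients; stabilization of this bound then forces $R/\Jac(R)$ itself to be semisimple. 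Once semilocality is established, the second assertion applied to $J = \Jac(R)$ yields $\tau = \tau_{\Jac(R)}$, completing the proof.
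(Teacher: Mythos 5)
Your proofs of the first two assertions are correct and essentially follow the paper's argument, with only cosmetic differences: you derive closedness of $J^n$ via Hausdorff-ness of $R/J^n$ (Corollary~\ref{RING:CR:fp-modules-over-pro-right-artinian}) rather than citing Theorem~\ref{RING:TH:closed-submodule-thm} directly, and in the second assertion you obtain $\tau\subseteq\tau_J$ from your fact~(a) rather than from Proposition~\ref{RING:PR:Jacobson-topology-is-the-biggest}; both are equivalent to the paper's routes, and your finite-length stabilization argument for openness of $J^n$ is the same as the paper's.

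For the third assertion you have correctly identified a gap that the paper glosses over: the ``in particular'' requires $R/\Jac(R)$ to be right artinian (equivalently, $R$ semilocal), and the paper's proof never addresses this. However, the plan you sketch cannot succeed, because the assertion as stated is actually \emph{false}. Consider $R=\prod_{i=1}^{\infty}\F_2$ with the product topology: $R\cong\invlim\{\prod_{i\leq n}\F_2\}_{n\in\N}$ is strictly pro-right-artinian (the finite products are artinian and the projections are surjective), and $\Jac(R)=\prod_i\Jac(\F_2)=\{0\}$ is trivially finitely generated as a right ideal, yet $R$ is not semilocal and its topology is not discrete, whereas the Jacobson topology on $R$ is discrete. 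Your proposed mechanism, controlling the number of simple summands of $R/(\Jac(R)+I)$ via Nakayama applied to $\Jac(R)/\Jac(R)^2$, detects nothing in this example because $\Jac(R)=0$. The statement should carry an additional hypothesis such as ``$R$ is semilocal'' (equivalently, by quasi-$\pi$-regularity and Lemma~\ref{RING:LM:main-lemma-II-quasi-pi-reg}(ii), that $R$ admits no infinite orthogonal set of idempotents); with that in hand $R/\Jac(R)$ is semisimple, hence right artinian, and the ``in particular'' reduces immediately to the second assertion applied with $J=\Jac(R)$. In the one place the paper invokes the third assertion, namely Corollary~\ref{RING:CR:noetherian-pro-semiprimary-rings}, the ring is right noetherian, which supplies the missing hypothesis, so the downstream results are unaffected.
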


    \begin{proof}
        Let $\calB$ be a local basis of ideals of $R$ such that $R/I$ is right artinian for all $I\in\calB$.
        We identify $R$ with its natural copy in $\prod_{I\in\calB}R/I$.
        Since $J$ is f.g.\ as a right ideal, then so are its powers. Therefore,
        by Theorem \ref{RING:TH:closed-submodule-thm}, $J^n$ is closed for all $n\in\N$.

        Let $\vphi$ denote the standard map from $R$ to $\invlim \{R/J^n\}_{n\in\N}$.
        Define a map $\psi:\invlim \{R/J^n\}_{n\in\N}\to R$ as follows: Let $r\in \invlim \{R/J^n\}_{n\in\N}$
        and let $r_n$ denote the image of $r$ in $R/J^n$.
        By Corollary \ref{RING:CR:pro-semiprim-is-satisfies-jacobson-conj}, for all $I\in \calB$,
        there is $n\in\N$ (depending on $I$) such that $J^n\subseteq I$.
        Let $r_I$ denote the image of $r_n$ in $R/I$. It is easy to check
        that $r_I$ is independent of $n$ and that $\what{r}:=(r_I)_{I\in\calB}\in R$. Define
        $\psi(r)=\what{r}$.

        It is straightforward to check that $\psi\circ \vphi=\id$. Therefore, we are done if we show that $\psi$ is injective.
        Let $y\in\ker \psi$ and let $y_n+J^n$ be the image of $y$ in $R/J^n$. Then for all $I\in\calB$,
        $J^n\subseteq I$ implies $y_n\in I$. This means
        $y_n\in\bigcap_{J^n\subseteq I\in\calB}I=\overline{J^n}=J^n$, so $y_n+J^n=0+J^n$ for all $n\in\N$,
        hence $y=0$.

        Now assume $R/J$ is right artinian. Then $\Jac(R)^k\subseteq J\subseteq\Jac(R)$ for some $k\in\N$,
        hence the Jacobson topology and the $J$-adic topology coincide.
        By Proposition \ref{RING:PR:Jacobson-topology-is-the-biggest}, the topology
        on $R$ is contained in the Jacobson topology, so we only need to show
        the converse. Let $n\in\N$. It is enough to show that $J^n$ is open.
        Indeed, since $J_R$ is f.g., then so is $(J^i/J^{i+1})_R$ ($i\geq 0$). As $(R/J)_R$ has finite length,
        $(J^i/J^{i+1})_R$ has finite length. Thus, $(R/J^n)_R$ have finite length as well.
        Since $J^n$ is closed, $J^n$ is an intersection of open ideals. As $(R/J^n)_R$ is of finite length,
        $J^n$ is the intersection of finitely many of those ideals, hence open.
    \end{proof}

    \begin{cor} \label{RING:CR:noetherian-pro-semiprimary-rings}
        Let $R$ be a right noetherian pro-$\pi$-regular ring. Then
        the topology on $R$ is the Jacobson topology, $R$ is strictly pro-right-artinian w.r.t.\ it
        and any right ideal of $R$ is closed. In particular, $R$ is semilocal complete.
    \end{cor}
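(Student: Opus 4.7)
The strategy is to bootstrap from pro-$\pi$-regular plus noetherian to strictly pro-right-artinian, then apply the previous corollary. First I would observe that $R$ is quasi-$\pi$-regular by Lemma \ref{RING:LM:pro-pi-reg-lemma-I}, so its elements behave well. Next, since $R$ is right noetherian, $R_R$ cannot contain an infinite family of orthogonal idempotents (such a family would produce a strictly ascending chain of right ideals $e_1R \subsetneq (e_1+e_2)R\subsetneq\cdots$). Therefore Lemma \ref{RING:LM:main-lemma-II-quasi-pi-reg}(ii) gives that $R$ is semiperfect, and in particular semilocal, so $R/\Jac(R)$ is semisimple (hence right artinian).

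The next step is to upgrade ``pro-$\pi$-regular'' to ``strictly pro-right-artinian'' with respect to the \emph{given} topology. By Lemma \ref{RING:LM:pro-P-equiv-cond}, for every $I\in\calI_R$ the quotient $R/I$ is $\pi$-regular. Since $R/I$ is also right noetherian as a quotient of $R$, Remark \ref{RING:RM:hirarchy-of-properties} tells us that $R/I$ is in fact right artinian. Consequently $\calI_R$ itself is a local basis of ideals whose quotients are right artinian, so $R$ is strictly pro-right-artinian in its given topology.

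Now I would invoke the preceding corollary with $J=\Jac(R)$: this is an ideal contained in $\Jac(R)$ and finitely generated as a right ideal (because $R$ is right noetherian), and $R/J$ is right artinian by the first paragraph. That corollary therefore yields exactly two of the claimed assertions, namely that the given topology on $R$ coincides with the Jacobson topology and that $R$ is complete semilocal.

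Finally, for the closedness of right ideals: any right ideal $N\subseteq R$ is finitely generated (right noetherian), and $R_R$ is a Hausdorff right $R$-module (since $R$ is Hausdorff). Applying Theorem \ref{RING:TH:closed-submodule-thm} to the strictly pro-right-artinian ring $R$, the finitely generated submodule $N$ of the Hausdorff module $R_R$ is closed. I do not anticipate a serious obstacle: the proof is essentially an orchestration of the lemmas and the preceding corollary, the only mildly delicate point being to confirm that pro-$\pi$-regular plus right noetherian forces $R/I$ to be right artinian for \emph{every} $I\in\calI_R$ (not just for the $I$'s in some a priori given inverse-limit presentation), which is handled by Lemma \ref{RING:LM:pro-P-equiv-cond} together with Remark \ref{RING:RM:hirarchy-of-properties}.
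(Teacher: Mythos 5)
Your proof is correct and follows the same route as the paper: use Lemma \ref{RING:LM:pro-P-equiv-cond} and Remark \ref{RING:RM:hirarchy-of-properties} to see that $R/I$ is right artinian for every $I\in\calI_R$, hence $R$ is strictly pro-right-artinian with $\Jac(R)_R$ finitely generated, and then invoke the preceding corollary. The paper's own proof is terser and consists essentially of that last sentence. What you add — and what arguably needs to be added — is the observation that $R$ is semiperfect (via Lemma \ref{RING:LM:pro-pi-reg-lemma-I}, the right noetherian chain condition ruling out infinite orthogonal idempotent families, and Lemma \ref{RING:LM:main-lemma-II-quasi-pi-reg}(ii)): this guarantees that $R/\Jac(R)$ is right artinian, which is exactly the hypothesis the ``moreover'' clause of the preceding corollary requires, so your application of it with $J=\Jac(R)$ is fully justified rather than implicit. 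You also make explicit that closedness of all right ideals comes from Theorem \ref{RING:TH:closed-submodule-thm} applied to the Hausdorff module $R_R$ (which the paper's ``apply the previous corollary'' does not itself deliver, since that corollary says nothing about closedness of arbitrary right ideals). In short: same strategy, but you fill two small gaps that the paper leaves to the reader.
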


    \begin{proof}
        %Let $\tau$ be the topology on $R$ and let $\tau_{\Jac}$ denote the Jacobson topology.
        By Lemma \ref{RING:LM:pro-P-equiv-cond}, $R/I$ is $\pi$-regular for all $I\in\calI_R$, hence
        Remark \ref{RING:RM:hirarchy-of-properties} implies $R/I$ is right artinian for all $I\in\calI_R$
        (since $R/I$ is right noetherian). Therefore, $R$ is pro-right-artinian,
        with $\Jac(R)_R$ finitely generated. Now apply the previous corollary.
\rem{
        so by Proposition \ref{RING:PR:Jacobson-topology-is-the-biggest}, $\tau\subseteq\tau_{\Jac}$.
        Since $R$ is right noetherian, any right ideal
        is f.g., hence closed by Theorem \ref{RING:TH:closed-submodule-thm}. In
        particular, $\Jac(R)^n$ is closed for all $n\in\N$ and is thus an intersection of open ideals.
        But $R/\Jac(R)^n$ is right artinian (since it is semiprimary and right noetherian),
        so $\Jac(R)^n$ is the intersection of finitely many open ideals. Therefore $\Jac(R)^n$ is open
        for all $n$, implying $\tau_{\Jac}\subseteq \tau$.}
    \end{proof}

    The next example demonstrates that Theorem \ref{RING:TH:closed-submodule-thm} fails for
    pro-artinian rings (and in particular for pro-semiprimary rings). It also
    implies that there are pro-artinian rings that are not strictly pro-right-artinian.

    \begin{example} \label{RING:EX:JacRsquared-can-be-non-open}
        Let $S=\Q(x)[t\where t^3=0]$. For all $n\in\N$ define $R_n=\Q(x^{2^n})+\Q(x)t+\Q(x)t^2\subseteq S$ and
        $I_n=\Q(x^{2^n})t^2\subseteq S$. Then $R_n$ is an artinian ring and $I_n\idealof R_n$. For $n\leq m$
        define
        a map $f_{nm}:R_m/I_m\to R_{n}/I_{n}$ by $f_{nm}(x+I_{m})=x+I_{n}$.
        Then $\{R_n/I_n,f_{nm}\}$ is an inverse system of artinian rings. Let $R=\invlim \{R_n/I_n\}_{n\in\N}$.
        Then $R$ can be identified with $\Q+\Q(x)t+Vt^2$ where $V$ is the $\Q$-vector space $\invlim \{\Q(x)/\Q(x^{2^n})\}_{n\in\N}$
        ($R$ does not embed in $S$).
        Observe that $\Q(x)$ is dense in $V$, but $\Q(x)\neq V$ since $V$ is not countable
        (it contains a copy of all power series $\sum a_nx^{2^n}\in \Q\dbrac{x}$). Therefore,
        the ideal $tR=\Q t+\Q(x)t^2$ is not closed in $R$ and by ($*$), $R/tR$ is a non-Hausdorff
        f.p.\ module.\rem{\footnote{
            The author is not sure whether $V$ can be naturally identified with $\Q\dpar{x}$. Even if the answer is yes,
            then the topology on $V$ is not likely to be the $x$-adic topology.
        }} We also note that $\Jac(R)^2=\Q(x)t^2$ is not closed (but $\Jac(R)$ must be closed by
        Proposition \ref{RING:LM:closed-jacobson-radical}).
    \end{example}

    We conclude by specializing the results of the previous section to first countable strictly pro-right-artinian
    rings. (We are guaranteed that all f.p.\ modules are Hausdorff in this case). By Corollary \ref{RING:CR:noetherian-pro-semiprimary-rings},
    this family include all right
    noetherian pro-semiprimary rings. More general statements can be obtained by applying Remark \ref{RING:RM:f-cogenerated-rings}.

    \begin{cor}
        (i) Let $R$ be a first countable pro-right-artinian ring and let $M$
        be a f.p.\ right $R$-module. Then $\End(M_R)$ is
        pro-semiprimary and first countable (w.r.t.\ $\tau_2^M$). If $R$ is semiperfect (e.g.\ if $R$ is right noetherian),
        then $\End(M_R)$ is semiperfect.

        (ii) Let $S$ be a commutative first countable pro-right-artinian ring and let $R$ be an $S$-algebra s.t.\ $R$ is f.p.\
        as an $S$-module. Then $R$ is pro-semiprimary (w.r.t.\ some topology). If $S$ is semiperfect (e.g.\ if $S$ is right noetherian),
        then $R$ is semiperfect.
    \end{cor}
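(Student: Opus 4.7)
The statement is essentially a specialization of the general theorems of Section \ref{section:modules} to the class of (strictly) pro-right-artinian rings, where finitely presented modules are automatically Hausdorff. Throughout I read ``pro-right-artinian'' as ``strictly pro-right-artinian'', in keeping with the paragraph immediately preceding the corollary. Since right-artinian implies semiprimary (Remark \ref{RING:RM:hirarchy-of-properties}), a (strictly) pro-right-artinian ring is pro-semiprimary, so in particular lies in $\Ptop\cap\Pmor$; it is also quasi-$\pi_\infty$-regular, by Lemma \ref{RING:LM:pro-pi-reg-lemma-I} applied to matrix rings.

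For part (i), I would first invoke Corollary \ref{RING:CR:fp-modules-over-pro-right-artinian} to conclude that $M$ is Hausdorff with respect to its standard topology. Consequently, as observed at the start of Section \ref{section:modules}, $\End(M)$ is Hausdorff in its standard topology $\tau_1^M$, and therefore also in the finer topology $\tau_2^M$, so that $I_2^M=\{0\}$. Theorem \ref{RING:TH:application-to-fp-modules-top-case}(i) applied with $\calP=\text{pro-semiprimary}$ then yields that $\End(M)$ is pro-semiprimary with respect to $\tau_2^M$. First countability of $\tau_2^M$ is immediate from the definition of its local basis $\{\Ball(J,E_\bullet): J\in\calI_R\}$, which is indexed by $\calI_R$ and therefore cofinal with a countable local basis of $R$. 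When $R$ is additionally semiperfect, Theorem \ref{RING:TH:application-to-fp-modules-top-case}(iii) gives that $\End(M)$ is semiperfect; the parenthetical ``right noetherian $\Rightarrow$ semiperfect'' is Corollary \ref{RING:CR:noetherian-pro-semiprimary-rings}.

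For part (ii), Corollary \ref{RING:CR:fp-modules-over-pro-right-artinian} applied to $S$ and the $S$-module $R$ gives that $R$ is Hausdorff as an $S$-module. Since $S$ is pro-semiprimary and first countable, Corollary \ref{RING:CR:fp-algebras-full-cor-topological-version}(ii) with $\calP=\text{pro-semiprimary}$ yields that $R$ is pro-semiprimary. The semiperfect conclusion follows from part (i) of the same corollary: $S$ is quasi-$\pi_\infty$-regular, hence if $S$ is also semiperfect then so is $R$.

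There is no real obstacle here: the proof is primarily bookkeeping, consisting of checking that the relevant property (pro-semiprimary, or pro-semiprimary-and-semiperfect) lies in $\Ptop\cap\Pmor$ and that the Hausdorff hypothesis required by Theorem \ref{RING:TH:application-to-fp-modules-top-case} and Corollary \ref{RING:CR:fp-algebras-full-cor-topological-version} is supplied by Corollary \ref{RING:CR:fp-modules-over-pro-right-artinian}. The only mildly delicate point is the identification $I_2^M=\{0\}$, which has to be read off from the containment $\tau_1^M\subseteq\tau_2^M$ together with Hausdorff-ness of $\tau_1^M$.
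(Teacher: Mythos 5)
Your argument is correct and matches the route the paper clearly intends: the corollary has no written proof in the text beyond the sentence preceding it (``We conclude by specializing the results of the previous section\dots''), and your reconstruction is exactly that specialization. You also correctly flag that the corollary's wording omits ``strictly'' before ``pro-right-artinian'', which is needed for Corollary~\ref{RING:CR:fp-modules-over-pro-right-artinian} to apply (and is consistent with the surrounding paragraph), and you correctly note that $\operatorname{M}_n(R)=\varprojlim \operatorname{M}_n(R_i)$ gives quasi-$\pi_\infty$-regularity, which Theorem~\ref{RING:TH:application-to-fp-modules-top-case}(iii) and Corollary~\ref{RING:CR:fp-algebras-full-cor-topological-version}(i) require.
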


\section{Further Remarks}
\label{section:further-remarks}

    It is likely that the theory of semi-invariant subrings developed in section \ref{section:top-rings}
    can be extended to \emph{right} linearly topologized rings, i.e.\ topological rings having a local
    basis consisting of \emph{right} ideals. This actually has the following remarkable implication
    (compare with Corollary \ref{RING:CR:application-to-fp-modules} and Corollary \ref{RING:CR:fp-algebras-full-cor-topological-version}):

    \begin{cnj}
        Let $S\in\LTR$ be a semiperfect quasi-$\pi_\infty$-regular ring and let $\vphi:S\to R$ be a ring homomorphism. Assume that:
        \begin{enumerate}
        \item[(a)] When considered as a right $S$-module via $\vphi$, $R$ is f.p.\ and Hausdorff.
        \item[(b)] For all $r\in R$ and $I\in\calI_S$, there is $J\in\calI_S$ such that $R\vphi(J)r\subseteq R\vphi(I)$.\footnote{
            This equivalent to saying that the topology on $R$ spanned by
            cosets of the left ideals $\{R\vphi(I)\where  I\in\calI_S\}$ is a ring topology;
            see \cite[\S3]{Wa93}.
        }
        \end{enumerate}
        Then $R$ is semiperfect and quasi-$\pi_\infty$-regular (w.r.t.\ some topology).
    \end{cnj}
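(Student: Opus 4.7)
The plan is to mimic the proof of Corollary~\ref{RING:CR:fp-algebras-full-cor-topological-version}(i), but with the ambient endomorphism ring replaced by a \emph{right} linearly topologized one. The key identification is $R \cong \End(R_R) = \Cent_{\End(R_S)}(\check{R})$, where $\check{y}\colon x\mapsto xy$ denotes right multiplication by $y\in R$ and the isomorphism sends $f\mapsto f(1)$. In the commutative case of Corollary~\ref{RING:CR:fp-algebras-full-cor-topological-version}(ii) each $\check{y}$ happens to be $S$-linear, so $\check{R}\subseteq \End(R_S)$ and the whole argument stays inside $\LTR$; in our non-commutative setting the $\check{y}$ only live in a larger ring of continuous additive endomorphisms of $R$, and that ring turns out to be right LT but not two-sided LT. The plan therefore rests on the envisioned extension of Section~\ref{section:top-rings} to right LT ambient rings.

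Concretely, first endow $W := \End(R_S)$ with its standard topology $\tau_1^R$; by Theorem~\ref{RING:TH:application-to-fp-modules-top-case}(iii), applied to the f.p.\ Hausdorff right $S$-module $R$, the ring $W$ is semiperfect and quasi-$\pi_\infty$-regular. Second, topologize $R$ via condition~(b) with local basis $\{R\varphi(I) : I\in\calI_S\}$ at $0$; this is a Hausdorff ring topology since $R_S$ is Hausdorff. Third, let $T\subseteq \cEnd(R_{\Z})$ be the subring generated by $W$ and $\check{R}$. Using condition~(b), one checks that the sets $\{\,\Hom(R_\Z, R\varphi(I)_\Z)\cap T : I\in\calI_S\,\}$ form a local basis of right ideals making $T$ into a right LT Hausdorff ring that contains $W$ as a topological subring and satisfies $\check{R}\subseteq T$. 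A direct computation yields $\Cent_W(\check{R}) = \End(R_R) \cong R$, so $R$ is realized as a T-semi-centralizer subring of $W$ in the extended (right-LT) sense. Granting the right-LT analogues of Corollary~\ref{RING:CR:quasi-pi-reg-transfers-to-semi-inv-subring} and Theorem~\ref{RING:TH:main-res-for-quasi-pi-reg-rings}, one concludes that $R$ is semiperfect and quasi-$\pi_\infty$-regular with respect to the topology induced from $W$ via $\lambda\colon r\mapsto \hat{r}$.

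The main obstacle is precisely this extension. The proof of Corollary~\ref{RING:CR:quasi-pi-reg-transfers-to-semi-inv-subring}(i) crucially invokes uniqueness of the associated idempotent in the ambient ring (Lemma~\ref{RING:LM:uniqueness-lemma}), which is currently known only for LT Hausdorff rings and explicitly flagged as open in the right LT setting in Remark~\ref{RING:RM:remark-after-uniqueness-lemma}(ii). A natural workaround that bypasses uniqueness in $T$ altogether is to use the (already established) uniqueness in the two-sided LT subring $W$: for $a\in R\subseteq W$ let $\tilde e\in W$ be its unique associated idempotent in $W$, and show directly that $\tilde e$ must commute with every $\check{y}\in T$, exploiting the intrinsic characterizations $\tilde e W = \bigcap_n a^n W$ and $(1-\tilde e) W = \{g\in W : a^n g\to 0\}$ from Remark~\ref{RING:RM:remark-after-uniqueness-lemma}(i), together with the fact that each $\check{y}$ already commutes with $a$. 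Verifying that this one-sided argument closes, and routinely checking that Proposition~\ref{RING:PR:T-semi-invariant-definition-prop} and Lemma~\ref{RING:LM:main-lemma-II-quasi-pi-reg} adapt to right LT ambient rings, would complete the proof.
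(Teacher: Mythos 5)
You should start by noting that this statement is a \emph{conjecture} in the paper; the text that follows it is an explicit sketch, and the paper itself acknowledges that the argument hinges on an as-yet-unverified extension of Section~\ref{section:top-rings} to right linearly topologized rings. Your first two paragraphs reconstruct that same sketch faithfully (modulo a naming collision: your $W$ is the paper's $\End(R_S)$, while your $T$ plays the role of the paper's ambient ring of continuous $\Z$-endomorphisms of $R$), and you correctly identify the exact point where the argument is incomplete, namely uniqueness of the associated idempotent over a right-LT ambient ring, which is flagged as open in Remark~\ref{RING:RM:remark-after-uniqueness-lemma}(ii).

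Your third paragraph goes genuinely beyond the paper. Bypassing the right-LT uniqueness problem by invoking the available uniqueness inside the two-sided LT ring $\End(R_S)$ and then showing directly that the associated idempotent $\tilde{e}$ of $a\in R$ centralizes $\check{R}$ is an idea the paper does not contain, and if carried out it would upgrade the conjecture to a theorem without needing the global right-LT machinery. The commutation should run cleanest via the right-$R$-module decomposition $R=\tilde{e}(R)\oplus\tilde{f}(R)$: the summand $\tilde{e}(R)=\bigcap_n a^n R$ is preserved by each $\check{y}$ because right multiplication commutes with left multiplication by $a$, and $\tilde{f}(R)=\{x\in R : a^n x\to 0\}$ is preserved because each $\check{y}$ is continuous (which is exactly what condition~(b) supplies); so each $\check{y}$ commutes with the projection $\tilde{e}$. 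What still needs checking before this becomes a proof: that the inverse of $\tilde{e}a\tilde{e}$ in $\tilde{e}\End(R_S)\tilde{e}$ lies in $\tilde{e}R\tilde{e}$ (it does, being the inverse of a right-$R$-linear bijection of $\tilde{e}(R)$); that the topology induced on $R$ from $\tau_1^R$ on $\End(R_S)$ via the left-regular embedding is a Hausdorff two-sided linear ring topology (note $R\cap \Ball(I)$ is a two-sided ideal of $R$); the matrix-ring passage for $\pi_\infty$-regularity; and the semiperfect conclusion via Lemma~\ref{RING:LM:main-lemma-II-quasi-pi-reg}(ii). You are candid that what you offer is a plan rather than a finished argument, which is the accurate assessment and consistent with the fact that the paper itself leaves this as a conjecture.
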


    The proof should be as follows: For any right $S$-module $M$, let $W$ denote the ring of
    continuous $\Z$-homomorphisms from $M$ to itself. Then $W$ can be made into a \emph{right LT} ring by
    taking $\{\Ball(J)\where J\in \calI_S\}$ as a local basis where
    $\Ball(J)=\{f\in W\suchthat \im f\subseteq MJ\}$ (this is the topology
    of uniform convergence).\footnote{
        Caution: Not any filter base of right ideals gives rise to a ring topology.
        By \cite[\S3]{Wa93}, we need to check that for all $f\in W$
        and $I\in\calI_S$ there is $J\in\calI_S$ such that $f\Ball(J)\subseteq\Ball(I)$. Indeed, we
        can take any $J$ with $f(MJ)\subseteq MI$ and such $J$ exists since $f$ is continuous.
    } Clearly $W$ contains $\End(M_S)$ as a topological ring (endow $\End(M_S)$
    with $\tau_1^M$). Now take $M=R$ (where $R$
    is viewed as a right $S$-module via $\vphi$). Then condition (a) implies $\End(R_S)$
    is semiperfect and quasi-$\pi_\infty$-regular w.r.t.\ $\tau_1^R$ (Theorem \ref{RING:TH:application-to-fp-modules-top-case}).
    Condition (b) implies that for all $r\in R$, the map $\what{r}:x\mapsto xr$ from $R$ to itself is continuous and
    hence lie in $W$. Since we assume the results of section \ref{section:top-rings}
    extend to right LT rings, $R\cong \End(R_R)=\Cent_{\End(R_S)}(\{\what{r}\where r\in R\})$ is a T-semi-invariant
    subring of $\End(R_S)$, so $R$ is semiperfect and quasi-$\pi_\infty$-regular.

    \smallskip

    Examples of rings satisfying conditions (a) and (b) can be produced by taking $R$ to be:
    (1) a twisted group algebra $S^\alpha G$
    where $G$ is a \emph{finite} group and $\alpha:G\to\cAut(G)$ is a group homomorphism
    or (2) a ``crossed product'', i.e.\ $R=\mathrm{CrossProd}(S,\psi,G)$ where $S$
    is commutative, $G$ is finite and act on $S$ via continuous automorphisms and $\psi\in H^2(G,\units{S})$.
    (Further examples
    can be produced by taking quotients). However, we can show directly that the conjecture holds in these special
    cases. Indeed, that $G$ is finite implies $\calB=\{\bigcap_{g\in G}g(I)\where I\in\calI_R\}$ is
    a local basis of $S$ and we have
    $RJ\subseteq JR$ for all $J\in\calB$. For any right $R$-module $M$ let
    $W'=\{f\in W\suchthat f(MJ)\subseteq MJ~\forall J\in\calB\}$ (with $W$ as in the previous paragraph).
    Then $W'$ is a
    \emph{linearly topologized} ring w.r.t.\
    the topology induced from $W$ (as seen by taking the local basis $\{W'\cap\Ball(J)\where J\in\calB\}$).
    In addition, when $M=R_S$,
    $\what{r}$ of the previous paragraph lies in $W'$ (since $RJ\subseteq JR$ for all $J\in\calB$). Therefore, repeating
    the argument of the last paragraph with $W'$
    instead of $W$, we get that $R$ is semiperfect and quasi-$\pi_\infty$-regular.

    \medskip

    The author could not find nor contradict the existence of the following:
    \begin{enumerate}
        \item[(1)] A pro-semiprimary ring that is not complete semilocal (i.e.\ complete w.r.t.\ its
        Jacobson topology).
        \item[(2)] A complete semilocal ring, endowed with the Jacobson topology, with a non-Hausdorff f.p.\ module.
    \end{enumerate}

\section{Appendix: When Do $\tau_1^M,\tau_2^M,\dots$ Coincide?}

    This appendix is dedicated to the question of when the topology obtained from a resolution
    is the standard topology.
    For that purpose we briefly recall the Artin-Rees property for ideals. For details and proofs, see
    \cite[\S3.5D]{Ro88}.

    Let $R$ be a \emph{right noetherian} ring.
    An ideal $I\idealof R$ is said to satisfy the \emph{Artin-Rees property} (abbreviated: AR-property)
    if for any right ideal $A \leq R$ there is $n\in\N$ such that $I^n\cap A\subseteq AI$.
    This is well known to imply that for any f.g.\ right $R$-module $M$ and a submodule
    $N$, there is $n\in\N$ such that $MI^n\cap N\subseteq NI$.
    For example, by \cite[p.\ 462, Ex.\ 19]{Ro88},
    every \emph{polycentral} ideal (e.g.\ an ideal generated by central elements) satisfies the AR-property. In addition,
    if $R$ is \emph{almost bounded} (e.g.\ a PI ring), then all ideals of $R$ are satisfy the AR-property.

    Now let $R$ be any LT ring. A right $R$-module $M$ is said to satisfy the \emph{topological Artin-Rees property}
    (abbreviated: TAR-property) if for any submodule $N\subseteq M$ and any $I\in \calI_R$
    there
    is $J\in\calI_R$ such that $MJ\cap N\subseteq NI$. (Equivalently,
    the induced topology and the standard topology coincide for any submodule of $M$.)
    For example, if $R$ is right noetherian, $J\idealof R$
    and $R$ is given the $J$-adic topology, then all f.g.\ right $R$-modules satisfy  the TAR-property
    if and only if $J$ satisfies the AR-property.

    \begin{prp} \label{RING:PR:artin-rees-implications-I}
        Let $R$ be an LT ring, let $M$ be a right $R$-module
        and let $P: P_{n-1}\to\dots\to P_0\to P_{-1}=M\to 0$ be a projective
        resolution of $M$.
        Assume that
        that $P_0,\dots, P_{n-1}$
        have the TAR-property. Then $\tau_{P}$ is the natural topology on $\End(M)$.
    \end{prp}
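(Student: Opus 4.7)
The plan is to show that the identity map $(\End(M),\tau_P)\to(\End(M),\tau_1^M)$ is a homeomorphism. Since the containment $\tau_1^M\subseteq\tau_P$ is already recorded in the discussion preceding the proposition, it suffices to produce, for each fixed $J\in\calI_R$, an ideal $J'\in\calI_R$ with $\Hom_R(M,MJ')\subseteq\Ball(J,P)$. Equivalently, given any $f_{-1}\in\Hom_R(M,MJ')$, we must build lifts $f_0,\dots,f_{n-1}$ along $P_\bullet$ satisfying $d_if_i=f_{i-1}d_i$ and $\im f_i\subseteq P_iJ$ for every $i$.

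The first step is to choose, by \emph{descending} induction on $i$, a nested chain of open ideals $J=K_{n-1}\supseteq K_{n-2}\supseteq\cdots\supseteq K_{-1}$ in $\calI_R$. Given $K_i$ (with $K_i\subseteq J$), apply the TAR-property of $P_{i-1}$ to the submodule $\im d_i\subseteq P_{i-1}$ and the open ideal $K_i$ to obtain some $K'\in\calI_R$ with $P_{i-1}K'\cap\im d_i\subseteq(\im d_i)\cdot K_i$, and put $K_{i-1}:=K'\cap K_i$. Note that $(\im d_i)K_i=d_i(P_iK_i)$ by $R$-linearity and surjectivity of $d_i\colon P_i\twoheadrightarrow\im d_i$. (The case $i=0$ requires no TAR-step: $\im d_0=M$, and the necessary containment $MK_{-1}\subseteq MK_0$ follows at once from $K_{-1}\subseteq K_0$, so only the TAR-property of $P_0,\dots,P_{n-2}$ is actually used.) Set $J':=K_{-1}$.

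The second step is the ascending lift. Given $f_{-1}\colon M\to MJ'$, we construct $f_i\colon P_i\to P_iK_i$ inductively. For $i=0$, the composition $f_{-1}d_0\colon P_0\to MK_{-1}\subseteq MK_0=d_0(P_0K_0)$ lifts through the surjection $d_0\colon P_0K_0\twoheadrightarrow MK_0$ by projectivity of $P_0$. For $i\geq1$, the chain relations give $d_{i-1}f_{i-1}d_i=f_{i-2}d_{i-1}d_i=0$, so
\[
\im(f_{i-1}d_i)\subseteq\ker d_{i-1}\cap P_{i-1}K_{i-1}=\im d_i\cap P_{i-1}K_{i-1}\subseteq d_i(P_iK_i),
\]
by exactness of $P_\bullet$ at position $i-1$ and the defining property of $K_{i-1}$; projectivity of $P_i$ then lifts $f_{i-1}d_i$ through $d_i\colon P_iK_i\twoheadrightarrow d_i(P_iK_i)$ to the required $f_i$. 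Since every $K_i\subseteq J$, we obtain $\im f_i\subseteq P_iJ$, so $f_{-1}\in\Ball(J,P)$, completing the proof.

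The substantive content is the descent of the lifting condition through the resolution: a priori the image of $f_{i-1}d_i$ is known only to sit inside $P_{i-1}K_{i-1}$, whereas to lift we need it to sit inside $d_i(P_iK_i)$ where $K_i$ may be considerably larger than $K_{i-1}$. The TAR-property is exactly the mechanism that converts ``being near $0$ in the ambient topology of $P_{i-1}$'' into ``being near $0$ inside $\im d_i$ in its subspace topology'', so no real obstacle remains; the rest is a textbook projective-lifting argument, and the only thing to keep track of is the descending chain of open ideals $K_i$.
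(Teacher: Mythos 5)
Your proof is correct and follows essentially the same strategy as the paper's: a descending construction of a nested chain of open ideals via the TAR-property applied at each $P_{i-1}$ to the submodule $\im d_i=\ker d_{i-1}$, followed by an ascending projective-lifting argument. Your parenthetical observation that only $P_0,\dots,P_{n-2}$ actually need the TAR-property is a nice clarification of a small imprecision in the paper's phrasing; otherwise the two arguments agree.
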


    \begin{proof}
        Denote by $d_i$ the map $P_i\to P_{i-1}$ and let $B_i=\ker d_{i}$.
        We will prove that for all $I\in\calI_R$ there is $J\in\calI_R$ such that $\Hom(M,MJ)\subseteq \Ball(I,P)$.
        Given $I\in\calI_R$ we define a sequence of open ideals $I_{n-1},I_{n-2},\dots,I_{-1}$ as follows:
        Let $I_{n-1}=I$. Given $I_i$, take $I_{i-1}$ to be an open ideal such that $P_{i-1}I_{i-1}\cap B_{i-1}\subseteq B_{i-1}I_{i}$
        and $I_{i-1}\subseteq I_i$ (the existence of $I_{i-1}$ follows from the TAR-property).
        We claim that $\Hom(M,MI_{-1})\subseteq \Ball(I,P)$. To see this, let
        $f_{-1}\in\Hom(M,MI_{-1})$ and assume
        we have constructed maps $f_i\in\Hom(P_i,P_iI_i)$ for all $-1\leq i<k$ such
        that $d_if_i=f_{i-1}d_i$. Then it is enough to show there is $f_k\in\Hom(P_k,P_kI_k)$
        such that $d_kf_k=f_{k-1}d_k$. The argument to follow is illustrated in the following diagram:
        \[\xymatrix{
        P_k \ar@{->>}[r]^{d_{k}} \ar@{.>}[dd]^{f_k} & B_{k-1} \ar@{^{(}->}[r] \ar[d]^{f_{k-1}}             & P_{k-1} \ar[d]^{f_{k-1}} \ar[r]^{d_{k-1}} & \dots\\
                                                    & P_{k-1}I_{k-1}\cap B_{k-1} \ar@{^{(}->}[d]\ar@{^{(}->}[r] & P_{k-1}I_{k-1} \ar[r]^-{d_{k-1}} & \dots\\
        P_kI_k \ar@{->>}[r]^{d_{k}}                 & B_{k-1}I_k                              &                                 &
        }\]
        That $d_{k-1}f_{k-1}=f_{k-2}d_{k-1}$
        implies $f_{k-1}(B_{k-1})\subseteq B_{k-1}$. As $\im f_{k-1}\subseteq P_{k-1}I_{k-1}$, we
        get that $f_{k-1}(B_{k-1})\subseteq P_{k-1}I_{k-1}\cap B_{k-1}\subseteq B_{k-1}I_k$ (by
        the definition of $I_{k-1}$). Since the map $P_kI_k\to B_{k-1}I_{k}$ is onto (because $\im(d_k)=B_{k-1}$), we
        can lift $f_{k-1}d_k:P_k\to B_{k-1}I_k$ to module homomorphism $f_k:P_k\to P_kI_k$, as required.
    \end{proof}

    \begin{remark}
        The proof still works if we replace the assumption that $P_{n-1}$ is projective
        with $d_{n-1}$ is injective.
    \end{remark}

    \begin{cor} \label{RING:CR:artin-rees-implications-II}
        Let $R$ be an LT right noetherian ring admitting local basis of ideals $\calB$ such
        that: (1) all ideals in $\calB$ have the AR-property (e.g.\ if all ideals in $\calB$
        are generated by central elements or if $R$ is PI) and (2)
        all powers of ideals in $\calB$ are open.
        Then $\tau_1^M=\tau_2^M=\dots$ for any f.g.\ right $R$-module $M$.
    \end{cor}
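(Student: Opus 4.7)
The plan is to reduce the corollary directly to Proposition \ref{RING:PR:artin-rees-implications-I}. Recall that $\tau_1^M$ is the standard topology on $\End(M)$ and that $\tau_1^M\subseteq\tau_2^M\subseteq\dots$, so it suffices to prove the reverse inclusions, i.e., that $\tau_n^M$ coincides with the standard topology for each $n$. Since $R$ is right noetherian and $M$ is f.g., we can pick for each $n\in\N$ a projective resolution $P_\bullet: P_{n-1}\to\dots\to P_0\to M\to 0$ with each $P_i$ a finitely generated (free) right $R$-module. Proposition \ref{RING:PR:artin-rees-implications-I} then gives $\tau_P = \tau_1^M$, provided that we can verify each $P_i$ has the TAR-property; and since $\tau_P=\tau_n^M$ depends only on the length of a projective resolution, this yields $\tau_n^M=\tau_1^M$.

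The one nontrivial step is therefore to establish the TAR-property for finitely generated right $R$-modules under hypotheses (1) and (2). Let $P$ be f.g., $N\subseteq P$ a submodule, and $I\in\calI_R$. Since $\calB$ is a local basis, choose $I_0\in\calB$ with $I_0\subseteq I$. Hypothesis (1) says $I_0$ satisfies the AR-property, so by the standard consequence of AR for finitely generated modules over a right noetherian ring (recalled just before Proposition \ref{RING:PR:artin-rees-implications-I}), there exists $k\in\N$ such that
\[
PI_0^{\,k}\cap N\ \subseteq\ NI_0\ \subseteq\ NI.
\]
By hypothesis (2), the ideal $J:=I_0^{\,k}$ is open, so $J\in\calI_R$ and $PJ\cap N\subseteq NI$. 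This shows $P$ has the TAR-property, as desired.

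Combining the two paragraphs: for each $n\in\N$, pick a projective resolution of $M$ of length $n$ with f.g.\ terms; each term has the TAR-property by the previous paragraph; Proposition \ref{RING:PR:artin-rees-implications-I} then yields $\tau_n^M=\tau_1^M$, completing the proof. I do not expect any serious obstacle; the only point requiring mild care is that the AR-property, as stated in the paper for ideals $I\idealof R$, must be invoked in its module-theoretic form (submodules of f.g.\ modules), which is exactly the consequence recorded in the preliminary paragraph on the AR-property.
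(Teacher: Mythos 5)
Your proposal is correct and follows the same route as the paper: reduce to Proposition~\ref{RING:PR:artin-rees-implications-I} via f.g.\ projective resolutions, and verify the TAR-property for f.g.\ modules using hypotheses (1) and (2). The paper leaves the TAR-property verification as ``easily seen''; you have simply spelled out that short argument (choose $I_0\in\calB$ with $I_0\subseteq I$, apply the module form of AR to get $PI_0^k\cap N\subseteq NI_0\subseteq NI$, and note $I_0^k$ is open by (2)), which is exactly what is intended.
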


    \begin{proof}
        Let $n\in\N$. Since $R$ is right noetherian, any f.g.\ $R$-module admits
        a resolution of length $n$ consisting of f.g.\ projective modules.
        The assumptions (1) and (2) are easily
        seen to imply that any f.g.\ $R$-module satisfies the TAR-property.
        Therefore, by Proposition \ref{RING:PR:artin-rees-implications-I}, $\tau_n^M$ is the
        natural topology on $\End(M)$.
    \end{proof}

    \begin{example}
        Condition (2) in Corollary \ref{RING:CR:artin-rees-implications-II} is essential
        even when all ideals of $R$ have the AR-property:
        Assign $\Z$ the unique topology with local basis
        $\calB=\{2\cdot 3^n\Z\where n\geq 0\}$ and let $M=\Z/4\times \Z/2$.
        Since $MI=2M$ for all $I\in\calB$,
        the standard topology on $\End(M)$ is obtained from
        the local basis $\{\Hom(M,2M)\}$. ($M$ is not Hausdorff).
        Let $I=2\Z\in\calB$ and consider the projective resolution
        \[P:~4\Z\times 2\Z\hookrightarrow \Z\times\Z\to M\to 0~.\]
        Define $f_{-1}:M\to MI=2M$ by $f(x+4\Z,y+2\Z)=(2y+4\Z,0)$.
        Then any lifting $f_0\in\End(\Z\times\Z)$  of $f_{-1}$ must satisfy $f_0(0,1)=(4x+2,2y)$
        for some $x,y\in\Z$. This means that any lifting $f_1\in\End(4\Z\times 2\Z)$ of $f_0$  (there
        is only one such lifting) satisfies $f_1(0,2)=(8x+4,4y)\notin 8\Z\times 4\Z=(4\Z\times 2\Z)I$.
        Therefore, $f_1\notin \Hom(4\Z\times 2\Z, (4\Z\times 2\Z)I)$, implying
        $f_{-1}\notin\Ball(P,I)$. But this means that $\Ball(P,I)\subsetneq\Hom(M,2M)$, hence
        $\tau_2^M\neq\tau_1^M$.
    \end{example}

\section*{Acknowledgements}

    The author thanks his supervisor Uzi Vishne for his help and guidance,
    to Louis H.\ Rowen for his useful advice, to Eli Matzri for his help in
    choosing the appropriate values in Examples \ref{RING:EX:inv-subring-semiperfect-case-I} and \ref{RING:EX:inv-subring-semiperfect-case-II}
    and to Michael Megrelishvili and Menny Shlonssberg for their help with anything that involves topology.
    Ofir Gorodetsky, Tomer Schlank and others have contributed to the formulation and proof of Theorem \ref{RING:TH:closed-submodule-thm}
    and the author is grateful for their help as well. Finally, the author also thanks the referee for his beneficial comments.

\bibliographystyle{plain}
\bibliography{MyBib}

\def\Dbar{\leavevmode\lower.6ex\hbox to 0pt{\hskip-.23ex \accent"16\hss}D}
  \def\Dbar{\leavevmode\lower.6ex\hbox to 0pt{\hskip-.23ex \accent"16\hss}D}
  \def\Dbar{\leavevmode\lower.6ex\hbox to 0pt{\hskip-.23ex \accent"16\hss}D}
  \def\Dbar{\leavevmode\lower.6ex\hbox to 0pt{\hskip-.23ex \accent"16\hss}D}
  \def\Dbar{\leavevmode\lower.6ex\hbox to 0pt{\hskip-.23ex \accent"16\hss}D}
\begin{thebibliography}{10}

\bibitem{ABF09}
Afshin Amini, Babak Amini, and Alberto Facchini.
\newblock Weak {K}rull-{S}chmidt for infinite direct sums of cyclically
  presented modules over local rings.
\newblock {\em Rend. Semin. Mat. Univ. Padova}, 122:39--54, 2009.

\bibitem{Azu51}
Gor{\^o} Azumaya.
\newblock On maximally central algebras.
\newblock {\em Nagoya Math. J.}, 2:119--150, 1951.

\bibitem{BFR01}
Francesco Barioli, Alberto Facchini, Francisco Raggi, and Jos{\'e} R{\'{\i}}os.
\newblock Krull-{S}chmidt theorem and homogeneous semilocal rings.
\newblock {\em Comm. Algebra}, 29(4):1649--1658, 2001.

\bibitem{Ba60}
Hyman Bass.
\newblock Finitistic dimension and a homological generalization of semi-primary
  rings.
\newblock {\em Trans. Amer. Math. Soc.}, 95:466--488, 1960.

\bibitem{Bj71B}
J.-E. Bj{\"o}rk.
\newblock Conditions which imply that subrings of semiprimary rings are
  semiprimary.
\newblock {\em J. Algebra}, 19:384--395, 1971.

\bibitem{Bj71A}
Jan-Erik Bj{\"o}rk.
\newblock Conditions which imply that subrings of artinian rings are artinian.
\newblock {\em J. Reine Angew. Math.}, 247:123--138, 1971.

\bibitem{BoTheoryOfSets}
Nicolas Bourbaki.
\newblock {\em Elements of mathematics. {T}heory of sets}.
\newblock Translated from the French. Hermann, Publishers in Arts and Science,
  Paris, 1968.

\bibitem{Bo66}
Nicolas Bourbaki.
\newblock {\em General topology. {C}hapters 5--10}.
\newblock Elements of Mathematics (Berlin). Springer-Verlag, Berlin, 1989.
\newblock Translated from the French, Reprint of the 1966 edition.

\bibitem{CaDi93}
Rosa Camps and Warren Dicks.
\newblock On semilocal rings.
\newblock {\em Israel J. Math.}, 81(1-2):203--211, 1993.

\bibitem{Di76}
Friedrich Dischinger.
\newblock Sur les anneaux fortement {$\pi $}-r\'eguliers.
\newblock {\em C. R. Acad. Sci. Paris S\'er. A-B}, 283(8):Aii, A571--A573,
  1976.

\bibitem{Fa03}
Alberto Facchini.
\newblock The {K}rull-{S}chmidt theorem.
\newblock In {\em Handbook of algebra, {V}ol. 3}, pages 357--397.
  North-Holland, Amsterdam, 2003.

\bibitem{FaEcKo10}
Alberto Facchini, {\c{S}}ule Ecevit, and M.~Tamer Ko{\c{s}}an.
\newblock Kernels of morphisms between indecomposable injective modules.
\newblock {\em Glasg. Math. J.}, 52(A):69--82, 2010.

\bibitem{FacHer06}
Alberto Facchini and Dolors Herbera.
\newblock Local morphisms and modules with a semilocal endomorphism ring.
\newblock {\em Algebr. Represent. Theory}, 9(4):403--422, 2006.

\bibitem{FHL95}
Alberto Facchini, Dolors Herbera, Lawrence~S. Levy, and Peter V{\'a}mos.
\newblock Krull-{S}chmidt fails for {A}rtinian modules.
\newblock {\em Proc. Amer. Math. Soc.}, 123(12):3587--3592, 1995.

\bibitem{Gr88}
E.~Gregorio.
\newblock Generalized {M}orita equivalence for linearly topologized rings.
\newblock {\em Rend. Sem. Mat. Univ. Padova}, 79:221--246, 1988.

\bibitem{Hi60}
Yukitoshi Hinohara.
\newblock Note on non-commutative semi-local rings.
\newblock {\em Nagoya Math. J.}, 17:161--166, 1960.

\bibitem{InvBook}
Max-Albert Knus, Alexander Merkurjev, Markus Rost, and Jean-Pierre Tignol.
\newblock {\em The book of involutions}, volume~44 of {\em American
  Mathematical Society Colloquium Publications}.
\newblock American Mathematical Society, Providence, RI, 1998.
\newblock With a preface in French by J. Tits.

\bibitem{La99}
T.~Y. Lam.
\newblock {\em Lectures on modules and rings}, volume 189 of {\em Graduate
  Texts in Mathematics}.
\newblock Springer-Verlag, New York, 1999.

\bibitem{Ma58}
Eben Matlis.
\newblock Injective modules over {N}oetherian rings.
\newblock {\em Pacific J. Math.}, 8:511--528, 1958.

\bibitem{MaximalOrders}
I.~Reiner.
\newblock {\em Maximal orders}, volume~28 of {\em London Mathematical Society
  Monographs. New Series}.
\newblock The Clarendon Press Oxford University Press, Oxford, 2003.
\newblock Corrected reprint of the 1975 original, With a foreword by M. J.
  Taylor.

\bibitem{RiSh76}
C.~Riehm and M.~A. Shrader-Frechette.
\newblock The equivalence of sesquilinear forms.
\newblock {\em J. Algebra}, 42(2):495--530, 1976.

\bibitem{Ri74}
Carl Riehm.
\newblock The equivalence of bilinear forms.
\newblock {\em J. Algebra}, 31:45--66, 1974.

\bibitem{Ro86}
Louis~H. Rowen.
\newblock Finitely presented modules over semiperfect rings.
\newblock {\em Proc. Amer. Math. Soc.}, 97(1):1--7, 1986.

\bibitem{Ro87}
Louis~H. Rowen.
\newblock Finitely presented modules over semiperfect rings satisfying
  {ACC}-{$\infty$}.
\newblock {\em J. Algebra}, 107(1):284--291, 1987.

\bibitem{Ro88}
Louis~H. Rowen.
\newblock {\em Ring theory. {V}ol. {I}}, volume 127 of {\em Pure and Applied
  Mathematics}.
\newblock Academic Press Inc., Boston, MA, 1988.

\bibitem{Ro89}
Louis~Halle Rowen.
\newblock Examples of semiperfect rings.
\newblock {\em Israel J. Math.}, 65(3):273--283, 1989.

\bibitem{Sa99}
David~J. Saltman.
\newblock {\em Lectures on division algebras}, volume~94 of {\em CBMS Regional
  Conference Series in Mathematics}.
\newblock Published by American Mathematical Society, Providence, RI, 1999.

\bibitem{Sw60}
Richard~G. Swan.
\newblock Induced representations and projective modules.
\newblock {\em Ann. of Math. (2)}, 71:552--578, 1960.

\bibitem{Va68}
P.~V{\'a}mos.
\newblock The dual of the notion of ``finitely generated''.
\newblock {\em J. London Math. Soc.}, 43:643--646, 1968.

\bibitem{Vamos90}
Peter V{\'a}mos.
\newblock Decomposition problems for modules over valuation domains.
\newblock {\em J. London Math. Soc. (2)}, 41(1):10--26, 1990.

\bibitem{Wa93}
Seth Warner.
\newblock {\em Topological rings}, volume 178 of {\em North-Holland Mathematics
  Studies}.
\newblock North-Holland Publishing Co., Amsterdam, 1993.

\end{thebibliography}

\end{document}